\newtheorem{thm}{Theorem} [section]
\newtheorem{lem}[thm]{Lemma}
\newtheorem{cor}[thm]{Corollary}
\newtheorem{prop}[thm]{Proposition}
\theoremstyle{definition}
\newtheorem{example}[thm]{Example}
\newtheorem{rem}[thm]{Remark}
\numberwithin{equation}{section}
\newcommand{\A}{\mathbb{A}}
\newcommand{\Ad}{\mathrm{Ad}}
\newcommand{\sB}{{\mathscr B}}
\newcommand{\C}{{\mathbb C}}
\newcommand{\D}{\mathcal{D}}
\newcommand{\End}{{\mathrm{End}}}
\newcommand{\ff}{\mathbf{f}}
\newcommand{\sF}{{\mathscr F}}
\newcommand{\g}{{\mathbf g}}
\newcommand{\HH}{\mathbf H}
\newcommand{\Hom}{{\mathrm{Hom}}}
\newcommand{\id}{\mathbbm{1}}
\newcommand{\Ob}{\mathscr O}
\newcommand{\Q}{\mathbb {Q}}
\newcommand{\rel}{\mathrm{rel}}
\newcommand{\Sc}{\mathbf S}
\newcommand{\T}{\mathbb T}
\newcommand{\bT}{\mathbf{T}}
\newcommand{\V}{\mathbb {V}}
\newcommand{\W}{\mathbb {W}}
\newcommand{\Z}{{\mathbb Z}}
\newcommand{\nc}{\newcommand}
\nc{\browntext}[1]{\textcolor{brown}{#1}}
\nc{\greentext}[1]{\textcolor{green}{#1}}
\nc{\redtext}[1]{\textcolor{red}{#1}}
\nc{\bluetext}[1]{\textcolor{blue}{#1}}
\nc{\brown}[1]{\browntext{ #1}}
\nc{\green}[1]{\greentext{ #1}}
\nc{\red}[1]{\redtext{ #1}}
\nc{\blue}[1]{\bluetext{ #1}}
\subjclass[2020]{20G43, 22E57, 20G42}
\keywords{Schur algebra, Borel-Moore homology, equivariant K-theory}
\title[Geometric construction of Schur algebras]
{Geometric construction of Schur algebras}
\author[Li Luo]{Li Luo}
\author[Zheming Xu]{Zheming Xu}
\author[Yang Yang]{Yang Yang}
\address{School of Mathematical Sciences, Key Laboratory of MEA (Ministry of Education) \& Shanghai Key Laboratory of PMMP, East China Normal University, Shanghai 200241, China}
\email{lluo@math.ecnu.edu.cn (Luo)}
\email{zxu0@stu.ecnu.edu.cn (Xu)} 
\email{52275500011@stu.ecnu.edu.cn (Yang)}
\begin{document}

\begin{abstract} 
We provide the geometric construction of a series of generalized Schur algebras of any type via Borel-Moore homologies and equivariant K-groups of generalized Steinberg varieties. As applications, we obtain a Schur algebra analogue of the local geometric Langlands correspondence of any type, provide an equivariant K-theoretic realization of quasi-split $\imath$quantum groups of affine type AIII, and establish a geometric Howe duality for affine ($\imath$-)quantum groups.
\end{abstract}

\maketitle

\tableofcontents
\section{Introduction}
\noindent{1.1.}\quad
One of the origins of geometric representation theory is the geometric realization of the Hecke algebra associated to an algebraic group $G$ as the convolution algebra of the $G$-orbits on the double complete flag varieties owing to Iwahori (cf. \cite{IM65}). Such a formulation was generalized to the case of partial ($n$-step) flag variety for $G=GL_d$ by Beilinson-Lusztig-MacPherson \cite{BLM90}, in which the corresponding convolution algebra is just the $q$-Schur algebra $\Sc_{n,d}$ (of type A) introduced by Dipper-James \cite{DJ89}. Furthermore, they realized the quantized enveloping algebra $\mathbf{U}_q(\mathfrak{gl}_n)$ and its canonical basis (for the modified version) based on a stabilization property for $\Sc_{n,d}$. The affine quantum $\mathfrak{gl}_n$ counterpart of the above realization is derived in \cite{Lu99, DF15}. 

There is another geometric approach to drawing the Weyl group $\W$ and the affine Hecke algebra $\widetilde{\mathbf{H}}$ via the Steinberg variety $Z$ attached to $G$. The Lagrangian construction obtained by Kashiwara-Tanisaki \cite{KT84} (and independently by Ginzburg \cite{G86}) says that the top Borel-Moore homology $H(Z)$ with convolution product is just the group algebra of $\W$. Then by Springer's representation theory on the cohomology of Springer fibers \cite{Sp76}, one can attain a classification of irreducible modules of $H(Z)$.
For the case of affine Hecke algebras, it is initiated by Kazhdan-Lusztig \cite{Lu85,KL85} and improved by Ginzburg \cite{G87} that the equivariant K-group $K^{G\times \mathbb{C}\backslash\{0\}}(Z)$ is isomorphic to the affine Hecke algebra $\widetilde{\mathbf{H}}({}^L{G})$ assoicated with ${}^L{G}$, where ${}^L{G}$ is the Langlands dual of $G$. This is a significant evidence of the mysterious link between Iwahori's realization and the equivariant K-theoretic realization, which is the so-called local geometric Langlands correspondence for affine Hecke algebras (See also \cite{Be16} for a categorification version of this correspondence).
The resulting classification of irreducible modules of $\widetilde{\mathbf{H}}({}^L{G})$ via Springer’s representation theory, is one of the principal successes of the celebrated Langlands program.

Inspired by the aforementioned Beilinson-Lusztig-MacPherson's realization of quantum $\mathfrak{gl}_n$, Ginzburg \cite{G91} constructed the enveloping algebra $U(\mathfrak{sl}_n)$ and its irreducible modules by the top Borel-Moore homology, while Ginzburg-Vasserot \cite{GV93} realized the affine quantum $\mathfrak{sl}_n$ and its irreducible modules by equivariant K-groups (see also \cite{Va98} for the case of affine quantum $\mathfrak{gl}_n$). The Steinberg varieties used in \cite{G91,GV93,Va98} are defined by employing $n$-step flags instead of complete flags.

\vspace{0.3cm}
\noindent{1.2.}\quad
Apart from Nakajima's treatment of quivers \cite{N98}, the above two geometric constructions of Schur algebras and quantum groups by partial flag varieties had not been promoted to the cases other than type A for about two decades. Thanks to Bao-Wang's influential work \cite{BW18}, in which they established a quantum Schur duality between quasi-split $\imath$quantum groups of type AIII (in the sense of Satake diagrams) and Hecke algebras of type B, experts began to recognize that the geometric construction with partial flag varieties of types BCD should not lead to a Drinfeld-Jimbo quantum group but a quasi-split $\imath$quantum group $\mathbf{U}^\imath$ of type AIII. The Beilinson-Lusztig-MacPherson type realization of $\mathbf{U}^\imath$ and its canonical basis was derived in \cite{BKLW18} (see also \cite{LL21} for the case of unequal parameters), while the equivariant K-theoretic approach was studied in \cite{FMX22}. Moreover, quasi-split affine $\imath$quantum groups of type AIII, including $\mathbf{U}^{\imath\imath}$, $\mathbf{U}^{\imath\jmath}$, $\mathbf{U}^{\jmath\imath}$ and $\mathbf{U}^{\jmath\jmath}$, were introduced in \cite{FLLLW20}. The Beilinson-Lusztig-MacPherson type realization for these affinization has been provided in \cite{FLLLW20, FLLLW23}. Recently, Su and Wang \cite{SW24} gave the equivariant K-theoretic realization of $\mathbf{U}^{\imath\imath}$ by using its Drinfeld new presentation given in \cite{LWZ24}.

 The quantum Schur duality established in \cite{BW18} was employed to reformulate the Kazhdan-Lusztig theory of types BCD, for which the Fock space in the duality is regarded as the Grothendieck group of the BGG category $\mathcal{O}$. Motivated by these connections to BGG category  $\mathcal{O}$ and Kazhdan-Lusztig theory, the first author and Wang \cite{LW22} generalized the notion of $n$-step partial flag variety $\sF_\ff$ in terms of a finite subset $Q_\ff$ consisting of $\W$-orbits on the associated (co)weight lattice for arbitrary finite type. A series of generalized $q$-Schur algebras $\Sc_\ff$ of any type were introduced therein. They admit a Beilinson-Lusztig-MacPherson type realization, saying that $\Sc_\ff$ is isomorphic to the convolution algebra of $G$-invariant $\Z[q,q^{-1}]$-valued functions on $\sF_\ff\times\sF_\ff$. 
 These new $q$-Schur algebras were studied widely by the first two authors and their collaborators, such as canonical bases and Schur dualities \cite{LW22}, Howe dualities \cite{LX22}, cells \cite{CLW24} and cellularity \cite{CLX23}. The affinization $\widetilde{\Sc}_\ff$ and its Beilinson-Lusztig-MacPherson type realization are introduced in \cite{CLW24}.
 
 For a special choice of $Q_\ff$ in type A, the algebra $\Sc_\ff$ recovers Dipper-James' $q$-Schur algebra $\Sc_{n,d}$. For some special choices of $Q_\ff$ in type B, it recovers the $q$-Schur algebras considered in \cite{Gr97, BKLW18} and proper subalgebras of the Schur algebras studied in \cite{DJM98, DS00}. Particularly, if we take $Q_\ff$ a single regular $\W$-orbit for any type, then $\Sc_\ff$ is just the Hecke algebra associated with $\W$.

 \vspace{0.3cm}
\noindent{1.3.}\quad
Since these $q$-Schur algebras $\Sc_\ff$ and their affinizations $\widetilde{\Sc}_\ff$ admit a Beilinson-Lusztig-MacPherson type geometric approach shown in \cite{LW22,CLW24}, one may ask whether the classical limit (resp. the affinization) of $\Sc_\ff$ owns a Lagrangian construction (resp. an equivariant K-theoretic realization) in light of the Langlands reciprocity. This is the main issue we shall address in the present paper. 

We shall provide an affirmative answer to the above question. For any finite $\W$-invariant subset $Q_\ff$ of weight or coweight lattice, we attach a generalized Steinberg variety $Z_\ff=\widetilde{\mathcal{N}}_\ff\times_\mathcal{N}\widetilde{\mathcal{N}}_\ff$ (see \S\ref{sec:spre}-\ref{sec:steinberg}). The Lagrangian construction obtained in Theorem~\ref{geo schur} says that the direct sum of top Borel-Moore homologies of each irreducible component of $Z_\ff$ is just the classical limit of $\Sc_\ff$. The equivariant K-groups realization of affine $q$-Schur algebras is given by Theorem~\ref{geo affine Schur-Weyl duality}, which shows that the affine $q$-Schur algebra $\widetilde{\Sc}_\ff$ is isomorphic to the equivariant K-group $K^{G\times \mathbb{C}\backslash\{0\}}(Z_\ff)$.


We must point out that our formulation for the isomorphisms obtained in Theorems~\ref{geo schur} \& \ref{geo affine Schur-Weyl duality} is implicit, for two main reasons. The first reason is that the convolution product for Schur algebras is much more cumbersome than the one for Hecke algebras; and the other reason is that the cellular fibration lemma, (which is the essential means to obtain good bases of the equavariant K-group for Hecke algebras in \cite{CG97}), no longer works for the setup of our affine $q$-Schur algebras $\widetilde{\Sc}_\ff$. Our strategy is as follows. In the spirit of Schur duality, we have three construction parts: the $q$-Schur algebra $\widetilde{\Sc}_\ff$, the Fork space $\widetilde{\mathbf{T}}_\ff$, and the Hecke algebra $\widetilde{\mathbf{H}}$. It is well known that $\widetilde{\mathbf{H}}$ corresponds to the Steinberg variety $\widetilde{\mathcal{N}}\times_\mathcal{N}\widetilde{\mathcal{N}}$, while we potentially expect that $\widetilde{\Sc}_\ff$ and $\widetilde{\mathbf{T}}_\ff$ correspond to the Steinberg varieties $\widetilde{\mathcal{N}}_\ff\times_\mathcal{N}\widetilde{\mathcal{N}}_\ff$ and $\widetilde{\mathcal{N}}_\ff\times_\mathcal{N}\widetilde{\mathcal{N}}$, respectively. On the one hand, as well as the convolution product for $\widetilde{\mathbf{H}}$, the convolution construction for $\widetilde{\mathbf{H}}$-action
on $\widetilde{\mathbf{T}}_\ff$ also only involves the pair $(\widetilde{\mathcal{N}}, \widetilde{\mathcal{N}})$, which we can hold. On the other hand, the cellular fibration lemma still works for $K^{G\times\mathbb{C}\setminus\{0\}}(\widetilde{\mathcal{N}}_\ff\times_\mathcal{N}\widetilde{\mathcal{N}})$. Thanks to these advantages, we firstly show the isomorphism $\widetilde{\mathbf{T}}_\ff\simeq K^{G\times \mathbb{C}\backslash\{0\}}(\widetilde{\mathcal{N}}_\ff\times_\mathcal{N}\widetilde{\mathcal{N}})$ as $\widetilde{\mathbf{H}}$-modules, by which we further achieve the isomorphism $\widetilde{\Sc}_\ff\simeq K^{G\times \mathbb{C}\backslash\{0\}}(Z_\ff)$ as we desired.

\vspace{0.3cm}
\noindent{1.4.}\quad
There are several applications for the main results. 

The first one is a Schur algebra analogue of the local geometric Langlands correspondence, which says:  (for notations we refer to \S\ref{subsec:app1})
$$\C\left[\bigsqcup_{\gamma,\nu\in\Lambda_{\ff}}\mathcal{I}_\gamma\backslash G(\mathbb{F}_p((\epsilon)))/\mathcal{I}_\nu\right]\simeq K^{{}^LG\times \mathbb{C}\backslash\{0\}}({}^LZ_\ff)|_{p=q^{-2}}.$$
If we take $Q_\ff$ to be a single regular $\W$-orbit, then the above reciprocity degenerates to the original local geometric Langlands correspondence about affine Hecke algebras:
$$\C\left[\mathcal{I}\backslash G(\mathbb{F}_p((\epsilon)))/\mathcal{I}\right]\simeq K^{{}^LG\times \mathbb{C}\backslash\{0\}}({}^LZ)|_{p=q^{-2}},$$
where $\mathcal{I}$ is the Iwahori subgroup of $G(\mathbb{F}_p((\epsilon)))$. 

The second application is an implicit equivariant K-theoretic realization of the affine $\imath$quantum groups $\mathbf{U}^{\imath\imath}$ and $\mathbf{U}^{\imath\jmath}$ in the inverse limit of affine $q$-Schur algebras by stabilization property. An explicit realization of $\mathbf{U}^{\imath\imath}$ was recently given by Su-Wang in \cite{SW24}. But there still has been no explicit formulation for $\mathbf{U}^{\imath\jmath}$, $\mathbf{U}^{\jmath\imath}$ and $\mathbf{U}^{\jmath\jmath}$ yet, because of the lack of a Drinfeld new presentation for them.    

The third application is a geometric duality between two affine $q$-Schur algebras via equivariant K-theory, which can be regarded as a mirror image of the geometric construction of Howe duality provided in \cite{LX22} in the sense of Langlands reciprocity. At a specialization of $Q_\ff$ for type A, we get a geometric Howe duality between affine quantum groups $\mathbf{U}(\widetilde{\mathfrak{gl}}_m)$ and $\mathbf{U}(\widetilde{\mathfrak{gl}}_n)$, whose Lagrangian construction counterpart for $\mathfrak{gl}_m$ and $\mathfrak{gl}_n$ was given by Wang \cite{W01}. At some specializations of $Q_\ff$ for type B, we obtain a geometric Howe duality between quasi-split affine $\imath$quantum groups of type AIII.

\vspace{0.3cm}
\noindent{1.5.}\quad
Let us discuss below some follow-up work we are considering related to this paper. 

The equivariant K-theoretic construction of affine Hecke algebras (with one parameter $q$) was generalized by Kato \cite{Ka09} to the case of affine Hecke algebras of type $C_n^{(1)}$ with three parameters in terms of the geomotry of the so-called exotic nilpotent cone of the complex symplectic group $\mathrm{Sp}_{2n}(\mathbb{C})$. In the process of writing this present paper, we also dealt with the affine quantum Schur algebras of type $C$ with three parameters in the sense of Kato's exotic setup, which will appear in our subsequent paper \cite{LXY}. 

Moreover, we are trying to obtain a Drinfeld new presentation of the quasi-split affine $\imath$quantum groups of type AIII with three parameters introduced in \cite{FLLLWW20}, so that we can further give their equivariant K-theoretic realization based on \cite{LXY} similar to Su-Wang's work \cite{SW24}.    

Recently, Bezrukavnikov, Karpov and Krylov \cite{BKK23} proved an equivariant K-theoretic description for Lusztig's asymptotic affine Hecke algebras, which was conjectured by Qiu and Xi in \cite{QX23}. Cui, Wang and the first author introduced asymptotic affine $q$-Schur algebras of any type in \cite{CLW24}, which have been further studied systematically in \cite{CLX23}. We are also working on providing an equivariant K-theoretic description for these asymptotic affine $q$-Schur algebras.

\vspace{0.3cm}
\noindent{1.6.}\quad
The paper is organized as follows. In Section 2, We provide some basic notions and definitions such as Schur algebras, flag varieties, Steinberg varieties and so on, some of which are generalizations of the usual ones. Section 3 and 4 are devoted to the Lagrangian construction of our generalized Schur algebras and their finite irreducible representations, respectively. The equivariant K-theoretic realization of affine $q$-Schur algebras are obtained in Section 5, while their irreducible representations are studied in Section 6. In Section 7, we provide several applications for our geometric construction, including an Schur algebra analogue of local geometric Langlands correspondence, geometric realization of affine $\imath$quantum groups, and geometric Howe dualities.  

\subsubsection*{Acknowledgement}
The work is partially supported by the NSF of China (No. 12371028), the Science and Technology Commission of Shanghai Municipality (No. 22DZ2229014), and Fundamental Research Funds for the Central Universities.

\section{Preliminary}
\subsection{Weyl group orbits on weight lattice}
 Fix a complex connected reductive algebraic group $G$ with complex dimension $n$ and rank $d$. Let $\mathfrak{g}=\mathrm{Lie}(G)$ be the associated Lie algebra, regarded as a $G$-module by means of the adjoint action.  Let $T$ be a maximal torus of $G$, $\W=N(T)/T$ the Weyl group of $G$ associated with $T$. Fix a simple system $\Delta=\{\alpha_1,\ldots,\alpha_d\}$ and let $\Pi$ (resp. $\Pi^+$, $\Pi^-$) be the associated (resp. positive, negative) root system. By abuse of notations, we also regard that the Weyl group $\W$ is generated by the simple reflections $s_1,\ldots,s_d$, where $s_i$ is the simple reflection associated with $\alpha_i$, $(i=1,\ldots,d)$. Let $\ell(w)$ denote the length of $w\in \W$. Let $Q$ (resp. $Q^\vee$) be the weight (resp. coweight) lattice, on which there is a natural $\W$-action. The associated (extended) affine Weyl group is defined as $\widetilde{\W}:=\W\ltimes Q$. 

Let us take a $\W$-invariant finite subset
	\begin{equation*}
		Q_\ff\subset Q\quad \mbox{(or $Q^{\vee}$ if necessary)}. 
	\end{equation*} 
 The choices of $Q_\ff$ are flexible and far from being unique.
	Denote
	\begin{align*}
		\Lambda =\{\mbox{$\W$-orbits in $Q$}\},\quad
		\Lambda_\ff =\{\mbox{$\W$-orbits in $Q_\ff$}\}.
	\end{align*}
 It is known that for any $\W$-orbit $\gamma \in \Lambda$, there exists a unique anti-dominant weight $\mathbf{i}_\gamma\in\gamma$. Let $$J_\gamma:=\{s_k~|~1\leq k\leq d, s_k\mathbf{i}_\gamma =\mathbf{i}_\gamma\},\quad \Delta_\gamma:=\{\alpha_k~|~1\leq k\leq d, s_k\mathbf{i}_\gamma =\mathbf{i}_\gamma\}\subset\Delta,$$ and $\Pi_\gamma=\Pi_\gamma^+ \sqcup\Pi_\gamma^-$ the associated root system.
 
\subsection{Parabolic subgroups}\label{subsec:parabolic}
For any $\W$-orbit $\gamma \in \Lambda$, let $\W_\gamma$ be the parabolic subgroup of $\mathbb{W}$ generated by $J_\gamma$. 
An orbit $\gamma\in\Lambda$ is called {\em regular} if $\W_\gamma=\{\id\}$ is trivial. 

Let $B\subset G$ (resp. $\mathfrak{b}\subset\mathfrak{g}$) be the Borel subgroup (resp. subalgebra) associated with $\Pi$. Denote by $U$ (resp. $\mathfrak{n}$) the unipotent radical  (resp. radical) of $B$ (resp. $\mathfrak{b}$), and by $U^-$ its negative counterpart. 
Let $P_\gamma=\bigsqcup_{w\in\W_\gamma}BwB$ be the parabolic subgroup of $G$ associated with $\gamma\in\Lambda$, and $U_\gamma$ be the unipotent radical of $P_\gamma$ . The Lie algebra of $P_\gamma$ (resp. $U_\gamma$) is denoted by $\mathfrak{p}_\gamma$ (resp. $\mathfrak{n}_\gamma$). 
  We remark that it may be $P_{\gamma}=P_{\nu}$ for $\gamma\neq\nu$.
 For example, $P_\gamma=B, U_\gamma=U$ and $\mathfrak{n}_\gamma=\mathfrak{n}$ for any regular orbit $\gamma$. 
Moreover, for those $\gamma$ such that $\mathbb{W}_\gamma=\{\id, s_i\}$, we shall sometimes write $P_i=P_\gamma$ and $U_i=U_\gamma$ by some abuse of notations. 

Denote
	$$\D_\gamma=\{w \in \W  ~|~  \ell(vw)= \ell(v)+ \ell(w), \forall v \in \W_\gamma\}.$$
	Then $\D_\gamma$ (resp. $\D_\gamma^{-1}$) is the set of distinguished minimal length
	right (resp. left) coset representatives of $\W_\gamma$ in $\W$.

\begin{lem}\label{dis}
The following three conditions are equivalent:
\begin{equation*}
\mbox{(1) $w \in \D_\gamma$;\qquad (2) $w^{-1}\Delta_\gamma \in \Pi^+$; \qquad (3) $w^{-1}\Pi_\gamma^\pm \in \Pi^\pm$.}
\end{equation*}
\end{lem}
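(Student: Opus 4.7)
The plan is to establish the equivalences cyclically, (1)$\Rightarrow$(2)$\Rightarrow$(3)$\Rightarrow$(1). All three are standard facts about minimal length right coset representatives in parabolic subgroups of a Coxeter group, so the overall strategy is to reduce everything to two well-known ingredients: the length-inversion formula $\ell(u)=\abs{\{\alpha\in\Pi^+:u^{-1}\alpha\in\Pi^-\}}$, and the closure property that any $v\in\W_\gamma$ sends $\Pi^+\setminus\Pi_\gamma^+$ into $\Pi^+$ (equivalently, the inversion set of $v$ lies in $\Pi_\gamma^+$).

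For (1)$\Rightarrow$(2), I will simply specialize to $v=s_k$ with $s_k\in J_\gamma$. The hypothesis then reads $\ell(s_kw)=\ell(w)+1$, and the exchange condition immediately translates this into $w^{-1}\alpha_k\in\Pi^+$. Ranging $s_k$ over $J_\gamma$ gives $w^{-1}\Delta_\gamma\subset\Pi^+$.

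For (2)$\Rightarrow$(3), I will use that $\Pi_\gamma^+$ equals the set of roots of $\W_\gamma$ lying in the non-negative integer span of $\Delta_\gamma$. For any $\beta\in\Pi_\gamma^+$, the image $w^{-1}\beta$ is again a root (as $w\in\W$), and it is a non-negative integer combination of the vectors in $w^{-1}\Delta_\gamma$, which by hypothesis all lie in $\Pi^+$. Any root that is a non-negative combination of positive roots must itself be positive, so $w^{-1}\Pi_\gamma^+\subset\Pi^+$, and taking negatives gives $w^{-1}\Pi_\gamma^-\subset\Pi^-$.

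The main work will be in (3)$\Rightarrow$(1), which I expect to be the main obstacle because we must promote a statement about $\Pi_\gamma$ to an additivity of lengths across all of $\W_\gamma$. My plan is to count inversions directly: write
\[
\ell(vw)=\abs{\bigl\{\alpha\in\Pi^+:w^{-1}v^{-1}\alpha\in\Pi^-\bigr\}}
\]
and split this set according to the sign of $v^{-1}\alpha$. The case $v^{-1}\alpha\in\Pi^+$ biject with $\{\beta\in\Pi^+:v\beta\in\Pi^+,\,w^{-1}\beta\in\Pi^-\}$; the case $v^{-1}\alpha\in\Pi^-$ biject with $\{\beta\in\Pi^+:v\beta\in\Pi^-,\,w^{-1}\beta\in\Pi^+\}$. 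Using the closure property for $v\in\W_\gamma$, any $\beta$ with $v\beta\in\Pi^-$ lies in $\Pi_\gamma^+$, and then hypothesis (3) forces $w^{-1}\beta\in\Pi^+$ automatically; symmetrically, if $w^{-1}\beta\in\Pi^-$ then $\beta\notin\Pi_\gamma^+$, so $v\beta\in\Pi^+$ automatically. Hence the two pieces reduce to the full inversion sets of $v^{-1}$ and $w^{-1}$, yielding $\ell(vw)=\ell(v)+\ell(w)$ for every $v\in\W_\gamma$, i.e.\ $w\in\D_\gamma$. Closing the cycle completes the proof.
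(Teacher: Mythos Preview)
Your proof is correct. The paper's own proof is much terser: it declares $(2)\Leftrightarrow(3)$ obvious and reduces $(1)\Leftrightarrow(2)$ to the single fact that $\ell(s_kw)=\ell(w)+1$ iff $w^{-1}\alpha_k\in\Pi^+$, implicitly invoking the standard result that this simple-reflection condition already characterises minimal coset representatives. You instead close the cycle by giving a self-contained inversion-counting argument for $(3)\Rightarrow(1)$, which is exactly the content the paper takes for granted; both routes rest on the same basic length formula, but yours is more explicit.
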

\begin{proof}
    It is obvious that $(2)\Leftrightarrow(3)$. For $(1)\Leftrightarrow(2)$, it follows from a basic result about Weyl groups that $\ell(ws_i)=\ell(w)+1$ if and only if $w(\alpha_i)\in\Pi^+$.
\end{proof}

\begin{lem}\label{lem:para}
    For $\gamma\in\Lambda_\ff$ and $w\in\D_\gamma$, it holds that $P_\gamma \cap wBw^{-1}B=B$.
\end{lem}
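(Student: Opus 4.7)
The plan is to prove both inclusions. The inclusion $B \subseteq P_\gamma \cap wBw^{-1}B$ is immediate, since $B \subseteq P_\gamma$ and $B = \id \cdot B \subseteq wBw^{-1}\cdot B$. For the reverse inclusion, I would first reduce to proving
\begin{equation*}
wBw^{-1} \cap P_\gamma \subseteq B,
\end{equation*}
using the fact that any $x \in P_\gamma \cap wBw^{-1}B$ factors as $x = cb$ with $c \in wBw^{-1}$ and $b \in B \subseteq P_\gamma$, which forces $c = xb^{-1} \in wBw^{-1}\cap P_\gamma$; granting the reduction, we obtain $c \in B$ and hence $x = cb \in B$.

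To establish this claim, I would invoke the root subgroup description of conjugates of $B$. Writing $B = TU$ gives $wBw^{-1} = T\cdot(wUw^{-1})$, and the conjugate admits the unique factorization
\begin{equation*}
wUw^{-1} = \bigl(wUw^{-1}\cap U\bigr)\cdot\bigl(wUw^{-1}\cap U^-\bigr),
\end{equation*}
whose two factors are the products of the root subgroups $U_\beta$ for $\beta\in w\Pi^+\cap\Pi^+$ and for $\beta\in w\Pi^+\cap\Pi^-$, respectively. Taking $c \in wBw^{-1}\cap P_\gamma$ and decomposing $c = t\,u_+\,u_-$ accordingly, we have $tu_+ \in TU = B \subseteq P_\gamma$, so $u_- = (tu_+)^{-1}c$ lies in $U^-\cap P_\gamma$. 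The latter is supported on the roots $\Pi^-\cap(\Pi^+\cup\Pi_\gamma^-) = \Pi_\gamma^-$, so the support of $u_-$ is contained in $\Pi_\gamma^-\cap w\Pi^+$.

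The crux is to show $\Pi_\gamma^-\cap w\Pi^+ = \emptyset$, and this is exactly the content of Lemma~\ref{dis}: since $w \in \D_\gamma$, we have $w^{-1}\Pi_\gamma^-\subseteq\Pi^-$, i.e.\ $\Pi_\gamma^-\subseteq w\Pi^-$, which is disjoint from $w\Pi^+$. Consequently $u_- = \id$, and $c = tu_+ \in B$ as required. The only non-trivial technical point is to cleanly justify the root-subgroup factorizations of $wUw^{-1}$ and of $U^-\cap P_\gamma$; both are standard consequences of the Chevalley structure theory of reductive groups, and once these are in hand the proof reduces to the purely combinatorial input already provided by Lemma~\ref{dis}.
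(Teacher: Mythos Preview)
Your proof is correct and follows essentially the same approach as the paper: both arguments rest on the root-subgroup factorizations of $wBw^{-1}$ and of $P_\gamma \cap U^-$, and both reduce to the combinatorial fact $\Pi_\gamma^- \cap w\Pi^+ = \emptyset$ supplied by Lemma~\ref{dis}. The only cosmetic difference is that the paper computes $P_\gamma \cap wBw^{-1}B$ directly by first writing $wBw^{-1}B = \bigl(\prod_{\alpha \in w\Pi^+ \cap \Pi^-} U_\alpha\bigr)B$, whereas you first reduce to showing $wBw^{-1}\cap P_\gamma \subseteq B$; these are equivalent reorganizations of the same argument.
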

\begin{proof}
    Let $U_\alpha$ be the root subgroup of $G$ corresponding to a root $\alpha\in\Pi$, which satisfies $w U_\alpha w^{-1}=U_{w\alpha }$ (refer to \cite[Part II \S 1.2\&1.4]{J03}). 
    Observe that 
        \begin{align*}
            wBw^{-1}=(\prod_{\alpha \in w\Pi^+}U_\alpha) T \quad\mbox{and}\quad
            P_\gamma \cap U^-=\prod_{\alpha \in \Pi_\gamma^-}U_\alpha,
        \end{align*} 
        hence we have
        \begin{align*}
          wBw^{-1}B =(\prod_{\alpha \in (w\Pi^+)\cap\Pi^-}U_\alpha)B \subset U^-B\quad\mbox{and}\quad
          P_\gamma \cap (U^-B)=(\prod_{\alpha \in \Pi_\gamma^-}U_\alpha)B.
        \end{align*}
        Thus $P_\gamma \cap wBw^{-1}B=(\prod_{\alpha \in (w\Pi^+) \cap \Pi_\gamma^-}U_\alpha)B$.  But $(w\Pi^+) \cap \Pi_\gamma^-=w(\Pi^+ \cap (w^{-1}\Pi_\gamma^-))=\emptyset$ for $w \in \D_\gamma$ by Lemma \ref{dis}. So $P_\gamma \cap wBw^{-1}B=B$.
       \end{proof}

	Let
	$\D_{\gamma\nu}=\D_\gamma \cap \D_\nu^{-1}$
	be the set of minimal length double coset representatives of $\W_\gamma\backslash \W/\W_\nu$. For any $w\in \D_{\gamma\nu}$, we denote 
 \begin{equation}\label{Pwrv}
 \W_{\gamma\nu}^w:=\W_\gamma \cap w \W_\nu w^{-1}\quad \mbox{and}\quad P_{\gamma\nu}^w:=P_\gamma \cap w P_\nu w^{-1}.
 \end{equation} 
In fact, $\W_{\gamma\nu}^w$ is also a parabolic subgroup of $\mathbb{W}$ because it is the Weyl group of the parabolic subgroup $(P_{\gamma}\cap wP_{\nu}w^{-1})U_{\gamma}$ of $G$ (see \cite[Proposition 4.7]{BT65}) and $B\subset(P_{\gamma}\cap wP_{\nu}w^{-1})U_{\gamma}$.
 We can not expect that $P_{\gamma\nu}^w$ and $\bigsqcup_{\sigma\in\W_{\gamma\nu}^w}B\sigma B$ coincide. In fact, $P_{\gamma\nu}^w\subsetneqq\bigsqcup_{\sigma\in\W_{\gamma\nu}^w}B\sigma B$ if $w \neq \id$. In general, $P_{\gamma\nu}^w$ is not a parabolic subgroup of $G$.
\begin{lem}\label{levi}
The Weyl group of the Levi part of $\mathrm{Lie}(P_{\gamma\nu}^w)$ is just $\W_{\gamma\nu}^w$.
\end{lem}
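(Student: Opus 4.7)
The plan is to identify both the Weyl group of the Levi of $\mathrm{Lie}(P_{\gamma\nu}^w)$ and $\W_{\gamma\nu}^w$ with the reflection subgroup $\W(\Pi_\gamma \cap w\Pi_\nu) := \langle s_\alpha \mid \alpha \in \Pi_\gamma \cap w\Pi_\nu \rangle$ of $\W$.

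First, I would write down the root decomposition of $\mathrm{Lie}(P_{\gamma\nu}^w)=\mathfrak{p}_\gamma \cap w\mathfrak{p}_\nu w^{-1}$ relative to the torus $T\subset P_{\gamma\nu}^w$. Since the roots of $\mathfrak{p}_\gamma$ are $\Pi^+ \cup \Pi_\gamma^-$ and those of $w\mathfrak{p}_\nu w^{-1}$ are $w\Pi^+ \cup w\Pi_\nu^-$, the root set of the intersection is
$$R \;=\; (\Pi^+ \cup \Pi_\gamma^-) \cap (w\Pi^+ \cup w\Pi_\nu^-),$$
and the Levi subalgebra has root system $R \cap (-R)$.

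Next I would extract $R \cap (-R)$ by a sign case analysis on $\alpha$. By Lemma \ref{dis} applied to $w \in \D_\gamma$ and $w^{-1} \in \D_\nu$ one obtains $\Pi_\gamma^+ \subseteq w\Pi^+$ and $w\Pi_\nu^+ \subseteq \Pi^+$; these two inclusions force the cross-terms $\Pi_\gamma^+ \cap w\Pi_\nu^-$ and $\Pi_\gamma^- \cap w\Pi_\nu^+$ to be empty. A short check on the surviving subcases then yields
$$R \cap (-R) \;=\; (\Pi_\gamma^+ \cap w\Pi_\nu^+)\cup(\Pi_\gamma^- \cap w\Pi_\nu^-) \;=\; \Pi_\gamma \cap w\Pi_\nu.$$
Thus the Weyl group of the Levi part of $\mathrm{Lie}(P_{\gamma\nu}^w)$ is exactly $\W(\Pi_\gamma \cap w\Pi_\nu)$.

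It remains to match $\W(\Pi_\gamma \cap w\Pi_\nu)$ with $\W_{\gamma\nu}^w$. The inclusion $\W(\Pi_\gamma \cap w\Pi_\nu) \subseteq \W_{\gamma\nu}^w$ is direct: for $\alpha = w\beta \in \Pi_\gamma \cap w\Pi_\nu$, the reflection $s_\alpha$ lies in $\W_\gamma$ (since $\alpha \in \Pi_\gamma$) and equals $ws_\beta w^{-1} \in w\W_\nu w^{-1}$. For the reverse inclusion I would invoke the Borel--Tits fact cited immediately before the lemma, that $\W_{\gamma\nu}^w$ is the Weyl group of the parabolic subgroup $(P_\gamma \cap wP_\nu w^{-1})U_\gamma$ of $G$, hence generated by the reflections corresponding to the roots of its Levi factor. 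A parallel root-space computation for this larger group shows that adjoining $\mathfrak{n}_\gamma$ (whose roots all lie in $\Pi^+\setminus\Pi_\gamma^+$) introduces no new Levi roots, so the Levi root system is again $\Pi_\gamma \cap w\Pi_\nu$, giving $\W_{\gamma\nu}^w \subseteq \W(\Pi_\gamma \cap w\Pi_\nu)$.

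The main obstacle is the bookkeeping in the second step: expanding $R\cap(-R)$ produces many subcases, of which most must be eliminated using the minimal-length double-coset hypothesis $w\in\D_{\gamma\nu}$ via Lemma \ref{dis}. Once this reduction is done, combining with the Borel--Tits identification makes both equalities collapse onto the common reflection subgroup $\W(\Pi_\gamma \cap w\Pi_\nu)$.
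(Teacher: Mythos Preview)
Your proof is correct and takes a more explicit, computational route than the paper. The paper's proof is a single sentence: the Levi part of $\mathrm{Lie}(P_{\gamma\nu}^w)$ is the intersection of the Levi parts of $\mathrm{Lie}(P_\gamma)$ and $\mathrm{Lie}(wP_\nu w^{-1})$, and the Weyl group of that intersection is $\W_\gamma \cap w\W_\nu w^{-1}$. Both steps are simply asserted. You instead verify them at the level of roots, computing the Levi root system as $\Pi_\gamma \cap w\Pi_\nu$ and then matching $\W(\Pi_\gamma \cap w\Pi_\nu)$ with $\W_{\gamma\nu}^w$ via the Borel--Tits identification recorded just before the lemma. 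Two small remarks: first, the identity $R\cap(-R)=\Pi_\gamma\cap w\Pi_\nu$ holds for \emph{any} $w$, since membership in $R$ and $-R$ separately forces $\alpha\in\Pi_\gamma$ and $\alpha\in w\Pi_\nu$; your appeal to Lemma~\ref{dis} at that point is therefore unnecessary. Second, your Borel--Tits step already gives the full equality $\W_{\gamma\nu}^w=\W(\Pi_\gamma\cap w\Pi_\nu)$, so the preliminary ``easy inclusion'' is redundant. The paper's approach buys brevity; yours buys a transparent verification of the two facts the paper leaves to the reader.
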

\begin{proof}It follows from that 
the Levi part of $\mathrm{Lie}(P_{\gamma\nu}^w)$ is the intersection of the Levi parts of $\mathrm{Lie}(P_\gamma)$ and $\mathrm{Lie}(wP_\nu w^{-1})$, whose Weyl group is  $\W_\gamma \cap w \W_\nu w^{-1}=\W_{\gamma\nu}^w$.
\end{proof}

	
\subsection{Schur algebras}\label{sec:schur}
Denote by $\theta_\gamma$ the unique longest element in $\W_\gamma$, and let $$x_\gamma:=\sum_{w \in \W_\gamma} (-1)^{\ell(w)-\ell(\theta_\gamma)} w\in\Q\W,$$ where $\Q\W$ is the group algebra of $\W$ over $\Q$. For any $s_i\in J_\gamma\subset\W_\gamma$, it is obvious that 
\begin{equation}\label{eq:xsi}
    x_\gamma s_i=-x_\gamma.
\end{equation}

Set $$\T_\ff:=\bigoplus_{\gamma \in \Lambda_\ff} \T_\gamma\quad \mbox{with}\quad\T_\gamma:=x_\gamma \Q \W,$$
which is a right $\Q \W$-module. As a linear space, $\T_\gamma$ owns a $\Q$-basis $\{x_\gamma w~|~w\in\D_{\gamma}\}$.
The Schur algebra $\mathbb{S}_{\ff}$ is defined as
$$\mathbb{S}_{\ff}:=\mathrm{End}_{\Q\W}(\T_\ff).$$ For $\gamma,\nu\in\Lambda_\ff$ and $w\in\D_{\gamma\nu}$, let $\phi_{\gamma\nu}^w\in\mathbb{S}_\ff$ be the element such that $$\phi_{\gamma\nu}^w(x_{\nu'})=\delta_{\nu\nu'}(-1)^{\ell(w_{\gamma\nu}^+)}\sum_{w'\in\W_\gamma w\W_\nu}(-1)^{\ell(w')}w',$$
where $w_{\gamma\nu}^+$ denotes the unique longest element in $\W_\gamma w\W_\nu$. It is known that $\{\phi_{\gamma\nu}^w~|~\gamma,\nu\in\Lambda_\ff, w\in\D_{\gamma\nu}\}$ forms a basis of $\mathbb{S}_{\ff}$.

\subsection{Affine $q$-Schur algebras}
Let $q$ be an indeterminate. The Hecke algebra $\mathbf{H}$ associated to $\W$ is a $\Z[q,q^{-1}]$-algebra with an $\Z[q,q^{-1}]$-basis $\{H_w~|~w\in \W\}$ subject to the following relations: 
\begin{align*}
(H_{s_i}-q^{-1})(H_{s_i}+q)=0, &\qquad \mbox{for $i=1,2,\ldots,n$};\\
H_wH_{w'}=H_{ww'}, &\qquad\mbox{if $\ell(ww')=\ell(w)+\ell(w')$}.
\end{align*}
We shall write $H_i=H_{s_i}$ for short.

The (extended) affine Hecke algebra $\widetilde{\mathbf{H}}$ is a free $\Z[q,q^{-1}]$-module with the basis $$\{H_w e^\lambda~|~w\in \W, \lambda\in Q\}$$ such that 
\begin{itemize}
    \item The set $\{H_w ~|~ w\in \W \}$ spans a subalgebras of $\widetilde{\mathbf{H}}$ isomorphic to $\mathbf{H}$;
    \item $e^\lambda e^{\lambda'}=e^{\lambda+\lambda'}$;
    \item $e^\lambda H_i-H_i e^{s_i(\lambda)} =(q^{-1}-q)\frac{e^\lambda-e^{s_i(\lambda)}}{1-e^{-\alpha_i}}$.
\end{itemize}

Let
\begin{equation}\label{eq:sym}
\mathbf{x}_\gamma:=\sum_{w \in \W_\gamma} (-q)^{\ell(w)-\ell(\theta_\gamma)} H_{w}\in\HH\quad \mbox{and}\quad \widetilde{\mathbf{T}}_\ff:=\bigoplus_{\gamma \in \Lambda_\ff}\mathbf{x}_\gamma \widetilde{\mathbf{H}}.
\end{equation}
The affine $q$-Schur algebra (or called affine quantum Schur algebra) $\widetilde{\Sc}_\ff$ is an $\A$-algebra defined as $$\widetilde{\Sc}_\ff:=\mathrm{End}_{\widetilde{\mathbf{H}}}(\widetilde{\mathbf{T}}_\ff).$$

\begin{rem}\label{rem:schur}
\begin{itemize}
\item[(1)] We would like to emphasize that we use weight lattice (instead of coweight lattice) to define the affine Hecke algebra $\widetilde{\mathbf{H}}$ here. So our setup is always the Langlands dual of the one by using coweight lattice such as in \cite{CLW24}.
\item[(2)] Though we use the $\W$-orbits $\gamma \in \Lambda_\ff$ here to define the Fock space $\widetilde{\mathbf{T}}_\ff$ while \cite{CLW24} used the orbits of the affine Weyl group $\widetilde{W}$ instead. Comparing \eqref{eq:sym} with \cite[(2.8)]{CLW24} by a replacement $v=-q^{-1}$ therein, we see that the affine $q$-Schur algebra $\widetilde{\Sc}_\ff$ defined here coincides with the one defined \textit{loc. cit.}, up to a Langlands dual mentioned in (1). Moreover, we remark that as the same as in this present paper, the set $J_\gamma$ in \textit{loc. cit.} always excludes the extra simple reflection $s_0$ of the affine Weyl group $\widetilde{W}$ though it was not explicitly stated therein.
\end{itemize}
\end{rem}


\subsection{Flag varieties}
 Let $\sB=G/B$ be the complete flag variety, which admits a natural $G$-action. Let $G$ act on $\sB\times \sB$ diagonally. It is well-known that there is a one-to-one correspondence between the Weyl group $\mathbb{W}$ and the $G$-orbits in $\sB\times \sB$, Precisely, it sends $w\in \mathbb{W}$ to the $G$-orbit containing $(B,wB)$, which will be denoted by $\Ob_w\in G \backslash(\sB\times\sB)$.
The closure $\overline{\Ob}_w=\bigcup_{y\leq w}\Ob_y$, where ``$\leq$'' means the Bruhat order on $\W$. For example, 
\begin{equation}\label{eq:obsi}
    \overline{\Ob}_{s_i}={\Ob}_{\id}\cup{\Ob}_{s_i}=\{(gB,ghB) \in \sB \times \sB ~|~ g \in G, h \in P_i\}\simeq G \times^B (P_i/B),
\end{equation} where $P_i$ is defined in \S\ref{subsec:parabolic}, and $G \times^B (P_i/B)$ means the variety of $B$-orbits on $G \times (P_i/B)$. Hence $\overline{\Ob}_{s_i}$ is a smooth variety. 

Denote $\sF_\gamma=G/P_\gamma$.	We shall consider the following partial flag variety introduced in \cite{LW22}:
	\begin{equation*}
		\sF_\ff= \bigsqcup_{\gamma \in \Lambda_\ff} \sF_\gamma, \quad\mbox{the disjoint union of 
			$\sF_\gamma\ (\gamma\in\Lambda_\ff)$.}
	\end{equation*}
 
\begin{rem}
    \begin{itemize}
    \item[(1)] The definition of $\sF_\ff$ depends on the choice of $\Lambda_\ff$, which is very flexible. For example, if we take $\Lambda_\ff$ to be a single regular $W$-orbit then $\sF_\ff=\sB$.
        \item[(2)]  In general, $\dim_{\mathbb{R}}\sF_\gamma\neq\dim_{\mathbb{R}}\sF_{\nu}$ if $\gamma\neq\nu$, so the dimension of the irreducible components of $\sF_\ff$ may vary considerably from component to component.
        \item[(3)] We always regard $\sF_\gamma\neq\sF_\nu$ if $\gamma\neq\nu$ though it may be  $P_\gamma=P_\nu$.
    \end{itemize}
\end{rem}
 
There is a natural $G$-action on $\sF_\gamma$ and hence on $\sF_\ff$. Let $G$ act diagonally on $\sF_\gamma\times \sF_\nu, (\gamma,\nu \in \Lambda)$,
	and on $\sF_\ff \times \sF_\ff$.
		Denote
	\begin{align*}
  \Xi_{\gamma\nu}&:=\{(\gamma,w,\nu)~|~w\in\D_{\gamma\nu}\} \quad (\forall \gamma \in \Lambda_\ff, \nu \in \Lambda_\g) \quad\mbox{and}\\
		\Xi_{\ff}&:=\bigsqcup_{\gamma,\nu \in \Lambda_\ff} \Xi_{\gamma\nu}=\bigsqcup_{\gamma ,\nu\in \Lambda_\ff} \{\gamma\} \times \D_{\gamma\nu} \times \{\nu\}.
	\end{align*}
 The sets $\Xi_{\gamma\nu}$ and $ \Xi_{\ff}$ are both finite.
 
	There is a bijection between $\D_{\gamma\nu}$ and the set of $G$-orbits $G \backslash
	(\sF_\gamma \times \sF_\nu)$, which sends $w \in\D_{\gamma\nu}$ to the $G$-orbit containing $(P_\gamma,wP_\nu)$. Hence, the $G$-orbits in $\sF_\ff \times \sF_\ff$ can be labeled by
	$\Xi_{\ff}$. The orbit related to $\xi=(\gamma,w,\nu)\in\Xi_{\ff}$ will be denoted by $\Ob_\xi$ or $\Ob_{\gamma,w,\nu}$. 
    
    Recall the subgroup $P_{\gamma\nu}^w=P_{\gamma}\cap wP_{\nu}w^{-1}$ in \eqref{Pwrv}. We have a natural isomorphism \begin{equation}\label{iso1}
    \Ob_{\gamma,w,\nu}\simeq G/P^w_{\gamma\nu},\quad (gP_\gamma, gwP_\nu)\mapsto gP_{\gamma\nu}^w.
\end{equation}

 \begin{lem}\label{partial Bruhat order}
For any $w \in \D_{\gamma\nu}$, the closure $\overline{\Ob}_{\gamma,w,\nu}=\bigcup_{y \in \D_{\gamma\nu}, y\leq w} \Ob_{\gamma,y,\nu}$.     
 \end{lem}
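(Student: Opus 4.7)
The plan is to use the natural $G$-equivariant projection
\[
\pi:\sB\times\sB\longrightarrow\sF_\gamma\times\sF_\nu,\qquad (g_1B,g_2B)\longmapsto(g_1P_\gamma,g_2P_\nu).
\]
Since $G/B\to G/P_\gamma$ and $G/B\to G/P_\nu$ are proper surjections, $\pi$ is proper and surjective. Properness is the crucial feature: it implies $\pi$ is closed, so for any subset $X\subset\sB\times\sB$ we have $\pi(\overline{X})=\overline{\pi(X)}$. Applying this to $X=\Ob_w$ with $w\in\D_{\gamma\nu}$, and using the fact that $\overline{\Ob}_w=\bigcup_{y\le w}\Ob_y$ recalled in the body of the text, we obtain
\[
\overline{\Ob}_{\gamma,w,\nu}=\overline{\pi(\Ob_w)}=\pi(\overline{\Ob}_w)=\bigcup_{y\le w}\pi(\Ob_y).
\]

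The next step is to identify $\pi(\Ob_y)$ for an arbitrary $y\in\W$. The orbit $\Ob_y$ is $G\cdot(B,yB)$, hence $\pi(\Ob_y)$ is the $G$-orbit through $(P_\gamma,yP_\nu)$. Writing $y=uzv$ with $z$ the unique minimal length double coset representative in $\W_\gamma y\W_\nu\cap\D_{\gamma\nu}$, $u\in\W_\gamma$ and $v\in\W_\nu$, one checks $(P_\gamma,yP_\nu)$ and $(P_\gamma,zP_\nu)$ lie in the same $G$-orbit; thus $\pi(\Ob_y)=\Ob_{\gamma,z,\nu}$. In particular $\pi(\Ob_w)=\Ob_{\gamma,w,\nu}$ for $w\in\D_{\gamma\nu}$, justifying the first display above. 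Combining these,
\[
\overline{\Ob}_{\gamma,w,\nu}=\bigcup_{y\le w}\Ob_{\gamma,z(y),\nu},
\]
where $z(y)\in\D_{\gamma\nu}$ denotes the minimal length element of $\W_\gamma y\W_\nu$.

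It remains to match this index set with $\{z\in\D_{\gamma\nu}:z\le w\}$. The inclusion $\supseteq$ is immediate by taking $y=z$. For $\subseteq$, the key input is the standard Bruhat-order fact: \emph{the minimum length element $z$ of any double coset $\W_\gamma z\W_\nu$ satisfies $z\le y$ for every $y$ in that double coset}. Accepting this, $z(y)\le y\le w$, so $z(y)\in\D_{\gamma\nu}$ with $z(y)\le w$, as desired.

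The main obstacle I anticipate is this last fact about minimal double coset representatives and the Bruhat order. It can be deduced from the subword property: writing $y=uzv$ with $u\in\W_\gamma$, $v\in\W_\nu$ and $\ell(y)=\ell(u)+\ell(z)+\ell(v)$ (which is available since $z\in\D_{\gamma\nu}=\D_\gamma\cap\D_\nu^{-1}$, using Lemma~\ref{dis} on both sides to guarantee the length additivity), a reduced expression for $y$ contains one for $z$ as a subword, giving $z\le y$. Everything else is a routine packaging of properness of $\pi$ with the known closure formula $\overline{\Ob}_w=\bigcup_{y\le w}\Ob_y$ on the full flag variety.
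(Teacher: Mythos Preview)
Your proof is correct and follows essentially the same route as the paper's: project via $\pi:\sB\times\sB\to\sF_\gamma\times\sF_\nu$, use properness to commute $\pi$ with closure, and reduce to the known formula $\overline{\Ob}_w=\bigcup_{y\le w}\Ob_y$. The paper simply asserts the final equality $\bigcup_{y\le w}\pi(\Ob_y)=\bigcup_{z\in\D_{\gamma\nu},\,z\le w}\Ob_{\gamma,z,\nu}$, whereas you unpack it via the fact that the minimal double coset representative $z(y)$ satisfies $z(y)\le y$; your justification of the length-additive decomposition $y=uzv$ is a touch loose (it needs Howlett's theorem rather than just $z\in\D_{\gamma\nu}$), but the conclusion $z(y)\le y$ is standard and the overall argument is sound.
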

\begin{proof}
    Let $\pi: \sB \times \sB \rightarrow \sF_\gamma \times \sF_\nu$ be the natural projective map, which satisfies $\pi(\overline{Y})=\overline{\pi(Y)}$ for any subset $Y \subset \sB \times \sB$. Then $\overline{\Ob}_{\gamma,w,\nu}=\overline{\pi(\Ob_w)}=\pi(\overline{\Ob_w})=\pi(\bigcup_{y\leq w}\Ob_y)=\bigcup_{y\leq w}\pi(\Ob_y)=\bigcup_{y \in \D_{\gamma\nu}, y\leq w}\Ob_{\gamma,y,\nu}$.
\end{proof}
\subsection{Nilpotent orbit and isotropic subgroup}
Let $\mathcal{N}$ denote the set of all nilpotent elements of $\mathfrak{g}$ (called the nilpotent cone of $\mathfrak{g}$), which is a closed $G$-stable subvariety of $\mathfrak{g}$. Under the adjoint action of $G$, the nilpotent cone $\mathcal{N}$ can be partitioned into finite $G$-orbits  (cf. \cite[Proposition~3.2.9]{CG97}): 
\begin{equation*}
\mathcal{N}=\bigsqcup_{\alpha\in\mathbb{J}} \mathcal{N}_\alpha,\quad \mbox{where $\mathbb{J}$ is a finite set labelling the orbits}.
\end{equation*} 

We shall always fix a base point $x_\alpha\in \mathcal{N}_\alpha$ for each $\alpha\in\mathbb{J}$.
 Let $G_{x_\alpha}$ be the isotropic subgroup of $x_\alpha$ in $G$, and $G^{\circ}_{x_\alpha}$ its connected component containing the identity. Let $C(\alpha):=G_{x_\alpha}/G^{\circ}_{x_\alpha}$ be the component group of $G_{x_\alpha}$ which is independent of the choice of $x_\alpha$. 
 
 Take a $G_{x_\alpha}$-variety $\mathcal{V}$. Denote by $\mathfrak{I}(\mathcal{V})$ the set of irreducible components of $\mathcal{V}$.
 The $G_{x_\alpha}$-action on $\mathcal{V}$ induces a $C(\alpha)$-action on $\mathfrak{I}(\mathcal{V})$ by permutation. 
For  $X\in\mathfrak{I}(\mathcal{V})$, denote by $\mathbb{O}_{X}$ the $G_{x_\alpha}$-orbit in $\mathfrak{I}(\mathcal{V})$ containing $X$. Write $\mathcal{V}_{X}:=\bigcup_{Y\in\mathbb{O}_{X}}Y$.
 Then 
 \begin{equation*}
 \mu:G\times^{G_{x_\alpha}}\mathcal{V}\to \mathcal{N}_\alpha, \quad (g,v)\mapsto \mathrm{Ad}g(x_\alpha)
 \end{equation*}
is a $G$-equivariant fiber bundle, and $G\times^{G_{x_\alpha}}\mathcal{V}_X$ is a closed subbundle. 

The following lemma may be known to experts. For safety, we provide a proof below.
 \begin{lem}\label{irreducible components of Z0}
     There is a natural bijection:
\begin{align*}
    \left\{\text{Irreducible components of }G\times^{G_{x_\alpha}}\mathcal{V}\right\}\longleftrightarrow \left\{C(\alpha)\text{-orbits on }\mathfrak{I}(\mathcal{V})\right\}.
\end{align*}
More precisely, each irreducible component of $G\times^{G_{x_\alpha}}\mathcal{V}$ is in the form of $G\times^{G_{x_\alpha}}\mathcal{V}_X$ for $X\in\mathfrak{I}(\mathcal{V})$.
      \end{lem}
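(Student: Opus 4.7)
The plan is to lift the analysis to $G \times \mathcal{V}$, where irreducible components are transparent because $G$ is irreducible, and then push down via the quotient morphism $\pi : G \times \mathcal{V} \to G \times^{G_{x_\alpha}} \mathcal{V}$. First, since $G$ is connected and each $X \in \mathfrak{I}(\mathcal{V})$ is an irreducible closed subvariety of $\mathcal{V}$, the irreducible components of $G \times \mathcal{V}$ are exactly $\{G \times X \mid X \in \mathfrak{I}(\mathcal{V})\}$. The $G_{x_\alpha}$-action $h\cdot(g,v)=(gh^{-1},hv)$ sends $G \times X$ to $G \times hX$, so upon passing to irreducible components it induces the permutation action of $G_{x_\alpha}$ on $\mathfrak{I}(\mathcal{V})$ introduced just before the lemma.

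The key observation — and the step I expect to flag most carefully, since the rest is bookkeeping around torsor quotients — is that the connected subgroup $G^{\circ}_{x_\alpha}$ must act trivially on the finite discrete set $\mathfrak{I}(\mathcal{V})$, because a connected algebraic group cannot act nontrivially on a finite set by permutations. Hence the $G_{x_\alpha}$-action on $\mathfrak{I}(\mathcal{V})$ factors through the component group $C(\alpha)=G_{x_\alpha}/G^{\circ}_{x_\alpha}$, and $G_{x_\alpha}$-orbits on $\mathfrak{I}(\mathcal{V})$ coincide with $C(\alpha)$-orbits. For such an orbit $\mathbb{O}_X$ I would set $W_X := \bigcup_{Y \in \mathbb{O}_X}(G \times Y) = G \times \mathcal{V}_X$, a closed $G_{x_\alpha}$-stable subset. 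Since any two of the $G \times Y'$ with $Y' \in \mathbb{O}_X$ are $G_{x_\alpha}$-translates of one another, $\pi(G \times Y) = \pi(W_X) = G \times^{G_{x_\alpha}} \mathcal{V}_X$ for any single choice of $Y \in \mathbb{O}_X$; hence $G \times^{G_{x_\alpha}} \mathcal{V}_X$ is irreducible and closed.

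Finally, it remains to check these irreducible closed subsets are pairwise incomparable, so that they are precisely the irreducible components of $G\times^{G_{x_\alpha}}\mathcal{V} = \bigcup_{\mathbb{O}_X} G \times^{G_{x_\alpha}} \mathcal{V}_X$. If $G \times^{G_{x_\alpha}} \mathcal{V}_X \subseteq G \times^{G_{x_\alpha}} \mathcal{V}_{X'}$, taking preimages under $\pi$ (noting that each $\mathcal{V}_X$ is $G_{x_\alpha}$-stable, so its preimage really is $G\times \mathcal{V}_X$) gives $\mathcal{V}_X \subseteq \mathcal{V}_{X'}$; then the irreducible component $X$ is contained in some $Y' \in \mathbb{O}_{X'}$, forcing $X = Y'$ and hence $\mathbb{O}_X = \mathbb{O}_{X'}$. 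This yields both the claimed bijection and the explicit description of each irreducible component as $G \times^{G_{x_\alpha}} \mathcal{V}_X$.
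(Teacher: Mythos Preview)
Your proof is correct and takes a somewhat different, cleaner route than the paper. The paper establishes irreducibility of $G\times^{G_{x_\alpha}}\mathcal{V}_X$ by passing to the regular locus $\mathcal{V}_X^{\mathrm{reg}}$: it argues that $G\times^{G_{x_\alpha}}\mathcal{V}_X^{\mathrm{reg}}$ is smooth and then proves it is connected by an explicit argument (given two points $(g,v)$ and $(g',v')$, choose $h\in G_{x_\alpha}$ moving $v$ into the connected component of $v'$ and then use connectedness of $G$), concluding that this open dense subset is irreducible and hence so is its closure $G\times^{G_{x_\alpha}}\mathcal{V}_X$. You instead lift to $G\times\mathcal{V}$, where the irreducible components are transparently the $G\times X$, and push irreducibility down through the quotient map $\pi$; closedness and incomparability then follow from the fact that $\mathcal{V}_X$ is $G_{x_\alpha}$-saturated. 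This bypasses the smooth locus entirely and also makes the appearance of $C(\alpha)$ more explicit, via your observation that the connected group $G^{\circ}_{x_\alpha}$ must act trivially on the finite set $\mathfrak{I}(\mathcal{V})$. Both arguments are short; yours is the more streamlined of the two.
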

\begin{proof}
 Note that $\mathcal{V}^{\mathrm{reg}}_{X}$ is $G_{x_\alpha}$-stable, and the connected components of $\mathcal{V}_{X}^{\mathrm{reg}}$ are bijective to the irreducible components of $\mathcal{V}_{X}$. Clearly,  $G\times^{G_{x_\alpha}}\mathcal{V}^{\mathrm{reg}}_{X}$ is smooth.
     We claim that $G\times^{G_{x_\alpha}}\mathcal{V}_{X}^{\mathrm{reg}}$ is connected (and hence also irreducible), which is shown as follows. 
     
     Let $(g,v)$ and $(g',v')$ be arbitrary two points in $G\times^{G_{x_\alpha}}\mathcal{V}^{\mathrm{reg}}_{X}$. Since the $G_{x_\alpha}$-action on the set of irreducible components of $\mathcal{V}_{X}$ is transitively, the action on the connected component of $\mathcal{V}_{X}^{reg}$ is also transitively. We can find $h\in G_{x_\alpha}$ such that $h(v)$ and $v'$ lie in the same connected component. Then $(g',h(v))$ and $(g',v')$ lie in the same connected component. Note $(g'h,v)$ ($=(g',h(v))$ and $(g,v)$ are also in the same connected component, since they are in the same $G$-orbit and $G$ is connected. So $(g,v)$ and $(g',v')$ are in the same connected component, which shows that $G\times^{G_{x_\alpha}}\mathcal{V}_{X}^{\mathrm{reg}}$ is a connected variety.
     
    Now we have a Zariski open, dense irreducible subvariety $G\times^{G_{x_\alpha}}\mathcal{V}_{X}^{\mathrm{reg}}$ of $G\times^{G_{x_\alpha}}\mathcal{V}_{X}$, so $G\times^{G_{x_\alpha}}\mathcal{V}_{X}$ itself is irreducible. 
    Clearly, for $\mathbb{O}_{X}\neq \mathbb{O}_{X'}$, we have $G\times^{G_{x_\alpha}}\mathcal{V}_{X}\not\subset G\times^{G_{x_\alpha}}V_{X'}$. Noting that $G\times^{G_{x_\alpha}}\mathcal{V}=\bigcup_{X}G\times^{G_{x_\alpha}}\mathcal{V}_{X}$, where $X$ runs over a complete set of representatives of $\mathfrak{I}(\mathcal{V})/C(x_\alpha)$, the lemma follows.
\end{proof}

\subsection{Partial Springer resolution}\label{sec:spre}
Let $$\widetilde{\mathcal{N}}:=T^*\sB\simeq G\times^B\mathfrak{n}$$ be the cotangent bundle of $\sB$, which is isomorphic to the variety consisting of $B$-orbits on $G\times\mathfrak{n}$. Here $B$ acts on $G$ by $b\cdot g=gb^{-1}$ ($g\in G, b\in B$), and on $\mathfrak{n}$ by the adjoint action. We shall use $(gB, x)$ to present the element in $\widetilde{\mathcal{N}}$ corresponding to the $B$-orbit containing $(g, \mathrm{Ad}_{g^{-1}}(x))\in G\times\mathfrak{n}$.

Similarly, let $$\widetilde{\mathcal{N}}_\gamma:=T^*\sF_\gamma\simeq G \times^{P_\gamma} \mathfrak{n}_\gamma$$ be the cotangent bundle of $\sF_\gamma$ (isomorphic to the variety consisting of $P_\gamma$-orbits on $G\times \mathfrak{n}_\gamma$), which is regarded as a closed $G$-subvariety of $\sF_\gamma \times \mathfrak{g}$. We shall use $(gP_\gamma, x)$ to present the element in $\widetilde{\mathcal{N}}_\gamma$ corresponding to the $P_\gamma$-orbit containing $(g, \mathrm{Ad}_{g^{-1}}(x))\in G\times\mathfrak{n}_\gamma$. Under this notation, the element $x$ can run over $\mathrm{Ad}G(\mathfrak{n}_\gamma)$.

Denote $$\widetilde{\mathcal{N}}_\ff=\bigsqcup_{\gamma \in \Lambda_\ff} \widetilde{\mathcal{N}}_\gamma.$$
The following projective maps
\begin{equation*}
\pi_\gamma: \widetilde{\mathcal{N}}_\gamma \to \mathcal{N}, \quad (gP_\gamma, x)\mapsto x, \quad (\forall \gamma\in\Lambda_\ff)
\end{equation*}
are called partial Springer resolutions. In other words, the resolution $\pi_\gamma$ maps the $P_\gamma$-orbit containing $(g,x)$ to $Ad_g(x)$. 
Denote $$\mathcal{N}_{\gamma}:=\mathrm{Im} \pi_\gamma=\mathrm{Ad}G(\mathfrak{n}_{\gamma}),$$ which is a closed subvariety of $\mathcal{N}$.
It is known that $\mathcal{N}_{\gamma}$ contains a unique open and dense $G$-orbit called Richardson orbit.

Moreover, we denote $$\pi_\ff=\bigsqcup_{\gamma \in \Lambda_\ff}\pi_\gamma: \widetilde{\mathcal{N}}_\ff \to \mathcal{N}.$$
We shall always denote
\begin{equation}\label{notation:S}
\widetilde{\mathcal{S}}:=\pi_\ff^{-1}(\mathcal{S})\quad\mbox{and}\quad \widetilde{\mathcal{S}}_\gamma:=\pi_\gamma^{-1}(\mathcal{S}),\quad\mbox{for any subset $\mathcal{S}\subset\mathcal{N}$}.
\end{equation}

\subsection{Steinberg varieties}\label{sec:steinberg}
The original Steinberg variety $Z$ (associated with the complete flag variety $\sB$) is defined as 
\begin{align}\label{steinvar}
Z&:= \widetilde{\mathcal{N}} \times_{\mathcal{N}} \widetilde{\mathcal{N}}=\{((gB,x),(g'B,x))\in\widetilde{\mathcal{N}} \times \widetilde{\mathcal{N}}\} 
\\\nonumber &\simeq \{(x,gB,g'B)\in\mathcal{N}\times\sB\times\sB~|~x\in(\mathrm{Ad} g)(\mathfrak{b}) \cap (\mathrm{Ad}g')(\mathfrak{b})\}.
\end{align}
In the spirit of \eqref{steinvar}, we introduce the generalized Steinberg variety $Z_\ff$ associated with the partial flag variety $\sF_\ff$ as follows:
	\begin{align*}
 Z_\ff&:=\widetilde{\mathcal{N}}_\ff \times_{\mathcal{N}} \widetilde{\mathcal{N}}_\ff=\bigsqcup_{\gamma,\nu \in \Lambda_\ff} \widetilde{\mathcal{N}}_\gamma \times_{\mathcal{N}} \widetilde{\mathcal{N}}_\nu
 =\{((gP_\gamma,x),(gP_\nu,x))\in \widetilde{\mathcal{N}}_\gamma \times \widetilde{\mathcal{N}}_\nu~|~\gamma,\nu\in\Lambda_\ff\}
 \\&\simeq\{(x,gP_\gamma,g'P_\nu) \in \mathcal{N} \times \sF_\ff \times \sF_\ff ~|~ \gamma,\nu\in\Lambda_\ff, x \in (\mathrm{Ad} g)(\mathfrak{n}_\gamma) \cap (\mathrm{Ad}g')(\mathfrak{n}_\nu)\}.
 \end{align*}
We shall denote the subvariety $Z_{\gamma\nu}:=\widetilde{\mathcal{N}}_\gamma \times_{\mathcal{N}} \widetilde{\mathcal{N}}_\nu$ for any $\gamma,\nu\in\Lambda_\ff$, and hence $$Z_\ff=\bigsqcup_{\gamma,\nu \in \Lambda_\ff}Z_{\gamma\nu}.$$
Let $Z_{\gamma\nu}\to \mathcal{N}$ and $Z_\ff\to \mathcal{N}$ be the canonical projections. For any subset $\mathcal{S}\subset\mathcal{N}$, we denote its preimage in $Z_{\gamma\nu}$ (resp. $Z_\ff$) by $Z_{\gamma\nu}^{\mathcal{S}}$ (resp. $Z_\ff^{\mathcal{S}}$).
It is clear that 
\begin{equation}\label{notation:Zfs}
Z_{\gamma\nu}^{\mathcal{S}}=\widetilde{\mathcal{S}}_\gamma\times_\mathcal{S}\widetilde{\mathcal{S}}_\nu\quad \mbox{and}\quad
Z_\ff^{\mathcal{S}}=\widetilde{\mathcal{S}}\times_{\mathcal{S}}\widetilde{\mathcal{S}}.
\end{equation}

Below is an analogue of \cite[Proposition~3.3.4]{CG97}, which says that the Steinberg variety $Z$ is the union of the conormal bundles to all $G$-orbits in $\sB\times\sB$.
\begin{prop}\label{prop:uninion}
The subvariety $Z_{\gamma\nu}$ is the union of the conormal bundles to all $G$-orbits in $\sF_\gamma\times\sF_\nu$.
\end{prop}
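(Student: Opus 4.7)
The plan is to realize $Z_{\gamma\nu}$ as the zero level set of the moment map for the diagonal $G$-action on $T^*(\sF_\gamma \times \sF_\nu)$, and then to invoke the standard principle that this zero fiber is the union of the conormal bundles to the $G$-orbits in the base. This mirrors the argument of \cite[Proposition~3.3.4]{CG97}.

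To set up the moment-map picture, I would use the identification $T^*(\sF_\gamma \times \sF_\nu) \simeq \widetilde{\mathcal{N}}_\gamma \times \widetilde{\mathcal{N}}_\nu$ such that the cotangent fiber at $gP_\gamma$ corresponds to $\mathrm{Ad}(g)(\mathfrak{n}_\gamma)$ via the Killing form, matching the presentation in \S\ref{sec:spre}. With the appropriate sign conventions, the moment map for the diagonal $G$-action is $\mu((gP_\gamma,x_1),(g'P_\nu,x_2)) = x_1 - x_2$, so that $\mu^{-1}(0)=Z_{\gamma\nu}$ by the very definition of the fiber product over $\mathcal{N}$.

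For the pointwise identification of conormal bundles, I would fix a $G$-orbit $\Ob_{\gamma,w,\nu}$ and work at the standard representative $(P_\gamma, wP_\nu)$, whose stabilizer is $P^w_{\gamma\nu}$ by \eqref{iso1}. The tangent space at this point is the image of $\mathfrak{g}$ in $(\mathfrak{g}/\mathfrak{p}_\gamma)\oplus(\mathfrak{g}/w\mathfrak{p}_\nu w^{-1})$, while the cotangent space is identified with $\mathfrak{n}_\gamma \oplus w\mathfrak{n}_\nu w^{-1}$ via the Killing form, using the orthogonality $(\mathfrak{n}_\gamma,\mathfrak{p}_\gamma)_K = 0$. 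A direct computation shows that a covector $(x_1,x_2)$ annihilates the tangent space if and only if $x_1 = x_2 \in \mathfrak{n}_\gamma \cap w\mathfrak{n}_\nu w^{-1}$, which is exactly the fiber of $Z_{\gamma\nu}$ over $(P_\gamma, wP_\nu)$. $G$-equivariance then extends this identification to the full orbit, and summing over $w \in \D_{\gamma\nu}$ gives the asserted decomposition $Z_{\gamma\nu} = \bigcup_{w\in\D_{\gamma\nu}} T^*_{\Ob_{\gamma,w,\nu}}(\sF_\gamma\times\sF_\nu)$.

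The main obstacle will be the careful bookkeeping of Killing-form duality conventions, particularly the sign choice needed to reconcile the naive pairing $(x_1+x_2, Y)_K$ arising from the tangent-cotangent duality with the fiber-product relation $x_1 = x_2$ in $Z_{\gamma\nu}$; one must ensure that the symplectic identification $\widetilde{\mathcal{N}}_\gamma \simeq G \times^{P_\gamma} \mathfrak{n}_\gamma$ of \S\ref{sec:spre} is compatible with the standard symplectic structure on $T^*\sF_\gamma$. Once these conventions are settled, the remainder of the argument is essentially formal and parallels the classical case.
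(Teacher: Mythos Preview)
Your proposal is correct and follows essentially the same approach as the paper: both reduce to the argument of \cite[Proposition~3.3.4]{CG97}, replacing $T^*\sB = G\times^B \mathfrak{b}^\perp$ by $T^*\sF_\gamma = G\times^{P_\gamma}\mathfrak{p}_\gamma^\perp$ (and similarly for $\nu$), so that $Z_{\gamma\nu}$ is identified with the zero fiber of the moment map and hence with the union of conormal bundles to the $G$-orbits. Your explicit fiber computation at $(P_\gamma, wP_\nu)$ and your attention to the sign convention (one factor carries a minus so that $\mu^{-1}(0)$ gives $x_1 = x_2$ rather than $x_1 + x_2 = 0$) simply unpack what the paper leaves implicit in its one-line reference to \cite{CG97}.
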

\begin{proof}
The arguments are similar to the proof of \cite[Proposition~3.3.4]{CG97}, just replacing $T^*\sB=G\times^B\mathfrak{n}=G\times^B\mathfrak{b}^{\perp}$ therein by $T^*\sF_{\gamma}=G\times^{P_\gamma}\mathfrak{n_\gamma}=G\times^{P_\gamma}\mathfrak{p}_\gamma^{\perp}$.
\end{proof}

Note that the $G$-orbits on $\sF_\gamma\times\sF_\nu$ are parameterized by the finite set $\Xi_{\gamma\nu}$. For $\xi\in\Xi_{\gamma\nu}$, denote by $T_{\Ob_\xi}^*$ the conormal bundle of $\Ob_\xi$ in $\sF_\gamma \times \sF_\nu$ and by $T_\xi^*:=\overline{T_{\Ob_\xi}^*}$ its closure. 

We have a direct corollary of Proposition~\ref{prop:uninion} as follows, which is a partial flag variety counterpart of \cite[Corollary~3.3.5]{CG97}.
\begin{cor}\label{cor:stein}
\begin{itemize}
\item[(1)] For any $\gamma,\nu\in\Lambda_\ff$, the subvariety $$Z_{\gamma\nu}=\bigsqcup_{\xi \in \Xi_{\gamma\nu}} T_{\Ob_\xi}^*.$$
\item[(2)] Irreducible components of $Z_\ff$ are parameterized by elements of $\Xi_{\ff}$. Each irreducible component is $T_{\xi}^*$ for a unique $\xi=(\gamma,w,\nu)\in\Xi_{\ff}$.  In particular, there are finite irreducible components of $Z_\ff$. 
\end{itemize}
\end{cor}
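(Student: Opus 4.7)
My plan is to deduce both parts of the corollary directly from Proposition~\ref{prop:uninion}, which already identifies $Z_{\gamma\nu}$ with the union of conormal bundles $T^*_{\Ob_\xi}$ of the $G$-orbits $\Ob_\xi$ in $\sF_\gamma\times\sF_\nu$. The bijection $\Xi_{\gamma\nu}\leftrightarrow G\backslash(\sF_\gamma\times\sF_\nu)$ established earlier in \S2.5 is what lets me index these conormal bundles by $\xi\in\Xi_{\gamma\nu}$.

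For part (1), I will argue that this union is automatically disjoint: by definition a conormal bundle $T^*_{\Ob_\xi}\subset T^*(\sF_\gamma\times\sF_\nu)$ lies set-theoretically over the base orbit $\Ob_\xi$, and the $G$-orbits $\Ob_\xi$ for distinct $\xi\in\Xi_{\gamma\nu}$ partition $\sF_\gamma\times\sF_\nu$. This is a one-line deduction from Proposition~\ref{prop:uninion}.

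For part (2), I will carry out three checks. First, each $T_\xi^*=\overline{T^*_{\Ob_\xi}}$ is irreducible, because $\Ob_\xi\cong G/P^w_{\gamma\nu}$ is a homogeneous space of the connected group $G$, the conormal bundle $T^*_{\Ob_\xi}$ is a vector bundle over this irreducible base, and the closure of an irreducible set is irreducible. Second, taking closures in part~(1) and using that $Z_{\gamma\nu}$ is closed in $Z_\ff$ (in fact clopen, since $\sF_\ff=\bigsqcup_\gamma\sF_\gamma$ is a disjoint union of connected pieces), I get $Z_{\gamma\nu}=\bigcup_{\xi\in\Xi_{\gamma\nu}} T_\xi^*$, and hence $Z_\ff=\bigcup_{\xi\in\Xi_\ff}T_\xi^*$. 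Third, I must check that no $T_\xi^*$ is contained in another, which then combined with irreducibility identifies them with precisely the irreducible components of $Z_\ff$. Across distinct pairs $(\gamma,\nu)$ this is trivial by the disjoint clopen decomposition of $Z_\ff$. Within a fixed pair $(\gamma,\nu)$, all the $T_\xi^*$ have the common dimension $\dim\sF_\gamma+\dim\sF_\nu$ (the standard Lagrangian dimension of a conormal bundle in a cotangent bundle), so a containment would force equality; but they have distinct open dense pieces $T^*_{\Ob_\xi}$ over distinct orbits, ruling this out. Finiteness of the set of components follows from the previously stated finiteness of $\Xi_\ff$.

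I do not anticipate a serious obstacle here: everything reduces to orbit disjointness, irreducibility of vector bundles over homogeneous spaces, and a dimension count. The only small care point is making explicit that $Z_{\gamma\nu}$ is clopen in $Z_\ff$ so that closures computed inside $Z_{\gamma\nu}$ agree with closures inside $Z_\ff$; this is where my argument needs to cite the disjoint-union structure of $\sF_\ff$ rather than rely on connectivity.
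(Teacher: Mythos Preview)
Your proposal is correct and follows exactly the route the paper intends: the corollary is stated as a direct consequence of Proposition~\ref{prop:uninion} (with the paper deferring details to the analogous \cite[Corollary~3.3.5]{CG97}), and you have simply spelled out the standard orbit-disjointness, irreducibility-of-vector-bundles, and equal-dimension arguments that this deferral encodes. There is nothing to add.
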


 
Another corollary immediate from Proposition~\ref{prop:uninion} is the following ``middle dimensional property'' for $Z_{\gamma\nu}$.
\begin{cor} \label{cor:dim} It holds that, for any $\gamma,\nu\in\Lambda_\ff$,
$$\dim_\mathbb{R}Z_{\gamma\nu}=\dim_\mathbb{R}\sF_{\gamma}+\dim_\mathbb{R}\sF_{\nu}=\frac{1}{2}(\dim_\mathbb{R} \widetilde{\mathcal{N}}_{\gamma}+\dim_\mathbb{R} \widetilde{\mathcal{N}}_{\nu}).$$
\end{cor}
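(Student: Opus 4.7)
The plan is to deduce this dimension formula directly from Proposition~\ref{prop:uninion} (equivalently Corollary~\ref{cor:stein}(1)), together with the standard fact that conormal bundles have the ambient dimension. So the real content is already packaged in the preceding proposition; this corollary is just bookkeeping.

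First I would recall that for any smooth locally closed subvariety $Y$ of a smooth variety $M$, the conormal bundle $T_Y^{*}M$ is a vector bundle of rank $\mathrm{codim}_M Y$ over $Y$, hence
\[
\dim_\mathbb{C} T_Y^{*}M = \dim_\mathbb{C} Y + \mathrm{codim}_{M}Y = \dim_\mathbb{C} M.
\]
Applying this with $M = \sF_\gamma \times \sF_\nu$ and $Y = \Ob_\xi$ for each $\xi \in \Xi_{\gamma\nu}$, every conormal bundle $T^*_{\Ob_\xi}$ appearing in the decomposition of Corollary~\ref{cor:stein}(1) has complex dimension equal to $\dim_\mathbb{C}\sF_\gamma + \dim_\mathbb{C}\sF_\nu$.

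Next, since $\Xi_{\gamma\nu}$ is a finite set and $Z_{\gamma\nu} = \bigsqcup_{\xi \in \Xi_{\gamma\nu}} T^*_{\Ob_\xi}$ by Corollary~\ref{cor:stein}(1), the dimension of $Z_{\gamma\nu}$ equals the maximum of the dimensions of its pieces, which is the common value $\dim_\mathbb{R}\sF_\gamma + \dim_\mathbb{R}\sF_\nu$ (passing from complex to real dimensions just doubles everything). This gives the first equality.

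For the second equality, I would simply invoke that $\widetilde{\mathcal{N}}_\gamma = T^*\sF_\gamma$ is the total space of the cotangent bundle of $\sF_\gamma$, so $\dim_\mathbb{R}\widetilde{\mathcal{N}}_\gamma = 2\dim_\mathbb{R}\sF_\gamma$, and likewise for $\nu$. Halving and summing yields $\dim_\mathbb{R}\sF_\gamma + \dim_\mathbb{R}\sF_\nu$, matching the first expression. There is essentially no obstacle here; the only point to be slightly careful about is that the various $\Ob_\xi$ may have different dimensions (since different double cosets yield different orbit dimensions), but their conormal bundles nevertheless all have the same (ambient) dimension, which is precisely why the ``middle dimensional'' statement is clean.
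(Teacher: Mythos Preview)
Your proof is correct and is precisely the argument the paper has in mind: the corollary is stated as ``immediate from Proposition~\ref{prop:uninion}'' with no further proof, and your write-up just unpacks that immediacy via the standard conormal-bundle dimension count and the identification $\widetilde{\mathcal{N}}_\gamma = T^*\sF_\gamma$.
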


\subsection{Relevant nilpotent orbit}
\begin{lem}\label{dimensinal inequality}
    For any $G$-orbit $\mathcal{N}_\alpha \subset \pi_\ff(\widetilde{\mathcal{N}}_\ff)$ and $\gamma \in \Lambda_\ff$, we have 
    \begin{equation}\label{ineq:lem41}
    2\dim_{\mathbb{R}}(\pi_{\gamma}^{-1}(x_\alpha))+\dim_\mathbb{R} \mathcal{N}_\alpha \le \dim_\mathbb{R} \widetilde{\mathcal{N}}_\gamma.
    \end{equation}
    \end{lem}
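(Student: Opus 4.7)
The plan is to deduce \eqref{ineq:lem41} from the middle-dimensional property of the Steinberg variety (Corollary~\ref{cor:dim}) applied to the diagonal piece $Z_{\gamma\gamma}=\widetilde{\mathcal{N}}_\gamma\times_\mathcal{N}\widetilde{\mathcal{N}}_\gamma$, by using $G$-equivariance to read off the dimension of the preimage of $\mathcal{N}_\alpha$ in $Z_{\gamma\gamma}$. This is essentially the classical semi-smallness argument for partial Springer resolutions, recast through the Steinberg variety rather than through symplectic geometry.

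First, if $\mathcal{N}_\alpha\not\subset\mathcal{N}_\gamma=\pi_\gamma(\widetilde{\mathcal{N}}_\gamma)$ then $\pi_\gamma^{-1}(x_\alpha)=\emptyset$ (since $\mathcal{N}_\alpha$, being a single $G$-orbit, is either contained in the $G$-stable image $\mathcal{N}_\gamma$ or disjoint from it) and the inequality is vacuous. So I may assume $\mathcal{N}_\alpha\subset\mathcal{N}_\gamma$. Then $\pi_\gamma\colon\pi_\gamma^{-1}(\mathcal{N}_\alpha)\twoheadrightarrow\mathcal{N}_\alpha$ is a $G$-equivariant surjection onto the single orbit $\mathcal{N}_\alpha$, so every fiber is a $G$-translate of $\pi_\gamma^{-1}(x_\alpha)$, hence of the same dimension, giving
$$\dim_\mathbb{R}\pi_\gamma^{-1}(\mathcal{N}_\alpha)=\dim_\mathbb{R}\mathcal{N}_\alpha+\dim_\mathbb{R}\pi_\gamma^{-1}(x_\alpha).$$

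Next I will introduce the closed subvariety $Z_{\gamma\gamma}^{\mathcal{N}_\alpha}=\pi_\gamma^{-1}(\mathcal{N}_\alpha)\times_{\mathcal{N}_\alpha}\pi_\gamma^{-1}(\mathcal{N}_\alpha)$ of $Z_{\gamma\gamma}$, using the notation of \eqref{notation:Zfs}. Its second projection onto $\pi_\gamma^{-1}(\mathcal{N}_\alpha)$ has fiber over $b$ equal to $\pi_\gamma^{-1}(\pi_\gamma(b))\cong\pi_\gamma^{-1}(x_\alpha)$ (again by $G$-equivariance), so
$$\dim_\mathbb{R}Z_{\gamma\gamma}^{\mathcal{N}_\alpha}=\dim_\mathbb{R}\pi_\gamma^{-1}(\mathcal{N}_\alpha)+\dim_\mathbb{R}\pi_\gamma^{-1}(x_\alpha)=\dim_\mathbb{R}\mathcal{N}_\alpha+2\dim_\mathbb{R}\pi_\gamma^{-1}(x_\alpha).$$

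Finally, since $Z_{\gamma\gamma}^{\mathcal{N}_\alpha}$ is a subvariety of $Z_{\gamma\gamma}$, Corollary~\ref{cor:dim} (specialized to $\nu=\gamma$) yields
$$\dim_\mathbb{R}Z_{\gamma\gamma}^{\mathcal{N}_\alpha}\le\dim_\mathbb{R}Z_{\gamma\gamma}=\tfrac12(\dim_\mathbb{R}\widetilde{\mathcal{N}}_\gamma+\dim_\mathbb{R}\widetilde{\mathcal{N}}_\gamma)=\dim_\mathbb{R}\widetilde{\mathcal{N}}_\gamma,$$
which is precisely \eqref{ineq:lem41}. I do not anticipate a serious obstacle here; the only step requiring minor care is the twofold use of $G$-equivariance to pass from the dimension of the fiber over the distinguished point $x_\alpha$ to the dimension of the whole preimage, but since $\mathcal{N}_\alpha$ is a single $G$-orbit this is routine and does not require any irreducibility hypothesis on $\pi_\gamma^{-1}(\mathcal{N}_\alpha)$.
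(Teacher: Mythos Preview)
Your proof is correct and follows essentially the same route as the paper's: both compute $\dim_\mathbb{R} Z_{\gamma\gamma}^{\mathcal{N}_\alpha}=\dim_\mathbb{R}\mathcal{N}_\alpha+2\dim_\mathbb{R}\pi_\gamma^{-1}(x_\alpha)$ and then invoke Corollary~\ref{cor:dim} with $\nu=\gamma$. The only cosmetic difference is that the paper packages the dimension count via the identification $Z_{\gamma\gamma}^{\mathcal{N}_\alpha}\simeq G\times^{G_{x_\alpha}}\bigl(\pi_\gamma^{-1}(x_\alpha)\times\pi_\gamma^{-1}(x_\alpha)\bigr)$, whereas you reach the same number by two successive $G$-equivariant fibration steps.
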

    
    \begin{proof} It is a straightforward computation that
    \begin{align*}
        &2\dim_{\mathbb{R}}(\pi_\gamma^{-1}(x_\alpha))+\dim_{\mathbb{R}}\mathcal{N}_\alpha=\dim_{\mathbb{R}}(\pi_{\gamma}^{-1}(x_\alpha) \times \pi_{\gamma}^{-1}(x_\alpha))+\dim_{\mathbb{R}}\mathcal{N}_\alpha\\
        &=\dim_{\mathbb{R}}(G\times^{G_{x_\alpha}}(\pi^{-1}_{\gamma}(x_\alpha)\times\pi^{-1}_{\gamma}(x_\alpha)))=\dim_{\mathbb{R}}(\pi^{-1}_\gamma(\mathcal{N}_\alpha)\times_{\mathcal{N}_\alpha}\pi^{-1}_{\gamma}(\mathcal{N}_\alpha))\\& \le \dim_\mathbb{R} Z_{\gamma\gamma}\stackrel{\mathrm{Cor.}~\ref{cor:dim}}=\dim_\mathbb{R} \widetilde{\mathcal{N}}_\gamma.
    \end{align*} \end{proof}

We say  $\mathcal{N}_\alpha$ is \textit{relevant} for $\pi_{\gamma}$ if the equality in \eqref{ineq:lem41} holds. 
    We remark that not all nilpotent orbit contained in $\mathcal{N}_{\gamma}$ is relevant for $\pi_{\gamma}$.

Recall the notation $Z_{\gamma\nu}^{\mathcal{S}}$ in \eqref{notation:Zfs}. Take $\mathcal{S}=\mathcal{N}_\alpha$. We have the following lemma.
\begin{lem} It holds that $\dim_{\mathbb{R}}Z_{\gamma\nu}^{\mathcal{N}_\alpha}\leq\dim_{\mathbb{R}} Z_{\gamma\nu}$ for any $\gamma,\nu\in\Lambda_\ff$ and $\alpha\in\mathbb{J}$, where the equality holds if and only if $\mathcal{N}_\alpha$ is relevant for both $\pi_{\gamma}$ and $\pi_{\nu}$.
\end{lem}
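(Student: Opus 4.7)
The plan is to reduce the statement to Lemma~\ref{dimensinal inequality} by exploiting the $G$-homogeneity of $\mathcal{N}_\alpha$. Writing $\mathcal{N}_\alpha \simeq G/G_{x_\alpha}$, I would first observe that
\[
Z_{\gamma\nu}^{\mathcal{N}_\alpha} \;=\; \pi_\gamma^{-1}(\mathcal{N}_\alpha)\times_{\mathcal{N}_\alpha}\pi_\nu^{-1}(\mathcal{N}_\alpha) \;\simeq\; G\times^{G_{x_\alpha}}\bigl(\pi_\gamma^{-1}(x_\alpha)\times\pi_\nu^{-1}(x_\alpha)\bigr),
\]
exactly as in the chain of equalities used in the proof of Lemma~\ref{dimensinal inequality}. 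Since this realizes $Z_{\gamma\nu}^{\mathcal{N}_\alpha}$ as a fiber bundle over $\mathcal{N}_\alpha$ whose fiber is $\pi_\gamma^{-1}(x_\alpha)\times\pi_\nu^{-1}(x_\alpha)$, the (real) dimension splits as
\[
\dim_{\mathbb{R}}Z_{\gamma\nu}^{\mathcal{N}_\alpha} \;=\; \dim_{\mathbb{R}}\mathcal{N}_\alpha + \dim_{\mathbb{R}}\pi_\gamma^{-1}(x_\alpha) + \dim_{\mathbb{R}}\pi_\nu^{-1}(x_\alpha).
\]

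Next, I would apply Lemma~\ref{dimensinal inequality} separately to the two projections, rewriting its inequality as
\[
\dim_{\mathbb{R}}\pi_\gamma^{-1}(x_\alpha) \le \tfrac{1}{2}\bigl(\dim_{\mathbb{R}}\widetilde{\mathcal{N}}_\gamma - \dim_{\mathbb{R}}\mathcal{N}_\alpha\bigr),
\]
and similarly for $\nu$. Adding these two bounds and $\dim_{\mathbb{R}}\mathcal{N}_\alpha$ yields
\[
\dim_{\mathbb{R}}Z_{\gamma\nu}^{\mathcal{N}_\alpha} \le \tfrac{1}{2}\bigl(\dim_{\mathbb{R}}\widetilde{\mathcal{N}}_\gamma + \dim_{\mathbb{R}}\widetilde{\mathcal{N}}_\nu\bigr),
\]
and the right-hand side equals $\dim_{\mathbb{R}}Z_{\gamma\nu}$ by Corollary~\ref{cor:dim}.

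For the equality statement, since the total inequality is a sum of two independent inequalities of Lemma~\ref{dimensinal inequality} type, equality forces both summands to be equalities, which by definition means that $\mathcal{N}_\alpha$ is relevant for $\pi_\gamma$ \textit{and} for $\pi_\nu$; conversely, if both are relevant, equality propagates back up.

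There is no serious obstacle here: the only small point to handle carefully is the fiber-bundle isomorphism in the first display, but this is precisely the move already executed inside the proof of Lemma~\ref{dimensinal inequality}, so it can be invoked directly.
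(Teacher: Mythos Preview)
Your proposal is correct and follows essentially the same route as the paper: the same fiber-bundle identification $Z_{\gamma\nu}^{\mathcal{N}_\alpha}\simeq G\times^{G_{x_\alpha}}(\pi_\gamma^{-1}(x_\alpha)\times\pi_\nu^{-1}(x_\alpha))$, the same dimension split, the same appeal to Lemma~\ref{dimensinal inequality} for each factor, and Corollary~\ref{cor:dim} to identify the bound with $\dim_{\mathbb{R}}Z_{\gamma\nu}$. The paper's version is just more compressed.
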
\label{lem:relZ}
   \begin{proof}
       Since $Z_{\gamma\nu}^{\mathcal{N}_\alpha}\simeq G\times^{G_{x_\alpha}}(\pi^{-1}_{\gamma}(x_\alpha)\times\pi^{-1}_{\nu}(x_\alpha))$, we have \begin{align*}
           \dim_\mathbb{R} Z_{\gamma\nu}^{\mathcal{N}_\alpha}&=\dim_{\mathbb{R}}(\pi_{\gamma}^{-1}(x_\alpha))+\dim_{\mathbb{R}}(\pi_{\nu}^{-1}(x_\alpha))+\dim_\mathbb{R} \mathcal{N}_\alpha\\&\leq\frac{1}{2}(\dim_\mathbb{R} \widetilde{\mathcal{N}}_\gamma+\dim_\mathbb{R} \widetilde{\mathcal{N}}_\nu)=\dim_{\mathbb{R}} Z_{\gamma\nu},
       \end{align*}
       where the equality holds if and only if $\mathcal{N}_\alpha$ is relevant for both $\pi_{\gamma}$ and $\pi_{\nu}$ by the previous lemma.
   \end{proof} 

\subsection{Transversal slices}\label{sec:transslice}
Let $S\subset\mathcal{N}$ be a transversal slice to $\mathcal{N}_\alpha$ at the point $x_\alpha$. By definition (cf. \cite[Definition~3.2.19]{CG97}), there is an open neighborhood $V$ of $x_\alpha$ such that $(\mathcal{N}_\alpha \cap V) \times S \xrightarrow{\sim} V$. By shrinking $V$ if necessary, we may assume that $\overline{\mathcal{N}_\alpha}\cap V=\mathcal{N}_\alpha\cap V$ and $V_\alpha:=\mathcal{N}_\alpha\cap V$ is a connected open neighborhood of $x_\alpha$ in $\mathcal{N}_\alpha$. 

Recall the notations in \eqref{notation:S} for $\widetilde{V}_\gamma=\pi_\gamma^{-1}(V)$ and $\widetilde{S}_\gamma=\pi_\gamma^{-1}(S)$.
\begin{lem}\label{dim2}
If $V \cap \mathcal{N}_{\gamma} \neq \emptyset$ for $\gamma\in\Lambda_\ff$, then $$\dim_{\mathbb{R}}\widetilde{V}_{\gamma}=\dim_{\mathbb{R}}\widetilde{\mathcal{N}}_{\gamma}\quad \mbox{and}\quad \dim_{\mathbb{R}}\widetilde{S}_{\gamma}=\dim_{\mathbb{R}}\widetilde{\mathcal{N}}_{\gamma}-\dim_{\mathbb{R}}\mathcal{N}_{\alpha}.$$
   \end{lem}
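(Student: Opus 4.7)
The plan is to handle the two equalities separately: the first reduces to irreducibility of $\widetilde{\mathcal{N}}_\gamma$, while the second follows by lifting the transversal-slice product decomposition to $\widetilde{V}_\gamma$ via the $G$-equivariance of $\pi_\gamma$.

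For the first equality, I would observe that $\widetilde{V}_\gamma = \pi_\gamma^{-1}(V)$ is open in $\widetilde{\mathcal{N}}_\gamma$, being the preimage of the open set $V$. It is nonempty: the hypothesis $V \cap \mathcal{N}_\gamma \neq \emptyset$ combined with the surjection $\pi_\gamma : \widetilde{\mathcal{N}}_\gamma \twoheadrightarrow \mathcal{N}_\gamma$ furnishes a point of $\widetilde{V}_\gamma$. Since $\widetilde{\mathcal{N}}_\gamma = G \times^{P_\gamma} \mathfrak{n}_\gamma$ is a vector bundle over the irreducible variety $G/P_\gamma$, it is itself irreducible, so every nonempty Zariski-open subvariety shares its dimension, giving $\dim_{\mathbb{R}} \widetilde{V}_\gamma = \dim_{\mathbb{R}} \widetilde{\mathcal{N}}_\gamma$.

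For the second equality, the strategy is to promote the transversal slice isomorphism $V_\alpha \times S \xrightarrow{\sim} V$ to an isomorphism $V_\alpha \times \widetilde{S}_\gamma \xrightarrow{\sim} \widetilde{V}_\gamma$. After (if necessary) shrinking $V_\alpha$ to arrange a local section $\sigma : V_\alpha \to G$ of the orbit map $G \to G/G_{x_\alpha} = \mathcal{N}_\alpha$ with $\mathrm{Ad}_{\sigma(y)}(x_\alpha) = y$, the slice isomorphism takes the concrete form $(y,s) \mapsto \mathrm{Ad}_{\sigma(y)}(s)$, and one sets
\begin{equation*}
    \Psi : V_\alpha \times \widetilde{S}_\gamma \longrightarrow \widetilde{V}_\gamma, \qquad (y, \tilde{m}) \longmapsto \sigma(y) \cdot \tilde{m}.
\end{equation*}
This is well-defined because $\pi_\gamma(\sigma(y) \cdot \tilde{m}) = \mathrm{Ad}_{\sigma(y)}(\pi_\gamma(\tilde{m})) \in V$; the inverse reads off $y$ from the $V_\alpha$-coordinate of $\pi_\gamma(\tilde{n})$ and then returns $\sigma(y)^{-1} \cdot \tilde{n}$, which lands in $\widetilde{S}_\gamma$ by $G$-equivariance. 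Taking dimensions and invoking the first equality yields
\begin{equation*}
\dim_{\mathbb{R}} \widetilde{S}_\gamma = \dim_{\mathbb{R}} \widetilde{V}_\gamma - \dim_{\mathbb{R}} V_\alpha = \dim_{\mathbb{R}} \widetilde{\mathcal{N}}_\gamma - \dim_{\mathbb{R}} \mathcal{N}_\alpha.
\end{equation*}

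The delicate point I expect is the existence of the local section $\sigma$: for general reductive $G$ and possibly disconnected $G_{x_\alpha}$ the quotient $G \to G/G_{x_\alpha}$ need not be Zariski-locally trivial, but it is always étale-locally trivial, which is sufficient for the dimension count. One must also verify that shrinking $V_\alpha$ preserves the hypothesis $V \cap \mathcal{N}_\gamma \neq \emptyset$; by $G$-invariance of $\mathcal{N}_\gamma$, the slice isomorphism restricts to $V_\alpha \times (S \cap \mathcal{N}_\gamma) \xrightarrow{\sim} V \cap \mathcal{N}_\gamma$, so $S \cap \mathcal{N}_\gamma \neq \emptyset$, and therefore any shrinking of $V_\alpha$ still meets $\mathcal{N}_\gamma$. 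If the global isomorphism $\Psi$ is deemed too heavy, a softer substitute is to argue only equidimensionality: the composite $\widetilde{V}_\gamma \xrightarrow{\pi_\gamma} V \twoheadrightarrow V_\alpha$ is surjective with each fiber a $G$-translate of $\widetilde{S}_\gamma$, hence of constant dimension $\dim_{\mathbb{R}} \widetilde{S}_\gamma$, from which the same dimension formula follows by standard fiber-dimension counting.
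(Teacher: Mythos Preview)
Your proof is correct and follows essentially the same route as the paper: the first equality from openness in the irreducible variety $\widetilde{\mathcal{N}}_\gamma$, and the second from the product decomposition $V_\alpha \times \widetilde{S}_\gamma \simeq \widetilde{V}_\gamma$. The only difference is one of detail: the paper simply invokes this isomorphism (it is the content of \cite[Corollary~3.2.21]{CG97}, cited immediately after the lemma), whereas you spell out its construction via a local section $\sigma$ of the orbit map and give the alternative fiber-dimension argument as a fallback; both are fine and neither changes the substance.
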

\begin{proof}
If $V \cap \mathcal{N}_{\gamma} \neq \emptyset$, then $\widetilde{V}_{\gamma}=\pi_{\gamma}^{-1}(V \cap \mathcal{N}_{\gamma})$ is an open submanifold of $\widetilde{\mathcal{N}}_{\gamma}$ and hence $\dim_{\mathbb{R}}\widetilde{V}_{\gamma}=\dim_{\mathbb{R}}\widetilde{\mathcal{N}}_{\gamma}$ by Corollary~\ref{cor:dim}.
 The rest part follows from $V_\alpha\times\widetilde{S}_{\gamma}\simeq\widetilde{V}_{\gamma}$.   \end{proof}

Recall the notation $Z_{\gamma\nu}^{\mathcal{S}}$ in \eqref{notation:Zfs}. We shall specialize $\mathcal{S}=S, V, V_\alpha$ or $\{x_\alpha\}$ below. Applying \cite[Corollary~3.2.21]{CG97}, we have 
\begin{equation*}
V_\alpha\times\bigsqcup\limits_{\gamma,\nu\in\Lambda_\ff}Z^{S}_{\gamma\nu}\simeq\bigsqcup\limits_{\gamma,\nu\in\Lambda_\ff}Z^{V}_{\gamma\nu}=Z^{V}_{\ff}\quad \mbox{and} \quad
V_\alpha\times\bigsqcup\limits_{\gamma\in\Lambda_\ff}\widetilde{S}_{\gamma}\simeq\bigsqcup\limits_{\gamma\in\Lambda_\ff}\widetilde{V}_{\gamma}=\widetilde{V},
\end{equation*}
and have that the canonical projection $Z^{V_\alpha}_{\gamma\nu}\to V_\alpha$ (resp. $\pi^{-1}_{\gamma}(V_\alpha)\to V_\alpha$) is a trivial fibration with fiber $Z^{x_\alpha}_{\gamma\nu}$ (resp. $\pi^{-1}_{\gamma}(x_\alpha)$) for any $\gamma,\nu\in\Lambda_\ff$, where $Z^{x_\alpha}_{\gamma\nu}$ is the abbreviation for $Z^{\{x_\alpha\}}_{\gamma\nu}$.
Moreover, $Z^{V_\alpha}_{\gamma\nu}$ (resp. $Z^{x_\alpha}_{\gamma\nu}$) is a closed subset of $Z^{V}_{\gamma\nu}$ (resp. $Z^{S}_{\gamma\nu}$). 
We denote 
\begin{equation}
\label{def:dln}
d_{\gamma\nu}:=\dim_\mathbb{R} Z_{\gamma\nu}\quad\mbox{and}\quad d'_{\gamma\nu}:=d_{\gamma\nu}-\dim_{\mathbb{R}}\mathcal{N}_\alpha.
\end{equation}
\begin{lem}\label{lem:vsdim}
For any $\gamma,\nu\in\Lambda_\ff$ and $\alpha\in\mathbb{J}$, 
it holds that 
$$\dim_{\mathbb{R}}Z_{\gamma\nu}^{V_\alpha}\leq d_{\gamma\nu}\quad \mbox{and}\quad  \dim_{\mathbb{R}}Z_{\gamma\nu}^{x_\alpha}\leq d'_{\gamma\nu},$$ where the two $``="$ hold if and only if $\mathcal{N}_\alpha$ is relevant for both $\pi_\gamma$ and $\pi_\nu$. 
\end{lem}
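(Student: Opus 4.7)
The plan is to reduce both inequalities to Lemma~\ref{dimensinal inequality} by exploiting the product structures provided by the transversal slice construction.

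First I would unwind the fibrations discussed just above the lemma: the transversal slice gives $Z^{V_\alpha}_{\gamma\nu} \simeq V_\alpha \times Z^{x_\alpha}_{\gamma\nu}$ as a trivial fibration, and the fiber itself is a direct product $Z^{x_\alpha}_{\gamma\nu} = \pi_\gamma^{-1}(x_\alpha) \times \pi_\nu^{-1}(x_\alpha)$. Hence
\begin{equation*}
\dim_\mathbb{R} Z^{V_\alpha}_{\gamma\nu} = \dim_\mathbb{R} \mathcal{N}_\alpha + \dim_\mathbb{R} \pi_\gamma^{-1}(x_\alpha) + \dim_\mathbb{R} \pi_\nu^{-1}(x_\alpha),
\end{equation*}
and the corresponding expression for $\dim_\mathbb{R} Z^{x_\alpha}_{\gamma\nu}$ omits the $\dim_\mathbb{R}\mathcal{N}_\alpha$ term.

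Next I would apply Lemma~\ref{dimensinal inequality} separately to $\gamma$ and $\nu$: each gives
\begin{equation*}
\dim_\mathbb{R} \pi_\gamma^{-1}(x_\alpha) \le \tfrac{1}{2}\bigl(\dim_\mathbb{R}\widetilde{\mathcal{N}}_\gamma - \dim_\mathbb{R}\mathcal{N}_\alpha\bigr),
\end{equation*}
and similarly for $\nu$. Adding these two inequalities and using Corollary~\ref{cor:dim} in the form $d_{\gamma\nu} = \tfrac{1}{2}(\dim_\mathbb{R}\widetilde{\mathcal{N}}_\gamma + \dim_\mathbb{R}\widetilde{\mathcal{N}}_\nu)$, we get $\dim_\mathbb{R} Z^{x_\alpha}_{\gamma\nu} \le d_{\gamma\nu} - \dim_\mathbb{R}\mathcal{N}_\alpha = d'_{\gamma\nu}$. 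Adding $\dim_\mathbb{R}\mathcal{N}_\alpha$ on both sides produces the bound $\dim_\mathbb{R} Z^{V_\alpha}_{\gamma\nu} \le d_{\gamma\nu}$.

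For the equality assertion, note that the only inequalities used come from Lemma~\ref{dimensinal inequality} applied individually to $\pi_\gamma$ and $\pi_\nu$. Thus a sum-of-two-nonnegative-defects argument shows that equality holds in either bound if and only if both individual inequalities are equalities, i.e.\ if and only if $\mathcal{N}_\alpha$ is relevant for both $\pi_\gamma$ and $\pi_\nu$. I do not foresee any genuine obstacle here; the only mild care needed is to ensure the trivial-fibration identifications from the paragraph preceding the lemma are being applied to the correct base ($V_\alpha$, not $V$) so that the factor contributing $\dim_\mathbb{R}\mathcal{N}_\alpha$ has the right dimension.
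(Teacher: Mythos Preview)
Your argument is correct and essentially the same as the paper's. The only cosmetic difference is that the paper first identifies $\dim_\mathbb{R} Z^{V_\alpha}_{\gamma\nu}=\dim_\mathbb{R} Z^{\mathcal{N}_\alpha}_{\gamma\nu}$ (since $V_\alpha$ is open in $\mathcal{N}_\alpha$) and then quotes the preceding lemma on $Z^{\mathcal{N}_\alpha}_{\gamma\nu}$, whereas you unfold that lemma directly via Lemma~\ref{dimensinal inequality}; the underlying computation and the equality characterization are identical.
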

\begin{proof}
Since $V_\alpha$ is an open subset of $\mathcal{N}_\alpha$, we have $\dim_{\mathbb{R}}Z_{\gamma\nu}^{V_\alpha}=\dim_{\mathbb{R}}Z_{\gamma\nu}^{\mathcal{N}_\alpha}\leq d_{\gamma\nu}$. The second statement is derived by $$\dim_{\mathbb{R}}Z_{\gamma\nu}^{x_\alpha}=\dim_{\mathbb{R}}(\pi^{-1}_{\gamma}(x_\alpha)\times\pi^{-1}_{\nu}(x_\alpha))\leq\dim_{\mathbb{R}}Z_{\gamma\nu}^{V_\alpha}-\dim_{\mathbb{R}}\mathcal{N}_\alpha=d'_{\gamma\nu}.$$ The condition for the two equalities comes from the definition of relevant nilpotent orbits directly.   
\end{proof}

\subsection{Convention}\label{sec:convention}
Let $\mu_0\in\Lambda$ be a regular $\W$-orbit. By abuse of notations, we shall always omit $\mu_0$ if it occurs in the subscript, e.g. $\Ob_{\gamma,w}=\Ob_{\gamma,w,\mu_0}$, 
$\Ob_w=\Ob_{\mu_0,w,\mu_0}$, $T_{w,\nu}^*=T_{\mu_0,w,\nu}^*$, $T_w^*=T_{\mu_0,w,\mu_0}^*$, etc.  We do not have to worry about the notation 
$\Ob_{w}$ can represent both the orbit $\Ob_{\mu_0,w,\mu_0}$ on $\sF_{\mu_0}\times\sF_{\mu_0}$ and the orbit $\Ob_w$ on $\sB\times\sB$, because on the one hand we can distinguish them contextually; and on the other hand, they are essentially equivalent since $P_{\mu_0}=B$. Furthermore, we shall write  $\sF_{\mu_0}=\sB$ in this case without confusion.

\section{Lagrangian construction}

\subsection{Borel-Moore homology}\label{subsec:BM}

For a complex variety $X$ (whose irreducible components are allowed to have different dimensions), let $H_\bullet(X)$ be the Borel-Moore homology groups of $X$ with coefficients in $\mathbb{Q}$, which may be replaced by any field of characteristic zero. We refer to \cite[\S2.6]{CG97} for details about the Borel-Moore homology.
For any complex closed subvariety $Y$ of $X$, there exists a fundamental class $[Y]$ in $H_\bullet(X)$ induced by the complex structure.
Let $H(X)$ denote the direct sum of top Borel-Moore homology groups of the connected components of $X$, which is clearly a subspace of $H_\bullet(X)$ spanned by the (linearly independent) fundamental classes of the irreducible components of $X$. Let $$\boxtimes: H_\bullet(X_1)\otimes H_\bullet(X_2)\rightarrow H_\bullet(X_1\times X_2),\quad h_1\otimes h_2\mapsto h_1\boxtimes h_2$$ be the isomorphism about the K\"unneth formula for Borel-Moore homology (cf. \cite[\S2.6.19]{CG97}).


The following lemma follows from the aforementioned basic facts about Borel-Moore homology and Corollary~\ref{cor:stein}. 
 \begin{lem}
 The set of fundamental classes $\{[T_\xi^*] ~|~ \xi \in\Xi_{\gamma\nu}\}$ forms a basis of $H(Z_{\gamma\nu})$.
 \end{lem}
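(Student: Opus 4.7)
The plan is to deduce the lemma immediately from the structural description of $Z_{\gamma\nu}$ already obtained, together with the general description of top Borel--Moore homology recalled in \S\ref{subsec:BM}. There is essentially nothing new to do; one just has to verify that the hypotheses of the standard basis statement are met.

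First I would invoke Corollary~\ref{cor:stein}(2) restricted to the single pair $(\gamma,\nu)$: since $Z_\ff=\bigsqcup_{\gamma,\nu}Z_{\gamma\nu}$ is a disjoint union and $\Xi_\ff=\bigsqcup_{\gamma,\nu}\Xi_{\gamma\nu}$, the irreducible components of $Z_{\gamma\nu}$ are precisely $\{T^*_\xi\mid\xi\in\Xi_{\gamma\nu}\}$. (Alternatively, this follows at once from Corollary~\ref{cor:stein}(1) by taking closures of the disjoint locally closed pieces $T^*_{\Ob_\xi}$.)

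Next I would verify that every $T^*_\xi$ has the same real dimension, equal to $\dim_{\mathbb{R}}Z_{\gamma\nu}$. For this, recall that $T^*_{\Ob_\xi}$ is a vector bundle over $\Ob_\xi$ whose rank is the codimension of $\Ob_\xi$ in $\sF_\gamma\times\sF_\nu$, so
\[
\dim_{\mathbb{R}}T^*_\xi=\dim_{\mathbb{R}}T^*_{\Ob_\xi}=\dim_{\mathbb{R}}(\sF_\gamma\times\sF_\nu)=\dim_{\mathbb{R}}\sF_\gamma+\dim_{\mathbb{R}}\sF_\nu,
\]
and the right-hand side equals $\dim_{\mathbb{R}}Z_{\gamma\nu}$ by Corollary~\ref{cor:dim}. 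In particular, inside every connected component of $Z_{\gamma\nu}$ each irreducible component attains the top dimension.

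Finally I would conclude via the general fact recalled in \S\ref{subsec:BM}: the top Borel--Moore homology of a complex variety is freely spanned by the fundamental classes of its top-dimensional irreducible components, and these classes are linearly independent. Summing over the connected components of $Z_{\gamma\nu}$ and using the equidimensionality established in the previous paragraph, the family $\{[T^*_\xi]\mid\xi\in\Xi_{\gamma\nu}\}$ simultaneously spans $H(Z_{\gamma\nu})$ and is linearly independent. I do not anticipate any real obstacle here: the lemma is a direct repackaging of Corollaries~\ref{cor:stein} and~\ref{cor:dim} together with standard Borel--Moore homology, and the only thing to be careful about is matching the ``top BM of each connected component'' in the definition of $H(-)$ with the common dimension of the $T^*_\xi$.
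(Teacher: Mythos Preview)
Your proposal is correct and follows exactly the route the paper indicates: the paper simply states that the lemma ``follows from the aforementioned basic facts about Borel--Moore homology and Corollary~\ref{cor:stein},'' and your argument is a faithful unpacking of that sentence, with the equidimensionality check (via the conormal-bundle dimension count and Corollary~\ref{cor:dim}) making explicit why every $T^*_\xi$ contributes to the top degree.
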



\subsection{Convolution in Borel-Moore homology}\label{subsec:conv}
Let $M_1$, $M_2$, $M_3$ be connected smooth complex varieties with real dimensions $d_1,d_2,d_3$, respectively. Let
$$Z_{12} \in M_1 \times M_2, \quad Z_{23} \in M_2 \times M_3$$
be two subsets. Define the set-theoretic composition $Z_{12} \circ Z_{23}$ as
\begin{align*}
    Z_{12}\circ Z_{23}:=
    \{(x_1,x_3)\in M_1 \times M_3~|~&\mbox{there exists}\ x_2 \in M_2 \qquad\\
    &\mbox{such that}\  (x_1,x_2,x_3) \in Z_{12} \times_{M_2} Z_{23}\}.
\end{align*}

A convolution in Borel-Moore homology 
\begin{align} \label{convolutionBM}
    *: H_i(Z_{12}) \times H_j(Z_{23}) \rightarrow H_{i+j-d_2}(Z_{12} \circ Z_{23})
\end{align}
is provided in \cite[\S 2.7]{CG97} under the assumption that $Z_{12}$ and $Z_{23}$ are closed subsets. For our purpose, we have to adapt to the case that $Z_{12}$ and $Z_{23}$ are locally closed subsets. The essential step is to generalize the intersection pairing in \cite[\S2.6]{CG97} as follows.

Let $Y$ and $Y'$ be two locally closed subsets of a connected smooth complex variety $M$. Let $V$ be an arbitrary open subset such that $Y \cap V, Y' \cap V$ are closed in $V$ and $Y \cap Y' \subset V$. we define an intersection paring 
\begin{align*}
    \cap:\ &H_i(Y) \times H_j(Y') \rightarrow H_i(V \cap Y) \times H_j(V \cap Y') \xrightarrow{\mbox{\tiny Poincar\'e duality}} \\
    &H^{m-i}(V, V\backslash Y) \times H^{m-j}(V, V\backslash Y') \xrightarrow{\cup} H^{2m-i-j}(V, V\backslash (Y\cap Y')) \xrightarrow{\mbox{\tiny Poincar\'e duality}} \\
    &H_{i+j-m}(Y \cap Y'),
\end{align*}
which is independent of the choice of $V$.

With this intersection pairing, we can introduce a convolution \eqref{convolutionBM} in the same way as in \cite[\S2.7]{CG97} under the assumption that $Z_{12}$, $Z_{23}$ and $Z_{12} \circ Z_{23}$ are locally closed subsets and the map $Z_{12} \times_{M_2} Z_{23} \to Z_{12} \circ Z_{23}$ is proper. 

Considering the convolution on each connected component of $Z_\ff$, $H_\bullet(Z_\ff)$ forms an associative algebra over $\mathbb{Q}$.
Particularly, if $\sF_\ff=\sB$, the Borel-Moore homology group $H_\bullet(Z)$ admits a convolution product and forms an associative algebra. It is known (cf. \cite[\S3.4]{CG97}) that the top Borel-Moore homology $H(Z)$ is a subalgebra of $H_\bullet(Z)$. Below is a generalization to the case of partial flag variety $\sF_\ff$.   
\begin{lem}
    The subspace $H(Z_{\ff})$ is a subalgebra of $H_\bullet(Z_\ff)$. 
\end{lem}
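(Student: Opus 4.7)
The plan is to prove this by a direct graded dimension count, using the decomposition $Z_\ff = \bigsqcup_{\gamma,\nu \in \Lambda_\ff} Z_{\gamma\nu}$ and the equidimensionality of each piece. First I would observe that the set-theoretic composition $Z_{\gamma\nu} \circ Z_{\nu'\mu}$ is empty whenever $\nu \neq \nu'$, since the middle fiber in $\widetilde{\mathcal{N}}_\ff$ is then empty; consequently the convolution of $\alpha \in H_\bullet(Z_{\gamma\nu})$ with $\beta \in H_\bullet(Z_{\nu'\mu})$ automatically vanishes in that case, and one only needs to treat the matched case $\nu = \nu'$.

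For $\nu = \nu'$, I would take the smooth ambient varieties in the convolution formula \eqref{convolutionBM} to be $M_1 = \widetilde{\mathcal{N}}_\gamma$, $M_2 = \widetilde{\mathcal{N}}_\nu$, $M_3 = \widetilde{\mathcal{N}}_\mu$. Each $Z_{\gamma\nu}$ is closed in $M_1 \times M_2$ as the preimage of the diagonal of $\mathcal{N} \times \mathcal{N}$ under $(\pi_\gamma, \pi_\nu)$, and the composition $Z_{\gamma\nu} \circ Z_{\nu\mu}$ lies in $Z_{\gamma\mu}$ by the fiber-product definition of the Steinberg varieties. By Corollaries~\ref{cor:stein} and~\ref{cor:dim}, the variety $Z_{\gamma\nu}$ is equidimensional of real dimension $d_{\gamma\nu} = \dim_\mathbb{R} \sF_\gamma + \dim_\mathbb{R} \sF_\nu$, so $H(Z_{\gamma\nu}) = H_{d_{\gamma\nu}}(Z_{\gamma\nu})$. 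Since $\dim_\mathbb{R} M_2 = 2\dim_\mathbb{R} \sF_\nu$, applying \eqref{convolutionBM} to top-degree classes yields a class of degree
\begin{align*}
d_{\gamma\nu} + d_{\nu\mu} - 2\dim_\mathbb{R} \sF_\nu = \dim_\mathbb{R} \sF_\gamma + \dim_\mathbb{R} \sF_\mu = d_{\gamma\mu},
\end{align*}
which, after pushforward along $Z_{\gamma\nu} \circ Z_{\nu\mu} \hookrightarrow Z_{\gamma\mu}$, lies in $H_{d_{\gamma\mu}}(Z_{\gamma\mu}) = H(Z_{\gamma\mu}) \subset H(Z_\ff)$. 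Summing over the finite index set then gives the subalgebra property.

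The only subtle point to verify is that the top subspace $H(Z_{\gamma\nu})$ genuinely coincides with the single homology group $H_{d_{\gamma\nu}}(Z_{\gamma\nu})$, rather than being a mixed-degree direct sum over connected components of different dimensions; equivalently, that every connected component of $Z_{\gamma\nu}$ is equidimensional of the uniform dimension $d_{\gamma\nu}$. This is already guaranteed by Corollary~\ref{cor:stein}, whose identification of the irreducible components with the closures of conormal bundles $T^*_\xi$ ($\xi \in \Xi_{\gamma\nu}$) forces each such component to have real dimension equal to $\dim_\mathbb{R}(\sF_\gamma \times \sF_\nu) = d_{\gamma\nu}$. Once this pure-dimensionality is secured, the proof reduces entirely to the degree bookkeeping above.
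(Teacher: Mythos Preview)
Your proof is correct and follows essentially the same approach as the paper's: both reduce to the dimension identity $d_{\gamma\nu}+d_{\nu\mu}-\dim_\mathbb{R}\widetilde{\mathcal{N}}_\nu=d_{\gamma\mu}$ coming from Corollary~\ref{cor:dim}. Your version is simply more explicit about the off-diagonal vanishing and the equidimensionality of $Z_{\gamma\nu}$, points the paper leaves implicit.
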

\begin{proof}
        It suffices to show the image of convolution $\ast:H_{\mathrm{top}}(Z_{\gamma\nu})\times H_{\mathrm{top}}(Z_{\nu\mu})\to H_{\bullet}(Z_{\gamma\mu})$ is in $H_{\mathrm{top}}(Z_{\gamma\mu})$, which follows from the fact $\dim_\mathbb{C}Z_{\gamma\nu}+\dim_\mathbb{C}Z_{\nu\mu}-\dim_\mathbb{C} \widetilde{\mathcal{N}}_{\nu}=\dim_\mathbb{C}Z_{\gamma\mu}$ by Corollary~\ref{cor:dim}.
\end{proof}

\subsection{A commutative diagram}
The following lemma will be used in the computations later.
\begin{lem}\label{con dia}
    Let $Z_{12}'$ (resp. $Z_{23}'$) be an open subset of $Z_{12}$ (resp. $Z_{23}$) such that $Z_{12}' \circ Z_{23}'$ is also open in $Z_{12} \circ Z_{23}$. If the following Cartisian diagram holds:
    $$\begin{tikzcd}
        Z_{12}' \times_{M_2} Z_{23}' \ar[r] \ar[d] & Z_{12}' \circ Z_{23}' \ar[d] \\
        Z_{12} \times_{M_2} Z_{23} \ar[r] & Z_{12} \circ Z_{23},
    \end{tikzcd}$$
    then for any open subset $V\subset Z_{12}' \circ Z_{23}'$, we have the following commutative diagram:
    $$\begin{tikzcd}
        H_i(Z_{12}) \times H_j(Z_{23}) \ar[r,"*"] \ar[d] & H_{i+j-d_2}(Z_{12} \circ Z_{23}) \ar[d] \ar[dr] &\\
        H_i(Z_{12}') \times H_j(Z_{23}') \ar[r,"*"] & H_{i+j-d_2}(Z_{12}' \circ Z_{23}') \ar[r] & H_{i+j-d_2}(V).
    \end{tikzcd}$$
\end{lem}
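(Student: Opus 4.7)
The plan is to unpack the convolution defined in \S\ref{subsec:conv} into its three constituent operations and verify that each is compatible with restriction to an open subset, then compose. Recall that the convolution $c_{12} * c_{23}$ is built from (i) pullback of $c_{12}, c_{23}$ under the two projections from $M_1 \times M_2 \times M_3$, (ii) the intersection pairing against the diagonal class of $M_2$ in $M_1 \times M_2 \times M_2 \times M_3$, yielding a class in $H_\bullet(Z_{12} \times_{M_2} Z_{23})$, and (iii) proper pushforward along the map $p\colon Z_{12} \times_{M_2} Z_{23} \to Z_{12} \circ Z_{23}$. The diagram to be verified splits naturally into a square (comparing the full and primed convolutions) plus a triangle (further restriction to the open subset $V$).

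For the triangle on the right, it is just functoriality of pullback for Borel-Moore homology along the open inclusions $V \hookrightarrow Z_{12}'\circ Z_{23}' \hookrightarrow Z_{12}\circ Z_{23}$, so there is nothing to prove beyond invoking \cite[\S2.6]{CG97}. The square reduces to checking three compatibilities. First, pullback of a Borel-Moore class along an open immersion is tautologically compatible with further pullback, so pullback under the projections from the ambient $M_1 \times M_2 \times M_3$ commutes with restriction to the primed subsets. Second, the intersection pairing used in \S\ref{subsec:conv} is by construction independent of the auxiliary open neighborhood $V$ containing $Y \cap Y'$; consequently, intersecting in $M_1 \times M_2 \times M_2 \times M_3$ and then restricting to the open subset corresponding to the primed data yields the same class as intersecting directly inside that open subset.

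The one step that actually requires the Cartesian hypothesis is the compatibility of the proper pushforward $p_*$ with open restriction. Here, the Cartesian square
\[
\begin{tikzcd}
Z_{12}' \times_{M_2} Z_{23}' \ar[r,"p'"] \ar[d,hookrightarrow] & Z_{12}'\circ Z_{23}' \ar[d,hookrightarrow] \\
Z_{12} \times_{M_2} Z_{23} \ar[r,"p"] & Z_{12}\circ Z_{23}
\end{tikzcd}
\]
together with the assumption that $Z_{12}'\circ Z_{23}'$ is open in $Z_{12}\circ Z_{23}$ (so that the vertical maps are open immersions) gives exactly the base-change identity $j^* \circ p_* = p'_* \circ j^*$ for Borel-Moore homology, where $j$ denotes the relevant open inclusion. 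Stringing these three compatibilities together yields commutativity of the square, and the triangle then follows as above.

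The main obstacle I expect is the bookkeeping around the intersection pairing: one must be careful that the implicit choice of neighborhood $V$ used in the definition of the intersection (in \S\ref{subsec:conv}) can be taken to lie in the primed locus, and that the Poincar\'e duality identifications match across the open inclusion. This is precisely the content of the independence statement recorded in the definition of the intersection pairing, so once we record that the primed open locus is admissible as a choice of $V$, the commutativity is essentially forced. No new geometry is needed beyond the Cartesian assumption supplied in the statement.
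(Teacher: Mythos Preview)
Your proposal is correct and follows essentially the same approach as the paper: decompose the convolution into its three constituent steps (external product/pullback, intersection pairing, proper pushforward) and verify each commutes with restriction, with the Cartesian hypothesis invoked precisely for the base-change identity on pushforward. The paper's proof is more explicit about the intersection-pairing step, factoring through auxiliary opens $V_{12}\subset M_1\times M_2$ and $V_{23}\subset M_2\times M_3$ in which $Z_{12}'$ and $Z_{23}'$ are closed, but this is exactly the ``independence of the neighborhood'' you invoke.
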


\begin{proof}
    We have two natural commutative diagrams as follows:
    $$\begin{tikzcd}
        H_i(Z_{12}) \times H_j(Z_{23}) \ar[r] \ar[d] & H_{i+d_3}(Z_{12} \times M_3) \times H_{j+d_1}(M_1 \times Z_{23}) \ar[d]\\
        H_i(Z_{12}') \times H_j(Z_{23}') \ar[r] & H_{i+d_3}(Z_{12}' \times M_3) \times H_{j+d_1}(M_1 \times Z_{23}'),
    \end{tikzcd}$$
    and
    $$\begin{tikzcd}
        H_{i+j-d_2}(Z_{12} \circ Z_{23}) \ar[d] \ar[dr] &\\
        H_{i+j-d_2}(Z_{12}' \circ Z_{23}') \ar[r] & H_{i+j-d_2}(U).
    \end{tikzcd}$$
    The Cartisian diagram gives the following commutative diagram
    $$\begin{tikzcd}
    H_{i+j-d_2}(Z_{12} \times_{M_2} Z_{23}) \ar[r] \ar[d] & H_{i+j-d_2}(Z_{12} \circ Z_{23}) \ar[d]\\
    H_{i+j-d_2}(Z_{12}' \times_{M_2} Z_{23}') \ar[r] & H_{i+j-d_2}(Z_{12}' \circ Z_{23}').
    \end{tikzcd}$$
    Therefore, we just need to verify that the diagram 
    $$\begin{tikzcd}
        H_{i+d_3}(Z_{12} \times M_3) \times H_{j+d_1}(M_1 \times Z_{23}) \ar[r,"\cap"] \ar[d] & H_{i+j-d_2}(Z_{12} \times_{M_2} Z_{23}) \ar[d] \\
        H_{i+d_3}(Z_{12}' \times M_3) \times H_{j+d_1}(M_1 \times Z_{23}') \ar[r,"\cap"] & H_{i+j-d_2}(Z_{12}' \times_{M_2} Z_{23}')
    \end{tikzcd}$$
    is commutative. Without loss of generality, we may assume that $Z_{12}$ and $Z_{23}$ are closed. Let $V_{12}$ (resp. $V_{23}$) be an arbitrary open subset of $M_1 \times M_2$ (resp. $M_2 \times M_3$) such that $Z_{12}'$ (resp. $Z_{23}'$) is a closed subset of $V_{12}$ (resp. $V_{23}$). Then we have the commutative diagram:
    $$\begin{tikzcd}
        H_{i+d_3}(Z_{12} \times M_3) \times H_{j+d_1}(M_1 \times Z_{23}) \ar[r,"\cap"] \ar[d] & H_{i+j-d_2}(Z_{12} \times_{M_2} Z_{23}) \ar[dd] \\
        H_{i+d_3}(Z_{12}' \times M_3) \times H_{j+d_1}(M_1 \times Z_{23}') \ar[d] &\\
        H_{i+d_3}(Z_{12}' \times_{M_2} V_{23}) \times H_{j+d_1}(V_{12} \times_{M_2} Z_{23}') \ar[r,"\cap"] & H_{i+j-d_2}(Z_{12}' \times_{M_2} Z_{23}'),
    \end{tikzcd}$$
    from which we obtain the commutativity of the previous diagram. Hence the lemma is valid.
\end{proof}

\subsection{Lagrangian construction of Weyl groups}
For each $w \in \W$, a homology class $\Lambda_w^0 \in H(Z)$ is defined in \cite[\S 3.4]{CG97}. It is shown therein that the classes $\{\Lambda_w^0~|~w\in\W\}$ form a basis of $H(Z)$. Moreover, 
\begin{equation}\label{eq:Tww}
    \Lambda_w^0 = [T_w^*]+\sum_{y<w} n_{yw} [T_y^*], \quad \mbox{for some $n_{yw}\in \Z_{>0}$}.
\end{equation}

The following theorem \cite[Theorem~3.4.1]{CG97} gives a Lagrangian construction of Weyl groups.
	\begin{thm}[Kashiwara-Tanisaki, Ginzburg]\label{cg}
 		There is an algebra isomorphism $$\Q \W \simeq H(Z), \quad w \mapsto \Lambda_w^0.$$
	\end{thm}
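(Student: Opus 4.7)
The plan is to set $\Phi \colon \Q\W \to H(Z)$ by $w \mapsto \Lambda_w^0$ and prove $\Phi$ is an algebra isomorphism in two steps. For the linear bijection, specialize Corollary~\ref{cor:stein}(2) to $\sF_\ff = \sB$ (the single regular orbit $\mu_0$ per \S\ref{sec:convention}), which produces the basis $\{[T_w^*]\}_{w \in \W}$ of $H(Z)$. The triangular expansion \eqref{eq:Tww} has strictly positive integer coefficients on terms strictly lower in the Bruhat order, so the transition matrix from $\{[T_w^*]\}$ to $\{\Lambda_w^0\}$ is unitriangular; hence $\{\Lambda_w^0\}_{w \in \W}$ is also a $\Q$-basis of $H(Z)$ and $\Phi$ is a linear bijection.

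To upgrade $\Phi$ to an algebra map it suffices, by Matsumoto's theorem applied to the convolution monoid, to verify two families of Coxeter relations: (i) $\Lambda_{s_i}^0 \ast \Lambda_{s_i}^0 = \Lambda_{\id}^0$ for each simple reflection $s_i$, and (ii) $\Lambda_w^0 \ast \Lambda_{s_i}^0 = \Lambda_{w s_i}^0$ whenever $\ell(w s_i) = \ell(w) + 1$. Relation (ii) is the softer half: for a chosen reduced word $w = s_{i_1} \cdots s_{i_k}$ I would define $\Lambda_w^0$ as the iterated convolution $\Lambda_{s_{i_1}}^0 \ast \cdots \ast \Lambda_{s_{i_k}}^0$ (equivalently, as the pushforward of the fundamental class of the conormal bundle to a Bott--Samelson resolution of $\overline{\Ob}_w$). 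The length condition forces the set-theoretic composition $\overline{\Ob}_w \circ \overline{\Ob}_{s_i} = \overline{\Ob}_{w s_i}$ with generic transversality along the open stratum, so the leading term of the convolution matches $[T_{ws_i}^*]$ and a length induction reproduces the triangular form \eqref{eq:Tww} with positive coefficients.

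The substantive obstacle is relation (i), which is where a nontrivial self-intersection appears. Using the $\mathbb{P}^1$-bundle description $\overline{\Ob}_{s_i} \simeq G \times^B (P_i/B)$ from \eqref{eq:obsi}, realize $\Lambda_{s_i}^0$ as a suitable renormalization of the fundamental class of the conormal bundle $T^*_{\overline{\Ob}_{s_i}}(\sB \times \sB) \subset \widetilde{\mathcal{N}} \times \widetilde{\mathcal{N}}$. Via the Cartesian diagram of Lemma~\ref{con dia} applied to $\overline{\Ob}_{s_i} \times_\sB \overline{\Ob}_{s_i} \to \sB \times \sB$, the convolution $\Lambda_{s_i}^0 \ast \Lambda_{s_i}^0$ reduces to an excess-intersection calculation localized to the zero section of $T^*\mathbb{P}^1$; the controlling numerical invariant is the Euler class $e(T\mathbb{P}^1)$, pairing to $\chi(\mathbb{P}^1) = 2$. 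This local Euler contribution must be balanced against the correction coefficient $n_{\id, s_i}$ in \eqref{eq:Tww} so that the net answer is exactly $\Lambda_{\id}^0$, producing $s_i^2 = 1$ rather than a Hecke-type quadratic relation. Reconciling the local Euler input with the global sign conventions in the definition of $\Lambda_{s_i}^0$ is the technical heart of the proof; once (i) is established, the braid relations among distinct simple reflections follow from the analogous computation inside each rank-two parabolic, and together with the linear bijection this identifies $\Phi$ as an algebra isomorphism.
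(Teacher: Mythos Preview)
The paper does not give its own proof of this theorem: it is quoted as a known result and attributed to \cite[Theorem~3.4.1]{CG97}, with the classes $\Lambda_w^0$ and the triangular expansion \eqref{eq:Tww} imported from that reference. There is therefore nothing in the present paper to compare your argument against.

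Your sketch is, in broad strokes, the argument of the cited reference: one defines $\Lambda_w^0$ by pushing forward from a Bott--Samelson resolution (equivalently, as an iterated convolution of the $\Lambda_{s_i}^0$), obtains the unitriangular relation \eqref{eq:Tww} and hence a linear bijection, and then reduces the multiplicativity to the quadratic relation $\Lambda_{s_i}^0 * \Lambda_{s_i}^0 = \Lambda_{\id}^0$ together with independence of reduced expression. One point worth tightening: invoking ``Matsumoto's theorem applied to the convolution monoid'' presupposes the braid relations in $H(Z)$, which is precisely what must be checked; in the cited reference this is handled not by a separate rank-two Euler computation but by showing that the Bott--Samelson class depends only on $w$ (via properness and birationality of the resolution onto $\overline{\Ob}_w$), which yields the braid relations for free. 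Your Euler-class computation for the quadratic relation is correct in spirit---indeed the later Proposition~\ref{2} in this paper performs exactly the analogous $\mathbb{P}^1$-fiber calculation and gets $[T_{s_i}^*]*[T_{s_i}^*]=-2[T_{s_i}^*]$, consistent with \eqref{eq:si0}.
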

    We shall identify $\Q \W$ with $H(Z)$ by the isomorphism above (i.e. $w=\Lambda_w^0$ by abuse of notations). As the unit element of $\Q\W=H(Z)$, it is clear that $\id=\Lambda_{\id}^0=[T_{\id}^*]$. Moreover, \cite[Theorem~0.4]{Sa13} tells us that
\begin{equation}\label{eq:si0}
[T_{s_i}^*]=\Lambda_{s_i}^0-[T_{\id}^*]=s_i-\id
 \quad\mbox{for any simple reflection $s_i\in\W$}.
\end{equation}


\subsection{Some multiplication formulas}
In this subsection, we are going to get some technical multiplication formulas. First we introduce the following obvious maps: for $1\leq i<j\leq3$,
\begin{align*}
p_{ij}&:M_1\times M_2\times M_3\rightarrow M_i \times M_j,\\ \nonumber
pr_{ij}&:T^*(M_1\times M_2\times M_3)\rightarrow T^*(M_i \times M_j),
\end{align*}
where $M_1, M_2, M_3$ are complex manifolds (perhaps disconnected manifolds with variable dimensions).

	\begin{prop}\label{2}
		For any $\gamma\in\Lambda_\ff$ and $s_i\in J_\gamma$, we have 
  \begin{equation}\label{eq:Tsi}
      [T_{\gamma,\id}^*] * [T_{s_i}^*]=-2[T_{\gamma,\id}^*]\quad\mbox{and}\quad[T_{\gamma,\id}^*] * s_i=-[T_{\gamma,\id}^*].
  \end{equation}
	\end{prop}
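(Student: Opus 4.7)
The two identities in \eqref{eq:Tsi} are equivalent. By \eqref{eq:si0}, $[T_{s_i}^*] = s_i - \id$, and since $\id = [T_{\id}^*]$ is the unit of $H(Z)$ we have $[T_{\gamma,\id}^*] * [T_{\id}^*] = [T_{\gamma,\id}^*]$, hence
\[
[T_{\gamma,\id}^*] * [T_{s_i}^*] \;=\; [T_{\gamma,\id}^*] * s_i \;-\; [T_{\gamma,\id}^*].
\]
Thus it suffices to establish the first identity $[T_{\gamma,\id}^*] * [T_{s_i}^*] = -2\,[T_{\gamma,\id}^*]$, after which the second follows immediately.

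I will compute this convolution directly from the definition in \S\ref{subsec:conv}. Using the descriptions $\Ob_{\gamma,\id,\mu_0}\simeq G/B$ and $\overline{\Ob}_{s_i}\simeq G\times^B(P_i/B)$ from \eqref{eq:obsi}, together with the containment $P_i\subset P_\gamma$ forced by $s_i\in J_\gamma$, a direct set-theoretic check identifies the composition $T_{\gamma,\id}^*\circ T_{s_i}^*\subset \widetilde{\mathcal{N}}_\gamma\times \widetilde{\mathcal{N}}$ with $T_{\gamma,\id}^*$ itself; the essential point is that moving the right-hand Borel within a $P_i$-coset does not change its image in $\sF_\gamma$. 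Consequently $[T_{\gamma,\id}^*] * [T_{s_i}^*] = c\,[T_{\gamma,\id}^*]$ for some scalar $c$, and it remains only to identify $c=-2$.

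To pin down $c$, I apply Lemma~\ref{con dia} together with the refined-intersection machinery of \cite[\S2.7]{CG97}. The fibre product $T_{\gamma,\id}^*\times_{\widetilde{\mathcal{N}}} T_{s_i}^*$ is generically a $\mathbb{P}^1$-bundle (with fibre $P_i/B$) over its image $T_{\gamma,\id}^*$, giving a one complex-dimensional excess. The refined pushforward therefore multiplies by the Euler class of the excess normal bundle, which in this rank-one situation equals the self-intersection of the zero section in $T^*\mathbb{P}^1$, namely $-2$. This is precisely the same mechanism that yields the quadratic Hecke relation $[T_{s_i}^*]^2 = -2\,[T_{s_i}^*]$ in the ordinary Steinberg algebra \cite[\S3.4]{CG97}, and it applies verbatim in our partial-flag setting because $s_i\in J_\gamma$ forces the entire $\mathbb{P}^1 = P_i/B$ fibre to collapse inside $\sF_\gamma$. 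The main technical obstacle is precisely this excess-intersection/Euler-class computation; the rest is routine bookkeeping using Lemma~\ref{dis} and the unit property of $H(Z)$.
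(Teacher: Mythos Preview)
Your approach is essentially the same as the paper's: both reduce to the clean-intersection formula for conormal bundles, exploiting that $p_{13}:\Ob_{\gamma,\id}\times_\sB\overline{\Ob}_{s_i}\to\Ob_{\gamma,\id}$ is a $\mathbb{P}^1$-bundle (fibre $P_i/B$), which produces the factor $(-1)^{\dim_\C\mathbb{P}^1}e(\mathbb{P}^1)=-2$. The paper makes this precise by citing \cite[Theorem~2.7.26]{CG97} with Nakajima's sign correction \cite[Lemma~8.5]{N98} and verifying the transversality hypothesis via \cite[2.7.27(ii)]{CG97}, whereas your ``Euler class of the excess normal bundle / self-intersection of the zero section in $T^*\mathbb{P}^1$'' phrasing is an equivalent but less direct packaging of the same computation; to make your version a complete proof you should name the result you are invoking and check its hypotheses explicitly rather than appeal to analogy with the quadratic Hecke relation.
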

	\begin{proof}
 Recall in \eqref{eq:obsi} that the closure $\overline{\Ob}_{s_i}$ is a smooth variety, so $[T_{s_i}^*]=[T_{\overline{\Ob}_{s_i}}^*(\sB \times \sB)]$.
 Since $\alpha_i\in \Delta_\gamma$, we have $\Ob_{\gamma,\id} \circ \overline{\Ob}_{s_i}=\Ob_{\gamma,\id}$.
 
 We shall use \cite[Theorem~2.7.26]{CG97} (see also a correct formula in \cite[Lemma~8.5]{N98}) to derive $[T_{\gamma,\id}^*] * [T_{s_i}^*]=-2[T_{\gamma,\id}^*]$. For this purpose, we need to check the following two conditions:
 \begin{itemize}
     \item[(1)] The intersection of $p_{12}^{-1}(\Ob_{\gamma,\id})$ and $p_{23}^{-1}(\overline{\Ob}_{s_i})$ is transverse;
     \item[(2)] The map $p_{13}: \Ob_{\gamma,\id} \times_\sB \overline{\Ob}_{s_i} \rightarrow \Ob_{\gamma,\id}$ is a locally trivial fibration with smooth and compact fiber.
 \end{itemize}

 Thanks to \cite[2.7.27 (ii)]{CG97}, we know  that $p_{12}^{-1}(\Ob_{\gamma,\id})$ and $p_{23}^{-1}(\overline{\Ob}_{s_i})$ intersect transversely at any $x\in \Ob_{\gamma,\id} \times_{\sB} \Ob_{s_i}$. For any $x \in \Ob_{\gamma,\id} \times_{\sB} \Ob_\id$, by \cite[2.7.27 (ii)]{CG97} again, we have $$T_x(\sF_\gamma \times \sB \times \sB)=T_x(p_{12}^{-1}(\Ob_{\gamma,\id}))+T_x(p_{23}^{-1}(\Ob_\id)) \subset T_x(p_{12}^{-1}(\Ob_{\gamma,\id}))+T_x(p_{23}^{-1}(\overline{\Ob}_{s_i})).$$ So $p_{12}^{-1}(\Ob_{\gamma,\id})$ and $p_{23}^{-1}(\overline{\Ob}_{s_i})$ intersect transversely. The Condition (1) holds.

Explicitly, $\Ob_{\gamma,\id} \times_{\sB} \overline{\Ob}_{s_i}=\{(gP_\gamma, gB, ghB) \in \sF_\gamma \times \sB \times \sB ~|~ g \in G, h \in P_i\}$.
        Moreover, $p_{13}: \Ob_{\gamma,\id} \times_\sB \overline{\Ob}_{s_i} \rightarrow \Ob_{\gamma,\id}$ is a locally trivial fibration with fiber $P_i/B \simeq \mathbb{P}^1$, which is smooth and compact. Hence the Condition (2) is derived. 

    Note that the complex dimension $\dim_\mathbb{C} (P_i/B)=1$ and the Euler character number $e(P_i/B)=2$. We apply \cite[Lemma~8.5]{N98} to obtain
		$$[T_{\gamma,\id}^*] * [T_{s_i}^*]=(-1)^{\dim_\mathbb{C} (P_i/B)}e(P_i/B) [T_{\gamma,\id}^*]=-2[T_{\gamma,\id}^*],$$ which further implies $[T_{\gamma,\id}^*] * s_i=-[T_{\gamma,\id}^*]$ by \eqref{eq:si0}.
	\end{proof}

Since each $[T_\xi^*]$ is an algebraic cycle and $\Ob_{\gamma,\id} \circ \overline{\Ob}_w =\overline{\Ob}_{\gamma,w}$, then  
\begin{equation}\label{eq1}
    [T_{\gamma,\id}^*] * [T_w^*]\in\sum_{y \in \D_\gamma, y \leq w} \Z [T_{\gamma, y}^*],\quad(\forall\gamma\in\Lambda_\ff, w \in \D_\gamma).
\end{equation} 
Below is a refinement for the case of $w \in \D_\gamma$, which says that the leading coefficient of the product is exactly $1$.
    
    \begin{prop}\label{5}
        Let $\gamma\in\Lambda_\ff$ and $w \in \D_\gamma$. We have $$[T_{\gamma,\id}^*] * [T_w^*] \in [T_{\gamma, w}^*]+ \sum_{y \in \D_\gamma, y < w} \Z [T_{\gamma, y}^*].$$
    \end{prop}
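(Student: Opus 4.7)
The plan is to extract the leading coefficient by localizing the convolution to the open dense $G$-orbit $\Ob_{\gamma,w}$ inside $\overline{\Ob}_{\gamma,w}$. Since by Lemma \ref{partial Bruhat order} none of the cycles $[T_{\gamma,y}^*]$ with $y<w$ is supported over $\Ob_{\gamma,w}$, and since \eqref{eq1} already tells us $[T_{\gamma,\id}^*]*[T_w^*]$ lies in $\sum_{y\in\D_\gamma,\,y\le w}\Z[T_{\gamma,y}^*]$, it suffices to show that the restriction of the product to the open piece is the fundamental class of the conormal bundle $T^*_{\Ob_{\gamma,w}}(\sF_\gamma\times\sB)$ with coefficient exactly $1$.

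To set this up, I would choose open subsets $U_{12}\subset\sF_\gamma\times\sB$ and $U_{23}\subset\sB\times\sB$ such that $U_{12}\cap\overline{\Ob}_{\gamma,\id}=\Ob_{\gamma,\id}$, $U_{23}\cap\overline{\Ob}_w=\Ob_w$, and whose set-theoretic composition lies over $\Ob_{\gamma,w}$. Letting $Z_{12}'=T^*_{\Ob_{\gamma,\id}}(\sF_\gamma\times\sB)\cap T^*U_{12}$ and $Z_{23}'=T^*_{\Ob_w}(\sB\times\sB)\cap T^*U_{23}$, the commutative diagram of Lemma \ref{con dia} reduces the determination of the coefficient of $[T_{\gamma,w}^*]$ in $[T_{\gamma,\id}^*]*[T_w^*]$ to the computation of $[Z_{12}']*[Z_{23}']$ inside $H_\bullet(T^*_{\Ob_{\gamma,w}}(\sF_\gamma\times\sB))$.

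I would then verify the two hypotheses of the transverse convolution formula, mirroring the proof of Proposition \ref{2}. Transversality of $p_{12}^{-1}(\Ob_{\gamma,\id})$ and $p_{23}^{-1}(\Ob_w)$ in $\sF_\gamma\times\sB\times\sB$ follows from \cite[2.7.27(ii)]{CG97}. The map $p_{13}\colon\Ob_{\gamma,\id}\times_\sB\Ob_w\to\Ob_{\gamma,w}$ is $G$-equivariant between $G$-homogeneous spaces; using \eqref{iso1} one identifies its source with $G/(B\cap wBw^{-1})$ and its target with $G/(P_\gamma\cap wBw^{-1})$. The crucial input, which is where the hypothesis $w\in\D_\gamma$ enters, is that Lemma \ref{lem:para} forces $P_\gamma\cap wBw^{-1}\subset P_\gamma\cap wBw^{-1}B=B$, hence $P_\gamma\cap wBw^{-1}=B\cap wBw^{-1}$. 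Thus $p_{13}$ is an isomorphism, in particular a locally trivial fibration with a single-point (smooth and compact) fibre.

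Plugging $\dim_\C\text{fibre}=0$ and $e(\text{fibre})=1$ into \cite[Lemma 8.5]{N98} (equivalently \cite[Theorem 2.7.26]{CG97}) yields $[Z_{12}']*[Z_{23}']=(-1)^0\cdot 1\cdot[T^*_{\Ob_{\gamma,w}}]$, and tracing back through Lemma \ref{con dia} pins the coefficient of $[T_{\gamma,w}^*]$ at $1$. The hard part will be the bookkeeping around the open-set setup and the clean application of Lemma \ref{con dia}, in particular checking that the Cartesian diagram hypothesis holds for our chosen $U_{12}$, $U_{23}$; once this is in place, the single-point fibre---a direct consequence of Lemma \ref{lem:para}---forces the numerical answer to be $+1$.
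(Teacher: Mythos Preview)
Your proposal is correct and follows essentially the same route as the paper: localize via Lemma \ref{con dia} over the open orbit $\Ob_{\gamma,w}$, use Lemma \ref{lem:para} to see that $p_{13}$ is an isomorphism (your stabilizer comparison $P_\gamma\cap wBw^{-1}=B\cap wBw^{-1}$ is exactly the content of the paper's fibre computation), and conclude that the leading coefficient is $1$. The paper streamlines two points you might adopt: since $\Ob_{\gamma,\id}$ is already closed, there is no need to open the $12$ factor at all (only $Z_{23}'=T^*_{\Ob_w}$ is taken), and instead of invoking \cite[Lemma~8.5]{N98} with a one-point fibre the paper cites \cite[2.7.27(iii)]{CG97} directly, which packages the transversality and the isomorphism $pr_{13}:T^*_{\gamma,\id}\times_{\widetilde{\mathcal{N}}}T^*_{\Ob_w}\xrightarrow{\sim}T^*_{\Ob_{\gamma,w}}$ in one step and makes the Cartesian-diagram check in Lemma \ref{con dia} immediate.
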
  
\begin{proof}
By \eqref{eq1},
       we have
		\begin{align*}
			[T_{\gamma,\id}^*] * [T_w^*]= k[T_{\gamma, w}^*]+ \mbox{lower terms},\quad\mbox{for some $k \in \Z$},
		\end{align*}
  and $pr_{13}^{-1}(T_{\Ob_{\gamma,w}}^*) \cap (T_{\gamma,\id}^* \times_{\widetilde{\mathcal{N}}} T_w^*) \subset T_{\gamma,\id}^* \times_{\widetilde{\mathcal{N}}} T_{\Ob_w}^*$. Take $x \in \Ob_{\gamma,w}$ and without loss of generality, we may assume $x=(P_\gamma,wB)$. Then 
    \begin{align*}
        p_{13}^{-1}(x)&=\{P_{\gamma}\}\times ((P_{\gamma}/B)\cap( wBw^{-1}B/B))\times\{wB\}.
    \end{align*} 
     By Lemma~\ref{lem:para}, $(P_{\gamma}/B)\cap (wBw^{-1}B/B)=\{B\}$, hence
     $$p_{13}:\Ob_{\gamma,\id} \times_{\sB} \Ob_w \to \Ob_{\gamma,\id} \circ \Ob_w=\Ob_{\gamma,w}$$
     is an isomorphism. Then by \cite[2.7.27.(iii)]{CG97}, $pr_{12}^{-1}(T_{\gamma,\id}^*)$ and $pr_{23}^{-1}(T_{\Ob_w}^*)$ intersect transversely and $pr_{13}: T_{\gamma,\id}^* \times_{\widetilde{\mathcal{N}}} T_{\Ob_w}^* \to T_{\Ob_{\gamma, w}}^*$ is an isomorphism. Therefore,\\ $[T_{\gamma,\id}^*] * [T_{\Ob_w}^*]=[T_{\Ob_{\gamma,w}}^*]$ and we have a Cartesian diagram
   \begin{equation*}
       \begin{tikzcd}
        T_{\gamma,\id}^* \times_{\widetilde{\mathcal{N}}} T_{\Ob_w}^* \ar[r] \ar[d] & T_{\gamma,\id}^* \times_{\widetilde{\mathcal{N}}} T_w^* \ar[d]\\
        T_{\Ob_{\gamma,w}}^* \ar[r] & \bigcup_{y \leq w} T_{\Ob_{\gamma,y}}^*.
    \end{tikzcd}
   \end{equation*}
    By Lemma \ref{con dia}, $k[T_{\Ob_{\gamma,w}}^*]=([T_{\gamma,\id}^*] * [T_w^*])|_{T_{\Ob_{\gamma,w}}^*}=[T_{\gamma,\id}^*] * [T_{\Ob_w}^*]=[T_{\Ob_{\gamma,w}}^*]$,
    i.e. $k=1$ as we desired.
\end{proof}

For $\gamma,\nu\in\Lambda_\ff$ and $w\in\D_{\gamma\nu}$, recall in \S\ref{sec:schur} the notations $\theta_\gamma$ and $w_{\gamma\nu}^+$ for the unique longest element in $\W_\gamma$ and $\W_\gamma w\W_\nu$, respectively. 
It follows from a theorem due to Howlett (cf. \cite[Theorem~4.18]{DDPW08}) that $w^+_{\gamma\nu}=\theta_\gamma w y$ for some $y\in \W_\nu$ with $w y \in\D_\gamma$. So $\theta_\gamma w_{\gamma\nu}^+=\theta_\gamma^2 w y=wy \in \D_\gamma$. Similarly, $w_{\gamma\nu}^+\theta_\nu\in\D_\nu^{-1}$.

\begin{lem}\label{lem2}
Suppose $P_\mu\subset P_\nu$. Then $\W_\gamma w\W_{\nu}\cap\mathcal{D}_{\gamma\mu}$ has a unique longest element $w'$. Furthermore,
one has $$T^*_{\gamma,w,\nu}\circ T_{\nu, \id, \mu}^* = T_{\gamma,w',\mu}^*.$$
\end{lem}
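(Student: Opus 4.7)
The plan is to identify $w'$ geometrically as the label of the unique open dense $G$-orbit in a natural preimage, and then compute the convolution by transversality in the spirit of Proposition~\ref{5}. Since $P_\mu\subset P_\nu$ implies $\W_\mu\subset\W_\nu$, the double coset $\W_\gamma w\W_\nu$ decomposes as a finite disjoint union of $(\W_\gamma,\W_\mu)$-double cosets, each containing a unique minimal-length representative in $\D_{\gamma\mu}$. These representatives are precisely the elements of $\W_\gamma w\W_\nu\cap\D_{\gamma\mu}$, which is therefore a nonempty finite set.

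The inclusion $P_\mu\subset P_\nu$ yields a $G$-equivariant smooth surjection $\pi:\sF_\mu\twoheadrightarrow\sF_\nu$ with connected fibers isomorphic to $P_\nu/P_\mu$. The induced morphism $p:=\mathrm{id}\times\pi:\sF_\gamma\times\sF_\mu\to\sF_\gamma\times\sF_\nu$ thus has irreducible fibers, and consequently $p^{-1}(\Ob_{\gamma,w,\nu})$ is irreducible. An orbit-level computation (using that $p(\Ob_{\gamma,w'',\mu})=\Ob_{\gamma,z,\nu}$ with $z\in\D_{\gamma\nu}$ the representative of $\W_\gamma w''\W_\nu$) gives
\[
p^{-1}(\Ob_{\gamma,w,\nu})=\bigsqcup_{w''\in\W_\gamma w\W_\nu\cap\D_{\gamma\mu}}\Ob_{\gamma,w'',\mu},
\]
so there is a unique open dense $G$-orbit $\Ob_{\gamma,w',\mu}$ on the right, and every other orbit in the preimage lies in its Zariski closure. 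Lemma~\ref{partial Bruhat order} thus forces $w'$ to be the Bruhat-maximum of $\W_\gamma w\W_\nu\cap\D_{\gamma\mu}$, which proves both the existence and uniqueness of the longest element.

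For the convolution, observe that $P_\mu\subset P_\nu$ gives $\mathfrak{n}_\nu\subset\mathfrak{n}_\mu$, so $\Ob_{\nu,\id,\mu}$ is a closed smooth subvariety of $\sF_\nu\times\sF_\mu$ identified via $\pi$ with the graph of $\pi$; in particular $T^*_{\nu,\id,\mu}$ is the full conormal bundle of a smooth closed subvariety, and convolving with it corresponds geometrically to pulling back along $p$. Adapting the transversality and Cartesian-diagram argument of Lemma~\ref{con dia} and Proposition~\ref{5}, one verifies set-theoretically that $T^*_{\gamma,w,\nu}\circ T^*_{\nu,\id,\mu}$ coincides with the $p$-pullback of $T^*_{\gamma,w,\nu}$; this pullback is irreducible of complex dimension $\dim\sF_\gamma+\dim\sF_\mu$ by Corollary~\ref{cor:dim} and projects dominantly onto $\overline{\Ob}_{\gamma,w',\mu}$ by the preceding paragraph. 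Since $T^*_{\gamma,w',\mu}$ is the unique irreducible Lagrangian of this dimension supported over $\overline{\Ob}_{\gamma,w',\mu}$, the required equality follows. The main obstacle is this last step: showing that the convolution is irreducible with no spurious components and that its closure is exactly $T^*_{\gamma,w',\mu}$, which requires a careful transversality check on the fiber product $T^*_{\gamma,w,\nu}\times_{\widetilde{\mathcal{N}}_\nu}T^*_{\nu,\id,\mu}$ over the generic locus of $\Ob_{\gamma,w',\mu}$, using crucially that $\Ob_{\nu,\id,\mu}$ is the graph of a smooth morphism.
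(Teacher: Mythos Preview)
Your proof of the first part is essentially identical to the paper's: both use the projection $p=\mathrm{id}\times\pi:\sF_\gamma\times\sF_\mu\to\sF_\gamma\times\sF_\nu$, note that $p^{-1}(\Ob_{\gamma,w,\nu})$ is irreducible with connected fiber $P_\nu/P_\mu$, decompose it into the orbits $\Ob_{\gamma,y,\mu}$ for $y\in\W_\gamma w\W_\nu\cap\D_{\gamma\mu}$, and invoke Lemma~\ref{partial Bruhat order} to identify the unique dense orbit with the Bruhat-maximal representative. This part is fine.

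For the second part, however, you over-engineer the argument and leave a genuine gap. You invoke transversality, Lemma~\ref{con dia}, and Proposition~\ref{5}, none of which are relevant here: the assertion $T^*_{\gamma,w,\nu}\circ T^*_{\nu,\id,\mu}=T^*_{\gamma,w',\mu}$ is purely set-theoretic composition of subsets, not a computation of homology classes or K-theory elements. The ``main obstacle'' you flag --- irreducibility of the convolution --- is in fact not an obstacle at all, and the paper dispatches it in one line: the projection $pr'_{12}\colon pr_{23}^{-1}(T^*_{\nu,\id,\mu})\to\widetilde{\mathcal{N}}_\gamma\times\widetilde{\mathcal{N}}_\nu$ is a locally trivial fibration with fiber $P_\nu/P_\mu$, so the fiber product $T^*_{\gamma,w,\nu}\times_{\widetilde{\mathcal{N}}_\nu}T^*_{\nu,\id,\mu}=(pr'_{12})^{-1}(T^*_{\gamma,w,\nu})$ is the pullback of an irreducible variety along a fibration with irreducible fibers, hence irreducible; its $pr_{13}$-image is therefore irreducible too. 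No transversality over a generic locus is needed.

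The paper then finishes by a direct inclusion rather than a dimension/support argument: since $\Ob_{\gamma,w',\mu}$ is smooth open dense in $\Ob_{\gamma,w,\nu}\circ\Ob_{\nu,\id,\mu}$, the conormal bundle $T^*_{\Ob_{\gamma,w',\mu}}$ sits inside $T^*_{\gamma,w,\nu}\circ T^*_{\nu,\id,\mu}$; both sides are irreducible, so taking closures gives equality. Your phrase ``unique irreducible Lagrangian of this dimension supported over $\overline{\Ob}_{\gamma,w',\mu}$'' is correct in spirit but circuitous compared to this direct containment. In short: drop transversality, use the fibration to get irreducibility, and argue by inclusion and closure.
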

\begin{proof}
    Consider the canonical projection $\pi_{\gamma,\mu\nu}:\sF_{\gamma}\times\sF_{\mu}\longrightarrow\sF_{\gamma}\times\sF_{\nu}$, which is a fibration. We have $$\pi^{-1}_{\gamma,\mu\nu}(\Ob_{\gamma,w,\nu})=\bigsqcup\limits_{y\in\W_\gamma w\W_\nu\cap\mathcal{D}_{\gamma\mu}}\Ob_{\gamma,y,\mu},$$ which is irreducible. So it has a unique open dense $G$-orbit 
    $\Ob_{\gamma,w',\mu}$ in $\pi^{-1}_{\gamma,\mu\nu}(\Ob_{\gamma,w,\nu})$ and $$\bigsqcup\limits_{y\in\W_\gamma w\W_\nu\cap\mathcal{D}_{\gamma\mu}}\Ob_{\gamma,y,\mu}\subset\overline{\Ob}_{\gamma,w',\mu}.$$ By Lemma \ref{partial Bruhat order}, $w'$ is the unique longest element in $\W_\gamma w\W_\nu\cap\mathcal{D}_{\gamma\mu}$. 

    For the rest part, observe that $pr'_{12}: pr_{23}^{-1}(T_{\nu,\id,\mu}^*) \to \widetilde{\mathcal{N}}_\gamma \times \widetilde{\mathcal{N}}_\nu$ is a locally trivial fibration with fiber $P_\nu/P_\mu$. Therefore, $(pr'_{12})^{-1}(T^*_{\gamma,w,\nu})=T^*_{\gamma,w,\nu} \times_{\widetilde{\mathcal{N}}_\nu} T_{\nu,\id,\mu}^*$ is irreducible and further so is $T_{\gamma,w,\nu}^* \circ T_{\nu,\id}^*$.
Note that $\Ob_{\gamma,w',\mu}$ is a smooth open dense subvariety of $\Ob_{\gamma,w,\nu}\circ\Ob_{\nu,\id,\mu}$, hence $T_{\Ob_{\gamma,w',\mu}}^*\subset T_{\gamma,w,\nu}^* \circ T_{\nu,\id,\mu}^*$. Since $T_{\Ob_{\gamma,w',\mu}}^*$ is also irreducible, by taking closure on the both sides, we get $T_{\gamma,w',\mu}^*=T_{\gamma,w,\nu}^*\circ T_{\nu,\id,\mu}^*$.
\end{proof}

    \begin{prop}\label{6}
        For $\gamma,\nu\in\Lambda_\ff$ and $w \in \D_{\gamma\nu}$, we have
        \begin{equation}\label{eq:TT}
             [T_{\gamma, w, \nu}^*] * [T_{\nu,\id}^*]= [T_{\gamma, \theta_\gamma w_{\gamma\nu}^+}^*]\quad\mbox{and}\quad [T_{\id, \gamma}^*] * [T_{\gamma, w, \nu}^*]= [T_{w_{\gamma\nu}^+\theta_\nu, \nu}^*].
        \end{equation}
    As a consequence, 
    \begin{equation}\label{eq:Twrv}
        [T_{w_{\gamma\nu}^+}^*]=[T_{\id, \gamma}^*] * [T_{\gamma,w,\nu}^*] * [T_{\nu, \id}^*].
    \end{equation}
    \end{prop}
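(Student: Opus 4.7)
The plan is to extract the first identity from Lemma~\ref{lem2} by specializing $\mu=\mu_0$ (so that $P_{\mu_0}=B\subset P_\nu$), upgrade the set-theoretic equality to a convolution equality in the style of Proposition~\ref{5}, derive the second equation by the mirror (transposed) application of Lemma~\ref{lem2}, and finally obtain the third equation by combining the first two.

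For the first equation, since $\W_{\mu_0}=\{\id\}$ we have $\D_{\gamma,\mu_0}=\D_\gamma$, so Lemma~\ref{lem2} gives $T^*_{\gamma,w,\nu}\circ T^*_{\nu,\id}=T^*_{\gamma,w'}$ where $w'$ is the unique longest element of $\W_\gamma w\W_\nu\cap\D_\gamma$. I would identify $w'=\theta_\gamma w^+_{\gamma\nu}$ as follows: by Howlett's theorem (recalled just before Lemma~\ref{lem2}), $w^+_{\gamma\nu}=\theta_\gamma wy$ with $wy\in\D_\gamma$, so $\theta_\gamma w^+_{\gamma\nu}=wy$ lies in $\D_\gamma\cap\W_\gamma w\W_\nu$ and has length $\ell(w^+_{\gamma\nu})-\ell(\theta_\gamma)$; any other $\W_\gamma$-coset of $\W_\gamma w\W_\nu$ has a strictly shorter maximum (by uniqueness of $w^+_{\gamma\nu}$) and therefore a strictly shorter $\D_\gamma$-representative. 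To promote the set-theoretic identity to an identity of fundamental classes, I mimic the proof of Proposition~\ref{5}: pick $x=(P_\gamma,\theta_\gamma w^+_{\gamma\nu}B)$ in the open orbit $\Ob_{\gamma,\theta_\gamma w^+_{\gamma\nu}}$ and observe that a point of the fibre $p_{13}^{-1}(x)\cap(\Ob_{\gamma,w,\nu}\times_{\sF_\nu}\Ob_{\nu,\id})$ must be of the form $(P_\gamma,gP_\nu,\theta_\gamma w^+_{\gamma\nu}B)$ with $gP_\nu=\theta_\gamma w^+_{\gamma\nu}P_\nu$ forced by the $\Ob_{\nu,\id}$-condition, while the $\Ob_{\gamma,w,\nu}$-condition is automatic since $\theta_\gamma w^+_{\gamma\nu}\in\W_\gamma w\W_\nu$. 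The fibre is a single point, so $p_{13}$ restricts to an isomorphism over the open orbit; combined with the transversality supplied by \cite[2.7.27.(iii)]{CG97} and the Cartesian-diagram formalism of Lemma~\ref{con dia}, this forces the coefficient to be~$1$.

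The second equation follows from the mirror application of Lemma~\ref{lem2}, obtained by swapping the two factors of $\sF_\gamma\times\sF_\nu$ (equivalently, using the inverse double coset $\W_\nu w^{-1}\W_\gamma$); the same Howlett-type argument identifies the longest representative as $w^+_{\gamma\nu}\theta_\nu$, and the fibre analysis is mirror-symmetric. For the third equation I combine the two: by the second equation,
\[
[T^*_{\id,\gamma}]*[T^*_{\gamma,w,\nu}]*[T^*_{\nu,\id}]=[T^*_{w^+_{\gamma\nu}\theta_\nu,\nu}]*[T^*_{\nu,\id}],
\]
and applying the first equation to the triple $(\mu_0,w^+_{\gamma\nu}\theta_\nu,\nu)$ yields $[T^*_{w^+_{\gamma\nu}}]$, since $\theta_{\mu_0}=\id$ and the longest element of $(w^+_{\gamma\nu}\theta_\nu)\W_\nu$ is $(w^+_{\gamma\nu}\theta_\nu)\theta_\nu=w^+_{\gamma\nu}$.

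The main obstacle is verifying the single-point fibre together with the cotangent-transversality in the second paragraph: because the projection $\sB\to\sF_\nu$ has positive-dimensional fibres, it is not a priori clear that the additional $\Ob_{\gamma,w,\nu}$-constraint collapses the generic fibre of $p_{13}$ to a single point and that the corresponding intersection in $T^*(\sF_\gamma\times\sF_\nu\times\sB)$ is transverse. Once those two points are checked, the Cartesian-diagram machinery of Lemma~\ref{con dia} produces the coefficient~$1$ essentially for free, and the remainder of the argument is a routine combinatorial bookkeeping with the Bruhat order.
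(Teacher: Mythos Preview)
Your proposal is correct and follows essentially the same route as the paper. The one place you flag as the ``main obstacle'' is in fact a non-issue, and the paper dispatches it in one line: since $\Ob_{\nu,\id}=\{(gP_\nu,gB):g\in G\}$ is precisely the graph of the projection $\sB\to\sF_\nu$, the map $p_{13}\colon p_{23}^{-1}(\Ob_{\nu,\id})\to\sF_\gamma\times\sB$ is a global isomorphism, so every fibre of $p_{13}$ restricted to $\Ob_{\gamma,w,\nu}\times_{\sF_\nu}\Ob_{\nu,\id}$ is automatically a single point and \cite[2.7.27(iii)]{CG97} gives the transversality and the conormal-level isomorphism immediately---there is no need to check a generic fibre by hand or to invoke $G$-equivariance. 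Your point-by-point computation is correct but redundant once you notice this graph description; the rest of your argument (identification of $w'=\theta_\gamma w_{\gamma\nu}^+$ via Howlett, the Cartesian diagram with Lemma~\ref{con dia} to pin down the coefficient as $1$, and the associativity argument for \eqref{eq:Twrv}) matches the paper exactly, with only the harmless difference that the paper applies the first identity before the second in deriving \eqref{eq:Twrv} while you apply them in the opposite order.
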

   \begin{proof} We only prove the first formula in \eqref{eq:TT}, while the other is similar.

   We see that
   $$p_{13}: p_{23}^{-1}(\Ob_{\nu,\id}) \to \sF_\gamma \times \sB,\quad (gP_\gamma,g'P_\nu,g'B) \mapsto (gP_\gamma,g'B)$$
   is an isomorphism and
   $$p_{12}: p_{23}^{-1}(\Ob_{\nu,\id}) \to \sF_\gamma \times \sF_\nu,\quad (gP_\gamma, g'P_\nu, g'B) \mapsto (gP_\gamma,g'P_\nu)$$
   is a locally trivial fibration with the fibre $P_\nu/B$. Then we have
   $p_{13}^{-1}(\Ob_{\gamma,w,\nu} \circ \Ob_{\nu,\id})=\Ob_{\gamma,w,\nu} \times_{\sF_\nu} \Ob_{\nu,\id}$ and $\Ob_{\gamma,w,\nu} \circ \Ob_{\nu,\id}$ is irreducible, smooth and open dense in $\overline{\Ob}_{\gamma,w,\nu} \circ \Ob_{\nu,\id}$. Note that $\Ob_{\gamma,\theta_\gamma w_{\gamma\nu}^+}$ is an open subvariety of $\Ob_{\gamma,w,\nu}\circ\Ob_{\nu,\id}$, so $\Ob_{\gamma,w,\nu} \circ \Ob_{\nu,\id}$ is open in $\overline{\Ob}_{\gamma,\theta_\gamma w_{\gamma\nu}^+}=\overline{\Ob}_{\gamma,w,\nu} \circ \Ob_{\nu,\id}$. Moreover, by \cite[2.7.27.(iii)]{CG97}, $pr_{12}^{-1}(T_{\Ob_{\gamma, w, \nu}}^*)$ and $pr_{23}^{-1}(T_{\nu,\id}^*)$ intersect transversely and
$$pr_{13}: T_{\Ob_{\gamma, w, \nu}}^* \times_{\widetilde{\mathcal{N}}_\nu} T_{\nu,\id}^* \to T_{\Ob_{\gamma, w, \nu} \circ \Ob_{\nu,\id}}^*(\sF_\gamma \times \sB)$$
is an isomorphism. Therefore, $T_{\Ob_{\gamma,w,\nu}}^*\circ T_{\nu,\id}^*=T_{\Ob_{\gamma, w, \nu} \circ \Ob_{\nu,\id}}^*(\sF_\gamma \times \sB)$ and $[T_{\Ob_{\gamma,w,\nu}}^*] * [T_{\nu,\id}^*]=[T_{\Ob_{\gamma,w,\nu}}^*\circ T_{\nu,\id}^*]$.

Lemma~\ref{lem2} implies $T_{\gamma,\theta_\gamma w_{\gamma\nu}^+}^*=T_{\gamma,w,\nu}^*\circ T_{\nu,\id}^*$, and hence $[T_{\gamma, w, \nu}^*] * [T_{\nu,\id}^*]= k [T_{\gamma, \theta_\gamma w_{\gamma\nu}^+}^*]$. We determine the number $k$ as follows. As mentioned above, $\Ob_{\gamma,w,\nu} \circ \Ob_{\nu,\id}$ is open in $\overline{\Ob}_{\gamma,\theta_\gamma w_{\gamma\nu}^+}$, therefore, $T_{\Ob_{\gamma,w,\nu}}^*\circ T_{\nu,\id}^*$ is open in $T_{\gamma, \theta_\gamma w_{\gamma\nu}^+}^*$. Thus we have the Cartesian diagram
    \begin{equation*}
        \begin{tikzcd}
        T_{\Ob_{\gamma,w,\nu}}^* \times_{\widetilde{\mathcal{N}}_\nu} T_{\nu,\id}^* \ar[r] \ar[d] & T^*_{\gamma,w,\nu} \times_{\widetilde{\mathcal{N}}_\nu} T_{\nu,\id}^* \ar[d]\\
        T_{\Ob_{\gamma,w,\nu}}^*\circ T_{\nu,\id}^* \ar[r] & T_{\gamma, \theta_\gamma w_{\gamma\nu}^+}^*
    \end{tikzcd}.
    \end{equation*}
    By Lemma \ref{con dia}, $$k[T_{\Ob_{\gamma,\theta_\gamma w_{\gamma\nu}^+}}^*]=([T^*_{\gamma,w,\nu}] * [T_{\nu,\id}^*])|_{T_{\Ob_{\gamma,\theta_\gamma w_{\gamma\nu}^+}}^*}=([T_{\Ob_{\gamma,w,\nu}}^*] * [T_{\nu,\id}^*])|_{T_{\Ob_{\gamma,\theta_\gamma w_{\gamma\nu}^+}}^*}=[T_{\Ob_{\gamma,\theta_\gamma w_{\gamma\nu}^+}}^*],$$
    i.e. $k=1$ as we desired.

Finally, let us verify \eqref{eq:Twrv} via \eqref{eq:TT} as follows: 
\begin{align*}
    [T_{\id, \gamma}^*] * [T_{\gamma,w,\nu}^*] * [T_{\nu, \id}^*]=[T_{\id, \gamma}^*]*[T_{\gamma,\theta_\gamma w_{\gamma\nu}^+}^*]=[T_{w_{\gamma\nu}^+}^*],
\end{align*}
where the second equality is derived by the fact that the longest element in $\W_\gamma \theta_\gamma w_{\gamma\nu}^+=\W_\gamma w_{\gamma\nu}^+$ is $w_{\gamma\nu}^+$.
\end{proof}

    \begin{cor}\label{7} It holds 
        $[T_{\theta_\gamma}^*]=x_\gamma$ for any $\gamma\in\Lambda_\ff$.
    \end{cor}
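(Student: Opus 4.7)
The plan is to exploit the multiplication formula \eqref{eq:Twrv} together with Proposition~\ref{2} to reduce the corollary to a combinatorial identity in $\Q\W$. Under the identification $\Q\W = H(Z)$ via $w \leftrightarrow \Lambda_w^0$, I will establish three properties of $[T_{\theta_\gamma}^*]$, namely that (i) it is supported on $\W_\gamma$, (ii) it is antisymmetric under right multiplication by each $s_i \in J_\gamma$, and (iii) the coefficient of $\theta_\gamma$ in its expansion equals $1$. Together these three properties force $[T_{\theta_\gamma}^*] = x_\gamma$.

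The first step is to apply \eqref{eq:Twrv} with $\nu = \mu_0$ (the regular orbit) and $w = \id$. Then $w_{\gamma\mu_0}^+ = \theta_\gamma$, and invoking the conventions of \S\ref{sec:convention} together with $[T_{\id}^*]=\id$ yields
\[
[T_{\theta_\gamma}^*] \;=\; [T_{\id,\gamma}^*] \ast [T_{\gamma,\id}^*] \ast [T_{\id}^*] \;=\; [T_{\id,\gamma}^*] \ast [T_{\gamma,\id}^*].
\]
By the second part of \eqref{eq:Tsi} in Proposition~\ref{2}, $[T_{\gamma,\id}^*] \ast s_i = -[T_{\gamma,\id}^*]$ for every $s_i \in J_\gamma$. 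Using associativity of the Borel-Moore convolution across the composable sequence $H(Z_{\mu_0\gamma})\times H(Z_{\gamma\mu_0})\times H(Z)\to H(Z)$, I obtain
\[
[T_{\theta_\gamma}^*] \ast s_i \;=\; [T_{\id,\gamma}^*] \ast \bigl([T_{\gamma,\id}^*] \ast s_i\bigr) \;=\; -[T_{\theta_\gamma}^*],
\]
which, by iteration, gives $[T_{\theta_\gamma}^*]\cdot y = (-1)^{\ell(y)}[T_{\theta_\gamma}^*]$ for every $y\in\W_\gamma$. This is property (ii).

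For (i) and (iii), I invert the upper-triangular change of basis \eqref{eq:Tww} to write $[T_{\theta_\gamma}^*] = \sum_{y \leq \theta_\gamma} m_y\, \Lambda_y^0$ for suitable rationals $m_y$, with leading coefficient $m_{\theta_\gamma}=1$. Since $\theta_\gamma$ is the longest element of the standard parabolic subgroup $\W_\gamma$ generated by $J_\gamma$, a reduced expression for $\theta_\gamma$ can be taken in the generators of $J_\gamma$, and the subword characterization of Bruhat order then identifies $\{y\in\W : y\leq\theta_\gamma\} = \W_\gamma$. Hence $[T_{\theta_\gamma}^*] = \sum_{y\in\W_\gamma} c_y\, y$ with $c_{\theta_\gamma}=1$, which is (i) and (iii). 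Combining with (ii), the relation $c_{ys_i}=-c_y$ for $y\in\W_\gamma$ and $s_i\in J_\gamma$ propagates from $c_{\theta_\gamma}=1$ to give $c_y=(-1)^{\ell(y)-\ell(\theta_\gamma)}$ throughout $\W_\gamma$, so $[T_{\theta_\gamma}^*]=x_\gamma$.

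The main technical point is the combinatorial identification of the Bruhat interval $[\id,\theta_\gamma]\subset\W$ with $\W_\gamma$; without it, the antisymmetry relations alone would only determine $[T_{\theta_\gamma}^*]$ up to one parameter per right coset of $\W_\gamma$ in $\W$. Associativity of convolution across the different Steinberg-type varieties $Z_{\mu_0\gamma}$, $Z_{\gamma\mu_0}$ and $Z_{\mu_0\mu_0}=Z$ is standard in the Chriss--Ginzburg setup and so presents no genuine obstacle.
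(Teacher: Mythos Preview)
Your proof is correct and follows essentially the same route as the paper's own argument: both use the factorization $[T_{\theta_\gamma}^*]=[T_{\id,\gamma}^*]\ast[T_{\gamma,\id}^*]$ from \eqref{eq:Twrv}, derive the antisymmetry $[T_{\theta_\gamma}^*]\ast s_i=-[T_{\theta_\gamma}^*]$ via Proposition~\ref{2}, invoke the triangularity \eqref{eq:Tww} together with the identification $\{y\le\theta_\gamma\}=\W_\gamma$, and then read off the sign-alternating coefficients. The only cosmetic difference is that the paper extracts the coefficient of a general $w$ by writing $(-1)^{\ell(\theta_\gamma)-\ell(w)}[T_{\theta_\gamma}^*]=[T_{\theta_\gamma}^*]\ast\theta_\gamma w$ and comparing, whereas you propagate the recursion $c_{ys_i}=-c_y$ directly; these are equivalent.
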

    \begin{proof}
Thanks to \eqref{eq:Tww}, we can assume 
$$[T_{\theta_\gamma}^*]=\theta_\gamma+\sum\limits_{w<\theta_\gamma}m_{w}w=\theta_\gamma+\sum\limits_{w\in\W_\gamma\backslash\{\theta_\gamma\}}m_{w}w, \quad\mbox{for some}\quad m_w\in\Z.$$
    
It follows from \eqref{eq:Tsi} and \eqref{eq:Twrv} that, for any $s_i \in J_\gamma$, $$[T_{\theta_\gamma}^*] * s_i=[T_{\id, \gamma}^*] * [T_{\gamma,\id}^*] * s_i=-[T_{\id, \gamma}^*] * [T_{\gamma,\id}^*]=-[T_{\theta_\gamma}^*],$$
        which implies $$(-1)^{\ell(\theta_\gamma)-\ell(w)}[T_{\theta_\gamma}^*]=[T_{\theta_\gamma}^*]*\theta_\gamma*w=w+\sum\limits_{y\in\W_{\gamma}\backslash\{\theta_\gamma\}}m_{y}y\theta_\gamma w$$ for any $w\in\W_{\gamma}$. Comparing the coefficients of $w$ on the both sides takes us to $m_{w}=(-1)^{\ell(\theta_\gamma)-\ell(w)}$. Hence $[T_{\theta_\gamma}^*]=\sum_{w\in\W_\gamma}(-1)^{\ell(\theta_\gamma)-\ell(w)}w=x_\gamma$.
    \end{proof}

\subsection{Lagrangian construction of Schur algebras}
      It is clear that  $H(\widetilde{\mathcal{N}}_\ff \times_\mathcal{N} \widetilde{\mathcal{N}})$ admits a right $\W$-action by $[T_{\gamma,w}^*] \cdot w':=[T_{\gamma,w}^*] * \Lambda_{w'}^0$.
\begin{lem}
    There exists a unique right $\mathbb{QW}$-module isomorphism $$\varphi: \T_\ff \rightarrow H(\widetilde{\mathcal{N}}_\ff \times_\mathcal{N} \widetilde{\mathcal{N}})\quad \mbox{satisfying}\quad x_\gamma \mapsto [T_{\gamma,\id}^*] (\gamma\in\Lambda_\ff).$$ Moreover, for $w\in \D_\gamma$,
            \begin{align}\label{eq:varphi}
                \varphi(x_\gamma w)=[T_{\gamma, w}^*]+\sum_{y\in\D_{\gamma},y<w} m_{yw}^\gamma [T_{\gamma, y}^*]\quad\mbox{with $m_{yw}^\gamma \in \Z$.}
            \end{align}
\end{lem}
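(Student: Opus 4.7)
The plan is to build $\varphi$ component by component. Since $\T_\ff=\bigoplus_{\gamma\in\Lambda_\ff}\T_\gamma$ and the target $H(\widetilde{\mathcal{N}}_\ff\times_{\mathcal{N}}\widetilde{\mathcal{N}})=\bigoplus_{\gamma\in\Lambda_\ff}H(\widetilde{\mathcal{N}}_\gamma\times_{\mathcal{N}}\widetilde{\mathcal{N}})$ decomposes along $\Lambda_\ff$, I would work one $\gamma$ at a time. The module $\T_\gamma=x_\gamma\Q\W$ is cyclic, generated by $x_\gamma$, so the only candidate right $\Q\W$-module map sending $x_\gamma\mapsto [T^*_{\gamma,\id}]$ is given by $\varphi(x_\gamma a)=[T^*_{\gamma,\id}]*\Lambda^0_{a}$ (via the identification $\Q\W\simeq H(Z)$ from Theorem \ref{cg}); this already gives uniqueness.

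For well-definedness I would note that $\T_\gamma$ is the cyclic right module on $x_\gamma$ modulo the relations $x_\gamma(1+s_i)=0$ for $s_i\in J_\gamma$ (see \eqref{eq:xsi}). Proposition \ref{2}, specifically the identity $[T^*_{\gamma,\id}]*s_i=-[T^*_{\gamma,\id}]$, shows that the corresponding relation is already satisfied in the target, so the assignment extends to a well-defined right $\Q\W$-module homomorphism.

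Next I would establish the triangularity \eqref{eq:varphi}, which will simultaneously prove bijectivity. For $w\in\D_\gamma$, write $\Lambda^0_w=[T^*_w]+\sum_{z<w}n_{zw}[T^*_z]$ using \eqref{eq:Tww} and expand
\begin{equation*}
\varphi(x_\gamma w)=[T^*_{\gamma,\id}]*[T^*_w]+\sum_{z<w}n_{zw}\,[T^*_{\gamma,\id}]*[T^*_z].
\end{equation*}
Proposition \ref{5} handles the leading term, giving $[T^*_{\gamma,\id}]*[T^*_w]\in[T^*_{\gamma,w}]+\sum_{y\in\D_\gamma,y<w}\Z[T^*_{\gamma,y}]$. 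For $z<w$, the convolution $[T^*_{\gamma,\id}]*[T^*_z]$ is supported on the closure of $\Ob_{\gamma,\id}\circ\Ob_{z}=\Ob_{\gamma,z'}$, where $z'\in\D_\gamma$ is the minimal coset representative of $\W_\gamma z$; by Lemma \ref{partial Bruhat order} and the basis description of $H(Z_{\gamma\nu})$, this convolution lies in $\sum_{y\in\D_\gamma,y\le z'}\Z[T^*_{\gamma,y}]\subset\sum_{y\in\D_\gamma,y<w}\Z[T^*_{\gamma,y}]$. Combining the two estimates yields the desired triangular expansion with leading coefficient $1$.

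The triangularity then shows that $\varphi$ sends the $\Q$-basis $\{x_\gamma w\,|\,w\in\D_\gamma\}$ of $\T_\gamma$ to a set related to the fundamental-class basis $\{[T^*_{\gamma,y}]\,|\,y\in\D_\gamma\}$ of $H(\widetilde{\mathcal{N}}_\gamma\times_{\mathcal{N}}\widetilde{\mathcal{N}})$ by a unipotent upper-triangular matrix with respect to the Bruhat order, hence $\varphi$ restricted to each $\T_\gamma$ is an isomorphism, and summing over $\gamma\in\Lambda_\ff$ finishes the proof. I expect the only mild obstacle to be the bookkeeping in the second step, namely bounding the Bruhat support of $[T^*_{\gamma,\id}]*[T^*_z]$ for arbitrary $z\in\W$ (not just $z\in\D_\gamma$); this is settled by passing to the minimal coset representative $z'$ of $\W_\gamma z$ and using $\Ob_{\gamma,\id}\circ\overline{\Ob}_z\subset\overline{\Ob}_{\gamma,z'}$, together with \eqref{eq1}.
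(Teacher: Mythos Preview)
Your proof is correct and follows essentially the same approach as the paper: use Proposition~\ref{2} (together with \eqref{eq:xsi}) to check well-definedness of the cyclic extension, then use Proposition~\ref{5} together with \eqref{eq:Tww} to get the unitriangular form, which gives bijectivity. The paper's proof is terser and simply asserts that Proposition~\ref{5} yields the triangular form for $[T^*_{\gamma,\id}]*\Lambda^0_w$; you have correctly unpacked the one point that requires a word of justification, namely bounding $[T^*_{\gamma,\id}]*[T^*_z]$ for the lower terms $z<w$ with $z$ possibly not in $\D_\gamma$, by passing to the minimal coset representative $z'\le z<w$ and invoking \eqref{eq1}.
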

  \begin{proof}
Combining \eqref{eq:xsi}, \eqref{eq:si0} and \eqref{eq:Tsi}, we know that $x_\gamma \mapsto [T_{\gamma,\id}^*] (\gamma\in\Lambda_\ff)$ determines a $\Q\W$-module homomorphism $\varphi: \T_\ff \rightarrow H(\widetilde{\mathcal{N}}_\ff \times_\mathcal{N} \widetilde{\mathcal{N}})$. It follows from Proposition~\ref{5} that $\varphi(x_\gamma w)=[T_{\gamma,\id}^*]*\Lambda_w^0 \in [T_{\gamma, w}^*]+ \sum_{y<w} \Z [T_{\gamma, y}^*]$ admits an upper-triangular form with all leading coefficients being $1$, which implies that $\varphi$ must be an isomorphism.
\end{proof}

The above lemma (together with Theorem~\ref{cg}) implies that $$\mathbb{S}_\ff\simeq\End_{H(Z)} (H(\widetilde{\mathcal{N}}_\ff \times_\mathcal{N} \widetilde{\mathcal{N}})).$$ 
Furthermore, we shall show in the following theorem that $\mathbb{S}_\ff\simeq H(Z_\ff)$, which is called the Lagrangian construction of the Schur algebra $\mathbb{S}_\ff$.
	\begin{thm}\label{geo schur}
       There is an isomorphism $\varphi$ between $H(Z_\ff)$ and $\mathbb{S}_\ff$:      
       \begin{align}
                \psi: H(Z_\ff) &\rightarrow \mathbb{S}_\ff,\nonumber\\
                \label{iso:psi}
                [T_{\gamma,w,\nu}^*] &\mapsto \phi_{\gamma\nu}^w+ \sum_{y<w} p_{yw}^{\gamma\nu}\phi_{\gamma\nu}^y
            \end{align}
            for $w \in \D_{\gamma\nu}$ and some $p_{yw}^{\gamma\nu} \in \Z$.
            Furthermore, the coefficients satisfy $p_{yw}^{\gamma\nu}=p_{y^{-1}w^{-1}}^{\nu\gamma}$.
	\end{thm}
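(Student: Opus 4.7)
The plan is to transport the left-convolution action of $H(Z_\ff)$ on $H(\widetilde{\mathcal{N}}_\ff\times_{\mathcal{N}}\widetilde{\mathcal{N}})$ through the right $\Q\W$-module isomorphism $\varphi$ from the preceding lemma. For $c\in H(Z_\ff)$ I set $\psi(c):=\varphi^{-1}\circ L_c\circ\varphi$, where $L_c$ denotes left convolution by $c$. Since the right $H(Z)\simeq\Q\W$-action on $H(\widetilde{\mathcal{N}}_\ff\times_{\mathcal{N}}\widetilde{\mathcal{N}})$ is again by convolution, associativity forces $L_c$ to commute with this right action, so $\psi(c)\in\End_{\Q\W}(\T_\ff)=\mathbb{S}_\ff$ and $\psi$ is an algebra homomorphism by the same associativity. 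Dimensions match ($\dim_\Q H(Z_\ff)=|\Xi_\ff|=\dim_\Q\mathbb{S}_\ff$ by Corollary~\ref{cor:stein} and the basis of $\mathbb{S}_\ff$ recalled in \S\ref{sec:schur}), so it suffices to verify injectivity and then refine the resulting expansion.

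To compute $\psi([T_{\gamma,w,\nu}^*])$ I evaluate on each $x_\mu\in\T_\ff$: the convolution $[T_{\gamma,w,\nu}^*]*[T_{\mu,\id}^*]$ vanishes unless $\mu=\nu$, while for $\mu=\nu$ Proposition~\ref{6} gives $[T_{\gamma,w,\nu}^*]*[T_{\nu,\id}^*]=[T_{\gamma,\theta_\gamma w_{\gamma\nu}^+}^*]$, hence by the upper-triangularity \eqref{eq:varphi},
\[
\psi([T_{\gamma,w,\nu}^*])(x_\nu)=\varphi^{-1}([T_{\gamma,\theta_\gamma w_{\gamma\nu}^+}^*])=x_\gamma\cdot\theta_\gamma w_{\gamma\nu}^+ +\sum_{z\in\D_\gamma,\,z<\theta_\gamma w_{\gamma\nu}^+}c_z\,x_\gamma z.
\]
Distinct $w\in\D_{\gamma\nu}$ yield distinct longest-length reps $\theta_\gamma w_{\gamma\nu}^+\in\D_\gamma$, so $\psi$ is injective, hence an isomorphism. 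To match with the Schur basis I decompose $\W_\gamma w\W_\nu=\bigsqcup_{z\in S_w}\W_\gamma z$ with $S_w\subset\D_\gamma$ (Howlett); the unique longest element of $S_w$ is $\theta_\gamma w_{\gamma\nu}^+$, and using $\sum_{u\in\W_\gamma}(-1)^{\ell(uz)}uz=(-1)^{\ell(z)+\ell(\theta_\gamma)}x_\gamma z$ for $z\in\D_\gamma$ one obtains $\phi_{\gamma\nu}^w(x_\nu)=x_\gamma\theta_\gamma w_{\gamma\nu}^+ +\sum_{z\in S_w\setminus\{\theta_\gamma w_{\gamma\nu}^+\}}\epsilon_z\,x_\gamma z$ with $\epsilon_z=\pm 1$. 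Combining this with Lemma~\ref{lem2} (giving $T_{\gamma,y,\nu}^*\circ T_{\nu,\id}^*=T_{\gamma,\theta_\gamma y_{\gamma\nu}^+}^*$) and Lemma~\ref{partial Bruhat order} (whose orbit-closure content forces $y\le w$ in $\D_{\gamma\nu}\Rightarrow\theta_\gamma y_{\gamma\nu}^+\le\theta_\gamma w_{\gamma\nu}^+$ in $\D_\gamma$), the change-of-basis from $\{\psi([T_{\gamma,w,\nu}^*])(x_\nu)\}$ to $\{\phi_{\gamma\nu}^w(x_\nu)\}$ is unitriangular in Bruhat order, and inverting it recursively yields integer coefficients $p_{yw}^{\gamma\nu}$ as in \eqref{iso:psi}. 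The Bruhat comparison for double-coset representatives is the step I expect to require the most care, and I plan to dispatch it via the geometric closure arguments of Lemmas~\ref{partial Bruhat order} and \ref{lem2}.

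Finally, for the symmetry $p_{yw}^{\gamma\nu}=p_{y^{-1}w^{-1}}^{\nu\gamma}$ I will exploit the swap involution $\iota:Z_\ff\to Z_\ff$, $(x,gP_\gamma,g'P_\nu)\mapsto(x,g'P_\nu,gP_\gamma)$, which exchanges $Z_{\gamma\nu}$ with $Z_{\nu\gamma}$ and sends $T_{\gamma,w,\nu}^*$ to $T_{\nu,w^{-1},\gamma}^*$. By standard functoriality (cf.\ \cite[\S 2.7]{CG97}), $\iota_*$ is an anti-involution of $(H(Z_\ff),*)$. On the Schur side, the anti-involution $w\mapsto w^{-1}$ of $\Q\W$ induces, via $\mathbb{S}_\ff=\End_{\Q\W}(\T_\ff)$, an anti-involution $\tau$ on $\mathbb{S}_\ff$ satisfying $\tau(\phi_{\gamma\nu}^w)=\phi_{\nu\gamma}^{w^{-1}}$. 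The swap-equivariance of the convolution set-up gives $\psi\circ\iota_*=\tau\circ\psi$; applying this intertwiner to \eqref{iso:psi} and using $y\le w\Leftrightarrow y^{-1}\le w^{-1}$ in Bruhat order produces the claimed equality of coefficients.
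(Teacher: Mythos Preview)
Your construction of $\psi$ and the verification that it is an isomorphism with unitriangular expansion \eqref{iso:psi} follow exactly the paper's route: associativity of convolution gives the algebra map into $\End_{H(Z)}(H(\widetilde{\mathcal{N}}_\ff\times_\mathcal{N}\widetilde{\mathcal{N}}))\simeq\mathbb{S}_\ff$, Proposition~\ref{6} yields $\psi([T_{\gamma,w,\nu}^*])(x_\nu)=\varphi^{-1}([T^*_{\gamma,\theta_\gamma w_{\gamma\nu}^+}])$, and the leading-term analysis via \eqref{eq:varphi} gives injectivity and integrality.

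For the symmetry $p_{yw}^{\gamma\nu}=p_{y^{-1}w^{-1}}^{\nu\gamma}$, however, your argument diverges from the paper and contains a real gap. The assertion that the swap anti-involution $\iota_*$ on $H(Z_\ff)$ and the anti-involution $\tau$ on $\mathbb{S}_\ff$ satisfy $\psi\circ\iota_*=\tau\circ\psi$ is the entire content of the claim, and ``swap-equivariance of the convolution set-up'' does not deliver it: convolution functoriality tells you $\iota_*$ is an anti-automorphism of $H(Z_\ff)$, but says nothing about its compatibility with $\psi$, which is defined through the \emph{module} $\T_\ff$. To prove the intertwining you would need to transport the swap to an involution $H(\widetilde{\mathcal{N}}_\ff\times_\mathcal{N}\widetilde{\mathcal{N}})\to H(\widetilde{\mathcal{N}}\times_\mathcal{N}\widetilde{\mathcal{N}}_\ff)$, build an analogue of $\varphi$ on the target identifying it with $\bigoplus_\gamma\Q\W x_\gamma$, and check that this matches the algebraic $\tau$ --- none of which is automatic. (Relatedly, an anti-involution on $\Q\W$ does not by itself ``induce'' one on $\End_{\Q\W}(\T_\ff)$; you need an extra datum such as a compatible bilinear form.)

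The paper sidesteps this difficulty by pushing everything into $H(Z)\simeq\Q\W$, where the swap anti-involution is completely explicit on both bases ($[T_w^*]^t=[T_{w^{-1}}^*]$ and $w^t=w^{-1}$). Concretely, it uses the sandwich formula \eqref{eq:Twrv} together with Corollary~\ref{7} to obtain
\[
[T_{w_{\gamma\nu}^+}^*]=[T_{\id,\gamma}^*]*[T_{\gamma,w,\nu}^*]*[T_{\nu,\id}^*]
=\sum_{y\le w}(-1)^{\ell(y_{\gamma\nu}^+)}p_{yw}^{\gamma\nu}\,\mathrm{sgn}(\W_\gamma y\W_\nu)
\]
(with $p_{ww}^{\gamma\nu}=1$), and then applies the anti-involution of $H(Z)$ to both sides to read off the coefficient identity. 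If you want to keep your swap-on-$Z_\ff$ approach, the cleanest fix is precisely this reduction to $H(Z)$.
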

	
	\begin{proof}
		By the associativity of convolution, we have a natural algebra homomorphism $$\psi: H(Z_\ff) \to \End_{H(Z)} (H(\widetilde{\mathcal{N}}_\ff \times_\mathcal{N} \widetilde{\mathcal{N}})) \simeq \mathbb{S}_\ff.$$
		By Proposition~\ref{6} and \eqref{eq:varphi}, we see that $\psi$ is injective and
        \begin{align*}
            \psi([T_{\gamma,w,\nu}^*]) (x_\nu)= \varphi^{-1}([T_{\gamma, \theta_\gamma w_{\gamma\nu}^+}^*]) \in x_\gamma \theta_\gamma w_{\gamma\nu}^+ +\sum_{y<\theta_\gamma w_{\gamma\nu}^+} \Z x_\gamma y,
        \end{align*}
        which implies $\psi([T_{\gamma,w,\nu}^*]) \in \phi_{\gamma\nu}^w+ \sum_{y<w} \Z \phi_{\gamma\nu}^y$ and hence $p_{yw}^{\gamma\nu} \in \Z$.
        Comparing the dimensions, we know that $\psi$ should be an isomorphism. 

Below we shall prove $p_{yw}^{\gamma\nu}=p_{y^{-1}w^{-1}}^{\nu\gamma}$. 
        
        It follows from the associativity of convolution again that $$L_\gamma: H(\widetilde{\mathcal{N}}_\ff \times_\mathcal{N} \widetilde{\mathcal{N}}) \rightarrow \mathbb{Q}\W,\quad c \mapsto [T_{\id, \gamma}^*]*c$$ defines a right $\mathbb{Q}\W$-module homomorphism. Then $L_\gamma \circ \varphi$ defines a right $\mathbb{Q}\W$-module homomorphism from $\T_\ff$ to $\mathbb{Q}\W$. We compute directly that 
        $$(L_\gamma \circ \varphi)(x_\gamma)=[T_{\id, \gamma}^*]*[T_{\gamma,\id}^*]=[T_{\theta_\gamma}^*]=x_\gamma,$$
        where the first equality holds by definitions, and the second (resp. third) equality holds by \eqref{eq:TT} (resp. Corollary~\ref{7}).
        Thus by \eqref{eq:Twrv}, we calculate
        \begin{align*}
            [T_{w_{\gamma\nu}^+}^*]&=[T_{\id, \gamma}^*] * [T_{\gamma,w,\nu}^*] * [T_{\nu, \id}^*]=(L_\gamma \circ \varphi)\circ \psi([T_{\gamma,w,\nu}^*])(x_\nu)\\
            &=(-1)^{\ell(w_{\gamma\nu}^+)}\mathrm{sgn}(\W_\gamma w \W_\nu)+ \sum_{y<w} (-1)^{\ell(y_{\gamma\nu}^+)}p_{yw}^{\gamma\nu}\mathrm{sgn}(\W_\gamma y \W_\nu),
        \end{align*}
        where $\mathrm{sgn}(\W_\gamma y \W_\nu):=\sum_{\sigma\in\W_\gamma y \W_\nu}(-1)^{\ell(\sigma)}\sigma$.

               There is an anti-involution $c\mapsto c^{t}$ on $H(Z)$ induced by switching factors on $\widetilde{\mathcal{N}} \times \widetilde{\mathcal{N}}$ (refer to \cite[Page 179]{CG97}). One has $(T^{*}_{w})^{t}=(\overline{T^{*}_{\Ob_{w}}})^{t}=\overline{({T^{*}_{\Ob_{w}}})^t}=\overline{T^{*}_{\Ob_{w^{-1}}}}=T^{*}_{w^{-1}}$ and hence $[T^{*}_{w}]^{t}=[T^{*}_{w^{-1}}]$.
        Noting that $(w_{\nu\gamma}^+)^{-1}$ is the unique longest element of $\W_{\nu} w^{-1}\W_{\gamma}$. So
         \begin{align*}
[T_{w_{\gamma\nu}^+}^*]^{t}&=[T^{*}_{(w_{\nu\gamma}^+)^{-1}}]=(-1)^{\ell(w_{\gamma\nu}^+)}\mathrm{sgn}(\W_\nu w^{-1} \W_\gamma)+ \sum_{y<w^{-1}} (-1)^{\ell(y_{\gamma\nu}^+)}p_{yw^{-1}}^{\nu\gamma}\mathrm{sgn}(\W_\nu y \W_\gamma)\\
            &=(-1)^{\ell(w_{\gamma\nu}^+)}\mathrm{sgn}(\W_\nu w^{-1} \W_\gamma)+ \sum_{y<w} (-1)^{\ell(y_{\gamma\nu}^+)}p_{y^{-1}w^{-1}}^{\nu\gamma}\mathrm{sgn}(\W_\nu y^{-1} \W_\gamma).
        \end{align*}
        On the other hand, by \cite[Lemma~3.6.11]{CG97}, we have 
        \begin{align*}
         [T_{w_{\gamma\nu}^+}^*]^{t}&=(-1)^{\ell(w_{\gamma\nu}^+)}(\mathrm{sgn}(\W_\gamma w \W_\nu))^{t}+ \sum_{y<w} (-1)^{\ell(y_{\gamma\nu}^+)}p_{yw}^{\gamma\nu}(\mathrm{sgn}(\W_\gamma y \W_\nu))^{t}\\&=(-1)^{\ell(w_{\gamma\nu}^+)}\mathrm{sgn}(\W_\nu w^{-1} \W_\gamma)+ \sum_{y<w} (-1)^{\ell(y_{\gamma\nu}^+)}p_{yw}^{\gamma\nu}\mathrm{sgn}(\W_\nu y^{-1} \W_\gamma).
        \end{align*}
        Comparing the coefficients takes us to $p_{yw}^{\gamma\nu}=p_{y^{-1}w^{-1}}^{\nu\gamma}$.
	\end{proof}

\begin{rem}
    It follows from \eqref{iso:psi} directly that the isomorphism $\psi$ is still valid over $\Z$ instead of $\mathbb{Q}$.
\end{rem}

\section{Representations of Schur algebras}
\subsection{Partial order for nilpotent orbits}
There is a partial order on the set $\{\mathcal{N}_\alpha~|~\alpha\in\mathbb{J}\}$ of nilpotent orbits as follows:
\begin{equation*}
\mathcal{N}_\alpha\leq\mathcal{N}_{\alpha'}\stackrel{\mathrm{def.}}\Leftrightarrow\mathcal{N}_\alpha\subset\overline{\mathcal{N}}_{\alpha'},\qquad \mathcal{N}_\alpha <\mathcal{N}_{\alpha'}\stackrel{\mathrm{def.}}\Leftrightarrow\mathcal{N}_\alpha \subset\overline{\mathcal{N}}_{\alpha'}\backslash\mathcal{N}_{\alpha'}.
\end{equation*}
Recall in \S~\ref{sec:steinberg} the notation $Z_{\gamma\nu}^{\mathcal{S}}$ for any subset $\mathcal{S}\in\mathcal{N}$.
Set $$Z_{\gamma\nu}^{\leq \alpha}:=\bigsqcup_{\mathcal{N}_{\alpha'}\leq\mathcal{N}_\alpha}Z_{\gamma\nu}^{\mathcal{N}_{\alpha'}}=Z_{\gamma\nu}^{\overline{\mathcal{N}_\alpha}}\quad \text{and}\quad Z_{\gamma\nu}^{<\alpha}:=\bigsqcup_{\mathcal{N}_{\alpha'}<\mathcal{N}_\alpha}Z_{\gamma\nu}^{\mathcal{N}_{\alpha'}}.$$
 Recall $d_{\gamma\nu}=\dim_{\mathbb{R}}Z_{\gamma\nu}$ in \eqref{def:dln}.
 \begin{lem}\label{H_O} The homology $H_{d_{\gamma\nu}}(Z_{\gamma\nu}^{\mathcal{N}_{\alpha}})\neq 0$ if and only if $\mathcal{N}_\alpha$ is relevant for both $\pi_{\gamma}$ and $\pi_{\nu}$.
      Furthermore, there is a canonical isomorphism:$$H_{d_{\gamma\nu}}(Z^{\leq\alpha}_{\gamma\nu})/H_{d_{\gamma\nu}}(Z^{<\alpha}_{\gamma\nu})\simeq H_{d_{\gamma\nu}}(Z_{\gamma\nu}^{\mathcal{N}_{\alpha}}).$$    
       \end{lem}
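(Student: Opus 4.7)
The plan is to separate the two assertions and handle each by a direct application of the dimension estimates already established together with the standard localization long exact sequence in Borel–Moore homology.

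For the first statement, I would invoke the preceding Lemma~\ref{lem:relZ}, which says $\dim_{\mathbb{R}} Z_{\gamma\nu}^{\mathcal{N}_\alpha} \leq d_{\gamma\nu}$ with equality precisely when $\mathcal{N}_\alpha$ is relevant for both $\pi_\gamma$ and $\pi_\nu$. Since the Borel--Moore homology of a complex variety $X$ vanishes strictly above $\dim_{\mathbb{R}} X$, and in top degree is free on the fundamental classes of the top-dimensional irreducible components, we get $H_{d_{\gamma\nu}}(Z_{\gamma\nu}^{\mathcal{N}_\alpha}) \neq 0$ if and only if $\dim_{\mathbb{R}} Z_{\gamma\nu}^{\mathcal{N}_\alpha} = d_{\gamma\nu}$, which is exactly the relevance condition. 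This half is essentially immediate once the dimension count is in hand.

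For the second statement I would use that $Z_{\gamma\nu}^{<\alpha}$ is closed in $Z_{\gamma\nu}^{\leq\alpha}$ with open complement $Z_{\gamma\nu}^{\mathcal{N}_\alpha}$ (here it matters that $Z_{\gamma\nu}^{\leq\alpha}= Z_{\gamma\nu}^{\overline{\mathcal{N}_\alpha}}$ and that $\overline{\mathcal{N}_\alpha}\setminus\mathcal{N}_\alpha=\bigsqcup_{\mathcal{N}_{\alpha'}<\mathcal{N}_\alpha}\mathcal{N}_{\alpha'}$ is closed), which triggers the standard long exact sequence
\begin{equation*}
H_{d_{\gamma\nu}+1}(Z_{\gamma\nu}^{\mathcal{N}_\alpha}) \to H_{d_{\gamma\nu}}(Z_{\gamma\nu}^{<\alpha}) \xrightarrow{i_*} H_{d_{\gamma\nu}}(Z_{\gamma\nu}^{\leq\alpha}) \xrightarrow{j^*} H_{d_{\gamma\nu}}(Z_{\gamma\nu}^{\mathcal{N}_\alpha}) \to H_{d_{\gamma\nu}-1}(Z_{\gamma\nu}^{<\alpha}).
\end{equation*}
The leftmost group vanishes by the dimension bound above $d_{\gamma\nu}$, so $i_*$ is injective and the quotient on the left-hand side of the desired isomorphism is well-defined and identifies with $\mathrm{im}\, j^*$. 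All that remains is the surjectivity of $j^*$.

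For the surjectivity I would argue directly on fundamental classes, which I expect to be the only mildly subtle point. If $\mathcal{N}_\alpha$ is not relevant for both maps, then $H_{d_{\gamma\nu}}(Z_{\gamma\nu}^{\mathcal{N}_\alpha})=0$ by the first part and there is nothing to prove. If it is relevant, each generator of $H_{d_{\gamma\nu}}(Z_{\gamma\nu}^{\mathcal{N}_\alpha})$ is a fundamental class $[X]$ of a top-dimensional irreducible component $X\subset Z_{\gamma\nu}^{\mathcal{N}_\alpha}$; taking closure in $Z_{\gamma\nu}^{\leq\alpha}$ gives a $d_{\gamma\nu}$-dimensional closed irreducible subvariety $\overline{X}$ whose fundamental class $[\overline{X}]\in H_{d_{\gamma\nu}}(Z_{\gamma\nu}^{\leq\alpha})$ satisfies $j^*[\overline{X}]=[X]$ because $X$ is open in $\overline{X}$ and restriction of a fundamental class along an open inclusion is the fundamental class of the open part. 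This lifts every generator of the right-hand group, so $j^*$ is surjective and the long exact sequence collapses to the stated isomorphism.
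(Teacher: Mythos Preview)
Your proof is correct and follows essentially the same route as the paper: both use the dimension bound from Lemma~\ref{lem:relZ} for the first assertion and the long exact sequence for the closed--open pair $(Z_{\gamma\nu}^{<\alpha},Z_{\gamma\nu}^{\mathcal{N}_\alpha})$ for the second. The only cosmetic difference is that you establish surjectivity of $j^*$ by explicitly lifting fundamental classes via closures, whereas the paper reaches the same conclusion by counting dimensions on both sides of the already-established injection; these are two phrasings of the same argument.
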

 \begin{proof}
     The first statement follows from Lemma~\ref{lem:relZ} directly. Below let us prove the second statement.
     
     Note that $Z_{\gamma\nu}^{\mathcal{N}_\alpha}$ is an open subvariety of
 $Z_{\gamma\nu}^{\leq\alpha}$ such that $Z_{\gamma\nu}^{<\alpha}=Z_{\gamma\nu}^{\leq\alpha}\backslash Z_{\gamma\nu}^{\mathcal{N}_\alpha}$. Since $\dim_{\mathbb{C}}(Z_{\gamma\nu}^{\mathcal{N}_\alpha})\le\dim_{\mathbb{C}}(Z_{\gamma\nu})=\frac{1}{2}d_{\gamma\nu}$, the exact sequence of Borel-Moore homology 
 $$0\to H_{d_{\gamma\nu}}(Z^{<\alpha}_{\gamma\nu})\to H_{d_{\gamma\nu}}(Z^{\leq\alpha}_{\gamma\nu})\to H_{d_{\gamma\nu}}(Z_{\gamma\nu}^{\mathcal{N}_\alpha})\to\cdots$$
 yields an inclusion $H_{d_{\gamma\nu}}(Z^{\leq\alpha}_{\gamma\nu})/H_{d_{\gamma\nu}}(Z^{<\alpha}_{\gamma\nu})\hookrightarrow H_{d_{\gamma\nu}}(Z_{\gamma\nu}^{\mathcal{N}_\alpha})$. So we only need to consider the case  $H_{d_{\gamma\nu}}(Z_{\gamma\nu}^{\mathcal{N}_\alpha})\neq 0$, which means that $\mathcal{N}_\alpha$ are relevant for both $\pi_\gamma$ and $\pi_\nu$. In this case, $H_{d_{\gamma\nu}}(Z_{\gamma\nu}^{\mathcal{N}_\alpha})$ is a finite dimensional $\mathbb{Q}$-vector space with a basis formed by the fundamental classes of top dimensional irreducible components of $Z_{\gamma\nu}^{\mathcal{N}_\alpha}$. The quotient $H_{d_{\gamma\nu}}(Z^{\leq\alpha}_{\gamma\nu})/H_{d_{\gamma\nu}}(Z^{<\alpha}_{\gamma\nu})$ is a $\mathbb{Q}$-vector space with basis formed by the fundamental classes of top dimensional irreducible components of $Z_{\gamma\nu}^{\leq\alpha}$ that meet $Z_{\gamma\nu}^{\mathcal{N}_\alpha}$, which equal to fundamental classes of Zariski closure of top dimensional irreducible components in $Z_{\gamma\nu}^{\mathcal{N}_\alpha}$. Hence the dimensions of these two vector spaces are the same, which forces that $H_{d_{\gamma\nu}}(Z^{\leq\alpha}_{\gamma\nu})/H_{d_{\gamma\nu}}(Z^{<\alpha}_{\gamma\nu})\simeq H_{d_{\gamma\nu}}(Z_{\gamma\nu}^{\mathcal{N}_\alpha})$.
 \end{proof}

\subsection{Relevant homology}
 Denote $$Z_{\ff}^{\leq \alpha}:=\bigsqcup\limits_{\gamma,\nu\in\Lambda_{\ff}}Z^{ \leq \alpha}_{\gamma\nu}\quad \mbox{and}\quad Z_\ff^{<\alpha}:=\bigsqcup\limits_{\gamma,\nu\in\Lambda_{\ff}}Z^{<\alpha}_{\gamma\nu}.$$
We define
$$H_{\rel}(Z_{\ff}^{\leq \alpha}):=\bigoplus_{\gamma,\nu\in\Lambda_{\ff}}H_{d_{\gamma\nu}}(Z_{\gamma\nu}^{\leq \alpha})\quad \mbox{and}\quad H_{\rel}(Z_{\ff}^{<\alpha}):=\bigoplus_{\gamma,\nu\in\Lambda_{\ff}}H_{d_{\gamma\nu}}(Z_{\gamma\nu}^{<\alpha}).$$
Since $Z_{\ff}^{\leq \alpha}\circ Z_\ff=Z_\ff\circ Z_{\ff}^{\leq \alpha}=Z_{\ff}^{\leq \alpha}$ and $Z_{\ff}^{<\alpha}\circ Z_\ff=Z\circ Z_{\ff}^{<\alpha}=Z_{\ff}^{<\alpha}$, we know that $H_{\rel}(Z_{\ff}^{\leq \alpha})$ and $H_{\rel}(Z_{\ff}^{<\alpha})$ are two-sided ideals of $H(Z_{\ff})$. 
Denote $$H_{\ff,\alpha}:=H_{\rel}(Z_{\ff}^{\leq \alpha})/H_{\rel}(Z_{\ff}^{<\alpha}) \simeq \bigoplus\limits_{\gamma,\nu\in\Lambda_{\ff}} H_{d_{\gamma\nu}}(Z_{\gamma\nu}^{\le a})/H_{d_{\gamma\nu}}(Z_{\gamma\nu}^{<a}).$$ Since $H(Z_\ff)$ is a finite dimensional semisimple algebra, we have an algebra isomorphism $$H(Z_\ff)\simeq \mathrm{gr}\ H(Z_\ff)= \bigoplus_{\alpha\in\mathbb{J}} H_{\ff,\alpha}.$$

Recall the notations about transversal slices in \S\ref{sec:transslice}. By Lemma~\ref{lem:vsdim}, we have that $H_{d_{\gamma\nu}}(Z^{V_\alpha}_{\gamma\nu})=H_{d'_{\gamma\nu}}(Z^{x_\alpha}_{\gamma\nu})\neq0$ only if $\mathcal{N}_\alpha$ is relevant for $\pi_{\gamma}$ and $\pi_{\nu}$. In this case, there are embeddings $H_{d_{\gamma\nu}}(Z^{V_\alpha}_{\gamma\nu})\hookrightarrow H_{d_{\gamma\nu}}(Z^{V}_{\gamma\nu})$ and $H_{d'_{\gamma\nu}}(Z^{x_\alpha}_{\gamma\nu})\hookrightarrow H_{d'_{\gamma\nu}}(Z^{S}_{\gamma\nu})$.

We denote
\begin{align*}
&H_{\rel}(Z_{\ff}^{V_\alpha})=\bigoplus\limits_{\gamma,\nu\in\Lambda_{\ff}}H_{d_{\gamma\nu}}(Z_{\gamma\nu}^{V_\alpha}),\qquad H_{\rel}(Z_{\ff}^{V})=\bigoplus\limits_{\gamma,\nu\in\Lambda_{\ff}} H_{d_{\gamma\nu}}(Z_{\gamma\nu}^{V}),\\
&H_{\rel}(Z_{\ff}^{x_\alpha})=\bigoplus\limits_{\gamma,\nu\in\Lambda_{\ff}} H_{{d'_{\gamma\nu}}}(Z_{\gamma\nu}^{x_\alpha}),\qquad H_{\rel}(Z_{\ff}^{S})=\bigoplus\limits_{\gamma,\nu\in\Lambda_{\ff}}H_{d'_{\gamma\nu}}(Z_{\gamma\nu}^{S}),\\
&H_\rel(Z_{\ff}^{\mathcal{N}_\alpha})=\bigoplus\limits_{\gamma,\nu\in\Lambda_{\ff}}H_{d_{\gamma\nu}}(Z_{\gamma\nu}^{\mathcal{N}_\alpha}).
\end{align*}
They satisfy the following commutative diagram:
$$\begin{tikzcd}\label{diagram 2}
H(Z_\ff) \arrow[r, "\mathrm{Res}_{V}"]                                                                 & H_{\rel}(Z_{\ff}^{V}) \arrow[r, "{g}"]                              & H_{\rel}(Z_{\ff}^{S})                 \\
H_{\rel}(Z_{\ff}^{\leq \alpha}) \arrow[u, hook] \arrow[r, "i"] \arrow[d, "{j}"] & H_{\rel}(Z_{\ff}^{V_\alpha}) \arrow[u, hook] \arrow[r, "{g}'"] & H_{\rel}(Z_{\ff}^{x_\alpha}) \arrow[u, hook] \\
H_{\rel}(Z_{\ff}^{\mathcal{N}_\alpha}) \arrow[ru, "{k}"]                                                  &                                                           &                         
\end{tikzcd},$$
\noindent where $\mathrm{Res}_V$ is the restriction map $[X]\mapsto[X\cap Z_\ff^V]$; $i,j,k$ are induced by open embedding; $g$ is the Gysin pull-back of the section map $Z_{\ff}^{S}\to Z_{\ff}^{V}: y \mapsto (x_\alpha,y)$ and $g'$ is its restriction. More precisely, we have $g([V_\alpha]\boxtimes c)=c$ for $c\in H_{\rel}(Z^{S}_{\ff})$.

\begin{lem}
    The maps $g$ and $g'$ are isomorphisms of vector spaces.
\end{lem}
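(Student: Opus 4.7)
My plan is to reduce both statements to the Künneth formula in Borel--Moore homology, combined with a simple dimension count. I will treat $g$ first; the argument for $g'$ is entirely parallel.

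Recall from \S\ref{sec:transslice} that the transversal slice $S$ together with the open neighbourhood $V$ produces a trivial fibration $V_\alpha \times Z_{\gamma\nu}^{S} \xrightarrow{\sim} Z_{\gamma\nu}^{V}$ for each pair $\gamma,\nu\in\Lambda_\ff$ (this is the content of \cite[Corollary~3.2.21]{CG97} as used in the transversal slice setup). Applying the Künneth formula for Borel--Moore homology \cite[\S2.6.19]{CG97} therefore gives a canonical decomposition
\begin{equation*}
H_{d_{\gamma\nu}}(Z_{\gamma\nu}^{V})\ \simeq\ \bigoplus_{i+j=d_{\gamma\nu}} H_i(V_\alpha)\otimes H_j(Z_{\gamma\nu}^{S}).
\end{equation*}
Since $V_\alpha$ is a connected smooth submanifold of $\mathcal{N}_\alpha$ of real dimension $\dim_{\mathbb R}\mathcal{N}_\alpha$, its Borel--Moore homology vanishes above degree $\dim_{\mathbb R}\mathcal{N}_\alpha$ and is one-dimensional, spanned by $[V_\alpha]$, in the top degree. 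On the other hand, the isomorphism $V_\alpha\times Z_{\gamma\nu}^{S}\simeq Z_{\gamma\nu}^{V}$ together with Lemma~\ref{lem:vsdim} (or directly from the open embedding $Z_{\gamma\nu}^{V}\hookrightarrow Z_{\gamma\nu}$) gives $\dim_{\mathbb R}Z_{\gamma\nu}^{S}\le d'_{\gamma\nu}=d_{\gamma\nu}-\dim_{\mathbb R}\mathcal{N}_\alpha$, so $H_j(Z_{\gamma\nu}^{S})=0$ for $j>d'_{\gamma\nu}$.

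Feeding these vanishings into the Künneth decomposition, the only surviving summand in the total degree $d_{\gamma\nu}$ is the one with $i=\dim_{\mathbb R}\mathcal{N}_\alpha$ and $j=d'_{\gamma\nu}$, yielding
\begin{equation*}
H_{d_{\gamma\nu}}(Z_{\gamma\nu}^{V})\ \simeq\ \mathbb{Q}[V_\alpha]\otimes H_{d'_{\gamma\nu}}(Z_{\gamma\nu}^{S}),\qquad [V_\alpha]\boxtimes c\longleftrightarrow c.
\end{equation*}
By construction, the Gysin pull-back $g$ sends $[V_\alpha]\boxtimes c$ to $c$, so $g$ is precisely the inverse of this Künneth identification, hence an isomorphism. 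Summing over $\gamma,\nu\in\Lambda_\ff$ gives the claim for $g$.

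For $g'$ we repeat the same reasoning with the trivial fibration $V_\alpha\times Z_{\gamma\nu}^{x_\alpha}\xrightarrow{\sim}Z_{\gamma\nu}^{V_\alpha}$ and with the bound $\dim_{\mathbb R}Z_{\gamma\nu}^{x_\alpha}\le d'_{\gamma\nu}$ from Lemma~\ref{lem:vsdim}; the identical degree-counting argument forces
\begin{equation*}
H_{d_{\gamma\nu}}(Z_{\gamma\nu}^{V_\alpha})\ \simeq\ \mathbb{Q}[V_\alpha]\otimes H_{d'_{\gamma\nu}}(Z_{\gamma\nu}^{x_\alpha}),
\end{equation*}
and $g'$ is once again the inverse of this identification. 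There is no real obstacle here beyond bookkeeping of degrees; the only subtle point worth flagging is that one must use $V_\alpha$ connected (guaranteed by the choice in \S\ref{sec:transslice}) to ensure that the top Borel--Moore homology of $V_\alpha$ has rank exactly one.
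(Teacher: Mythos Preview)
Your proof is correct and follows essentially the same approach as the paper's own proof: both use the product decompositions $Z_\ff^V\simeq V_\alpha\times Z_\ff^S$ and $Z_\ff^{V_\alpha}\simeq V_\alpha\times Z_\ff^{x_\alpha}$, the K\"unneth formula, and the fact that $V_\alpha$ is a connected manifold so its top Borel--Moore homology is $\mathbb{Q}[V_\alpha]$. Your version simply spells out the degree-counting that forces the single surviving K\"unneth summand, which the paper leaves implicit.
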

\begin{proof}
    Since $V_\alpha$ is a connected manifold, we have $H(V_\alpha)=\mathbb{Q}\cdot[V_\alpha]\simeq \mathbb{Q}$. Note that $Z_{\ff}^V \simeq V_\alpha \times Z_{\ff}^{S}$ and $Z_{\ff}^{V_\alpha}\simeq V_a\times Z^{x_\alpha}_{\ff}$. Then by K\"unneth formula, the Gysin pull-backs $g$ and $g'$ are isomorphisms. 
\end{proof}

Recall $V_\alpha$ is a closed subset of $V$ so that $\widetilde{V}_\alpha$ is a closed subset of $\widetilde{V}$.
Taking $\widetilde{V} \times \widetilde{V}$ as the ambient space of $Z^{V_\alpha}_{\ff}$ and $Z_{\ff}^{x_\alpha}$, one has $Z_{\ff}^{V} \circ Z^{V}_{\ff}=Z^{V}_{\ff}$, $Z^{V}_{\ff}\circ Z_{\ff}^{V_\alpha}=Z_{\ff}^{V_\alpha} \circ Z^{V}_{\ff}=Z^{V_\alpha}_{\ff}$ and $Z^{V}_{\ff}\circ Z^{x_\alpha}_{\ff}=Z^{x_\alpha}_{\ff}\circ Z^{V}_{\ff}=Z^{x_\alpha}_{\ff}$.
 By $\widetilde{V}\simeq V_\alpha \times\widetilde{S}$, $\widetilde{S}$ is also smooth, $Z_{\ff}^{x_\alpha}$ and $Z_{\ff}^{S}$ are closed subsets of $\widetilde{S}\times\widetilde{S}$. Taking $\widetilde{S}\times\widetilde{S}$ as the ambient space of $Z^{x_\alpha}_{\ff}$, one has $Z^{S}_{\ff}\circ Z^{S}_{\ff}=Z^{S}_{\ff}$ and $Z_{\ff}^{S}\circ Z_{\ff}^{x_\alpha}=Z_{\ff}^{x_\alpha}\circ Z_{\ff}^{S}=Z_{\ff}^{x_\alpha}$.   
 The next lemma is a direct consequence of (\ref{cor:dim}) and Lemma \ref{dim2} by dimension counting.

\begin{lem}\label{two sided ideals of hZx}
Keep the notations above.
\begin{itemize}
    \item[(1)] $H_{\rel}(Z^{V}_{\ff})$ is a subalgebra of $H_\bullet(Z^{V}_{\ff})$ with $H_{\rel}(Z^{V_\alpha}_{\ff})$ as a two-sided ideal.
    \item[(2)]  $H_{\rel}(Z^{S}_{\ff})$ is a subalgebra of $H_\bullet(Z^{S}_{\ff})$ with $H_{\rel}(Z_{\ff}^{x_\alpha})$ as a two-sided ideal.
    \item[(3)] $H_{\rel}(Z^{x_\alpha}_{\ff})$ is a  $H_{\rel}(Z^{V}_{\ff})$-bimodule.
\end{itemize}
\end{lem}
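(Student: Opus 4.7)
The plan is to prove all three statements by a uniform dimension count, combining the convolution formula \eqref{convolutionBM} with Corollary~\ref{cor:dim} and Lemma~\ref{dim2}; no new geometric input is needed. In each case I must verify two things: that the relevant set-theoretic composition of correspondences is contained in the correspondence appearing on the ambient space side (so the subspace is closed under convolution), and that the degrees match up so the convolution of top-degree relevant classes is again top-degree relevant.

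For (1), take $\gamma,\mu,\nu\in\Lambda_\ff$ and view $Z^V_{\gamma\mu}\subset \widetilde{V}_\gamma\times\widetilde{V}_\mu$, $Z^V_{\mu\nu}\subset\widetilde{V}_\mu\times\widetilde{V}_\nu$. The convolution
\begin{equation*}
\ast: H_{d_{\gamma\mu}}(Z^V_{\gamma\mu})\times H_{d_{\mu\nu}}(Z^V_{\mu\nu})\longrightarrow H_{d_{\gamma\mu}+d_{\mu\nu}-\dim_\mathbb{R}\widetilde{V}_\mu}\bigl(Z^V_{\gamma\mu}\circ Z^V_{\mu\nu}\bigr)
\end{equation*}
is defined because the middle projection is proper (it factors through the proper map $\pi_\ff\times\pi_\ff$). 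Lemma~\ref{dim2} gives $\dim_\mathbb{R}\widetilde{V}_\mu=\dim_\mathbb{R}\widetilde{\mathcal{N}}_\mu$, and Corollary~\ref{cor:dim} then gives $d_{\gamma\mu}+d_{\mu\nu}-\dim_\mathbb{R}\widetilde{\mathcal{N}}_\mu=d_{\gamma\nu}$, so the output lies in degree $d_{\gamma\nu}$. Set-theoretically $Z^V_{\gamma\mu}\circ Z^V_{\mu\nu}\subset Z^V_{\gamma\nu}$, so the image lies in $H_{d_{\gamma\nu}}(Z^V_{\gamma\nu})$ and subalgebraicity follows. For the ideal claim, if one factor is supported over $V_\alpha$ then the common $\mathcal{N}$-coordinate of the composition is forced into $V_\alpha$, giving $Z^{V_\alpha}_{\gamma\mu}\circ Z^V_{\mu\nu}\subset Z^{V_\alpha}_{\gamma\nu}$ and symmetrically for the right factor.

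For (2) the argument is identical after replacing $V$ with $S$ and $V_\alpha$ with $\{x_\alpha\}$; here Lemma~\ref{dim2} supplies $\dim_\mathbb{R}\widetilde{S}_\mu=\dim_\mathbb{R}\widetilde{\mathcal{N}}_\mu-\dim_\mathbb{R}\mathcal{N}_\alpha$, so with $d'_{\gamma\mu}=d_{\gamma\mu}-\dim_\mathbb{R}\mathcal{N}_\alpha$ from \eqref{def:dln} one computes
\begin{equation*}
d'_{\gamma\mu}+d'_{\mu\nu}-\dim_\mathbb{R}\widetilde{S}_\mu=d_{\gamma\nu}-\dim_\mathbb{R}\mathcal{N}_\alpha=d'_{\gamma\nu},
\end{equation*}
and $Z^S_{\gamma\mu}\circ Z^S_{\mu\nu}\subset Z^S_{\gamma\nu}$, $Z^{x_\alpha}_{\gamma\mu}\circ Z^S_{\mu\nu}\subset Z^{x_\alpha}_{\gamma\nu}$. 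For (3) I take $\widetilde{V}\times\widetilde{V}$ as the ambient space so that both $Z^V_\ff$ and $Z^{x_\alpha}_\ff$ sit naturally inside it; the convolution
\begin{equation*}
\ast: H_{d_{\gamma\mu}}(Z^V_{\gamma\mu})\times H_{d'_{\mu\nu}}(Z^{x_\alpha}_{\mu\nu})\longrightarrow H_{d_{\gamma\mu}+d'_{\mu\nu}-\dim_\mathbb{R}\widetilde{V}_\mu}(Z^V_{\gamma\mu}\circ Z^{x_\alpha}_{\mu\nu})
\end{equation*}
lands in degree $d'_{\gamma\nu}$ by the same computation, and $Z^V_{\gamma\mu}\circ Z^{x_\alpha}_{\mu\nu}\subset Z^{x_\alpha}_{\gamma\nu}$ since the common $\mathcal{N}$-coordinate is pinned at $x_\alpha$; the right action is symmetric.

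There is no real obstacle here; the only point requiring a little care is bookkeeping of ambient manifolds in (3), to make sure the dimension shift in the convolution uses the correct $\dim_\mathbb{R}\widetilde{V}_\mu$ (rather than $\dim_\mathbb{R}\widetilde{S}_\mu$), so that the input degrees $d_{\gamma\mu}$ and $d'_{\mu\nu}$ produce output degree $d'_{\gamma\nu}$ rather than something off by $\dim_\mathbb{R}\mathcal{N}_\alpha$. Everything else is a straightforward transcription of the proof that $H(Z_\ff)$ is a subalgebra of $H_\bullet(Z_\ff)$.
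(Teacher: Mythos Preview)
Your proof is correct and takes essentially the same approach as the paper, which states only that the lemma ``is a direct consequence of Corollary~\ref{cor:dim} and Lemma~\ref{dim2} by dimension counting.'' You have simply written out that dimension count in full, together with the set-theoretic composition facts that the paper records in the paragraph immediately preceding the lemma.
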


For a variety (or a manifold) $M$, we write $\Delta M$ for the image of diagonal embedding $\Delta: M\hookrightarrow M\times M$.

\begin{prop}\label{bimodule structure}
    The map $g$ is an algebra isomorphism. Furthermore, the $H_{\rel}(Z_{\ff}^{V})$-bimodule structure on  $H_{\rel}(Z_{\ff}^{x_\alpha})$ induced by $g$ coincides with the $H_{\rel}(Z^{V}_{\ff})$-bimodule structure induced by the convolution.
\end{prop}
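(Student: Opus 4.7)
The plan is to exploit the transversal-slice trivialization $\widetilde{V}\simeq V_\alpha\times\widetilde{S}$, together with $V\simeq V_\alpha\times S$ sending $x_\alpha$ to $(x_\alpha,x_\alpha)$, in order to decompose every variety in sight as a product whose first factor is either $\Delta V_\alpha$ or $\{(x_\alpha,x_\alpha)\}$. Inspecting the fibre products one obtains
\begin{align*}
Z_\ff^{V} &\simeq \Delta V_\alpha\times Z_\ff^{S},\\
Z_\ff^{V_\alpha} &\simeq \Delta V_\alpha\times Z_\ff^{x_\alpha},\\
Z_\ff^{x_\alpha} &\simeq \{(x_\alpha,x_\alpha)\}\times Z_\ff^{x_\alpha},
\end{align*}
all sitting inside $\widetilde{V}\times\widetilde{V}\simeq V_\alpha^2\times\widetilde{S}^2$. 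Via the K\"unneth isomorphism of \S\ref{subsec:BM} and the top-dimensional cut-off built into $H_\rel$, the map $g$ then takes the explicit form $[V_\alpha]\boxtimes c\mapsto c$.

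To show that $g$ is an algebra homomorphism I would compute the convolution $([V_\alpha]\boxtimes c_1)*([V_\alpha]\boxtimes c_2)$ directly in the ambient space $\widetilde{V}^3\simeq V_\alpha^3\times\widetilde{S}^3$. Under the product decomposition, the locally closed pull-backs $p_{12}^{-1}(Z_\ff^V)$ and $p_{23}^{-1}(Z_\ff^V)$ are precisely $\{v_1=v_2\}\times p_{12}^{-1}(Z_\ff^S)$ and $\{v_2=v_3\}\times p_{23}^{-1}(Z_\ff^S)$, so their intersection factorises as $\Delta V_\alpha^3\times(Z_\ff^S\times_{\widetilde{S}}Z_\ff^S)$ and the proper push-forward by $p_{13}$ lands in $\Delta V_\alpha\times(Z_\ff^S\circ Z_\ff^S)$. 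The intersection pairing of \S\ref{subsec:conv} is K\"unneth-compatible, and on the $V_\alpha$-factor the computation reduces to the tautology $[\Delta V_\alpha]*[\Delta V_\alpha]=[\Delta V_\alpha]$, since the triple diagonal projects isomorphically onto $\Delta V_\alpha$ under $p_{13}$. Putting these pieces together gives
\begin{equation*}
([V_\alpha]\boxtimes c_1)*([V_\alpha]\boxtimes c_2)=[V_\alpha]\boxtimes(c_1*c_2),
\end{equation*}
which is exactly the desired identity $g(a*b)=g(a)*g(b)$.

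The bimodule compatibility is then handled by the same bookkeeping. For $c'\in H_\rel(Z_\ff^S)$ and $c''\in H_\rel(Z_\ff^{x_\alpha})$, the pull-back $p_{23}^{-1}(Z_\ff^{x_\alpha})$ already forces $v_2=v_3=x_\alpha$, so intersecting with $p_{12}^{-1}(Z_\ff^V)=\{v_1=v_2\}\times p_{12}^{-1}(Z_\ff^S)$ collapses the entire $V_\alpha$-factor to the single point $\{x_\alpha\}$. K\"unneth then yields
\begin{equation*}
([V_\alpha]\boxtimes c')*c''=c'*c'',
\end{equation*}
and the mirror computation with the factors swapped handles the right action. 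Since $c'*c''$ is by definition the action of $g([V_\alpha]\boxtimes c')$ on $c''$ under the $H_\rel(Z_\ff^S)$-bimodule structure, the two $H_\rel(Z_\ff^V)$-bimodule structures on $H_\rel(Z_\ff^{x_\alpha})$ coincide.

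The main technical obstacle I anticipate is making the convolution machinery of \S\ref{subsec:conv}, which is set up for locally closed supports, interact cleanly with both the product decomposition above and the K\"unneth factorisation on Borel-Moore homology. Concretely, I expect to invoke Lemma \ref{con dia} to restrict each step to open pieces of $Z_\ff^V\circ Z_\ff^V$ and $Z_\ff^V\circ Z_\ff^{x_\alpha}$ on which Poincar\'e duality can be realised on a tubular neighbourhood of product type, and to use the dimensional bookkeeping of Lemmas \ref{dim2} and \ref{lem:vsdim} to guarantee that every intermediate composite stays in the correct top dimension so no class escapes $H_\rel$. Once those compatibility issues are settled, the proposition reduces to the transparent K\"unneth identity sketched above.
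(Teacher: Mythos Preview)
Your proposal is correct and follows essentially the same route as the paper: both arguments use the product decomposition $Z_\ff^{V}\simeq\Delta V_\alpha\times Z_\ff^{S}$ (and its variants) together with the K\"unneth compatibility of convolution and the fact that $[\Delta V_\alpha]$ acts as the identity under convolution in $H_\bullet(\Delta V_\alpha)$. The paper streamlines your hands-on computation by citing the K\"unneth formula for convolution \cite[2.7.17]{CG97} directly, which obviates the anticipated detour through Lemma~\ref{con dia} and the explicit dimensional bookkeeping.
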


\begin{proof}
  Note that $Z^{V}_{\ff}\simeq\Delta V_\alpha \times Z_{\ff}^{S}\subset V_\alpha \times V_\alpha \times \widetilde{S} \times \widetilde{S} \simeq \widetilde{V} \times \widetilde{V}$. Since $g$ is an isomorphism of vector space, to show $g$ is an algebra isomorphism, it suffices to show that the following diagram (which is valid thanks to Lemma \ref{two sided ideals of hZx}) commutes: 
 $$   \begin{tikzcd}
H_{\rel}(Z^{V}_{\ff})\times H_{\rel}(Z^{V}_{\ff}) \arrow[rr, "g\times g"] \arrow[d, "*"] &  & H_{\rel}(Z^{S}_{\ff})\times H_{\rel}(Z^{S}_{\ff}) \arrow[d, "*"] \\
H_{\rel}(Z^{V}_{\ff}) \arrow[rr, "g"]                                                   &  & H_{\rel}(Z^{S}_{\ff})                                          \end{tikzcd}.$$ 
The commutativity follows from the K\"unneth formula for convolution \cite[2.7.17]{CG97} and the fact that $[\Delta V_\alpha]$ is the identity of $H_\bullet(\Delta V_\alpha)$.

To show the rest part, it suffices to show the following diagram (which is valid thanks to Lemma \ref{two sided ideals of hZx} again) commutes:
    $$\begin{tikzcd}
H_{\rel}(Z^{V}_{\ff})\times H_{\rel}(Z^{x_\alpha}_{\ff})\times H_{\rel}(Z^{V}_{\ff}) \arrow[rr, "g\times \mathrm{id}\times g"] \arrow[d, "*"] &  & H_{\rel}(Z^{S}_{\ff})\times H_{\rel}(Z^{x_\alpha}_{\ff})\times H_{\rel}(Z^{S}_{\ff}) \arrow[d, "*"] \\
H_{\rel}(Z^{x_\alpha}_{\ff}) \arrow[rr, "\mathrm{id}"]                                                                                     &  & H_{\rel}(Z^{x_\alpha}_{\ff})                                                                     \end{tikzcd}.$$
%
We use 
\begin{align*}
&Z^{x_\alpha}_{\ff}\simeq\Delta x_\alpha\times Z^{x_\alpha}_{\ff}\subset V_\alpha\times V_\alpha \times \widetilde{S} \times \widetilde{S} \simeq \widetilde{V} \times \widetilde{V},\\
&Z^{V}_{\ff} \simeq\Delta V_\alpha\times Z^{S}_{\ff}\subset V_\alpha\times V_\alpha\times\widetilde{S}\times\widetilde{S}\simeq\widetilde{V}\times\widetilde{V}\qquad \mbox{and}\\
&H_{\rel}(Z^{x_\alpha}_{\ff})\simeq H(\Delta x_\alpha)\otimes H_{\rel}(Z^{x_\alpha}_{\ff})=[\Delta x_\alpha]\otimes H_{\rel}(Z^{x_\alpha}_{\ff})
\end{align*}
when do the convolution for the left part of the diagram. Then the above diagram commutes by \cite[2.7.17]{CG97} and the fact that $[\Delta V_\alpha]$ is the identity of $H_\bullet(\Delta V_\alpha)$ (here we regard $H(\Delta x_\alpha)$ as $H_\bullet(\Delta V_\alpha)$-module via convolution in the ambient space $V_\alpha \times V_\alpha$).
\end{proof}

\subsection{Some isomorphisms}

Set $H_{\rel}(\pi_{\ff}^{-1}(x_\alpha)):=\bigoplus_{\gamma\in\Lambda_{\ff}}H_{d'_{\gamma}}(\pi_{\gamma}^{-1}(x_\alpha))$, where $d'_{\gamma}=\dim_{\mathbb{C}}\widetilde{\mathcal{N}}_\gamma-\dim_{\mathbb{C}}\mathcal{N}_\alpha$. Since $Z^{x_\alpha}_{\ff} \simeq \pi^{-1}_{\ff}(x_\alpha)\times\pi^{-1}_{\ff}(x_\alpha)$,  
by K\"unneth formula and the equation $d'_{\gamma\nu}=d'_{\gamma}+d'_{\nu}$, we have 
\begin{equation}\label{kunneth-iso}
H_{\rel}(Z_{\ff}^{x_\alpha})\simeq H_{\rel}(\pi^{-1}_{\ff}(x_\alpha))\otimes H_{\rel}(\pi^{-1}_{\ff}(x_\alpha)).
\end{equation}
We write $H_{\rel}(\pi^{-1}_{\ff}(x_\alpha))_{L}$ and $H_{\rel}(\pi^{-1}_{\ff}(x_\alpha))_{R}$ for the corresponding left and right $H(Z_\ff)$ (and also $H_{\rel}(Z^{V}_{\ff})$) modules, respectively.
The next lemma, which is a variant of \cite[3.5.1]{CG97}, follows from the definition of convolution in homology.
    
\begin{lem}\label{Ku isomorphism}
    The K\"unneth isomorphism \eqref{kunneth-iso} yields an isomorphism of $H(Z_{\ff})$ (and also $H_{\rel}(Z^{V}_{\ff})$)-bimodules $$H_{\rel}(Z_{\ff}^{x_\alpha}) \simeq H_{\rel}(\pi^{-1}_{\ff}(x_\alpha))_{L}\otimes H_{\rel}(\pi^{-1}_{\ff}(x_\alpha))_{R}.$$  
\end{lem}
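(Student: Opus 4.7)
The plan is to reduce the statement to the model case \cite[3.5.1]{CG97} by unpacking the convolution on each graded piece of the bimodule and exhibiting its factorization through the K\"unneth decomposition. The vector space isomorphism $H_{\rel}(Z_\ff^{x_\alpha})\simeq H_{\rel}(\pi_\ff^{-1}(x_\alpha))\otimes H_{\rel}(\pi_\ff^{-1}(x_\alpha))$ is already available from \eqref{kunneth-iso}; the remaining content is compatibility with the bimodule structure. By symmetry, it suffices to treat the left action of $H(Z_\ff)$, and then invoke Proposition~\ref{bimodule structure} together with Lemma \ref{two sided ideals of hZx} to deduce the corresponding statement for $H_{\rel}(Z_\ff^V)$.

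The core step is to show that on each summand the convolution
\[
\ast:\ H_{d_{\gamma\mu}}(Z_{\gamma\mu})\times H_{d'_{\mu\nu}}(Z_{\mu\nu}^{x_\alpha})\longrightarrow H_{d'_{\gamma\nu}}(Z_{\gamma\nu}^{x_\alpha})
\]
becomes, under the K\"unneth isomorphism, the operator $\ast\otimes\mathrm{id}$, where the first factor is the convolution $H_{d_{\gamma\mu}}(Z_{\gamma\mu})\times H_{d'_\mu}(\pi_\mu^{-1}(x_\alpha))\to H_{d'_\gamma}(\pi_\gamma^{-1}(x_\alpha))$ that defines the $H(Z_\ff)$-module structure on $H_{\rel}(\pi_\ff^{-1}(x_\alpha))_L$. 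The key geometric input is the identification of fiber products
\[
Z_{\gamma\mu}\times_{\widetilde{\mathcal{N}}_\mu}Z_{\mu\nu}^{x_\alpha}\ \simeq\ \bigl(Z_{\gamma\mu}\times_{\widetilde{\mathcal{N}}_\mu}\pi_\mu^{-1}(x_\alpha)\bigr)\times\pi_\nu^{-1}(x_\alpha),
\]
obtained because $Z_{\mu\nu}^{x_\alpha}=\pi_\mu^{-1}(x_\alpha)\times\pi_\nu^{-1}(x_\alpha)$ splits off the factor $\pi_\nu^{-1}(x_\alpha)$ on which no further intersection takes place. Consequently $Z_{\gamma\mu}\circ Z_{\mu\nu}^{x_\alpha}=(Z_{\gamma\mu}\circ\pi_\mu^{-1}(x_\alpha))\times\pi_\nu^{-1}(x_\alpha)$ sits inside $\pi_\gamma^{-1}(x_\alpha)\times\pi_\nu^{-1}(x_\alpha)=Z_{\gamma\nu}^{x_\alpha}$, and the projection $p_{13}$ of the convolution diagram is the product of the corresponding map on the first factor with the identity on $\pi_\nu^{-1}(x_\alpha)$.

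Granting this product structure, the proof reduces to a naturality check: the intersection pairing entering the convolution, built from Poincar\'e duality and cup product, is compatible with the K\"unneth isomorphism, and the proper pushforward $p_{13*}$ commutes with $\boxtimes$ by \cite[2.7.17]{CG97}. Applying these two facts to classes of the form $[c]\boxtimes ([f_1]\boxtimes[f_2])$ with $c\in Z_{\gamma\mu}$, $f_1\in\pi_\mu^{-1}(x_\alpha)$, $f_2\in\pi_\nu^{-1}(x_\alpha)$, and using that $[f_2]$ is the fundamental class of the untouched factor (so that pairing against $[\widetilde{\mathcal{N}}_\mu]$-type duality classes in the other slots produces no contribution), one obtains $[c]\ast([f_1]\boxtimes[f_2])=([c]\ast[f_1])\boxtimes[f_2]$, which is precisely the desired identity.

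The main obstacle will be the careful bookkeeping of dimensions and ambient spaces. Unlike the classical setting, $\widetilde{\mathcal{N}}_\ff=\bigsqcup_\gamma\widetilde{\mathcal{N}}_\gamma$ is a disjoint union of smooth varieties of different dimensions, so every convolution must be taken block by block with the correct $d_{\gamma\mu}$, $d'_{\mu\nu}$, $d'_{\gamma\nu}$; the identity $d'_{\gamma\nu}=d'_\gamma+d'_\nu$ (verified from Corollary~\ref{cor:dim} and the definitions) is what guarantees that the K\"unneth isomorphism lands in top degree on both sides. A subtlety is that $\pi_\ff^{-1}(x_\alpha)$ is usually not smooth, so one must work throughout with the intersection pairing formalism of \S\ref{subsec:conv} (locally closed setup) rather than the smooth convolution of \cite[\S2.7]{CG97}; however, since $\pi_\gamma^{-1}(x_\alpha)$ is closed inside $\widetilde{\mathcal{N}}_\gamma$ and its top Borel--Moore homology is spanned by fundamental classes of its top-dimensional irreducible components, the naturality arguments still go through verbatim.
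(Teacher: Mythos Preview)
Your proposal is correct and takes essentially the same approach as the paper, which simply states that the lemma is a variant of \cite[3.5.1]{CG97} and follows from the definition of convolution in homology. You have unpacked exactly this: the product decomposition of the fiber product, the K\"unneth formula for convolution \cite[2.7.17]{CG97}, and the dimension identity $d'_{\gamma\nu}=d'_\gamma+d'_\nu$ are precisely the ingredients the paper has in mind, and your remark that the $H_{\rel}(Z_\ff^V)$-case is handled by the same argument (the invocation of Proposition~\ref{bimodule structure} is not strictly needed, as the direct convolution argument in the ambient space $\widetilde{V}\times\widetilde{V}$ goes through verbatim) is consistent with the paper's terse treatment.
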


The above lemma, together with \cite[2.7.46]{CG97}, gives the following corollary immediately.
\begin{cor}\label{4.10}
   The bimodule structures on $H_{\rel}(Z_{\ff}^{x_{\alpha}})$ of $H(Z_{\ff})$ and of $H_{\rel}(Z^{V}_{\ff})$ are compatible with the restriction map $\mathrm{Res}_{V}$.
\end{cor}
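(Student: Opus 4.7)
The plan is to reduce the compatibility assertion to a standard naturality property of convolution under restriction to an open subvariety, using the K\"unneth decomposition established just above.

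First, I would invoke Lemma~\ref{Ku isomorphism} to identify $H_{\rel}(Z_{\ff}^{x_{\alpha}})$ with the outer tensor product $H_{\rel}(\pi_{\ff}^{-1}(x_{\alpha}))_{L}\otimes H_{\rel}(\pi_{\ff}^{-1}(x_{\alpha}))_{R}$ as a bimodule over both $H(Z_{\ff})$ and $H_{\rel}(Z^{V}_{\ff})$, with the two algebras acting by convolution on the left and right tensor factor respectively. It therefore suffices to verify that, on $H_{\rel}(\pi_{\ff}^{-1}(x_{\alpha}))$, the left $H(Z_{\ff})$-action factors through $\mathrm{Res}_{V}$ as the left $H_{\rel}(Z^{V}_{\ff})$-action; the right action is then obtained by the obvious symmetry that swaps the two factors of $\widetilde{\mathcal{N}}_{\ff}\times\widetilde{\mathcal{N}}_{\ff}$.

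Second, I would exploit the crucial geometric observation that $\pi_{\ff}^{-1}(x_{\alpha})\subset\widetilde{V}_{\alpha}\subset\widetilde{V}$, and that $\widetilde{V}$ is open in $\widetilde{\mathcal{N}}_{\ff}$. As a consequence, for any class $c\in H(Z_{\ff})$ the set-theoretic composition that defines $c\ast m$ for $m\in H_{\rel}(\pi_{\ff}^{-1}(x_{\alpha}))$ only records the portion of the support of $c$ lying over $\widetilde{V}$; equivalently, there is an identification of fibre products
\[
Z_{\ff}\times_{\widetilde{\mathcal{N}}_{\ff}}\pi_{\ff}^{-1}(x_{\alpha}) \;=\; Z_{\ff}^{V}\times_{\widetilde{V}}\pi_{\ff}^{-1}(x_{\alpha}),
\]
and this fibre product fits into a Cartesian square of the form considered in Lemma~\ref{con dia}, whose upper horizontal arrow is precisely the restriction to the open subset $Z_{\ff}^{V}\subset Z_{\ff}$. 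Applying \cite[2.7.46]{CG97} (the naturality of convolution under restriction to open subvarieties of the ambient manifold), or equivalently Lemma~\ref{con dia} with $V=Z_{\ff}^{V}\circ\pi_{\ff}^{-1}(x_{\alpha})$, one obtains $c\ast m = \mathrm{Res}_{V}(c)\ast m$, which is exactly the required compatibility.

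The main obstacle I anticipate is not any deep geometric input but rather a bookkeeping check: our convolution formalism (\S\ref{subsec:conv}) has been extended beyond the closed-subset setting of \cite{CG97} to handle locally closed subvarieties, and the components of $\widetilde{\mathcal{N}}_{\ff}$ have variable dimensions, so one must ensure that \cite[2.7.46]{CG97} still applies verbatim on each pair of components $(\widetilde{\mathcal{N}}_{\gamma},\widetilde{\mathcal{N}}_{\nu})$. This is handled precisely by the generalized intersection pairing of \S\ref{subsec:conv} together with Lemma~\ref{con dia}, so no genuinely new argument is needed beyond restricting to a fixed pair of components and invoking these tools.
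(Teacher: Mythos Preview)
Your proposal is correct and follows essentially the same approach as the paper: the paper's one-line proof (``The above lemma, together with \cite[2.7.46]{CG97}, gives the following corollary immediately'') invokes exactly the two ingredients you spell out, namely the K\"unneth decomposition of Lemma~\ref{Ku isomorphism} reducing to one-sided actions on $H_{\rel}(\pi_{\ff}^{-1}(x_{\alpha}))$, and the open-restriction naturality of convolution from \cite[2.7.46]{CG97}. Your elaboration of why \cite[2.7.46]{CG97} applies (via the identification of fibre products over $\widetilde{V}$ and Lemma~\ref{con dia}) is a faithful unpacking of what the paper leaves implicit.
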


Just as \cite[3.5.2]{CG97}, the $H(Z_\ff)$-convolution action on $H_{\rel}(Z_{\ff}^{x_\alpha})$ commutes with the $C(\alpha)$-action. Hence $H_{\rel}(Z_{\ff}^{x_\alpha})^{C(\alpha)}$ is naturally an $H(Z_\ff)$-bimodule.

\begin{lem}\label{lem:iota}
The inclusion $\iota: \pi_{\gamma\nu}^{-1}(V_\alpha)\to G\times^{G_{x_\alpha}}Z_{\gamma\nu}^{x_\alpha}$ yields an isomorphism of vector spaces:
$$\iota^*:H(G\times^{G_{x_\alpha}}Z_{\gamma\nu}^{x_\alpha})\stackrel{\sim}\to [V_\alpha]\boxtimes H(Z_{\gamma\nu}^{x_\alpha})^{C(\alpha)}.$$
\end{lem}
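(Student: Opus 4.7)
The plan is to analyse $\iota$ as the open inclusion arising from a local trivialization of the fiber bundle $\mu:G\times^{G_{x_\alpha}}Z_{\gamma\nu}^{x_\alpha}\to\mathcal{N}_\alpha$, and then compute $\iota^*$ on the canonical basis of top Borel--Moore homology. Under the isomorphism $Z_{\gamma\nu}^{\mathcal{N}_\alpha}\simeq G\times^{G_{x_\alpha}}Z_{\gamma\nu}^{x_\alpha}$ noted in the proof of Lemma~\ref{lem:relZ}, the preimage $\mu^{-1}(V_\alpha)$ coincides with $Z_{\gamma\nu}^{V_\alpha}=\pi_{\gamma\nu}^{-1}(V_\alpha)$, which was already trivialized as $V_\alpha\times Z_{\gamma\nu}^{x_\alpha}$ in Section~\ref{sec:transslice}. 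Hence $\iota$ is an open embedding, $\iota^*$ is the associated restriction map in Borel--Moore homology, and the real dimensions on both sides match ($\dim_{\mathbb{R}}\mathcal{N}_\alpha+\dim_{\mathbb{R}}Z_{\gamma\nu}^{x_\alpha}$ in each case, with both sides being zero when $\mathcal{N}_\alpha$ is not relevant for $\pi_\gamma$ and $\pi_\nu$, by Lemma~\ref{lem:vsdim}).

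Next I would enumerate a basis for the source. By Lemma~\ref{irreducible components of Z0}, if $X$ runs over a set of representatives of $C(\alpha)$-orbits in $\mathfrak{I}(Z_{\gamma\nu}^{x_\alpha})$, the irreducible components of $G\times^{G_{x_\alpha}}Z_{\gamma\nu}^{x_\alpha}$ are precisely
$$G\times^{G_{x_\alpha}}(Z_{\gamma\nu}^{x_\alpha})_X,\qquad (Z_{\gamma\nu}^{x_\alpha})_X=\bigcup_{Y\in\mathbb{O}_X}Y,$$
and their fundamental classes (in top dimension of each connected component) span $H(G\times^{G_{x_\alpha}}Z_{\gamma\nu}^{x_\alpha})$. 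Intersecting with the open subset $\mu^{-1}(V_\alpha)$ and using the trivialization gives
$$\bigl(G\times^{G_{x_\alpha}}(Z_{\gamma\nu}^{x_\alpha})_X\bigr)\cap\mu^{-1}(V_\alpha)\;\simeq\;V_\alpha\times(Z_{\gamma\nu}^{x_\alpha})_X,$$
so by the K\"unneth formula and additivity of the fundamental class along a union of irreducible components one obtains
$$\iota^*\bigl[\,G\times^{G_{x_\alpha}}(Z_{\gamma\nu}^{x_\alpha})_X\,\bigr]\;=\;[V_\alpha]\boxtimes\sum_{Y\in\mathbb{O}_X}[Y].$$

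Finally I would match the target. The group $C(\alpha)$ permutes the basis $\{[Y]\mid Y\in\mathfrak{I}(Z_{\gamma\nu}^{x_\alpha})\}$ of $H(Z_{\gamma\nu}^{x_\alpha})$, so $H(Z_{\gamma\nu}^{x_\alpha})^{C(\alpha)}$ has a canonical basis given by the orbit sums $\bigl\{\sum_{Y\in\mathbb{O}_X}[Y]\bigr\}_{X}$. Thus $\iota^*$ sends the basis above bijectively onto a basis of $[V_\alpha]\boxtimes H(Z_{\gamma\nu}^{x_\alpha})^{C(\alpha)}$, establishing the claimed isomorphism.

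The main subtlety lies in verifying that the image of $\iota^*$ lies in the $C(\alpha)$-invariant subspace rather than the full space $[V_\alpha]\boxtimes H(Z_{\gamma\nu}^{x_\alpha})$: this is precisely the content of Lemma~\ref{irreducible components of Z0}, which forces the globally defined top fundamental classes to descend from orbit-sums in the fiber $Z_{\gamma\nu}^{x_\alpha}$. Once the trivialization $\mu^{-1}(V_\alpha)\simeq V_\alpha\times Z_{\gamma\nu}^{x_\alpha}$ and the resulting restriction formula for fundamental classes are in hand, the remainder of the argument is a direct dimension count and K\"unneth bookkeeping.
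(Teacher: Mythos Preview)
Your proposal is correct and follows essentially the same route as the paper's own proof: both invoke Lemma~\ref{irreducible components of Z0} to enumerate the irreducible components of $G\times^{G_{x_\alpha}}Z_{\gamma\nu}^{x_\alpha}$ as the subvarieties $G\times^{G_{x_\alpha}}(Z_{\gamma\nu}^{x_\alpha})_X$, compute the restriction $\iota^*$ on each fundamental class via the trivialization over $V_\alpha$ to obtain $[V_\alpha]\boxtimes\sum_{Y\in\mathbb{O}_X}[Y]$, and then identify these orbit sums with a basis of $H(Z_{\gamma\nu}^{x_\alpha})^{C(\alpha)}$. Your write-up is slightly more explicit about why $\iota$ is an open inclusion and about the dimension bookkeeping, but the mathematical content is the same.
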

\begin{proof} Denote $\mathcal{V}=Z^{x_\alpha}_{\gamma\nu}$.
     By Lemma \ref{irreducible components of Z0}, we have $H(G\times^{G_{x_\alpha}}\mathcal{V})=\bigoplus_{X}\mathbb{Q}[G\times^{G_{x_\alpha}}\mathcal{V}_X]$, where $X$ runs over the subset of a complete set of representatives of $\mathfrak{I}(\mathcal{V})/C(\alpha)$ with maximal $\dim X$. We have $$\iota^{*}([G\times^{G_{x_\alpha}}\mathcal{V}_X])=[(G\times^{G_{x_\alpha}}\mathcal{V}_X)\cap\mu^{-1}(V_\alpha)]=[V_\alpha\times\mathcal{V}_{X}]=[V_\alpha]\boxtimes[\mathcal{V}_{X}],$$ where $[\mathcal{V}_{X}]=\sum_{Y\in\mathbb{O}_{X}}[Y]$ are $C(\alpha)$-invariant elements in $H(\mathcal{V})$ and such elements form a basis of $H(\mathcal{V})^{C(\alpha)}$. Since such $[G\times^{G_{x_\alpha}}\mathcal{V}_X]$ also form a basis of $H(G\times^{G_{x_\alpha}}\mathcal{V})$, $\iota^*$ is injective. The lemma follows. 
\end{proof}

\begin{prop}\label{4.9}
    The morphism $H_{\ff,\alpha} \to H_{\rel}(Z_{\ff}^{x_\alpha})$ induced by $g'\circ i$ yields a $H(Z_{\ff})$-bimodule isomorphism $H_{\ff,\alpha}\simeq H_{\rel}(Z_{\ff}^{x_\alpha})^{C(\alpha)}$. 
\end{prop}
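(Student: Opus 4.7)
The plan is to trace the map $g'\circ i$ through the commutative diagram \eqref{diagram 2} and factor it as $g'\circ k\circ j$, then identify each factor with the help of Lemma~\ref{H_O} and Lemma~\ref{lem:iota}.

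First I would note that by Lemma~\ref{H_O}, the map $j\colon H_{\rel}(Z_\ff^{\leq\alpha})\to H_{\rel}(Z_\ff^{\mathcal{N}_\alpha})$ has kernel exactly $H_{\rel}(Z_\ff^{<\alpha})$ and is surjective onto $H_{\rel}(Z_\ff^{\mathcal{N}_\alpha})$, so it descends to a linear isomorphism $\bar\jmath\colon H_{\ff,\alpha}\xrightarrow{\sim}H_{\rel}(Z_\ff^{\mathcal{N}_\alpha})$. Since $i=k\circ j$, it then suffices to show that $g'\circ k$ induces a bijection from $H_{\rel}(Z_\ff^{\mathcal{N}_\alpha})$ onto $H_{\rel}(Z_\ff^{x_\alpha})^{C(\alpha)}$ and respects bimodule structures.

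Next I would identify the map $k$. Recall $Z_{\gamma\nu}^{\mathcal{N}_\alpha}\simeq G\times^{G_{x_\alpha}} Z_{\gamma\nu}^{x_\alpha}$ and $Z_{\gamma\nu}^{V_\alpha}\simeq V_\alpha\times Z_{\gamma\nu}^{x_\alpha}$ is the preimage of $V_\alpha$ under the bundle map $\mu$. Consequently, $k$ is exactly the pull-back $\iota^{*}$ of Lemma~\ref{lem:iota} (taken degree-wise in the top homology of each $\gamma,\nu$-piece), and therefore identifies $H_{\rel}(Z_\ff^{\mathcal{N}_\alpha})$ with $[V_\alpha]\boxtimes H_{\rel}(Z_\ff^{x_\alpha})^{C(\alpha)}\subset H_{\rel}(Z_\ff^{V_\alpha})$. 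Applying $g'$, which by its explicit description satisfies $g'([V_\alpha]\boxtimes c)=c$, we obtain the desired linear isomorphism $H_{\ff,\alpha}\xrightarrow{\sim} H_{\rel}(Z_\ff^{x_\alpha})^{C(\alpha)}$.

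The remaining step, which I expect to be the main obstacle, is the compatibility with the $H(Z_\ff)$-bimodule structures. To handle it I would argue as follows. On the source, the bimodule structure on $H_{\ff,\alpha}$ is induced from convolution on $H(Z_\ff)$ together with the fact that $H_\rel(Z_\ff^{\leq\alpha})$ and $H_\rel(Z_\ff^{<\alpha})$ are two-sided ideals. Composing with $\mathrm{Res}_V$ and using that $H_\rel(Z_\ff^{V_\alpha})$ is a two-sided ideal of $H_\rel(Z_\ff^V)$ (Lemma~\ref{two sided ideals of hZx}), the bimodule action is intertwined by $i$ with the $H_\rel(Z_\ff^V)$-action through $\mathrm{Res}_V$. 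Passing through the algebra isomorphism $g$ of Proposition~\ref{bimodule structure} and its module-level analogue established there, the $H_\rel(Z_\ff^V)$-bimodule structure on $H_\rel(Z_\ff^{x_\alpha})$ transferred via $g'$ agrees with the one coming from convolution; combining with Corollary~\ref{4.10}, the $H(Z_\ff)$- and $H_\rel(Z_\ff^V)$-actions on $H_\rel(Z_\ff^{x_\alpha})$ match under $\mathrm{Res}_V$. Chasing these identifications through the diagram shows $g'\circ i$ is $H(Z_\ff)$-bilinear, and since the $H(Z_\ff)$-action commutes with $C(\alpha)$, its image lands in the invariants, giving the asserted bimodule isomorphism. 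The delicate point to check carefully is that the Gysin pullback $g$ and the open-restriction $k$ interact correctly with convolution on the level of fundamental classes of top-dimensional components, but this is ensured by the K\"unneth-type factorization in Lemma~\ref{Ku isomorphism} together with \cite[2.7.17, 2.7.46]{CG97}.
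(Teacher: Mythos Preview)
Your proposal is correct and follows essentially the same approach as the paper: factor $g'\circ i$ through $j$ and $k$ (equivalently, through the chain $H_{\ff,\alpha}\simeq H_{\rel}(Z_\ff^{\mathcal{N}_\alpha})\simeq [V_\alpha]\boxtimes H_{\rel}(Z_\ff^{x_\alpha})^{C(\alpha)}$ via Lemmas~\ref{H_O} and~\ref{lem:iota}), then strip off $[V_\alpha]$ with $g'$; for the bimodule compatibility, the paper writes out the explicit chain $g'(i(a\ast b))=g'(\mathrm{Res}_V(a)\ast i(b))=g'(\mathrm{Res}_V(a))\ast g'(i(b))=\mathrm{Res}_V(a)\ast g'(i(b))=a\ast g'(i(b))$ using \cite[2.7.46(a)]{CG97}, Proposition~\ref{bimodule structure}, and Corollary~\ref{4.10}, which is exactly the content of your last paragraph stated more concretely.
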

\begin{proof} 
By Lemmas \ref{H_O} \& \ref{lem:iota}, we have $$H_{\ff,\alpha} \simeq H_{\rel}(Z^{\mathcal{N}_\alpha}_{\ff})=H_{\rel}(G\times^{G_{x_\alpha}}Z^{x_\alpha}_{\ff})\simeq [V_\alpha]\boxtimes H_{\rel}(Z^{x_\alpha}_{\ff})^{C(\alpha)}.$$ Then the map $g'\circ i$ yields that $H_{\ff,\alpha} \simeq H_{\rel}(Z^{x_\alpha}_{\ff})^{C(\alpha)}$ as vector spaces. 

For $a\in H(Z_\ff)$, $b \in H_{\rel}(Z_{\ff}^{\leq \alpha})$, we have
     \begin{align*}
     g'( i(a\ast b))&=g'(\mathrm{Res}_{V}(a)\ast i(b))&&\text{by \cite[2.7.46 (a)]{CG97}}\\
     &={g}'(\mathrm{Res}_{V}(a))\ast {g}'({i}(b))&&\text{by Proposition}\ \ref{bimodule structure}\\
    &=\mathrm{Res}_{V}(a)\ast{g}'({i}(b))&&\text{by Proposition}\ \ref{bimodule structure}\\
     &=a\ast{g}'({i}(b))&&\text{by Corollary }\ref{4.10}.
 \end{align*}  
 Similarly, we have $g'({i}(b\ast a))=g'(i(b))\ast a$. So $g'\circ i: H_{\rel}(Z_{\ff}^{\leq \alpha})\to H_{\rel}(Z_{\ff}^{x_\alpha})$ is a $H(Z_\ff)$-bimodule morphism, which completes the proof.
\end{proof}

\subsection{Dual modules of cellular algebras}
Let $\mathbb{K}$ be a field, and $\mathfrak{A}$ be an associative $\mathbb{K}$-algebra. For any left $\mathfrak{A}$-module $M$, there is an canonical right module structure on the dual space $\Hom_{\mathbb{K}}(M,\mathbb{K})$, which we denote by $M^{\vee}$. 
  \begin{lem}
    Let $\mathfrak{A}$ be a semisimple cellular algebra over field $\mathbb{K}$ with cell datum $(\Lambda,M,C,\Psi)$. Let $M$ be a finite dimensional left $\mathfrak{A}$-module. Equip a right $\mathfrak{A}$-module structure on $M$ with the $\mathfrak{A}$-action given by $m\cdot a:=\Psi(a)m$, and denote this module by $M^{\Psi}$. It holds that $M^{\Psi}\simeq M^{\vee}$ as right $\mathfrak{A}$-modules.  
  \end{lem}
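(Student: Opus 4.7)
The plan is to reduce to simples and then exploit the canonical bilinear form provided by the cellular structure. Since $\mathfrak{A}$ is semisimple, every finite-dimensional left $\mathfrak{A}$-module decomposes as a direct sum of simples, and both constructions $(-)^{\Psi}$ and $(-)^{\vee}$ commute with finite direct sums. Hence it suffices to establish the isomorphism for each simple left module. Because $\mathfrak{A}$ is a semisimple cellular algebra, a complete set of pairwise non-isomorphic simple left $\mathfrak{A}$-modules is given by the cell modules $\{W(\lambda)\mid\lambda\in\Lambda\}$.

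Next, I would invoke the intrinsic bilinear form
\[
\phi_\lambda\colon W(\lambda)\times W(\lambda)\longrightarrow\mathbb{K}
\]
produced by the Graham--Lehrer construction from the cell datum $(\Lambda,M,C,\Psi)$. By construction, $\phi_\lambda$ satisfies the adjunction
\[
\phi_\lambda(a\cdot m,n)=\phi_\lambda(m,\Psi(a)\cdot n),\qquad a\in\mathfrak{A},\ m,n\in W(\lambda),
\]
and, under the standing assumption that $\mathfrak{A}$ is semisimple, $\phi_\lambda$ is non-degenerate (its radical is the unique maximal submodule of $W(\lambda)$, which must vanish in the semisimple case). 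I would then define
\[
\theta_\lambda\colon W(\lambda)^{\Psi}\longrightarrow W(\lambda)^{\vee},\qquad n\longmapsto \phi_\lambda(-,n),
\]
and verify right $\mathfrak{A}$-linearity directly:
\[
\theta_\lambda(n\cdot a)(m)=\phi_\lambda\bigl(m,\Psi(a)n\bigr)=\phi_\lambda(am,n)=\bigl(\theta_\lambda(n)\cdot a\bigr)(m).
\]
Non-degeneracy forces $\theta_\lambda$ to be injective, and a dimension count gives bijectivity, yielding $W(\lambda)^{\Psi}\simeq W(\lambda)^{\vee}$ for every $\lambda\in\Lambda$.

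Assembling these isomorphisms across the decomposition of $M$ into cell modules then produces the desired isomorphism $M^{\Psi}\simeq M^{\vee}$ of right $\mathfrak{A}$-modules. The main obstacle is not conceptual but bookkeeping: one must identify the simple left modules of the semisimple cellular algebra with the cell modules and cite/recall that the Graham--Lehrer bilinear form is non-degenerate precisely in the semisimple regime. Once this cellular input is in place, the $\mathfrak{A}$-linearity of $\theta_\lambda$ and the reduction via semisimplicity are both straightforward.
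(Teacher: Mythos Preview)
Your proposal is correct and follows essentially the same route as the paper: reduce to simple cell modules $W(\lambda)$ by semisimplicity, invoke the Graham--Lehrer bilinear form $\phi_\lambda$ (non-degenerate in the semisimple regime) with its $\Psi$-adjunction, and use it to write down the isomorphism $W(\lambda)^\Psi\to W(\lambda)^\vee$. The only cosmetic difference is which slot of $\phi_\lambda$ you place the argument in, which is immaterial since $\phi_\lambda$ is symmetric.
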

  \begin{proof}
      Since $\mathfrak{A}$ is semisimple, we assume that $M=W(\lambda)$, $\lambda\in\Lambda$, is an finite dimensional irreducible left $\mathfrak{A}$-module. Then the $\mathbb{K}$-bilinear form $\phi_{\lambda}$ in \cite[Proposition 2.4]{GL96} is non-degenerate and satisfies that $\phi_{\lambda}(\Psi(a)x,y)=\phi_{\lambda}(x,ay)$. 
      Define $f: M^{\Psi}\rightarrow M^{\vee}$ via $x\mapsto f_{x}(:y\mapsto \phi_{\lambda}(x,y))$, which is isomorphism as $\mathbb{K}$-vector space since $\phi_{\lambda}$ is non-degenerate. For $a\in \mathfrak{A}$ and $x,y\in M^{\Psi}$, $(f_{x}\cdot a)(y)=\phi_{\lambda}(x,ay)=\phi_{\lambda}(\Psi(a)x,y)=f_{\Psi(a)x}(y)=f(x\cdot a)(y)$, hence $f$ is also a right $\mathfrak{A}$-module isomorphism.
  \end{proof}
  
  By \cite[Theorem 4.2]{CLX23} and Theorem \ref{geo schur}, the Schur algebra $H(Z_\ff)\simeq\mathbb{S}_{\ff}$ is a semisimple cellular algebra with anti-involution $\Psi=-^t$, from which we immediately have the following corollary.  
  \begin{cor}\label{isomorpism of cellular module}
      As right $H(Z_\ff)$-modules, one has $$H_{\rel}(\pi^{-1}_{\ff}(x_a))^{t}\simeq H_{\rel}(\pi^{-1}_{\ff}(x_\alpha))^{\vee}.$$
  \end{cor}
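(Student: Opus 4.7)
The plan is to derive this corollary as an essentially immediate consequence of the preceding lemma applied to $\mathfrak{A}=H(Z_\ff)$ with anti-involution $\Psi=-^t$ and $M=H_{\rel}(\pi^{-1}_{\ff}(x_\alpha))$. What needs checking is that all hypotheses of the lemma are satisfied: that $H(Z_\ff)$ is a semisimple cellular algebra with cell anti-involution $-^t$, and that $M$ is a finite-dimensional left module.

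First I would combine Theorem \ref{geo schur} with \cite[Theorem 4.2]{CLX23} to equip $H(Z_\ff)\simeq\mathbb{S}_\ff$ with a cellular datum whose anti-involution is $-^t$. The transpose $-^t$ was introduced earlier in the proof of Theorem \ref{geo schur} via switching factors on $\widetilde{\mathcal{N}}_\ff\times\widetilde{\mathcal{N}}_\ff$, and it was already observed there that $[T^*_{\gamma,w,\nu}]^t=[T^*_{\nu,w^{-1},\gamma}]$; this is precisely the form of involution realized by the cellular structure on $\mathbb{S}_\ff$ in \cite{CLX23}, so the identification is immediate. Semisimplicity of $H(Z_\ff)$ was already used in Section 4.2 and is part of the cited cellular structure.

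Second I would note that $\pi_\ff$ is proper (it is a partial Springer resolution, a composition of proper maps), hence $\pi^{-1}_\ff(x_\alpha)$ is a projective variety and $H_{\rel}(\pi^{-1}_\ff(x_\alpha))$ is finite-dimensional. Its left $H(Z_\ff)$-module structure is the one used throughout Section 4 (see Lemma \ref{Ku isomorphism}), with the right-module structure obtained by twisting by $-^t$ being exactly the notation $H_{\rel}(\pi^{-1}_\ff(x_\alpha))^t$.

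Finally, applying the preceding lemma yields $M^\Psi\simeq M^\vee$ for any irreducible $M$; by semisimplicity of $H(Z_\ff)$, $M=H_{\rel}(\pi^{-1}_\ff(x_\alpha))$ decomposes as a finite direct sum of irreducibles, and both functors $M\mapsto M^\Psi$ and $M\mapsto M^\vee$ commute with finite direct sums, so the isomorphism extends to arbitrary finite-dimensional modules, giving the claim. The only substantive point to verify is the compatibility between the geometric anti-involution $-^t$ and the cellular anti-involution from \cite{CLX23}; this is the single potential obstacle, but it is a bookkeeping check on the cellular basis rather than a conceptual difficulty, since both are induced by the same symmetry $(\gamma,w,\nu)\leftrightarrow(\nu,w^{-1},\gamma)$ of the parametrizing set $\Xi_\ff$.
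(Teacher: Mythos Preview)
Your proposal is correct and follows essentially the same approach as the paper: both apply the preceding lemma directly with $\mathfrak{A}=H(Z_\ff)$, $\Psi=-^t$, and $M=H_{\rel}(\pi^{-1}_\ff(x_\alpha))$, invoking Theorem~\ref{geo schur} together with \cite[Theorem 4.2]{CLX23} to obtain the needed semisimple cellular structure with anti-involution $-^t$. You add more detail on hypothesis-checking (finite-dimensionality, compatibility of the two anti-involutions) than the paper's one-line justification, but the argument is the same.
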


\subsection{Classification of irreducible modules}
We assume in the next two subsections that $G$ is an adjoint type complex simple algebraic group. Thus the component groups relating to nilpotent elements are elementary abelian $2$-groups for classical types, and $\mathfrak{S}_{n}$ ($n\le 5$) for exceptional types. In particular, all their irreducible representations can be realized over the base field $\mathbb{Q}$.
 For a finite dimensional $C(\alpha)$-representation $M$ and an irreducible character $\chi$, let $M_{\chi}:=\Hom_{C(\alpha)}(\chi,M)$ be the $\chi$-isotypical component of $M$.
  Let $\widehat{C}(\alpha)$ be the set of all those irreducible characters $\chi$ of $C(\alpha)$ such that $H_{\rel}(\pi^{-1}_{\ff}(x_\alpha))_{\chi}\neq 0$. Each $H_{\rel}(\pi^{-1}_{\ff}(x_\alpha))_{\chi}$ is naturally an $H(Z_{\ff})$-bimodule, since the action of $C(\alpha)$ commutes with the action of $H(Z_{\ff})$ on $H_{\rel}(\pi^{-1}_{\ff}(x_\alpha))$ by a similar reason to \cite[Lemma~3.5.3]{CG97}.

  Below we shall denote by $\mathbb{J}_\mathrm{rel}\subset\mathbb{J}$ the subset consisting of such $\alpha$ that $\mathcal{N}_\alpha$ is relevant for $\pi_\gamma$ for at least one orbit $\gamma\in\Lambda_\ff$. By definition, we know that $H_{\rel}(\pi^{-1}_{\ff}(x_\alpha))\neq 0$ if and only if  $\alpha\in\mathbb{J}_{\mathrm{rel}}$.
\begin{thm}
  \begin{itemize}
        \item [(1)] For any $\alpha\in\mathbb{J}_\mathrm{rel}$ and $\chi\in\widehat{C}(\alpha)$, the $\chi$-isotypical component $H_{\rel}(\pi^{-1}_{\ff}(x_\alpha))_{\chi}$ is an irreducible representation of Schur algebra. 
        \item [(2)] The $G$-orbits of pairs $(x,\chi)$ are one to one correspondence to irreducible representations of $H(Z_\ff)$, where $x\in \mathcal{N}_{\alpha}$ and $\chi\in\widehat{C}(\alpha)$ for some $\alpha\in\mathbb{J}_\mathrm{rel}$. 
        \item[(3)] The set $\{H_{\rel}(\pi_{\ff}^{-1}(x_\alpha)_{\chi})\mid \alpha\in\mathbb{J}_\mathrm{rel},\ \chi\in\widehat{C}(\alpha)\}$ forms a complete collection of isomorphism classes of irreducible representations of $H(Z_\ff)$.
    \end{itemize}
\end{thm}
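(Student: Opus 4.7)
The plan is to mimic the Ginzburg--Chriss--Ginzburg classification strategy for the Weyl group/Hecke algebra (see \cite[\S3.5--3.6, \S8.7]{CG97}), but pushed through the partial-flag setting established in the preceding subsections. The main inputs are: (a) the decomposition $H(Z_\ff)\simeq\bigoplus_{\alpha}H_{\ff,\alpha}$ coming from the stratification by nilpotent orbits, (b) Proposition~\ref{4.9}, which identifies $H_{\ff,\alpha}\simeq H_\rel(Z_\ff^{x_\alpha})^{C(\alpha)}$ as $H(Z_\ff)$-bimodules, (c) the K\"unneth-type bimodule identification in Lemma~\ref{Ku isomorphism}, and (d) Corollary~\ref{isomorpism of cellular module}, which says that the right module $H_\rel(\pi_\ff^{-1}(x_\alpha))_R$ is the $\mathbb{Q}$-linear dual of the left module $H_\rel(\pi_\ff^{-1}(x_\alpha))_L$.

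First I would write $M_\alpha:=H_\rel(\pi_\ff^{-1}(x_\alpha))_L$ and, combining (b)--(d), obtain an isomorphism of $H(Z_\ff)$-bimodules
\[
H_{\ff,\alpha}\;\simeq\;\bigl(M_\alpha\otimes M_\alpha^{\vee}\bigr)^{C(\alpha)}.
\]
Decomposing $M_\alpha=\bigoplus_{\chi}V_\chi\otimes (M_\alpha)_\chi$ as a $C(\alpha)$-representation (using that all irreducible $C(\alpha)$-representations are defined over $\mathbb{Q}$, because $C(\alpha)$ is an elementary abelian $2$-group or a small symmetric group), Schur's lemma gives
\[
\bigl(M_\alpha\otimes M_\alpha^{\vee}\bigr)^{C(\alpha)}\;\simeq\;\bigoplus_{\chi\in\widehat{C}(\alpha)}(M_\alpha)_\chi\otimes (M_\alpha)_\chi^{\vee}.
\]
Each summand is identified with $\End_\mathbb{Q}\bigl((M_\alpha)_\chi\bigr)$ and, by semisimplicity of $H(Z_\ff)$, this identification is an algebra isomorphism. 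Running over all $\alpha\in\mathbb{J}_\mathrm{rel}$ therefore presents $H(Z_\ff)$ as a direct sum of matrix algebras indexed by pairs $(\alpha,\chi)$ with $\alpha\in\mathbb{J}_\mathrm{rel}$ and $\chi\in\widehat{C}(\alpha)$.

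From this Wedderburn-type decomposition, part (1) follows immediately: the left factor of $(M_\alpha)_\chi\otimes (M_\alpha)_\chi^{\vee}$ is exactly the unique simple module of the corresponding matrix block, which is $H_\rel(\pi_\ff^{-1}(x_\alpha))_\chi$. Parts (2) and (3) follow by counting: the number of distinct simple factors equals the number of pairs $(\alpha,\chi)$, and these pairs are in bijection with $G$-orbits of pairs $(x,\chi)$ by definition of $C(\alpha)$ and the choice of base point $x_\alpha\in\mathcal{N}_\alpha$. Mutual non-isomorphism of the $H_\rel(\pi_\ff^{-1}(x_\alpha))_\chi$ follows because they sit in distinct blocks of $H(Z_\ff)$.

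The main obstacle I anticipate is verifying carefully that the bimodule structures line up throughout the chain of identifications: the $H(Z_\ff)$-bimodule structure on $H_{\ff,\alpha}$, the one on $H_\rel(Z_\ff^{x_\alpha})^{C(\alpha)}$ via Proposition~\ref{4.9}, the tensor-product bimodule structure via Lemma~\ref{Ku isomorphism}, and the identification of the right factor with the dual via Corollary~\ref{isomorpism of cellular module} (which uses the anti-involution $(-)^{t}$ and the cellular structure established in \cite{CLX23}). Once compatibility is settled, the Schur-lemma computation of the $C(\alpha)$-invariants is routine. A secondary subtlety is that $C(\alpha)$ may be non-abelian for exceptional types; this is handled by the general fact that the representations are rational, so $\End_{C(\alpha)}(V_\chi)=\mathbb{Q}$ and the isotypic decomposition of $M_\alpha\otimes M_\alpha^\vee$ behaves exactly as in the abelian case.
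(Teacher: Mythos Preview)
Your proposal is correct and follows essentially the same route as the paper: the paper assembles the chain $H(Z_\ff)\simeq\bigoplus_\alpha H_{\ff,\alpha}\simeq\bigoplus_\alpha H_\rel(Z_\ff^{x_\alpha})^{C(\alpha)}\simeq\bigoplus_\alpha(M_\alpha\otimes M_\alpha^{t})^{C(\alpha)}\simeq\bigoplus_\alpha(M_\alpha\otimes M_\alpha^{\vee})^{C(\alpha)}\simeq\bigoplus_{\alpha,\chi}\End_\Q((M_\alpha)_\chi)$ using exactly your inputs (a)--(d), and then reads off (1)--(3) from this Wedderburn decomposition. The bimodule compatibilities you flag as the main obstacle are precisely what Proposition~\ref{4.9}, Lemma~\ref{Ku isomorphism}, and Corollary~\ref{isomorpism of cellular module} were set up to guarantee, so no extra work is needed there.
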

\begin{proof}
We can compute straightforward that, as $H(Z_\ff)$-bimodules, 
 \begin{align*}
        H(Z_\ff)\simeq &\bigoplus_{\alpha\in\mathbb{J}}H_{\ff,\alpha}\simeq\bigoplus_{\alpha\in\mathbb{J}} H_{\rel}(Z^{x_\alpha}_{\ff})^{C(\alpha)}\qquad\text{by Proposition \ref{4.9}}\\\simeq& \bigoplus_{\alpha\in\mathbb{J}}(H_{\rel}(\pi^{-1}_{\ff}(x_\alpha))_{L}\otimes H_{\rel}(\pi^{-1}_{\ff}(x_\alpha))_{R})^{C(\alpha)}\qquad\text{by Lemma \ref{Ku isomorphism}}\\
        \simeq&\bigoplus_{\alpha\in\mathbb{J}}(H_{\rel}(\pi^{-1}_{\ff}(x_\alpha))_{L}\otimes H_{\rel}(\pi^{-1}_{\ff}(x_\alpha))_{L}^{t})^{C(\alpha)}\\
        \simeq&\bigoplus_{\alpha\in\mathbb{J}}(H_{\rel}(\pi^{-1}_{\ff}(x_\alpha))_{L}\otimes H_{\rel}(\pi^{-1}_{\ff}(x_\alpha))_{L}^{\vee})^{C(\alpha)}\qquad\text{by Corollary \ref{isomorpism of cellular module}}\\
        \simeq&\bigoplus_{\alpha\in\mathbb{J}}\End_{\mathbb{Q}}(H_{\rel}(\pi^{-1}_{\ff}(x_\alpha)))^{C(\alpha)}\simeq\bigoplus_{\alpha\in\mathbb{J}}\End_{C(\alpha)}(H_{\rel}(\pi^{-1}_{\ff}(x_\alpha)))\\
        \simeq&\bigoplus_{\alpha\in\mathbb{J}}\End_{C(\alpha)}(\bigoplus_{\chi\in \widehat{C}(\alpha)}\chi\otimes H_{\rel}(\pi^{-1}_{\ff}(x_\alpha))_{\chi})\\    \simeq&\bigoplus_{\alpha\in\mathbb{J}, \chi,\psi\in\widehat{C}(\alpha)}\Hom_{C(\alpha)}(\chi,\psi)\otimes\Hom_{\mathbb{Q}}(H_{\rel}(\pi^{-1}_{\ff}(x_\alpha))_{\chi},H_{\rel}(\pi^{-1}_{\ff}(x_\alpha))_{\psi})\\
        \simeq&\bigoplus_{\alpha\in\mathbb{J}_\mathrm{rel}, \chi\in\widehat{C}(\alpha)}\End_{\mathbb{Q}}(H_{\rel}(\pi_{\ff}^{-1}(x_\alpha)_{\chi})).
    \end{align*} 
    Thus $\{H_{\rel}(\pi_{\ff}^{-1}(x_\alpha)_{\chi})\mid \alpha\in\mathbb{J}_\mathrm{rel},\ \chi\in\widehat{C}(\alpha)\}$ forms a complete set of irreducible representations of $H(Z_\ff)$. Finally, it is clear that $H_{\rel}(\pi^{-1}_{\ff}(x_\alpha))_{\chi}\simeq H_{\rel}(\pi^{-1}_{\ff}(x_{\alpha'}))_{\chi'}$ if and only if $(x_\alpha,\chi)$ and $(x_{\alpha'},\chi')$ are $G$-conjugacy, which completes the proof.
\end{proof}

\subsection{Classification via perverse sheaves}
    Since $G$ is connected, a $G$-equivariant local system over $\mathcal{N}_\alpha$ is irreducible if and only if the stalk at an arbitrary point is an irreducible $C(\alpha)$-representation. Therefore, an irreducible representation $\chi$ of $C(\alpha)$ gives an irreducible local system over $\mathcal{N}_\alpha$, which we denote by $\mathcal{L}_\chi$. 
    
     Let $D^b(\widetilde{\mathcal{N}}_\ff)$ (resp. $D^{b}(\mathcal{N})$) be the bounded derived category of complexes of sheaves with constructible cohomology sheaves on $\widetilde{\mathcal{N}}_\ff$ (resp. $\mathcal{N}$).   
    Set $\mathcal{C}_\ff :=\bigoplus_{\gamma\in\Lambda_\ff} \underline{\mathbb{C}}_{\widetilde{\mathcal{N}}_\gamma}[d_\gamma]\in D^b(\mathcal{N})$, where $d_\gamma=\dim_\C\widetilde{\mathcal{N}}_\gamma$ and $\underline{\mathbb{C}}_{\widetilde{\mathcal{N}}_\gamma}$ is the constant sheaf on $\widetilde{\mathcal{N}}_{\gamma}$.

     Note that the map $\pi_\ff: \widetilde{\mathcal{N}}_\ff \to \mathcal{N}, (gP_\gamma, x)\mapsto x$, is a $G$-equivariant projective morphism and $\mathcal{N}$ consists of finitely many $G$-orbits, by which we have the following theorem (cf. \cite[Theorem 8.4.12]{CG97} and \cite[Theorem 5.4]{G98}). Here we write $\C$ in the notations of homology to indicate that the coefficient field $\Q$ is replaced by $\C$. 
    \begin{thm}
    There is a direct sum decomposition
    $$R\pi_{\ff*}(\mathcal{C}_\ff)= \bigoplus_{\alpha\in\mathbb{J},\ \chi \in \widehat{C}(\alpha)} H_{\rel}(\pi_\ff^{-1}(x_\alpha),\C)_\chi \otimes IC_\mathcal{N}(\mathcal{L}_\chi),$$
    where $H_{\rel}(\pi_\ff^{-1}(x_\alpha),\mathbb{C})_\chi$ is the $\chi$-isotypical components of $H_{\rel} (\pi_\ff^{-1}(x_\alpha),\mathbb{C})$.
    \end{thm}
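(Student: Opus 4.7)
The plan is to apply the Beilinson-Bernstein-Deligne (BBD) decomposition theorem to the proper $G$-equivariant morphism $\pi_\ff : \widetilde{\mathcal{N}}_\ff \to \mathcal{N}$, then identify the simple summands using the relevant Borel-Moore homology computed in the previous subsections. First, I would verify that $\mathcal{C}_\ff = \bigoplus_{\gamma\in\Lambda_\ff}\underline{\mathbb{C}}_{\widetilde{\mathcal{N}}_\gamma}[d_\gamma]$ is a semisimple $G$-equivariant perverse sheaf (each $\widetilde{\mathcal{N}}_\gamma$ is smooth of complex dimension $d_\gamma$), and that $\pi_\ff$ is projective on each component, so the BBD decomposition theorem applies: $R\pi_{\ff*}\mathcal{C}_\ff$ is a $G$-equivariantly semisimple complex, hence a direct sum of shifted equivariant simple perverse sheaves on $\mathcal{N}$.

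Since $\mathcal{N}=\bigsqcup_\alpha \mathcal{N}_\alpha$ is a finite stratification by $G$-orbits, every $G$-equivariant simple perverse sheaf on $\mathcal{N}$ is of the form $IC_\mathcal{N}(\mathcal{L}_\chi)$ for some $\alpha\in\mathbb{J}$ and some $\chi\in\widehat{C}(\alpha)$; this uses the standard correspondence between irreducible $G$-equivariant local systems on $\mathcal{N}_\alpha$ and irreducible representations of the component group $C(\alpha)$. The next step is to rule out the possibility of nontrivial shifts: I would show that $\pi_\gamma$ is semismall, which amounts exactly to the inequality
\[
2\dim_\mathbb{R}(\pi_\gamma^{-1}(x_\alpha))+\dim_\mathbb{R}\mathcal{N}_\alpha \le \dim_\mathbb{R}\widetilde{\mathcal{N}}_\gamma
\]
of Lemma \ref{dimensinal inequality}, together with the fact that the only orbits contributing summands are precisely those for which this inequality is an equality, i.e.\ the relevant ones. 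Consequently $R\pi_{\ff*}\mathcal{C}_\ff$ is perverse and decomposes as an unshifted sum $\bigoplus_{\alpha,\chi} V_{\alpha,\chi}\otimes IC_\mathcal{N}(\mathcal{L}_\chi)$ with each $V_{\alpha,\chi}$ a finite-dimensional multiplicity space.

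To identify $V_{\alpha,\chi}$, I would pick a base point $x_\alpha\in\mathcal{N}_\alpha$ and compute the stalk of $R\pi_{\ff*}\mathcal{C}_\ff$ at $x_\alpha$ via proper base change: its cohomology in the appropriate degree is $H_{\rel}(\pi_\ff^{-1}(x_\alpha),\mathbb{C})$, naturally a $C(\alpha)$-module via the monodromy/component group action. On the other hand, the stalk of $IC_\mathcal{N}(\mathcal{L}_{\chi'})$ at $x_\alpha$ in top degree is $\chi'$ if $\alpha'=\alpha$ and otherwise contributes only in strictly lower (perverse) degrees on the stratum $\mathcal{N}_\alpha$. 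Decomposing into $C(\alpha)$-isotypical components gives $V_{\alpha,\chi}\simeq H_{\rel}(\pi_\ff^{-1}(x_\alpha),\mathbb{C})_\chi$, as desired.

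The main obstacle is verifying semismallness cleanly for the disjoint union $\pi_\ff$ (which reduces to verifying it component by component via Lemma \ref{dimensinal inequality}) and keeping careful track of the shifts so that the top-degree Borel-Moore homology of the fiber is matched with the correct graded piece of the stalk of $IC_\mathcal{N}(\mathcal{L}_\chi)$; once semismallness is in hand, the isolation of multiplicity spaces from stalk cohomology is standard, and the $C(\alpha)$-equivariance of the whole construction guarantees that only $\chi$ with $H_{\rel}(\pi_\ff^{-1}(x_\alpha))_\chi\neq 0$ actually appear.
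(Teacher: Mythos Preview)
Your proposal is correct and follows exactly the route the paper indicates: the paper does not give a self-contained proof but simply observes that $\pi_\ff$ is $G$-equivariant projective and $\mathcal{N}$ has finitely many $G$-orbits, then cites \cite[Theorem 8.4.12]{CG97} and \cite[Theorem 5.4]{G98} for the BBD decomposition in this setting. Your outline (semisimplicity via BBD, semismallness from Lemma~\ref{dimensinal inequality}, identification of the multiplicity spaces as $C(\alpha)$-isotypical components of the top Borel--Moore homology of the fiber) is precisely the content of those references specialized to this situation, so nothing further is needed.
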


It is a standard result (cf. \cite[Lemma~8.6.1]{CG97} or \cite[Theorem 5.4]{G98}) that there is an algebra isomorphism $$H(Z_\ff,\mathbb{C}) \simeq \Hom_{D^b(\mathcal{N})}(R\pi_{\ff*}(\mathcal{C}_\ff),R\pi_{\ff*}(\mathcal{C}_\ff)).$$   
Thus the above theorem gives us the following corollary.
    \begin{cor}\label{rep}
   There is an algebra isomorphism
    \begin{align*}
        H(Z_\ff,\mathbb{C}) \simeq\bigoplus_{\alpha\in\mathbb{J},\ \chi \in \widehat{C}(\alpha)}\End_\mathbb{C} (H_{\rel}(\pi_\ff^{-1}(x_\alpha),\mathbb{C})_\chi).
    \end{align*}
        Moreover, $\{H_{\rel}(\pi_\ff^{-1}(x_\alpha),\mathbb{C})_\chi ~|~ \alpha\in\mathbb{J}_{\mathrm{rel}}, \chi \in \widehat{C}(\alpha)\}$ is a complete set of isomorphism classes of irreducible $H(Z_\ff,\mathbb{C})$-representations. 
    \end{cor}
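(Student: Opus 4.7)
The plan is to combine the two isomorphisms already stated just before the corollary and then exploit the orthogonality of simple perverse sheaves. First, I would apply the functor $\Hom_{D^b(\mathcal{N})}(-,-)$ to the decomposition
\[
R\pi_{\ff\ast}(\mathcal{C}_\ff)\simeq\bigoplus_{\alpha\in\mathbb{J},\,\chi\in\widehat{C}(\alpha)}H_{\rel}(\pi_\ff^{-1}(x_\alpha),\C)_\chi\otimes IC_\mathcal{N}(\mathcal{L}_\chi)
\]
on both variables, so that the algebra $\Hom_{D^b(\mathcal{N})}(R\pi_{\ff\ast}\mathcal{C}_\ff,R\pi_{\ff\ast}\mathcal{C}_\ff)$ is rewritten as a direct sum of tensor products $\Hom_{\C}\!\bigl(H_{\rel}(\pi_\ff^{-1}(x_\alpha),\C)_\chi,H_{\rel}(\pi_\ff^{-1}(x_{\alpha'}),\C)_{\chi'}\bigr)\otimes \Hom_{D^b(\mathcal{N})}(IC_\mathcal{N}(\mathcal{L}_\chi),IC_\mathcal{N}(\mathcal{L}_{\chi'}))$.

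Next, I would use the fact that the objects $IC_\mathcal{N}(\mathcal{L}_\chi)$ are simple perverse sheaves (pairwise non-isomorphic because the pairs $(\mathcal{N}_\alpha,\mathcal{L}_\chi)$ are distinct), so that $\Hom_{D^b(\mathcal{N})}(IC_\mathcal{N}(\mathcal{L}_\chi),IC_\mathcal{N}(\mathcal{L}_{\chi'}))$ vanishes unless the two pairs coincide, in which case it is one-dimensional by Schur's lemma for perverse sheaves on a complex variety. (Only the indices with $\alpha\in\mathbb{J}_{\mathrm{rel}}$ actually contribute, since otherwise the tensor factor $H_{\rel}(\pi_\ff^{-1}(x_\alpha),\C)_\chi$ is zero.) Combining this with the algebra isomorphism $H(Z_\ff,\C)\simeq\Hom_{D^b(\mathcal{N})}(R\pi_{\ff\ast}\mathcal{C}_\ff,R\pi_{\ff\ast}\mathcal{C}_\ff)$ reduces the big $\Hom$ to exactly the claimed direct sum of endomorphism algebras.

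Finally, once $H(Z_\ff,\C)$ is identified with $\bigoplus_{\alpha,\chi}\End_{\C}(H_{\rel}(\pi_\ff^{-1}(x_\alpha),\C)_\chi)$, the module-theoretic consequence is the standard fact about a finite direct sum of matrix algebras $\bigoplus_j\End_{\C}(V_j)$: the algebra is semisimple, the spaces $V_j$ are the simple modules (with $V_j$ killed by every summand other than the $j$-th), and they form a complete set of isomorphism classes. Applying this to $V_{\alpha,\chi}=H_{\rel}(\pi_\ff^{-1}(x_\alpha),\C)_\chi$ yields the second part of the corollary. I do not expect a serious obstacle: the only point that needs care is checking that $IC_\mathcal{N}(\mathcal{L}_\chi)$ and $IC_\mathcal{N}(\mathcal{L}_{\chi'})$ are truly non-isomorphic for distinct pairs and that the $\Hom$ in the derived category between two isomorphic simple perverse sheaves is one-dimensional, both of which are standard facts in the BBD formalism and are the usual input in this Ginzburg-type classification argument.
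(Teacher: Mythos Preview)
Your proposal is correct and follows exactly the approach the paper intends: the paper states the corollary as an immediate consequence of the decomposition theorem together with the isomorphism $H(Z_\ff,\mathbb{C}) \simeq \Hom_{D^b(\mathcal{N})}(R\pi_{\ff*}(\mathcal{C}_\ff),R\pi_{\ff*}(\mathcal{C}_\ff))$, and you have simply spelled out the implicit Schur-lemma computation that makes this deduction work. The only detail worth adding is that $\Hom_{D^b(\mathcal{N})}$ between two perverse sheaves coincides with $\Hom$ in the abelian category of perverse sheaves (negative $\mathrm{Ext}$'s vanish), which is precisely why Schur's lemma applies; you allude to this but it is the one step where a reader might pause.
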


    \begin{rem}
        Corollary \ref{rep} also implies that $H(\pi_\ff^{-1}(x_\alpha),\mathbb{C})$ has a decomposition $\bigoplus_\chi H_{\rel}(\pi_\ff^{-1}(x_\alpha),\mathbb{C})_\chi \otimes \chi$ as a $(H(Z_\ff, \mathbb{C}), C(\alpha))$-bimodule. However, $H(Z_\ff) \simeq \Sc_\ff$ is split semisimple, and the action of $(\mathbb{C} \otimes_\Q H(Z_\ff), C(\alpha))$ is a scalar extension of the one of $(H(Z_\ff), C(\alpha))$ on $H_{\rel}(\pi_\ff^{-1}(x_\alpha))$. Thus the decomposition is still valid for $H_{\rel}(\pi_\ff^{-1}(x_\alpha))$, which implies that $H_{\rel}(\pi_\ff^{-1}(x_\alpha),\mathbb{C})_\chi$ can be defined over $\Q$.
    \end{rem}

\section{Equivariant K-theoretic construction}
\subsection{Equivariant K-groups}\label{sec:Kgroup}
Let $G$ be a complex algebraic group and $X$ a quasi-projective $G$-variety, the $0$-th equivariant K-group $K^{G}(X)$ of $X$ is defined as the Grothendieck group of the category $\mathtt{Coh}^G(X)$ of $G$-equivariant coherent sheaves on $X$. In particular, $K^G(pt)=R(G)$ is the representation ring of $G$, and $K^G(X)$ is an $R(G)$-module canonically. We shall denote $[\mathcal{F}]\in K^{G}(X)$ for $\mathcal{F}\in\mathtt{Coh}^{G}(X)$. 

Given two G-varieties $X$ and $Y$, we refer \cite[\S 5.2.11]{CG97} for the external tensor product $\boxtimes$ on equivariant K-groups
\begin{align*}
\boxtimes: K^G(X)\otimes_{R(G)} K^G(Y)\to K^G(X\times Y).
\end{align*}
Let $f: X \to Y$ be a $G$-morphism. We will define the pull-back between equivariant K-groups when the functor $f^*$ between equivariant coherent sheaves is homologically finite, and the pushforward between equivariant K-groups when $f$ is proper. Moreover, we just denote them by $f_*$ or $f^*$.

If there exists a $G$-morphism $f: X \to M$ with $M$ smooth, then $K^G(M)$ is an $R(G)$-algebra and $K^G(X)$ is a $K^G(M)$-module by $[\mathcal{F}]\cdot[\mathcal{F}']=\sum_i (-1)^i[f^*\mathcal{E}^i \otimes_{\mathcal{O}_{X}} \mathcal{F}']$ for $\mathcal{F} \in \mathtt{Coh}^G(M)$ and $\mathcal{F}' \in \mathtt{Coh}^G(X)$ with $\mathcal{E}^\bullet \to \mathcal{F}$ a finite $G$ locally free resolution of $\mathcal{F}$. If there exists another $G$-morphism $g: Y \to M$, then the external tensor product factors through $K^G(X)\otimes_{K^G(M)} K^G(Y)$. If there exists a $G$-morphism $\varphi: X \to Y$ such that $f=g \circ \varphi$ and $\varphi$ is proper, then $\varphi_*: K^G(X) \to K^G(Y)$ is a $K^G(M)$-homomorphism by the projection formula.

For any closed $G$-subvariety $Y$ of $X$, each element in $K^G(Y)$ can be extended trivially to a one in $K^G(X)$. By abuse of notations, we shall regard each element in $K^G(Y)$ as its extension in $K^G(X)$, though $K^G(Y)$ can not be embedded in $K^G(X)$ in general. Particularly, let $\mathcal{O}_Y$ be the structure sheaf of the regular functions on $Y$, which admits a canonical $G$-equivariant structure (cf. \cite[Remark~5.1.7]{CG97}). The class $[\mathcal{O}_Y]\in K^G(Y)$ is also regarded as an element in $K^G(X)$.

Let $H$ be a closed algebraic subgroup of $G$. For a virtual character $\chi \in R(H)$, we define the sheaf $\mathcal{O}_{G/H}(\chi)$ over $G/H$ by the sheaf of rational sections of $G \times^H \V_\chi$, where $\V_\chi$ is the $H$-module associated with $\chi$. The Observation on \cite[page 233]{CG97} implies that $\mathcal{O}_{G/H}(\chi)$ is $G$-equivariant.
Thus we have an element $[\mathcal{O}_{G/H}(\chi)]$ in $K^G(G/H)$. Since $K^G(G/H) \simeq R(H)$, any element in $K^G(G/H)$ is of the form $[\mathcal{O}_{G/H}(\chi)]$ for some $\chi \in R(H)$. For a $G$-equivariant vector bundle $E \to G/H$, there is an isomorphism $K^G(E) \simeq K^G(G/H)$. We shall write $\mathcal{O}_E(\chi)$ and $[\mathcal{O}_E(\chi)]$ for the pull-back of $\mathcal{O}_{G/H}(\chi)$ and $[\mathcal{O}_{G/H}(\chi)]$, respectively.
\subsection{Convolution in equivariant K-theory}
Similar to what we did in \S \ref{subsec:conv}, let us introduce the convolution for equivariant K-groups in the case of locally closed varieties (instead of the case of closed varieties introduced in \cite{CG97}).
Firstly, we define the tensor product with the support between locally closed subsets as follows. Let $Y$ and $Y'$ be two locally closed $G$-subvarieties of a connected smooth $G$-variety $M$. Let $V$ be an arbitrary open $G$-subvariety such that $Y \cap V, Y' \cap V$ are closed in $V$ and $Y \cap Y' \subset V$. Then we define:
\begin{align*}
    \otimes : K^G(Y) \otimes_{K^G(M)} K^G(Y') \rightarrow K^G(Y \cap V) \otimes_{K^G(V)} K^G(Y' \cap V) \xrightarrow{\Delta_{V}^* \circ \boxtimes} K^G(Y \cap Y'),
\end{align*}
which is independent of the choice of $V$.

Take three connected smooth complex $G$-varieties $M_1$, $M_2$, $M_3$. Let
$$Z_{12} \in M_1 \times M_2, \quad Z_{23} \in M_2 \times M_3$$
be $G$-subvarieties.
Assume that $Z_{12}$, $Z_{23}$ and $Z_{12} \circ Z_{23}$ are locally closed $G$-subvariety and the map $Z_{12} \times_{M_2} Z_{23} \to Z_{12} \circ Z_{23}$ is proper. We define the convolution
\begin{align*}
    \star : K^G(Z_{12}) \otimes_{K^G(M_2)} K^G(Z_{23}) \rightarrow K^G(Z_{12} \circ Z_{23})
\end{align*}
in the same way as that in \cite[\S 5.2.20]{CG97}.

Similar to Lemma~\ref{con dia}, we have the following result. 
\begin{lem}\label{Kcon dia}
    Let $Z_{12}'$ and $Z_{23}'$ be open $G$-subvarieies of $Z_{12}$ and $Z_{23}$ respectively such that $Z_{12}' \circ Z_{23}'$ is also open in $Z_{12} \circ Z_{23}$ and we have the Cartesian diagram
    $$\begin{tikzcd}
        Z_{12}' \times_{M_2} Z_{23}' \ar[r] \ar[d] & Z_{12}' \circ Z_{23}' \ar[d] \\
        Z_{12} \times_{M_2} Z_{23} \ar[r] & Z_{12} \circ Z_{23}
    \end{tikzcd}.$$
    Let $V$ be an open $G$-subvariety of $Z_{12}' \circ Z_{23}'$. Then we have a commutative diagram
    $$\begin{tikzcd}
        K^G(Z_{12}) \otimes_{K^G(M_2)} K^G(Z_{23}) \ar[r,"\star"] \ar[d] & K^G(Z_{12} \circ Z_{23}) \ar[d] \ar[dr] &\\
        K^G(Z_{12}') \otimes_{K^G(M_2)} K^G(Z_{23}') \ar[r,"\star"] & K^G(Z_{12}' \circ Z_{23}') \ar[r] & K^G(V)
    \end{tikzcd}.$$
\end{lem}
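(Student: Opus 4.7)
The plan is to mirror the proof of Lemma~\ref{con dia} by decomposing the equivariant convolution $\star$ into its three constituent operations: the external tensor product $\boxtimes$, the tensor product with support along the diagonal $\Delta M_2 \subset M_2 \times M_2$ (which cuts down the external product onto the fiber product $Z_{12}\times_{M_2}Z_{23}$), and the proper pushforward along the projection $p_{13}: Z_{12}\times_{M_2}Z_{23} \to Z_{12}\circ Z_{23}$. I will verify separately that each of these three operations is compatible with restriction to the indicated open $G$-subvarieties, and then read off the commutativity of the target diagram from the stacked compatibilities.

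For the first two operations, the arguments are essentially formal. Restriction to an open $G$-subvariety is pullback along an open immersion, which is exact on equivariant coherent sheaves and so commutes with the external tensor product $\boxtimes$ coordinate-wise. For the tensor product with support along $\Delta M_2$, recall that its definition in \S\ref{sec:Kgroup} is independent of the choice of auxiliary open neighborhood used to make the intersections supported on closed subvarieties; we may thus pick one neighborhood for $(Z_{12},Z_{23})$ and a second, contained in the first, for $(Z_{12}',Z_{23}')$, whence the compatibility with restriction becomes tautological.

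The pushforward step is where the Cartesian hypothesis enters and is the main technical point. The given square has vertical maps that are open immersions (hence flat) and horizontal maps that are proper, so equivariant proper base change in K-theory applied to it yields
\begin{equation*}
j^*\circ (p_{13})_* \;=\; (p_{13}')_* \circ (j')^*,
\end{equation*}
where $j,j'$ denote the open immersions between fiber products and between composition sets respectively, and $p_{13},p_{13}'$ denote the two horizontal projections. Stacking the three compatibilities produces the left square of the target diagram. The triangle on the right is immediate: the diagonal arrow is pullback along the open immersion $V\hookrightarrow Z_{12}\circ Z_{23}$, which factors through the open immersion $V\hookrightarrow Z_{12}'\circ Z_{23}'$ by functoriality of pullback. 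The main obstacle is to make the tensor product with support interact cleanly with the proper pushforward in the locally closed setting; this is handled by choosing the ambient open neighborhood for $\otimes$ large enough to contain the image of $p_{13}$, after which the argument reduces to the standard equivariant base-change statement already packaged in \cite[\S5.2]{CG97}.
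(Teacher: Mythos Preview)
Your proposal is correct and follows exactly the approach the paper intends: the paper does not give an independent proof of Lemma~\ref{Kcon dia} but simply declares it ``similar to Lemma~\ref{con dia}'', and your decomposition into external tensor product, tensor product with support along $\Delta M_2$, and proper pushforward with equivariant base change across the given Cartesian square is the direct K-theoretic transcription of that Borel--Moore argument.
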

The following lemma is a $K$-theory version of \cite[Lemma~2.7.17]{CG97}. Here the notations are counterparts of those in \cite[\S2.7.16]{CG97} (with all the manifolds replacing by $G$-varieties), and we assume further that all the product varieties here satisfy the conditions stated in \cite[Theorem~5.6.1]{CG97}.

\begin{lem}\label{Ku-cov}
    The following diagram formed by convolution maps commutes:
$$\begin{tikzcd}
     K^G(Z_{12}\times\widetilde{Z}_{12})\otimes K^{G}(Z_{23}\times\widetilde{Z}_{23}) \arrow[d,equal, "{\text{K\"unneth}}"] \arrow[rd, "\star"] &\\
K^G(Z_{12})\otimes K^G(\widetilde{Z}_{12})\otimes K^G(Z_{23})\otimes K^G(\widetilde{Z}_{23}) \arrow[d, "\star"] & K^G((Z_{12}\times\widetilde{Z}_{12})\circ(Z_{23}\times\widetilde{Z}_{23})) \arrow[d,equal] \\
K^G(Z_{12}\circ Z_{23})\otimes K^G(\widetilde{Z}_{12}\circ\widetilde{Z}_{23}) \arrow[r,equal, "{\text{K\"unneth}}"]                        & K^G((Z_{12}\circ Z_{23})\times(\widetilde{Z}_{12}\circ\widetilde{Z}_{23}))    
\end{tikzcd}$$
\end{lem}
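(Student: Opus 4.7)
The plan is to mimic the proof of the analogous Borel-Moore statement \cite[Lemma~2.7.17]{CG97}, replacing each operation by its K-theoretic counterpart. By the definition recalled from \cite[\S5.2.20]{CG97}, the convolution $\star$ is the composite of three operations: the external tensor product $\boxtimes$ landing in $K^G$ of a product; the tensor product with support against the diagonal of $M_2$, which restricts classes to the fibre product $Z_{12}\times_{M_2}Z_{23}$; and the proper push-forward along the projection $Z_{12}\times_{M_2}Z_{23}\to Z_{12}\circ Z_{23}$. I will check separately that each of these three operations commutes with the K\"unneth isomorphism identifying $K^G$ of a product with a suitable tensor product of $K^G$'s of the factors, which is available under the hypotheses recalled from \cite[Theorem~5.6.1]{CG97}.

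For the external tensor product, the claim is the associativity and commutativity of $\boxtimes$: via the canonical identification, $(a\boxtimes b)\boxtimes(c\boxtimes d)$ matches $(a\boxtimes c)\boxtimes(b\boxtimes d)$, which is a formal property of the symmetric monoidal structure on equivariant coherent sheaves. For the restriction step, the key geometric input is the natural isomorphism of fibre products
$$(Z_{12}\times\widetilde{Z}_{12})\times_{M_2\times\widetilde{M}_2}(Z_{23}\times\widetilde{Z}_{23})\simeq(Z_{12}\times_{M_2}Z_{23})\times(\widetilde{Z}_{12}\times_{\widetilde{M}_2}\widetilde{Z}_{23}),$$
combined with the factorization $\Delta_{M_2\times\widetilde{M}_2}=\Delta_{M_2}\times\Delta_{\widetilde{M}_2}$; pull-back along a product of closed embeddings intertwines with K\"unneth. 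For the push-forward step, the natural map from the above fibre product to $(Z_{12}\circ Z_{23})\times(\widetilde{Z}_{12}\circ\widetilde{Z}_{23})$ is itself a product of two proper maps, and proper push-forward commutes with external tensor product.

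Splicing these three compatibilities into the three-step description of $\star$ yields the asserted commutative square. The main obstacle, as in the Borel-Moore case, is not geometric but bookkeeping: one must track the base rings along which the various relative tensor products are formed (namely $R(G)$, $K^G(M_2)$, $K^G(\widetilde{M}_2)$, and $K^G(M_2\times\widetilde{M}_2)$) and verify, via the identification $K^G(M_2\times\widetilde{M}_2)\simeq K^G(M_2)\otimes_{R(G)}K^G(\widetilde{M}_2)$ provided by K\"unneth, that both paths around the diagram factor through the same relative tensor product before equality can be asserted. Once this identification is installed, commutativity reduces to the three formal checks above.
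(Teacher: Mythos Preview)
Your proposal is correct and is precisely the ``standard diagram chasing'' the paper invokes: the paper's own proof says nothing more than that, so your three-step decomposition (external product, diagonal restriction, proper push-forward) together with the base-ring bookkeeping is exactly what is meant. You have simply written out what the paper leaves implicit.
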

 \begin{proof}
 The statement is derived by a standard diagram chasing.
 \end{proof}

\subsection{Cellular fibration}
Let $\pi: F\to X$ be a $G$-equivariant morphism of $G$-varieties. We call $F$ a cellular fibration over $X$ if there is a finite decreasing filtration $F=F^n\supset F^{n-1}\supset\cdots\supset F^{1}\supset F^{0}=\emptyset$ satisfying the following conditions: for any $i=1,2,\ldots,n$,
\begin{itemize}
    \item[(1)] $F^i$ is a $G$-stable closed algebraic subvarieties; furthermore, the restriction $\pi:F^i\to X$ is a $G$-equivariant locally trivial fibration.
    \item[(2)] The map $\pi_i:F^i\backslash F^{i-1}\to X$, the restriction of $\pi$, is a $G$-equivariant affine fibration.  
\end{itemize}
Let $\bar{\pi}_i: \overline{F^i\backslash F^{i-1}}\longrightarrow X$ be the restriction of $\pi$ on the closure of $F^i\backslash F^{i-1}$. The following known result is referred to as the cellular fibration lemma (cf. \cite[Lemma~5.5.1]{CG97}).
\begin{lem}\label{lem. cfl}
    Let $F$ be a cellular fibration over $X$. The following statements hold.
    \begin{itemize}
        \item[(a)] For each $i=1,\cdots,n$ there is a canonical short exact sequence $$0\longrightarrow K^{G}(F^{i-1})\longrightarrow K^{G}(F^i)\longrightarrow K^{G}(F^{i}\backslash F^{i-1})\longrightarrow0$$ 
        \item[(b1)] If $X$ is smooth, then the exact sequences in (a) are split. Moreover, $K^{G}(F)$ is a free $K^G(X)$-module with a basis $$\{ [\mathcal{O}_{\overline{F^i\backslash F^{i-1}}}] ~|~ i=1,\cdots,n\};$$
        \item[(b2)] If $K^G(X)$ is a free $R(G)$-module with the basis $\{[\mathcal{F}_j] ~|~ 1 \leq j \leq m\}$, then the exact sequences in (a) are split. Moreover, $K^G(F)$ is a free $R(G)$-module with a basis $$\{[\bar{\pi}_i^*\mathcal{F}_j] ~|~ 1 \leq i \leq n, 1\leq j \leq m\}.$$
        \item[(c)] Let $H\subset G$ be a closed algebraic subgroup and suppose that the assumption of (b) holds for both $G$ and $H$. If the natural map $$R(H)\otimes_{R(G)}K^{G}(X) \to K^{H}(X)$$ is an isomorphism, then so is the map $$R(H)\otimes_{R(G)}K^{G}(F)\to K^{H}(F).$$
    \end{itemize}
\end{lem}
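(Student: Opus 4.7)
The plan is to mirror the classical cellular fibration argument, substituting equivariant coherent $K$-theory throughout. For part (a), I would start from the excision long exact sequence attached to the closed embedding $F^{i-1}\hookrightarrow F^i$ with open complement $F^i\setminus F^{i-1}$:
$$\cdots \to K_1^G(F^i\setminus F^{i-1}) \xrightarrow{\partial} K^G(F^{i-1}) \to K^G(F^i) \to K^G(F^i\setminus F^{i-1}) \to 0.$$
Surjectivity and exactness at the middle are automatic from excision; the substantive task is to show $\partial=0$. Since $\pi_i$ is a $G$-equivariant affine bundle, Thom/homotopy invariance yields $\pi_i^{*} : K^G_\bullet(X) \xrightarrow{\sim} K^G_\bullet(F^i\setminus F^{i-1})$. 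I would then produce a section of the restriction $K^G(F^i)\to K^G(F^i\setminus F^{i-1})$ by pulling back along $\bar\pi_i$ and pushing forward through the closed inclusion $\overline{F^i\setminus F^{i-1}}\hookrightarrow F^i$. The existence of such a section forces $\partial=0$ and realizes the sequence as split.

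For (b1) and (b2), I would induct on $i$, with the base case $i=1$ handled directly via the Thom isomorphism for the affine bundle $F^1\to X$. In the induction step, the split SES from (a) reduces the computation of $K^G(F^i)$ to that of its two outer terms, and the Thom isomorphism identifies $K^G(F^i\setminus F^{i-1})\simeq K^G(X)$. In case (b1), smoothness of $X$ makes $K^G(X)$ a ring and $K^G(F^i)$ a $K^G(X)$-module via pull-back; the generator of the quotient lifts to the class $[\mathcal{O}_{\overline{F^i\setminus F^{i-1}}}]$. In case (b2), the lifts are the classes $[\bar\pi_i^{*}\mathcal F_j]$ for $1\le j\le m$. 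Combined with the inductive basis for $K^G(F^{i-1})$, this gives the asserted free bases.

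For part (c), I would apply the base-change functor $R(H)\otimes_{R(G)} -$ to the split SES from (a). Splitting ensures that base change preserves exactness, producing a commutative diagram of short exact sequences comparing $R(H)\otimes_{R(G)} K^G(-)$ with $K^H(-)$ along the filtration $F^0\subset F^1\subset\cdots\subset F^n$. A straightforward induction on $i$, using the assumed isomorphism for the base $X$ transported to $F^i\setminus F^{i-1}$ via the affine bundle $\pi_i$, together with the Five Lemma, yields the desired isomorphism at level $F=F^n$.

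The principal technical step is producing the splitting in (a); everything downstream is routine induction plus diagram chasing. The natural candidate section via $\bar\pi_i^{*}$ followed by the closed push-forward is the expected one, but I expect the subtle point to be verifying that it genuinely restricts to the identity on the open stratum $F^i\setminus F^{i-1}$, which reduces to checking compatibility of the Thom isomorphism for $\pi_i$ with restriction from the closure $\overline{F^i\setminus F^{i-1}}$.
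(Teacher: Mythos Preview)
The paper does not prove this lemma; it is quoted as a known result with the citation ``cf.\ \cite[Lemma~5.5.1]{CG97}'' and no argument is given. Your outline is precisely the standard proof one finds in Chriss--Ginzburg: excision long exact sequence, Thom isomorphism for the affine bundle $\pi_i$, a splitting built from $\bar\pi_i^*$ followed by closed pushforward, and induction along the filtration (with the Five Lemma for part (c)). So there is no divergence in approach to report---your proposal recovers the reference the paper defers to.

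One small caution worth flagging: your argument for part~(a) in isolation already invokes the section via $\bar\pi_i^*$, but the well-definedness of that pullback on $K$-theory is not automatic without some hypothesis (smoothness of $X$ as in (b1), or at least that the $\mathcal{F}_j$ in (b2) admit locally free representatives so that naive sheaf pullback suffices). In Chriss--Ginzburg the lemma is effectively packaged so that (a) is used only under the hypotheses of (b), and the paper follows that convention; just be aware that (a) as a standalone statement with no assumption on $X$ would require a slightly different justification for left-exactness.
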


\subsection{The group $\breve{G}$}\label{sec:Aaction}
From now on, we assume that $G$ is 
\begin{itemize}
\item connected and reductive; and satisfies that
\item each simple factor of its derived subgroup is simply connected or of type $\mathrm{SO}_{2d+1}$.
\end{itemize}

There are natural ring isomorphisms for representation rings about $G$ and its maximal torus $T$ (see \cite[\S6.1]{CG97}):
$$R(T)=\Z[Q] \quad\mbox{and}\quad R(G)=R(T)^\W.$$ Here the second isomorphism is a ``group'' analogue of the Chevalley restriction theorem. In fact, it can be further generalized to the following result, obtained in \cite[Theorems 1.2 \& 2.2]{St75}.
\begin{lem}\label{lem:PW}
Let $P$ be a parabolic subgroup of $G$ containing $T$, and let $\W'$ be the Weyl group of the Levi part of $\mathrm{Lie}(P)$. Then $R(P)\simeq R(T)^{\W'}$ is a free $R(G)$-module with rank $\W/\W'$.
\end{lem}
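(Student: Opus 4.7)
My plan is to deduce both assertions from two theorems of Steinberg, cited in \cite{St75}, by passing through the Levi subgroup of $P$.

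First, for the isomorphism $R(P)\simeq R(T)^{\W'}$, I would begin by reducing from $P$ to its Levi subgroup $L\supset T$. Writing the Levi decomposition $P=L\ltimes U_P$, one has that $U_P$ is split unipotent, has no nontrivial algebraic characters, and every finite-dimensional algebraic representation of $P$ is the inflation of a unique representation of $L$; this gives an identification $R(P)\simeq R(L)$. Now $L$ is itself connected reductive with maximal torus $T$ and Weyl group $\W'$, and the hypothesis on simple factors of $G_{\mathrm{der}}$ is preserved under passage to $L_{\mathrm{der}}$ (derived groups of Levi subgroups in a simply connected group are simply connected, and Levi subgroups of $\mathrm{SO}_{2d+1}$ decompose into $GL$-factors and smaller odd special-orthogonal factors). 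Hence Steinberg's Theorem~2.2 of \cite{St75} applies to $L$, yielding $R(L)\simeq R(T)^{\W'}$.

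Second, for the freeness and rank over $R(G)$, I would apply Steinberg's Theorem~1.2 of \cite{St75} twice: once to $(G,T)$, to conclude that $R(T)$ is a free $R(G)$-module of rank $|\W|$; and once to $(L,T)$, to conclude that $R(T)$ is a free $R(L)=R(T)^{\W'}$-module of rank $|\W'|$. A rank comparison over $R(G)$ then forces $R(T)^{\W'}$ to be free of rank $|\W|/|\W'|=|\W/\W'|$.

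The main technical point is the transitivity of freeness through the intermediate ring in the second paragraph: one must justify that a chain of the form $R(G)\subset R(T)^{\W'}\subset R(T)$ with $R(T)$ free over both ends forces $R(T)^{\W'}$ to be a free (not merely projective or finitely generated) $R(G)$-module. I would secure this by exhibiting $R(T)^{\W'}$ as a direct $R(G)$-summand of $R(T)$, using the fact that in the Steinberg setup the explicit bases of $R(T)$ over $R(G)$ can be chosen compatibly with coset representatives of $\W/\W'$, so that the $\W'$-invariants form a distinguished basis block of the desired cardinality. This is the step I would write out most carefully, as the other two reductions are essentially formal once the cited theorems are in hand.
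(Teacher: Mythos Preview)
The paper does not give a proof of this lemma: it simply records that the statement is ``obtained in \cite[Theorems 1.2 \& 2.2]{St75}'' and moves on. Your proposal is therefore more detailed than anything in the paper, and the reductions you outline (passing from $P$ to its Levi $L$ via $R(P)\simeq R(L)$, then invoking Steinberg for $R(L)\simeq R(T)^{\W'}$) are correct and standard.

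One comment on your second step. Your worry about the ``transitivity of freeness'' is well-founded in the abstract, but you are working harder than necessary. Pittie's theorem, which Steinberg's paper reproves and sharpens, asserts directly that $R(H)$ is free over $R(G)$ for any closed connected subgroup $H$ of maximal rank (under the simply-connectedness-type hypothesis). Since $L$ is exactly such a subgroup, you get freeness of $R(L)\simeq R(T)^{\W'}$ over $R(G)$ for free, and the rank is then determined by the two rank equalities you already wrote down ($|\W|=\mathrm{rank}_{R(G)}R(T)=\mathrm{rank}_{R(G)}R(T)^{\W'}\cdot |\W'|$). This avoids having to exhibit $R(T)^{\W'}$ as an explicit $R(G)$-summand of $R(T)$.
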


For an arbitrary algebraic group $H$, denote $$\breve{H}=H \times (\mathbb{C}\backslash\{0\}).$$
It is clear that $R(\breve{H}) \simeq R(H)[q,q^{-1}]$, where $q$ means the tautological representation of $\mathbb{C}\backslash \{0\}$.

Let $\breve{G}$ act on $\sF_\gamma \times \mathfrak{g}$ by
$$(s,z)\cdot( gP_\gamma, x)=(sgP_\gamma,z^2\Ad_s(x)),\quad \forall (s,z)\in \breve{G}, (gP_\gamma,x)\in \sF_\gamma \times \mathfrak{g}.$$
Then $\widetilde{\mathcal{N}}_\gamma$ is a $\breve{G}$-subvariety of $\sF_\gamma \times \mathfrak{g}$. Let $\breve{G}$ act on $\widetilde{\mathcal{N}}_{\gamma}\times\widetilde{\mathcal{N}}_{\nu}$ diagonally for $\gamma,\nu\in\Lambda_\ff$. Then $Z_{\gamma\nu}$ (resp. $Z_\ff$) is a $\breve{G}$-subvariety of $\widetilde{\mathcal{N}}_{\gamma}\times\widetilde{\mathcal{N}}_{\nu}$ (resp. $\widetilde{\mathcal{N}}_{\ff}\times\widetilde{\mathcal{N}}_{\ff}$). Thus we get equivariant K-groups $K^{\breve{G}}(Z_{\gamma\nu})$ and $K^{\breve{G}}(Z_\ff)$. Particularly, $(K^{\breve{G}}(Z_\ff),\star)$ forms an associative $\Z[q,q^{-1}]$-algebra.

\subsection{Filtrations}
Fix a total order ``$\preceq$" on $\W$ compatible with the Bruhat order ``$\le$".
Define $$Z^{\preceq w}_{\gamma\nu}:=\bigsqcup\limits_{y\preceq w,y\in\mathcal{D}_{\gamma\nu}}T^*_{\Ob_{\gamma,y,\nu}},\quad Z^{\prec w}_{\gamma\nu}:=\bigsqcup\limits_{y\prec w,y\in\mathcal{D}_{\gamma\nu}}T^*_{\Ob_{\gamma,y,\nu}}.$$
Then we have a closed subset filtration
\begin{equation}\label{filtra:Z}
Z_{\gamma\nu} \supset \cdots\supset Z_{\gamma\nu}^{\preceq w} \supset Z_{\gamma\nu}^{\prec w} \supset\cdots \supset Z_{\gamma\nu}^{\preceq\id}= T_{\gamma,\id,\nu}^* \supset \emptyset.
\end{equation}

\begin{lem}\label{ex seq1}
Let $Y,Y'$ be two locally closed subset of $Z_{\gamma\nu}$ consisting of several conormal bundles $T^*_{\Ob_{\xi}}$ with $Y'$ closed in $Y$.  There exists an exact sequence
\begin{equation*}
0 \longrightarrow K^{\breve{G}}(Y') \longrightarrow K^{\breve{G}}(Y) \longrightarrow K^{\breve{G}}(Y\backslash Y') \longrightarrow 0,
\end{equation*}
\end{lem}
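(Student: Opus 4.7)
The plan is to deduce the sequence from the general long exact sequence of $\breve{G}$-equivariant K-theory (see \cite[\S5.2.14]{CG97}) for the closed embedding $i:Y'\hookrightarrow Y$ with open complement $j:Y\setminus Y'\hookrightarrow Y$:
\[
\cdots\to K_1^{\breve{G}}(Y\setminus Y')\xrightarrow{\partial}K^{\breve{G}}(Y')\xrightarrow{i_*}K^{\breve{G}}(Y)\xrightarrow{j^*}K^{\breve{G}}(Y\setminus Y')\to 0,
\]
so the problem reduces to showing that the boundary map $\partial$ vanishes, equivalently that $j^*$ admits an $R(\breve{G})$-linear section.

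To establish $\partial=0$, I will argue by d\'{e}vissage on the finite collection of conormal bundles comprising $Y\setminus Y'$. Fixing a total order compatible with the Bruhat-type closure order (in the spirit of the filtration \eqref{filtra:Z}) produces a filtration $Y'=Y_0\subset Y_1\subset\cdots\subset Y_m=Y$ with each $Y_i=Y_{i-1}\sqcup T^*_{\Ob_{\xi_i}}$ and $Y_{i-1}$ closed in $Y_i$. Splicing the short exact sequences at successive pairs reduces the problem to the base case where $Y\setminus Y'=T^*_{\Ob_\xi}$ consists of a single conormal bundle.

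For this base case, the isomorphism \eqref{iso1} identifies $\Ob_\xi\simeq G/P^w_{\gamma\nu}$, so $T^*_{\Ob_\xi}$ is a $\breve{G}$-equivariant vector bundle over a homogeneous space; by Thom isomorphism together with $K^{\breve{G}}(G/P^w_{\gamma\nu})\simeq R(\breve{P}^w_{\gamma\nu})$, every class in $K^{\breve{G}}(T^*_{\Ob_\xi})$ is an $R(\breve{G})$-linear combination of $[\mathcal{O}_{T^*_{\Ob_\xi}}(\chi)]$ for $\chi\in R(\breve{P}^w_{\gamma\nu})$. A section of $j^*$ will be constructed by lifting each such generator to a class $[\mathcal{O}_{T^*_\xi}(\chi)]\in K^{\breve{G}}(Y)$, defined as the class of a $\breve{G}$-equivariant coherent extension of $\mathcal{O}_{T^*_{\Ob_\xi}}(\chi)$ across the boundary $T^*_\xi\setminus T^*_{\Ob_\xi}$ followed by trivial extension to $Y$; restriction via $j^*$ recovers the original class tautologically, so this defines an honest $R(\breve{G})$-linear section and forces $\partial=0$.

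The principal technical obstacle is precisely the equivariant coherent extension across the generally singular boundary $T^*_\xi\setminus T^*_{\Ob_\xi}$: one must appeal to the equivariant coherent extension principle for Noetherian $\breve{G}$-schemes (cf.\ \cite[\S5.1]{CG97}) to produce such a sheaf and verify the $R(\breve{G})$-linearity of the resulting section. Fortunately any ambiguity in the chosen extension is supported on the boundary and hence absorbed by $\mathrm{im}(i_*)$, so it does not obstruct the vanishing of $\partial$ and the desired short exact sequence follows.
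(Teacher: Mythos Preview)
Your approach is correct but takes a genuinely different route from the paper's. The paper does not construct a section of $j^*$ directly; instead it passes to $\breve{T}$-equivariant K-theory, observes that each $T^*_{\Ob_\xi}$ decomposes into $\breve{T}$-stable affine cells (via Bruhat decomposition), applies the cellular fibration lemma to obtain the short exact sequence
\[
0\to K^{\breve{T}}(Y')\to K^{\breve{T}}(Y)\to K^{\breve{T}}(Y\setminus Y')\to 0,
\]
and then invokes the identification $K^{\breve{G}}(-)\simeq K^{\breve{T}}(-)^{\W}$ from \cite[6.1.22(b)]{CG97} to deduce injectivity of $K^{\breve{G}}(Y')\to K^{\breve{G}}(Y)$ from that at the torus level. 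This bypasses any coherent-extension technicality and stays entirely within the cellular-fibration machinery.

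Your d\'evissage plus coherent-extension argument is more hands-on and has the virtue of actually producing the splitting (not merely injectivity); in fact it anticipates exactly how the paper later defines the classes $\chi_{\gamma\nu}^w$ in Proposition~\ref{prop:noncbasis}. Two small points deserve tightening: first, ``$\partial=0$'' and ``$j^*$ admits a section'' are not equivalent---the latter is strictly stronger, so you should say ``it suffices to show'' rather than ``equivalently''. Second, to make the section a well-defined $R(\breve{G})$-module map you must choose lifts only on a \emph{basis} of $K^{\breve{G}}(T^*_{\Ob_\xi})\simeq R(\breve{P}^w_{\gamma\nu})$ and extend linearly; this uses that $R(\breve{P}^w_{\gamma\nu})$ is free over $R(\breve{G})$ (Lemma~\ref{lem:PW}), a fact you should state rather than leave implicit in the phrase ``$R(\breve{G})$-linear combination''. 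With those adjustments your argument is complete.
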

\begin{proof} 
By equivariant K-theory, one has an exact sequence $$K^{\breve{G}}(Y') \longrightarrow K^{\breve{G}}(Y) \longrightarrow K^{\breve{G}}(Y\backslash Y')\longrightarrow 0.$$ It remains to show the left exactness.  
    Each $T_{\Ob_{\xi}}^*$ can be decomposed into a union of $\breve{T}$-affine spaces. Hence by the cellular fibration theorem, we have
    $$0\longrightarrow K^{\breve{T}}(Y')\longrightarrow K^{\breve{T}}(Y)\longrightarrow K^{\breve{T}}(Y\backslash Y')\longrightarrow0.$$
    Applying \cite[6.1.22(b)]{CG97}, there comes the following commutative diagram: \begin{equation*}
        \begin{tikzcd}
K^{\breve{G}}(Y') \arrow[d] \arrow[r, "\simeq"] & K^{\breve{T}}(Y')^\W \arrow[d,hook]\\
K^{\breve{G}}(Y) \arrow[r, "\simeq"] & K^{\breve{T}}(Y)^\W,
\end{tikzcd}
    \end{equation*}
    from which the left exactness follows.
\end{proof}

Lemma~\ref{ex seq1} brings us a filtration 
\begin{equation}\label{filtra}
K^{\breve{G}}(Z_{\gamma\nu})\supset\cdots\supset K^{\breve{G}}(Z_{\gamma\nu}^{\preceq w})\supset K^{\breve{G}}(Z_{\gamma\nu}^{\prec w}) \supset\cdots \supset K^{\breve{G}}(T_{\gamma,\id,\nu}^*) \supset \{0\},
\end{equation}
in which each quotient is in the form of $K^{\breve{G}}(T_{\Ob_{\gamma,w,\nu}}^*)$, $w\in\D_{\gamma\nu}$.

\begin{cor}\label{basis}
    For $\gamma,\nu\in\Lambda_\ff$, the equivariant K-group $K^{\breve{G}}(Z_{\gamma\nu})$ is a free $R(\breve{G})$-module with rank $\# \D_\gamma\# \D_\nu$.
\end{cor}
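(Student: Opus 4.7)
The plan is to induct along the filtration~\eqref{filtra:Z}, using the short exact sequences produced by Lemma~\ref{ex seq1}. Each step of the induction attaches a single stratum $T^*_{\Ob_{\gamma,w,\nu}}$ (indexed by $w \in \D_{\gamma\nu}$) and gives
\begin{equation*}
0 \longrightarrow K^{\breve G}(Z^{\prec w}_{\gamma\nu}) \longrightarrow K^{\breve G}(Z^{\preceq w}_{\gamma\nu}) \longrightarrow K^{\breve G}(T^*_{\Ob_{\gamma,w,\nu}}) \longrightarrow 0.
\end{equation*}
Provided that each graded piece $K^{\breve G}(T^*_{\Ob_{\gamma,w,\nu}})$ can be shown to be a free $R(\breve G)$-module of known rank, every such sequence will automatically split (a surjection onto a projective module splits), and both freeness and the rank formula will then propagate up the filtration to $K^{\breve G}(Z_{\gamma\nu})$.

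To identify the graded piece at $w$, I would apply the Thom isomorphism for the $\breve G$-equivariant vector bundle $T^*_{\Ob_{\gamma,w,\nu}} \to \Ob_{\gamma,w,\nu}$ together with the identification $\Ob_{\gamma,w,\nu} \simeq G/P^w_{\gamma\nu}$ from~\eqref{iso1} to obtain
\begin{equation*}
K^{\breve G}(T^*_{\Ob_{\gamma,w,\nu}}) \;\simeq\; K^{\breve G}(G/P^w_{\gamma\nu}) \;\simeq\; R(\breve P^w_{\gamma\nu}).
\end{equation*}
Since $P^w_{\gamma\nu}$ contains $T$ and, by Lemma~\ref{levi}, its Levi has Weyl group $\W^w_{\gamma\nu}$, one expects $R(P^w_{\gamma\nu}) \simeq R(T)^{\W^w_{\gamma\nu}}$. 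The remark immediately following~\eqref{Pwrv} observes that $\W^w_{\gamma\nu}$ is a (standard) parabolic subgroup of $\W$; a Steinberg-type invariant-theoretic argument analogous to Lemma~\ref{lem:PW} then shows that $R(T)^{\W^w_{\gamma\nu}}$ is free over $R(G) = R(T)^{\W}$ of rank $|\W|/|\W^w_{\gamma\nu}|$, and base change by $\Z[q,q^{-1}]$ preserves this freeness and rank.

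Summing across the filtration, the total rank of $K^{\breve G}(Z_{\gamma\nu})$ comes out to
\begin{equation*}
\sum_{w \in \D_{\gamma\nu}} \frac{|\W|}{|\W^w_{\gamma\nu}|} \;=\; \frac{|\W|}{|\W_\gamma|\,|\W_\nu|} \sum_{w \in \D_{\gamma\nu}} |\W_\gamma w \W_\nu| \;=\; \frac{|\W|^2}{|\W_\gamma|\,|\W_\nu|} \;=\; \#\D_\gamma \cdot \#\D_\nu,
\end{equation*}
using $|\W_\gamma w \W_\nu| = |\W_\gamma|\,|\W_\nu|/|\W^w_{\gamma\nu}|$ and the partition $\W = \bigsqcup_{w \in \D_{\gamma\nu}} \W_\gamma w \W_\nu$. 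The main obstacle is the middle step: since $P^w_{\gamma\nu}$ is typically not a parabolic subgroup of $G$, Lemma~\ref{lem:PW} does not apply verbatim. The workaround is that only the Weyl-group side of the argument is really needed, namely that $\W^w_{\gamma\nu}$ is parabolic in $\W$, so that Steinberg's freeness theorem for $R(T)^{\W^w_{\gamma\nu}}$ over $R(T)^{\W}$ still applies; once this is granted, the rank computation above closes the proof.
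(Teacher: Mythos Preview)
Your proof is correct and follows essentially the same route as the paper's: induct along the filtration~\eqref{filtra} using Lemma~\ref{ex seq1}, identify each graded piece as $R(\breve T)^{\W^w_{\gamma\nu}}$ via the Thom isomorphism and Lemma~\ref{levi}, invoke Steinberg's theorem to get freeness of rank $|\W|/|\W^w_{\gamma\nu}|$, and sum over $w\in\D_{\gamma\nu}$. You are right to flag that $P^w_{\gamma\nu}$ need not be parabolic in $G$, so Lemma~\ref{lem:PW} does not literally apply; the paper cites it anyway, but the real input (as you note) is Steinberg's freeness result for $R(T)^{\W'}$ over $R(T)^{\W}$ when $\W'$ is a parabolic (reflection) subgroup of $\W$, which is exactly what \cite[Theorem~1.2]{St75} provides.
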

\begin{proof}
Thanks to Lemma~\ref{lem:PW}, $K^{\breve{G}}(T_{\Ob_{\gamma,w,\nu}}^*) \simeq K^{\breve{G}}(\Ob_{\gamma,w,\nu}) \simeq R(\breve{P}_{\gamma\nu}^w)\simeq R(\breve{T})^{\W_{\gamma\nu}^w}$ is free over $R(\breve{G})$ with rank $\# \D_\gamma \# (\W_\gamma w \W_\nu/\W_\nu)$.

The filtration \eqref{filtra} implies that $K^{\breve{G}}(Z_{\gamma\nu})$ is a free $R(\breve{G})$-module. Therefore,
    \begin{align*} &\mathrm{rank}_{R(\breve{G})}K^{\breve{G}}(Z_{\gamma\nu})=\mathrm{rank}_{R(\breve{G})}(\bigoplus_{w \in \mathcal{D}_{\gamma\nu}}K^{\breve{G}}(T^{*}_{\Ob_{\gamma,w,\nu}}))=\mathrm{rank}_{R(\breve{G})}(\bigoplus_{w \in \mathcal{D}_{\gamma\nu}}K^{\breve{G}}(\Ob_{\gamma,w,\nu}))\\&=\mathrm{rank}_{R(\breve{G})}(\bigoplus_{w \in \mathcal{D}_{\gamma\nu}}K^{\breve{G}}(G/P^{w}_{\gamma\nu}))=\mathrm{rank}_{R(\breve{G})}(\bigoplus_{w \in \D_{\gamma\nu}} R(\breve{T})^{\W_{\gamma\nu}^w})\\
        &=\sum_{w \in \D_{\gamma\nu}} \# \D_\gamma \# (\W_\gamma w \W_\nu/\W_\nu)=\# \D_\gamma\# \D_\nu.
    \end{align*}
\end{proof}

\subsection{$R(\breve{T})$-modules}
Firstly, let us deal with the case for which the cellular fibration lemma works.
Recall $\mu_0\in\Lambda_\ff$ is a regular $\W$-orbit. For any $\gamma\in\Lambda$, the $G$-variety $Z_{\gamma\mu_0}$ with the finite filtration 
\eqref{filtra:Z} (taking $\nu=\mu_0$ therein)
admits a cellular fibration over $\sB$ via the canonical morphism 
\begin{equation}\label{eq:cellularpi}
\pi: Z_{\gamma\mu_0}=\bigsqcup_{w\in\mathcal{D}_\gamma}T^*_{\Ob_{\gamma,w}} \to\bigsqcup_{w\in\mathcal{D}_\gamma}\Ob_{\gamma,w} \to \sB,
\end{equation}
whose restriction $Z_{\gamma\mu_0}^{\preceq w}\backslash Z_{\gamma\mu_0}^{\prec w}=T^*_{\Ob_{\gamma,w}}\stackrel{p_1}\longrightarrow\Ob_{\gamma,w}\stackrel{p_2}\longrightarrow\sB$ is a $\breve{G}$-equivariant affine fibration since the natural map $p_1$ is an affine fibration and the projection $p_2$ is a fibration with fiber $BwP_{\gamma}/P_{\gamma}$ which is an affine space by Bruhat decomposition.

Noting $K^{\breve{G}}(\sB)\simeq R(\breve{T})$ (cf. \cite[p313]{CG97}) and applying Lemma~\ref{lem. cfl}, we have: 
\begin{cor}\label{cor:basisZr}
    For $\gamma\in\Lambda_\ff$, the equivariant K-group $K^{\breve{G}}(Z_{\gamma\mu_0})$ is a free $R(\breve{T})$-module with rank $\# \D_\gamma$ and an $R(\breve{T})$-basis $\{[\mathcal{O}_{T^{*}_{\gamma,w}}]\mid w \in \D_\gamma\}$.
\end{cor}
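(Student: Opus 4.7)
The plan is to invoke the cellular fibration lemma, specifically Lemma \ref{lem. cfl}(b1), applied to the map $\pi:Z_{\gamma\mu_0}\to\sB$ from \eqref{eq:cellularpi} together with the closed subvariety filtration obtained by specializing \eqref{filtra:Z} at $\nu=\mu_0$. The whole setup has already been prepared in the paragraph preceding the statement, so the argument reduces to bookkeeping.

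First I would verify the two conditions of the cellular fibration definition. The affine fibration condition on each open stratum $Z^{\preceq w}_{\gamma\mu_0}\setminus Z^{\prec w}_{\gamma\mu_0}=T^*_{\Ob_{\gamma,w}}$ is exactly what is spelled out above the corollary: the factorization $T^*_{\Ob_{\gamma,w}}\xrightarrow{p_1}\Ob_{\gamma,w}\xrightarrow{p_2}\sB$ is an iteration of two $\breve{G}$-equivariant affine fibrations, with $p_1$ the conormal vector bundle and $p_2$ a fibration of fiber $BwP_\gamma/P_\gamma$, an affine space by the Bruhat decomposition. For the other condition, that each $\pi\!\upharpoonright_{Z^{\preceq w}_{\gamma\mu_0}}$ is a $\breve{G}$-equivariant locally trivial fibration over $\sB$, I would use that $\sB=G/B$ is $G$-homogeneous and that the total space is $\breve{G}$-stable, so local triviality follows by translating over the standard open cover of $\sB$.

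Once this is in place, Lemma \ref{lem. cfl}(b1) applies, because $\sB$ is smooth. It produces a free $K^{\breve{G}}(\sB)$-module structure on $K^{\breve{G}}(Z_{\gamma\mu_0})$ with basis the classes of the structure sheaves of the closures $\overline{T^*_{\Ob_{\gamma,w}}}=T^*_{\gamma,w}$, one for each $w\in\mathcal{D}_\gamma$. The canonical identification $K^{\breve{G}}(\sB)\simeq R(\breve{T})$ then converts this into the asserted freeness over $R(\breve{T})$ with basis $\{[\mathcal{O}_{T^*_{\gamma,w}}]\mid w\in\mathcal{D}_\gamma\}$, and the rank is precisely the number $\#\mathcal{D}_\gamma$ of strata in the filtration.

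I do not expect any serious obstacle: every nontrivial ingredient (the filtration, the affine fibration structure on the strata, freeness of $K^{\breve{G}}(\sB)$ over $R(\breve{G})$, Lemma \ref{lem. cfl} itself) is already available. The only point that warrants an explicit remark in the final write-up is the local triviality in condition (1) of the cellular fibration definition, and that is immediate from the homogeneity of $\sB$.
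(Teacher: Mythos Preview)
Your proposal is correct and follows exactly the paper's approach: the paper's own proof is the one-line remark that, noting $K^{\breve{G}}(\sB)\simeq R(\breve{T})$, the result follows by applying Lemma~\ref{lem. cfl} to the cellular fibration \eqref{eq:cellularpi} with the filtration \eqref{filtra:Z} at $\nu=\mu_0$. You have simply unpacked the verification of the cellular-fibration conditions in more detail than the paper does.
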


Similar statement is also valid for $Z_{\mu_0\nu}$, ($\forall\nu\in\Lambda$). If we
specialize $\gamma$ to be also a regular $\W$-orbit, then the above corollary recovers a known result about equivariant K-theory of the Steinberg variety $Z$ as follows.
\begin{cor}\label{basis:H}
Regard $K^{\breve{G}}(Z)$ as an $R(\breve{T})$-module by considering the projection on the second factor $Z \to \sB$. The equivariant K-group $K^{\breve{G}}(Z)$ is a free $R(\breve{T})$-module with rank $\#\W$ and an $R(\breve{T})$-basis $\{[\mathcal{O}_{T^*_w}]\mid w\in\W\}$.
\end{cor}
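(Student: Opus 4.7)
The plan is to derive this statement as a direct specialization of Corollary~\ref{cor:basisZr} to the case where the ``left'' orbit is also regular, so that no fresh geometric input is required. First I would observe that taking $\gamma = \mu_0$ trivializes the stabilizer: $\W_{\mu_0} = \{\id\}$, whence $P_{\mu_0} = B$ and $\mathcal{D}_{\mu_0} = \W$. Invoking the convention in \S\ref{sec:convention}, this identifies $\sF_{\mu_0} = \sB$, $Z_{\mu_0\mu_0} = \widetilde{\mathcal{N}} \times_{\mathcal{N}} \widetilde{\mathcal{N}} = Z$, and $T^*_{\mu_0, w} = T^*_w$ for each $w \in \W$.

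With these identifications in place, I would apply Corollary~\ref{cor:basisZr} with $\gamma = \mu_0$. The cellular fibration \eqref{eq:cellularpi} underlying that corollary specializes to the projection $Z \to \sB$ onto the second factor, factored through $\bigsqcup_{w\in\W}\Ob_w$, which is precisely the $R(\breve{T})$-module structure stipulated in the statement of the corollary (recalling $K^{\breve{G}}(\sB) \simeq R(\breve{T})$). The rank count $\#\D_{\mu_0} = \#\W$ matches, and the basis $\{[\mathcal{O}_{T^*_{\mu_0,w}}] \mid w\in \D_{\mu_0}\}$ becomes $\{[\mathcal{O}_{T^*_w}] \mid w\in \W\}$ under the convention, so the conclusion follows at once.

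There is essentially no obstacle: the entire geometric content (verifying that $\pi$ is genuinely a cellular fibration, checking the hypotheses of Lemma~\ref{lem. cfl}(b1), and identifying the class of each closed stratum) has already been absorbed into the proof of Corollary~\ref{cor:basisZr}. If one wanted an internal sanity check, the only point worth reverifying is that when both flag-variety factors of $Z$ are complete, the map $\bigsqcup_{w\in\W}\Ob_w\to\sB$ used in \eqref{eq:cellularpi} has affine-space fibers $BwB/B$ by the Bruhat decomposition; this is standard and needs no separate argument here.
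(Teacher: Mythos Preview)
Your proposal is correct and matches the paper's approach exactly: the paper introduces this corollary with the sentence ``If we specialize $\gamma$ to be also a regular $\W$-orbit, then the above corollary recovers a known result\ldots'', and provides no further argument. Your explicit unpacking of the identifications $\D_{\mu_0}=\W$, $Z_{\mu_0\mu_0}=Z$, and $T^*_{\mu_0,w}=T^*_w$ is precisely the specialization the paper has in mind.
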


For each weight $\lambda\in \breve{T}^*$, let $e^\lambda\in R(\breve{T})\simeq K^{\breve{G}}(\sB)$ be the virtual character of $\breve{T}$ associated with $\lambda$. The $R(\breve{T})$-action on $K^{\breve{G}}(Z_{\gamma\mu_0})$ and $K^{\breve{G}}(Z)$, mentioned in the above two corollaries, is given by $e^\lambda\cdot-=\pi^*(e^\lambda)\otimes^{\mathbb{L}}-$, where $\pi$ is the cellular fibration \eqref{eq:cellularpi}, and $\otimes^{\mathbb{L}}$ is the derived tensor product.
We write 
\begin{equation}\label{notation:elambda}
e_{\gamma,w}^\lambda:=e^\lambda \cdot [\mathcal{O}_{T^*_{\gamma,w}}]\in K^{\breve{G}}(Z_{\gamma{\mu_0}}), \quad\mbox{(particularly, $e^\lambda_{w}:=e^\lambda\cdot[\mathcal{O}_{T^*_{w}}]\in K^{\breve{G}}(Z)$)}.
\end{equation}
We would like to emphasize that here $K^{\breve{G}}(Z)$ is regarded as an $R(\breve{T})$-module by considering the projection on the second factor $Z \to \sB$, though we write left action $e^\lambda\cdot-$ for convenience.

Let $\Delta \widetilde{\mathcal{N}}\subset \widetilde{\mathcal{N}}\times\widetilde{\mathcal{N}}$ be the diagonal copy of $\widetilde{\mathcal{N}}$. It is known (cf. \cite[7.2.4]{CG97}) that $K^{\breve{G}}(\Delta \widetilde{\mathcal{N}})\simeq R(\breve{T})$. Thus there is another (right) $R(\breve{T})$-action on $K^{\breve{G}}(\widetilde{\mathcal{N}}_\ff\times_{\mathcal{N}} \widetilde{\mathcal{N}})$ given by the convolution product  
$$\star:K^{\breve{G}}(\widetilde{\mathcal{N}}_\ff\times_{\mathcal{N}} \widetilde{\mathcal{N}})\times K^{\breve{G}}(\Delta \widetilde{\mathcal{N}})\longrightarrow K^{\breve{G}}(\widetilde{\mathcal{N}}_\ff\times_{\mathcal{N}} \widetilde{\mathcal{N}}).$$ 
We know $K^{\breve{G}}(\Delta\widetilde{\mathcal{N}})\subset K^{\breve{G}}(Z)$ is a free $R(\breve{T})$-module by the cellular fibration lemma for almost a trivial case $\Delta\widetilde{\mathcal{N}}\simeq \widetilde{\mathcal{N}}=T^*\sB\to\sB$, which implies $[\mathcal{O}_{T^*_{\id}}]\in K^{\breve{G}}(\Delta\widetilde{\mathcal{N}})$ and hence  $e^\lambda_{\id}=e^\lambda\cdot[\mathcal{O}_{T^*_{\id}}]\in K^{\breve{G}}(\Delta\widetilde{\mathcal{N}})$.  

\begin{lem}
The two actions $\cdot$ and $\star$ coincide under the identification $K^{\breve{G}}(\Delta \widetilde{\mathcal{N}})\simeq R(\breve{T})$ given by $e^{\lambda}_{\id}\leftrightarrow e^{\lambda}$, namely, 
\begin{equation}
[\mathcal{O}_{T^*_{\gamma,w}}]\star e^{\lambda}_{\id}=e^\lambda\cdot[\mathcal{O}_{T^*_{\gamma,w}}]=e^\lambda_{\gamma,w}.
\end{equation}\label{eq:stardot}
\end{lem}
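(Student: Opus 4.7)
The plan is to unpack the definition of the convolution $\star$ and show it collapses to the $\cdot$-action. First observe that the conormal bundle $T^{*}_{\id}$ to the diagonal orbit $\Ob_{\id}\subset\sB\times\sB$ is naturally the diagonal copy $\Delta\widetilde{\mathcal{N}}\subset Z$, and under the isomorphism $K^{\breve{G}}(\Delta\widetilde{\mathcal{N}})\simeq R(\breve{T})$ recalled in \S\ref{sec:Kgroup} one has $[\mathcal{O}_{T^{*}_{\id}}]\leftrightarrow 1$. Consequently $e^{\lambda}_{\id}$ is, up to this identification, the class of the structure sheaf of $\Delta\widetilde{\mathcal{N}}$ twisted by the pull-back of $\mathcal{O}_{\sB}(\lambda)$ along the cotangent projection $\widetilde{\mathcal{N}}\to\sB$.

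Next, by the definition of $\star$ we have
\begin{equation*}
[\mathcal{O}_{T^{*}_{\gamma,w}}]\star e^{\lambda}_{\id}
=p_{13*}\bigl(p_{12}^{*}[\mathcal{O}_{T^{*}_{\gamma,w}}]\otimes^{\mathbb{L}} p_{23}^{*}e^{\lambda}_{\id}\bigr),
\end{equation*}
where $p_{ij}$ denote the projections from $\widetilde{\mathcal{N}}_{\gamma}\times\widetilde{\mathcal{N}}\times\widetilde{\mathcal{N}}$. Since $e^{\lambda}_{\id}$ is supported on the middle diagonal, the two supports intersect along the fibre product $T^{*}_{\gamma,w}\times_{\widetilde{\mathcal{N}}}\Delta\widetilde{\mathcal{N}}$, which the assignment $(a,b,b)\mapsto (a,b)$ identifies with $T^{*}_{\gamma,w}$; in particular $p_{13}$ restricts to an isomorphism on the support. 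Using base change together with the projection formula, the convolution therefore simplifies to $[\mathcal{O}_{T^{*}_{\gamma,w}}]\otimes^{\mathbb{L}}\tilde{\pi}^{*}\mathcal{O}_{\sB}(\lambda)$ on $T^{*}_{\gamma,w}$, where $\tilde{\pi}\colon T^{*}_{\gamma,w}\hookrightarrow\widetilde{\mathcal{N}}_{\gamma}\times\widetilde{\mathcal{N}}\to\widetilde{\mathcal{N}}\to\sB$ is the composition of the second projection with the cotangent projection. By construction this is precisely the $\cdot$-action of $e^{\lambda}\in K^{\breve{G}}(\sB)$ induced by the cellular fibration $\pi\colon Z_{\gamma\mu_{0}}\to\sB$, so the right-hand side equals $e^{\lambda}\cdot[\mathcal{O}_{T^{*}_{\gamma,w}}]=e^{\lambda}_{\gamma,w}$.

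The main obstacle will be making the base-change and projection-formula manipulations rigorous for locally-closed supports in equivariant K-theory. The K\"unneth compatibility for convolution packaged in Lemma~\ref{Ku-cov}, combined with the open/closed formalism of Lemma~\ref{Kcon dia}, supplies exactly the bookkeeping needed; moreover, because the fibre product $T^{*}_{\gamma,w}\times_{\widetilde{\mathcal{N}}}\Delta\widetilde{\mathcal{N}}$ identifies cleanly with $T^{*}_{\gamma,w}$ via $p_{13}$, the derived tensor product contributes no higher $\mathrm{Tor}$ terms and the calculation reduces to a straightforward line-bundle identity on $T^{*}_{\gamma,w}$.
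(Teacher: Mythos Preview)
Your proof is correct and follows essentially the same approach as the paper's. Both arguments unpack the convolution definition, write $e^{\lambda}_{\id}$ as $[\mathcal{O}_{\Delta\widetilde{\mathcal{N}}}]$ twisted by the line bundle pulled back from $\sB$, and use projection-formula type reasoning to pass the line-bundle factor through $p_{13*}$; the paper phrases this as reducing to $(e^{\lambda}\cdot[\mathcal{O}_{T^{*}_{\gamma,w}}])\star e^{0}_{\id}$ and then invoking that $e^{0}_{\id}$ is the convolution unit, whereas you carry out the support identification $T^{*}_{\gamma,w}\times_{\widetilde{\mathcal{N}}}\Delta\widetilde{\mathcal{N}}\xrightarrow{\sim}T^{*}_{\gamma,w}$ directly. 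One small remark: your appeal to Lemmas~\ref{Ku-cov} and~\ref{Kcon dia} is unnecessary here since both $T^{*}_{\gamma,w}$ and $\Delta\widetilde{\mathcal{N}}$ are already closed in their ambient spaces, and the vanishing of higher Tor follows simply from the fact that the twist $e^{\lambda}$ is a line bundle (locally free), so the only nontrivial derived tensor is with $[\mathcal{O}_{\Delta\widetilde{\mathcal{N}}}]$ itself, which is exactly the identity convolution.
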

\begin{proof}
It is a straightforward computation that 
\begin{align*}
    [\mathcal{O}_{T^*_{\gamma,w}}]\star e^{\lambda}_{\id}&=pr_{13*}(([\mathcal{O}_{T^*_{\gamma,w}}]\boxtimes[\mathcal{O}_{T^*\sB}])\otimes^{\mathbb{L}}([\mathcal{O}_{T^*\sF_{\gamma}}]\boxtimes[\mathcal{O}_{\Delta\widetilde{\mathcal{N}}}(\lambda)]))\\&=pr_{13*}(([\mathcal{O}_{T^*_{\gamma,w}}]\boxtimes[\mathcal{O}_{T^*\sB}])\otimes^{\mathbb{L}}(([\mathcal{O}_{T^*\sF_{\gamma}}]\boxtimes[\mathcal{O}_{\Delta\widetilde{\mathcal{N}}}])\otimes^{\mathbb{L}}pr_{2}^*(e^{\lambda}))\\&=pr_{13*}(((\pi^*(e^\lambda)\otimes^{\mathbb{L}}[\mathcal{O}_{T^*_{\gamma,w}}])\boxtimes[\mathcal{O}_{T^*\sB}])\otimes^{\mathbb{L}}([\mathcal{O}_{T^*\sF_{\gamma}}]\boxtimes[\mathcal{O}_{\Delta\widetilde{\mathcal{N}}}]))\\&=(e^{\lambda}\cdot[\mathcal{O}_{T^*_{\gamma,w}}])\star e^0_{\id}=e^{\lambda}\cdot[\mathcal{O}_{T^*_{\gamma,w}}]=e^\lambda_{\gamma,w}.
\end{align*}
\end{proof}

We would like to caution that if neither $\gamma$ nor $\nu$ is a regular $\W$-orbit then there exists no $\breve{G}$-cellular fibration for $Z_{\gamma\nu}$ (see Example~\ref{ex:nof} below). Hence the cellular fibration lemma (i.e. Lemma~\ref{lem. cfl}) is not applicable to $K^{\breve{G}}(Z_{\gamma\nu})$ to obtain its freeness and basis. We shall provide Proposition~\ref{prop:noncbasis} below, served as an analogue of the cellular fibration lemma for $Z_{\gamma\nu}$. 
\begin{example}\label{ex:nof}
    Let $G=\mathrm{GL}_{n}(\C),n\ge3$, and let $\gamma$ be a $\W$-orbit such that $P_{\gamma}=B\{1,s_1\}B$. For $w\in\mathcal{D}_{\gamma\gamma}$, consider $p:T^{*}_{\Ob_{\gamma,w,\gamma}}\to\Ob_{\gamma,w,\gamma}\to\sF_{\gamma}$. The second arrow is a fibration with fiber isomorphic to $P_{\gamma}wP_{\gamma}/P_{\gamma}=B\{w,s_{1}w\}P_{\gamma}/P_{\gamma}$, which is a disjoint union of two cells if $s_1w\neq ws_1$ (for example $w=s_2$). In this case, the variety $Z_{\gamma\gamma}$ has no $\breve{G}$-cellular fibration.
\end{example}
%

\subsection{Basis of $K^{\breve{G}}(Z_{\gamma\nu})$}
Though we have obtain an $R(\breve{T})$-basis of $K^{\breve{G}}(Z_{\gamma{\mu_0}})$ by the cellular fibration lemma in the previous section. As shown in Example~\ref{ex:nof}, it no longer works for $K^{\breve{G}}(Z_{\gamma\nu})$ with general $\gamma, \nu\in\Lambda$. In this section, we shall try to give an $R(\breve{G})$-basis of $K^{\breve{G}}(Z_{\gamma\nu})$ without using the cellular fibration lemma.

We have a commutative diagram: (for $\gamma,\nu\in\Lambda$ and $w\in\mathcal{D}_{\gamma\nu}$) $$\begin{tikzcd}
K^{\breve{G}}(T^*_{\gamma,w,\nu}) \arrow[r] \arrow[d] & K^{\breve{G}}(T^*_{\Ob_{\gamma,w,\nu}}) \arrow[r] \arrow[d,equal] & 0 \\
K^{\breve{G}}(Z^{\preceq w}_{\gamma\nu}) \arrow[r] & K^{\breve{G}}(T^*_{\Ob_{\gamma,w,\nu}}) \arrow[r] & 0,
\end{tikzcd}$$
by which we have a split $R(\breve{G})$-homomorphism $K^{\breve{G}}(T^*_{\Ob_{\gamma,w,\nu}}) \rightarrow K^{\breve{G}}(T^*_{\gamma,w,\nu})$ satisfying that the composition $$\iota_w: K^{\breve{G}}(T^*_{\Ob_{\gamma,w,\nu}}) \rightarrow K^{\breve{G}}(T^*_{\gamma,w,\nu})\rightarrow K^{\breve{G}}(Z^{\preceq w}_{\gamma\nu})$$ is also a split $R(\breve{G})$-homomorphism. 

Recall in \eqref{Pwrv} the subgroup $P_{\gamma\nu}^w$ and in \eqref{iso1} the isomorphism $$\Ob_{\gamma,w,\nu}\simeq G/P_{\gamma\nu}^w\simeq \breve{G}/\breve{P}_{\gamma\nu}^w.$$ We have a G-equivariant vector bundle $T^*_{\Ob_{\gamma,w,\nu}}\to \breve{G}/\breve{P}_{\gamma\nu}^w$, which induces a Thom type isomorphism 
\begin{equation}\label{iso:KKR}
K^{\breve{G}}(T^*_{\Ob_{\gamma,w,\nu}})\simeq K^{\breve{G}}(\breve{G}/\breve{P}_{\gamma\nu}^w).
\end{equation}
Recall the notation of $G$-equivariant sheaf $\mathcal{O}_{G/H}(\chi)$ in \S\ref{sec:Kgroup}. By taking $\chi\in R(\breve{P}_{\gamma\nu}^w)$, we get a $G$-equivariant sheaf $\mathcal{O}_{\breve{G}/\breve{P}_{\gamma\nu}^w}(\chi)$, and hence a class $[\mathcal{O}_{\breve{G}/\breve{P}_{\gamma\nu}^w}(\chi)]\in  K^{\breve{G}}(\breve{G}/\breve{P}_{\gamma\nu}^w)$. Let $\mathcal{O}_{T^*_{\Ob_{\gamma,w,\nu}}}(\chi)$ (resp. $[\mathcal{O}_{T^*_{\Ob_{\gamma,w,\nu}}}(\chi)]\in K^{\breve{G}}(T^*_{\gamma,w,\nu})$) denote the pull back of $\mathcal{O}_{\breve{G}/\breve{P}_{\gamma\nu}^w}(\chi)$ (resp. $[\mathcal{O}_{\breve{G}/\breve{P}_{\gamma\nu}^w}(\chi)]$) by \eqref{iso:KKR}. 
Since $K^{\breve{G}}(\breve{G}/\breve{P}_{\gamma\nu}^w)\simeq R(\breve{P}_{\gamma\nu}^w)$, each element in $K^{\breve{G}}(\breve{G}/\breve{P}_{\gamma\nu}^w)$ (resp. $K^{\breve{G}}(T^*_{\Ob_{\gamma,w,\nu}})$) is in the form $[\mathcal{O}_{\breve{G}/\breve{P}_{\gamma\nu}^w}(\chi)]$ (resp. $[\mathcal{O}_{T^*_{\Ob_{\gamma,w,\nu}}}(\chi)]$) for some $\chi\in R(\breve{P}_{\gamma\nu}^w)$.

Denote $$\chi_{\gamma\nu}^w:=\iota_w([\mathcal{O}_{T^*_{\Ob_{\gamma,w,\nu}}}(\chi)])\in  K^{\breve{G}}(Z^{\preceq w}_{\gamma\nu}).$$
Note that the notation $\chi_{\gamma\nu}^w$ is somewhat redundant, since a single $\chi\in R(\breve{P}_{\gamma\nu}^w)$ already contains the information $(\gamma,w,\nu)$. But we still keep this information in the notation for ease of understanding.
%


Let $\mathcal{B}(P^w_{\gamma\nu})$ be an $R(\breve{G})$-basis of $R(\breve{P}^w_{\gamma\nu})\simeq K^{\breve{G}}(T^*_{\Ob_{\gamma,w,\nu}})$.
\begin{prop}\label{prop:noncbasis}
For $w\in\mathcal{D}_{\gamma\nu}$, the set $\{{\chi}_{\gamma\nu}^y\mid y\in\mathcal{D}_{\gamma\nu}, y\preceq w,\chi\in\mathcal{B}(P^y_{\gamma\nu})\}$ forms an $R(\breve{G})$-basis of $K^{\breve{G}}(Z^{\preceq w}_{\gamma\nu})$. 
   In particular, the set $\{{\chi}_{\gamma\nu}^w\mid w\in\mathcal{D}_{\gamma\nu},\chi\in\mathcal{B}(P^w_{\gamma\nu})\}$ forms an $R(\breve{G})$-basis of $K^{\breve{G}}(Z_{\gamma\nu})$.
\end{prop}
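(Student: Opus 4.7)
The plan is to prove the first assertion by induction on $w\in\D_{\gamma\nu}$ with respect to the fixed total order $\preceq$, advancing along the filtration \eqref{filtra:Z} one conormal stratum at a time. The second assertion is the special case obtained by taking $w$ to be the largest element of $\D_{\gamma\nu}$, since then $Z^{\preceq w}_{\gamma\nu}=Z_{\gamma\nu}$; so only the first claim requires proof.

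For the base case $w=\id$, note that $\id$ is minimal in the Bruhat order and hence in $\preceq$, so Lemma~\ref{partial Bruhat order} forces $\overline{\Ob}_{\gamma,\id,\nu}=\Ob_{\gamma,\id,\nu}$, whence $Z^{\preceq\id}_{\gamma\nu}=T^*_{\gamma,\id,\nu}=T^*_{\Ob_{\gamma,\id,\nu}}$. The Thom isomorphism \eqref{iso:KKR} combined with $K^{\breve{G}}(\breve{G}/\breve{P}^{\id}_{\gamma\nu})\simeq R(\breve{P}^{\id}_{\gamma\nu})$ then exhibits $\{\chi^{\id}_{\gamma\nu}\mid\chi\in\mathcal{B}(P^{\id}_{\gamma\nu})\}$ directly as the desired $R(\breve{G})$-basis.

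For the inductive step I would apply Lemma~\ref{ex seq1} to the pair $Z^{\prec w}_{\gamma\nu}\subset Z^{\preceq w}_{\gamma\nu}$, whose set-theoretic complement is precisely $T^*_{\Ob_{\gamma,w,\nu}}$, obtaining the short exact sequence
$$0\longrightarrow K^{\breve{G}}(Z^{\prec w}_{\gamma\nu})\longrightarrow K^{\breve{G}}(Z^{\preceq w}_{\gamma\nu})\longrightarrow K^{\breve{G}}(T^*_{\Ob_{\gamma,w,\nu}})\longrightarrow 0.$$
This sequence is split by the $R(\breve{G})$-linear map $\iota_w$ constructed in the paragraph preceding the proposition. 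Under the identification $K^{\breve{G}}(T^*_{\Ob_{\gamma,w,\nu}})\simeq R(\breve{P}^w_{\gamma\nu})$ supplied by the Thom isomorphism, the basis $\mathcal{B}(P^w_{\gamma\nu})$ corresponds to $\{[\mathcal{O}_{T^*_{\Ob_{\gamma,w,\nu}}}(\chi)]\mid\chi\in\mathcal{B}(P^w_{\gamma\nu})\}$, whose images under $\iota_w$ are by construction exactly the classes $\chi^w_{\gamma\nu}$. Combining these with the inductive basis of $K^{\breve{G}}(Z^{\prec w}_{\gamma\nu})$ through the splitting of the sequence produces the required $R(\breve{G})$-basis of $K^{\breve{G}}(Z^{\preceq w}_{\gamma\nu})$, closing the induction.

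The only non-mechanical point has in fact been absorbed into the setup: the left exactness of the sequence (Lemma~\ref{ex seq1}, via the $\breve{T}$-cellular decomposition of each conormal bundle together with $\W$-invariants) and the existence of the $R(\breve{G})$-equivariant splitting $\iota_w$ provided by the commutative diagram preceding the statement. Once these are in hand, the remainder is a routine diagonal assembly of bases along the filtration; as a sanity check, summing the ranks $\#\D_\gamma\cdot\#(\W_\gamma w\W_\nu/\W_\nu)$ of $K^{\breve{G}}(T^*_{\Ob_{\gamma,w,\nu}})$ over $w\in\D_{\gamma\nu}$ recovers the total rank $\#\D_\gamma\cdot\#\D_\nu$ predicted by Corollary~\ref{basis}, confirming that no basis elements are duplicated or missed.
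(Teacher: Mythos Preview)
Your proof is correct and follows essentially the same approach as the paper: induction on $w$ along the filtration, using the split short exact sequence furnished by Lemma~\ref{ex seq1} and the section $\iota_w$ to peel off one conormal stratum at a time. The paper's proof compresses the inductive step into the single line $K^{\breve{G}}(Z^{\preceq w}_{\gamma\nu})=\mathrm{Span}_{R(\breve{G})}\{\chi_{\gamma\nu}^w\mid\chi\in\mathcal{B}(P^w_{\gamma\nu})\}\oplus K^{\breve{G}}(Z^{\prec w}_{\gamma\nu})$, while you spell out the base case and the role of the Thom isomorphism explicitly, but the argument is the same.
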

\begin{proof}
    By our construction, $K^{\breve{G}}(Z^{\preceq w}_{\gamma\nu})=\mathrm{Span}_{R(\breve{G})}\{\chi_{\gamma\nu}^w\mid\chi\in\mathcal{B}(P^w_{\gamma\nu})\}\oplus K^{\breve{G}}(Z^{\prec w}_{\gamma\nu})$. The statements can be proved by induction on $w$.
\end{proof}
\subsection{Realization of affine Hecke algebras}

For $\lambda\in Q$ and $1\leq i\leq d$, let $e^\lambda$ and $e^{\alpha_i}$ be the virtual characters of $T$ associated with $\lambda$ and $\alpha_i$, respectively. They can also be regarded as elements in $R(\breve{T})$ via the natural embedding $R(T)\hookrightarrow R(\breve{T})=R(T)[q,q^{-1}]$.
Recall the notations in \eqref{notation:elambda} that $e_{\id}^{\lambda}=e^{\lambda}\cdot[\mathcal{O}_{T^*_{\id}}]$ and $e_{s_i}^{\alpha_i}=e^{\alpha_i}\cdot[\mathcal{O}_{T^*_{s_i}}]$.
The following is the celebrated equivariant K-theoretic construction of affine Hecke algebras due to Ginzburg \cite{G87} (see details in \cite{CG97}). 
\begin{thm}[Ginzburg]
    There exists a $\Z[q,q^{-1}]$-algebra isomorphism $\widetilde{\HH} \simeq K^{\breve{G}}(Z)$ given by
    \begin{align}\label{iso:Hecke}
        e^\lambda \mapsto e_\id^\lambda, \quad H_i \mapsto qe_{s_i}^{\alpha_i}+ q^{-1}e_\id^0,\qquad (\lambda\in Q, 1\leq i\leq d).
    \end{align}
\end{thm}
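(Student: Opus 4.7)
The plan is to define a $\mathbb{Z}[q,q^{-1}]$-algebra homomorphism $\Phi:\widetilde{\HH}\to K^{\breve{G}}(Z)$ on generators by \eqref{iso:Hecke} and then show it is a bijection. To do so, I must verify four families of defining relations in $K^{\breve{G}}(Z)$: the quadratic Hecke relation $(H_i-q^{-1})(H_i+q)=0$, the braid relations among the $H_i$, the commutative relation $e^\lambda e^\mu=e^{\lambda+\mu}$, and the Bernstein--Lusztig mixing relation. The commutative relation is immediate, since $K^{\breve{G}}(\Delta\widetilde{\mathcal{N}})\simeq R(\breve{T})$ is a commutative subalgebra of $(K^{\breve{G}}(Z),\star)$ with $e^\lambda_{\id}$ corresponding to $e^\lambda$ under this identification.

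For the quadratic and braid relations, I would compute the relevant $\star$-products using the geometry of $T^*_{s_i}\to\overline{\Ob}_{s_i}\simeq G\times^B(P_i/B)$ recorded in \eqref{eq:obsi}, which is a smooth $\mathbb{P}^1$-fibration over $\sB$. This is the K-theoretic analogue of the Borel--Moore homology computation in Proposition \ref{2}, now tracking the $q$-twists coming from the $\mathbb{C}\backslash\{0\}$-action on cotangent directions; the values $q^{-1}$ and $-q$ arise as the two eigenvalues associated with the two irreducible components of $Z^{\preceq s_i}$. The braid relations reduce to analogous rank-two computations on the Levi subgroups generated by $\{s_i, s_j\}$. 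The main obstacle is the Bernstein--Lusztig mixing relation, which amounts to computing $e^\lambda_{\id}\star[\mathcal{O}_{T^*_{s_i}}]-[\mathcal{O}_{T^*_{s_i}}]\star e^{s_i(\lambda)}_{\id}$ via the fiber product $T^*_{s_i}\times_{\widetilde{\mathcal{N}}}\Delta\widetilde{\mathcal{N}}$ and pushforward under $\pr_{13}$. The denominator $(1-e^{-\alpha_i})$ enters through a $\breve{B}$-equivariant Koszul-type resolution of the diagonal inside $\overline{\Ob}_{s_i}$, while the prefactor $(q^{-1}-q)$ arises from the $q$-shifted trivializations of cotangent line bundles along the $P_i/B\simeq\mathbb{P}^1$ fibers.

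For bijectivity, Corollary \ref{basis:H} equips $K^{\breve{G}}(Z)$ with a free $R(\breve{T})$-basis $\{[\mathcal{O}_{T^*_w}]\}_{w\in\W}$, while the Bernstein presentation gives $\widetilde{\HH}$ a free $R(\breve{T})$-basis $\{H_w\}_{w\in\W}$, both of rank $\#\W$. A K-theoretic analogue of Proposition \ref{5}, proved by induction on the length of $w$ using the filtration \eqref{filtra}, shows that $\Phi(H_w)\in[\mathcal{O}_{T^*_w}]+\sum_{y<w}R(\breve{T})\cdot[\mathcal{O}_{T^*_y}]$ with an invertible leading coefficient, so $\Phi$ sends a basis to a basis and is therefore an algebra isomorphism. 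Throughout, the chief difficulty lies in the explicit K-theoretic convolution computation for the mixing relation; the remaining steps are length-inductions in the Bruhat order together with the convolution machinery already developed in \S\ref{sec:Kgroup}.
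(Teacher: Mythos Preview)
Your proposal sketches a self-contained proof, whereas the paper does not prove this theorem at all: it simply cites \cite[\S7.6]{CG97} and remarks that the formula \eqref{iso:Hecke} has been rewritten to match the paper's conventions for the generators $H_i$ and the parameter $q$. So there is no ``paper's own proof'' to compare against beyond the reference.

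Your outline is essentially the strategy carried out in \cite{CG97}: verify the defining relations of $\widetilde{\HH}$ under $\Phi$, then deduce bijectivity by matching free $R(\breve{T})$-bases of rank $\#\W$ via an upper-triangular argument. One point worth flagging: in \cite{CG97} the braid relations are not checked by direct rank-two convolution computations as you suggest. Instead, one uses the faithful action of $K^{\breve{G}}(Z)$ on $K^{\breve{G}}(\widetilde{\mathcal{N}})\simeq R(\breve{T})$ (Claim 7.6.7 there), which identifies the images of the $H_i$ with Demazure--Lusztig operators; the braid relations then follow from the well-known braid relations for those operators. Your direct approach would work in principle but is considerably more laborious. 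The remaining steps you describe (the quadratic relation via the $\mathbb{P}^1$-fibration, the Bernstein relation via a Koszul resolution along the diagonal, and the triangularity argument for bijectivity) are accurate summaries of what is done in \cite{CG97}.
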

\begin{proof}
It was shown in \cite[\S 7.6]{CG97}. We reformulate the isomorphism by \eqref{iso:Hecke} (comparing with \cite[(7.6.1)]{CG97}) to adapt to our choice of generators of $\widetilde{\HH}$ and the quantum parameter $q$.
\end{proof}

\subsection{A key convolution formulas}
For $1\leq i\leq d$, let $P_i^-$ be the opposite parabolic subgroup of $P_i$ (see the definition of $P_i$ in \S\ref{subsec:parabolic}). The Levi subgroup of $P_i^-$ is denoted by $L_i$ and the unipotent radical of $P_i^-$ by $U_i^-$. Write $B_i=L_i \cap B$.The reductive part of the Levi decomposition of $B_i$ is $T$, which implies $R(B_i)\simeq R(T)$. 

Take $\gamma\in\Lambda$ and set $M_1=P_i^-P_\gamma/P_\gamma\simeq U_i^-/(U_i^-\cap P_\gamma)$ and $M_2=M_3=P_i^-B/B=P_i^- /B_i \simeq U_i^- \times L_i/B_i$, where $\breve{L}_i$ acts adjointly on $U_i$, which are open $\breve{L}_i$-subvarieties of $\sF_\gamma$ and $\sB$, respectively. We write $M_{ij}=M_i\times M_j$, $(1\leq i<j\leq 3)$, for short.  Let $Z_{12}=M_{12} \cap \Ob_{\gamma, \id}$, $Z_{13}=M_{13} \cap \Ob_{\gamma, \id}$, $Z_{23}=M_{23} \cap \overline{\Ob}_{s_i}$.
   Let $f: U_i^- \to U_i^-/(U_i^-\cap P_\gamma)$ be the canonical morphism, and denote $\mathrm{graph}(f):=\{(x,f(x))~|~x\in U_i^-\}$. 
\begin{lem} \label{lem:4eq}
Keep the notations above. We have that, as $\breve{L}_i$-varieties,
\begin{align*}
    	&Z_{12} =Z_{13} \simeq \mathrm{graph}(f) \times L_i/B_i \simeq M_2=M_3,\\
    	&Z_{23} \simeq \Delta U_i^- \times L_i/B_i \times L_i/B_i,\\
        &T_{Z_{12}}^*M_{12}=T_{Z_{13}}^*M_{13} \simeq T_{\mathrm{graph}(f)}^*(M_1 \times U_i^-) \times L_i/B_i,\\
        &T_{Z_{23}}^*M_{23} \simeq T_{\Delta U_i^-}^*(U_i^- \times U_i^-)\times L_i/B_i \times L_i/B_i,
    \end{align*}
    \end{lem}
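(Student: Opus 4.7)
The plan is to introduce explicit group-theoretic coordinates on each $M_i$ and reduce all four isomorphisms to routine product decompositions. Using the Levi decomposition $P_i^- = U_i^- L_i$ (with $L_i$ normalizing $U_i^-$) and $B = U_i B_i$, I would first identify $M_2 = M_3 \simeq U_i^- \times L_i/B_i$ via $(u, lB_i) \mapsto ulB$, and $M_1 \simeq U_i^-/(U_i^- \cap P_\gamma)$ via $u \mapsto uP_\gamma$, using the implicit condition $L_i \subset P_\gamma$ that makes the stated identification of $M_1$ natural. After reordering factors I view $M_{12} \simeq (M_1 \times U_i^-) \times L_i/B_i$ and $M_{23} \simeq (U_i^- \times U_i^-) \times (L_i/B_i \times L_i/B_i)$, which is the form needed for the conormal bundle step.

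For $Z_{12}$ (and symmetrically $Z_{13}$), the inclusion $B \subset P_\gamma$ makes the isotropy of $(P_\gamma, B)$ equal to $B$, so the second-projection map $\Ob_{\gamma,\id} \to \sB$ is an isomorphism. Its preimage over $M_2$ lies inside $Z_{12}$ because $P_i^- B \cdot P_\gamma \subset P_i^- P_\gamma$, yielding an isomorphism $Z_{12} \xrightarrow{\sim} M_2$ whose inverse sends $(v, lB_i)$ to $(vlP_\gamma, vlB) = (vP_\gamma, vlB) = (f(v), (v, lB_i))$ (the middle equality uses $l \in L_i \subset P_\gamma$). Hence $Z_{12}$ is precisely the graph of the morphism $(v, lB_i) \mapsto f(v)$, which in the reordered coordinates $M_{12} \simeq (U_i^- \times M_1) \times L_i/B_i$ becomes $\mathrm{graph}(f) \times L_i/B_i$.

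For $Z_{23}$, I would compute the condition $g^{-1}g' \in P_i$ with $g = vl$, $g' = v'l'$ directly. Setting $u := v^{-1}v' \in U_i^-$ and using that $L_i$ normalizes $U_i^-$, one has $g^{-1}g' = (l^{-1}ul)(l^{-1}l') \in U_i^- L_i$. The key group-theoretic input is the identity $U_i^- L_i \cap P_i = L_i$, which follows from the uniqueness of expression in the open cell $G = U_i^- L_i U_i$. This forces $l^{-1}ul = e$, equivalently $v = v'$, with no constraint on $l, l'$, giving $Z_{23} \simeq \Delta U_i^- \times L_i/B_i \times L_i/B_i$.

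Finally, for the conormal bundle formulas, each $Z$ has the form $Z_0 \times Y$ inside $X \times Y$ with $Y = L_i/B_i$ (resp.\ $L_i/B_i \times L_i/B_i$) exhausting its factor. The general identity $T_{Z_0 \times Y}^*(X \times Y) \simeq T_{Z_0}^* X \times Y$, where $Y$ is realized via the zero-section of $T^* Y$, then delivers both conormal bundle formulas. The $\breve{L}_i$-equivariance of all identifications is immediate since $\breve{L}_i$ acts adjointly on the $U_i^-$-factors and by left translation on $L_i/B_i$. The only real obstacle is keeping the group-theoretic manipulations clean — especially verifying $U_i^- L_i \cap P_i = L_i$ and tracking the implicit assumption $L_i \subset P_\gamma$ — after which all four isomorphisms are formal consequences.
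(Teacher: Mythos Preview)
Your proposal is correct and follows essentially the same strategy as the paper: introduce explicit coordinates via the Levi decomposition $P_i^- = U_i^- L_i$ and reduce everything to product identifications. The paper's proof is terse and only treats $Z_{23}$, saying the rest is similar; your write-up supplies the details for $Z_{12}$ and the conormal bundles that the paper omits, and correctly flags the implicit hypothesis $L_i \subset P_\gamma$ (equivalently $s_i \in J_\gamma$), which is indeed the setting where the lemma is applied in the next result.

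The one tactical difference worth noting is in the $Z_{23}$ computation. The paper decomposes $\overline{\Ob}_{s_i} = \Ob_{\id} \sqcup \Ob_{s_i}$ and identifies each stratum separately as $\Delta U_i^- \times \Delta(L_i/B_i)$ and $\Delta U_i^- \times (L_i/B_i \times L_i/B_i \setminus \Delta(L_i/B_i))$, then glues. You instead use the closed description $\overline{\Ob}_{s_i} = \{(gB,g'B) : g^{-1}g' \in P_i\}$ and the single group-theoretic fact $P_i^- \cap P_i = L_i$ (your $U_i^- L_i \cap P_i = L_i$), which gives $v=v'$ in one step without stratifying. Your route is slightly cleaner and avoids checking that the two strata glue to the full product; the paper's route makes the orbit structure visible. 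Both are short and equivalent in substance.
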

    \begin{proof}
   We only prove $Z_{23}\simeq\Delta U_i^-\times L_i/B_i\times L_i/B_i$ and the rest is similar. 
   
   As a $\breve{L}_i$-variety,
   $M_{23}\cap\Ob_{s_i}\simeq P^-_iB(B,s_iB)\simeq \Delta U_i^-\times L_i(B_i,s_iB_i)=\Delta U_i^-\times(L_i/B_i\times L_i/B_i\backslash(\Delta L_i/B_i))$, while $M_{23}\cap\Ob_{\id}\simeq\Delta U_i^-\times\Delta(L_i/B_i)$. Hence $Z_{23}=(M_{23}\cap\Ob_{s_i})\sqcup(M_{23}\cap\Ob_\id)\simeq\Delta U_{i}^-\times L_i/B_i\times L_i/B_i.$
    \end{proof}

Let $\omega_\gamma:=\sum_{\alpha \in \Pi_\gamma^+} \alpha$ for $\gamma\in\Lambda_\ff$. We have a virtual character $e^{\omega_\gamma}\in R(T)\subset R(\breve{T})$.
Recall the notations in \eqref{notation:elambda} that $e_{\gamma,\id}^{\omega_\gamma}=e^{\omega_\gamma}\cdot[\mathcal{O}_{T^*_{\gamma,\id}}]$ and $e_{s_i}^{\alpha_i}=e^{\alpha_i}\cdot[\mathcal{O}_{T^*_{s_i}}]$. 

\begin{lem}\label{calc} For any $\gamma\in\Lambda_\ff$ and $\alpha_i\in\Delta_{\gamma}$, it holds that
    $$e_{\gamma,\id}^{\omega_\gamma} \star e_{s_i}^{\alpha_i}=-(1+q^{-2})e_{\gamma,\id}^{\omega_\gamma}.$$
\end{lem}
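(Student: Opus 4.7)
The plan is to localize the convolution to the open neighborhoods provided by Lemma~\ref{lem:4eq} and reduce, via the equivariant K\"unneth formula, to an explicit computation on $L_i/B_i\cong\mathbb{P}^1$. First I would observe that, since $\alpha_i\in\Delta_\gamma$, we have $s_i\in\W_\gamma$ and hence $P_i\subset P_\gamma$; consequently $\Ob_{\gamma,\id}\circ\overline{\Ob}_{s_i}=\Ob_{\gamma,\id}$, so the set-theoretic composition of the supports is $T^*_{\gamma,\id}$, consistent with the expected shape of the RHS. Then Lemma~\ref{Kcon dia} reduces the computation to the convolution of the restrictions to the open $\breve{L}_i$-subvarieties $M_1\times M_2\subset\sF_\gamma\times\sB$ and $M_2\times M_3\subset\sB\times\sB$, since the resulting open piece is dense in $T^*_{\gamma,\id}$ and the answer is determined by any such open restriction.

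Next I would apply the K\"unneth decompositions listed in Lemma~\ref{lem:4eq}. Under the K\"unneth convolution formula (Lemma~\ref{Ku-cov}), the composition $\star$ splits as the external product of a convolution on the $U_i^-$-direction with a convolution on the $L_i/B_i$-direction. On the $U_i^-$-direction, we are convolving the conormal bundle of $\mathrm{graph}(f)$ with the conormal of $\Delta U_i^-$ inside $U_i^-\times U_i^-$; a direct base-change/Thom-isomorphism argument shows this convolution is the identity, returning the conormal of $\mathrm{graph}(f)$. This leaves only the $L_i/B_i$-factor, where the interesting content sits.

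For the $\mathbb{P}^1$-factor, I would identify the restricted classes: after the K\"unneth split, $e^{\omega_\gamma}_{\gamma,\id}$ restricts (up to the $U_i^-$ piece) to the class of $[\mathcal{O}_{\Delta(L_i/B_i)}]$ twisted by the $\breve{L}_i$-equivariant line bundle associated with $\omega_\gamma|_{B_i}$, while $e^{\alpha_i}_{s_i}$ restricts to a class on $(L_i/B_i)\times(L_i/B_i)$ built from $[\mathcal{O}_{T^*_{\Ob_{s_i}}}(\alpha_i)]$ with the tautological $q^{\pm 2}$-weights coming from the $\breve{G}$-action on the cotangent directions. The remaining convolution is then computed as an equivariant pushforward $R\pi_{13*}$ along $\mathbb{P}^1\times\mathbb{P}^1\times\mathbb{P}^1\to\mathbb{P}^1\times\mathbb{P}^1$, which reduces to the $\breve{L}_i$-equivariant Euler characteristic of a single line bundle on $\mathbb{P}^1=P_i/B$. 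Up to the twist by $e^{\omega_\gamma}$, this Euler characteristic is precisely $-(1+q^{-2})$, yielding the desired coefficient in front of $[\mathcal{O}_{T^*_{\gamma,\id}}]$ after recombining via K\"unneth.

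The hardest part will be the final bookkeeping: tracking the precise $\breve{L}_i$-equivariant line bundles obtained after the K\"unneth split, and verifying that the $q^{\pm 2}$-weights from the $\breve{G}$-action on the cotangent fibers combine correctly with the weights of $e^{\omega_\gamma}$ and $e^{\alpha_i}$ so that the equivariant Euler characteristic on $\mathbb{P}^1$ produces exactly the quantum dimension $1+q^{-2}$, and that the Koszul/orientation mismatch between $T^*_{\Ob_{s_i}}$ and $T^*_{\id}$ contributes the overall sign $-1$. As a sanity check, specializing $q\to 1$ must recover the Borel--Moore identity $[T^*_{\gamma,\id}]*[T^*_{s_i}]=-2[T^*_{\gamma,\id}]$ of Proposition~\ref{2}, since $-(1+q^{-2})|_{q=1}=-2$, which is exactly the non-equivariant Euler characteristic of $P_i/B\simeq\mathbb{P}^1$.
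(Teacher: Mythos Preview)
Your proposal is correct and follows essentially the same approach as the paper's proof: localize via Lemma~\ref{Kcon dia} to the $\breve{L}_i$-equivariant open charts of Lemma~\ref{lem:4eq}, split the convolution via the K\"unneth formula (Lemma~\ref{Ku-cov}) into a trivial $U_i^-$-factor and an interesting $L_i/B_i\simeq\mathbb{P}^1$-factor, and compute the latter as an equivariant Euler characteristic. Two points where the paper is more explicit and you should be careful in execution: first, the paper checks that the restriction map $K^{\breve{G}}(T^*_{\gamma,\id})\to K^{\breve{L}_i}(T^*_{Z_{13}}M_{13})$ is actually an \emph{isomorphism}, which is what justifies your claim that ``the answer is determined by any such open restriction''; second, the twist by $\omega_\gamma$ is not incidental but essential---the paper uses a Koszul resolution of $\mathcal{O}_{L_i/B_i}$ inside $T^*(L_i/B_i)$ and the Demazure-type formula $R\Gamma(\mathcal{O}_{L_i/B_i}(\lambda))=\frac{e^{\lambda-\alpha_i}-e^{s_i(\lambda)}}{e^{-\alpha_i}-1}$, and it is precisely the identity $s_i(\omega_\gamma)=\omega_\gamma-2\alpha_i$ (valid because $\alpha_i\in\Delta_\gamma$) that collapses the two terms to the scalar $-(1+q^{-2})e^{\omega_\gamma-\alpha_i}$, which then recombines with the $[\mathcal{O}_{L_i/B_i}(\alpha_i)]$ factor to give $-(1+q^{-2})e^{\omega_\gamma}_{\gamma,\id}$.
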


\begin{proof} Keep all the above notations. 
    We have a Cartisian diagram
    $$\begin{tikzcd}
        Z_{12} \times_{M_2} Z_{23} \arrow[r] \arrow[d] &  Z_{13} \arrow[d]\\
        \Ob_{\gamma, \id} \times_\sB \overline{\Ob}_{s_i} \arrow[r] & \Ob_{\gamma,\id}
    \end{tikzcd},$$
    which further induces another Cartisian diagram
    $$\begin{tikzcd}
        T_{Z_{12}}^* M_{12} \times_{T^* M_2} T_{Z_{23}}^* M_{23} \arrow[r] \arrow[d] &  T_{Z_{13}}^* M_{13} \arrow[d]\\
        T_{\gamma,\id}^* \times_\sB T_{s_i}^* \arrow[r] & T_{\gamma,\id}^*
    \end{tikzcd}.$$
    Therefore, by Lemma \ref{Kcon dia} and considering the forgetful functor from $\mathtt{Coh}^{\breve{G}}$ to $\mathtt{Coh}^{\breve{L}_i}$, we have the following commutative diagram
    \begin{equation}\label{diag:5-10}
    \begin{tikzcd}
        {K^{\breve{G}}(T_{\gamma, \id}^*) \otimes K^{\breve{G}}(T_{s_i}^*)} \arrow[r,"\star"] \arrow[d] & {K^{\breve{G}}(T_{\gamma, \id}^*)} \arrow[d]\\
        {K^{\breve{L}_i}(T_{Z_{12}}^*M_{12}) \otimes K^{\breve{L}_i}(T_{Z_{23}}^*M_{23})} \arrow[r,"\star"] & {K^{\breve{L}_i}(T_{Z_{13}}^*M_{13})}
    \end{tikzcd}.
    \end{equation}

The sheaf $\mathcal{O}_{\Ob_{\gamma,\id}}(\chi)$ corresponds to $G\times^{B}\mathbb{V}_{\chi}$, where $\Ob_{\gamma,\id}\simeq G/B$ canonically. As a $\breve{L}_i$-equivariant sheaf, $\mathcal{O}_{\Ob_{\gamma,\id}}(\chi)|_{Z_{13}}$ corresponds to $(G\times^{B}\mathbb{V}_{\chi})|_{Z_{13}}=(U_i^-\times P_i\times^{B}\mathbb{V}_\chi)\simeq\mathrm{graph}(f)\times L_i\times^{B_i}\mathbb{V}_\chi$, which is a $\breve{L}_i$-equivariant bundle over $Z_{13}$.
Hence, the map $K^{\breve{G}}(\Ob_{\gamma,\id})\to K^{\breve{L_i}}(Z_{13})$ via $\mathcal{O}_{\Ob_{\gamma,\id}}(\chi)\mapsto\mathcal{O}_{\mathrm{graph}(f)}\boxtimes\mathcal{O}_{L_i/B_i}(\chi) (=\mathcal{O}_{\Ob_{\gamma,\id}}(\chi)|_{Z_{13}})$ is an isomorphism. So the map $K^{\breve{G}}(T_{\gamma,\id}^*) \to K^{\breve{L}_i}(T_{Z_{13}}^* M_{13})$ in the left of \eqref{diag:5-10} is an isomorphism as well.

	Thanks to Lemma~\ref{lem:4eq}, there exist two canonical morphisms by external tensor product
\begin{align*}
		K^{\breve{L}_i}(T_{\mathrm{graph}(f)}^*(M_1 \times U_i^-)) \otimes K^{\breve{L}_i}(L_i/B_i) \to& K^{\breve{L}_i}(T_{Z_{12}}^*M_{12})=K^{\breve{L}_i}(T_{Z_{13}}^*M_{13}),\\
		K^{\breve{L}_i}(T_{\Delta U_i^-}^*(U_i^- \times U_i^-)) \otimes K^{\breve{L}_i}(L_i/B_i \times L_i/B_i) \to& K^{\breve{L}_i}(T_{Z_{23}}^*M_{23})
\end{align*}
	such that $(\mathcal{L}_1 \boxtimes \mathcal{L}_2)\star(\mathcal{L}_3 \boxtimes \mathcal{L}_4)=(\mathcal{L}_1 \star \mathcal{L}_3) \boxtimes (\mathcal{L}_2 \star \mathcal{L}_4)$ (Recall Lemma~\ref{Ku-cov} and don't forget that the ambient space of $L_i/B_i$ is $T^*(L_i/B_i)$).

Recall the virtual character $e^{\lambda}\in R(\breve{T})\simeq R(\breve{B})\simeq R(\breve{B}_i)$ for any weight $\lambda\in T^*$. We write $\mathcal{O}_{T^*\sB}(\lambda):=\mathcal{O}_{T^*\sB}(e^\lambda)$ for short. Its  restriction $\mathcal{O}_{T^*\sB}(\lambda)|_{T^*M_2}=\mathcal{O}_{T^*U^{-}_i}\boxtimes\mathcal{O}_{T^*(L_i/B_i)}(\lambda)$,  which further implies
\begin{align*}
    &e_{\gamma,\id}^{\lambda}|_{T^*_{Z_{12}}M_{12}}=[\mathcal{O}_{\mathrm{graph}(f)}]\boxtimes[\mathcal{O}_{L_i/B_i}(\lambda)]\\
    &e^{\lambda}_{s_i}|_{T^*_{Z_{23}}M_{23}}=[\mathcal{O}_{T_{\Delta U_i^-}^*(U_i^- \times U_i^-)}] \boxtimes [\mathcal{O}_{L_i/B_i}] \boxtimes [\mathcal{O}_{L_i/B_i}(\lambda)].
\end{align*}
    Therefore, the composition map along the left-lower corner of the diagram~\eqref{diag:5-10}, sends $e_{\gamma,\id}^{\omega_\gamma}\otimes e_{s_i}^{\alpha_i}\in {K^{\breve{G}}(T_{\gamma, \id}^*) \otimes K^{\breve{G}}(T_{s_i}^*)}$ to 
  	\begin{align*}
		&([\mathcal{O}_{\mathrm{graph}(f)}] \boxtimes [\mathcal{O}_{L_i/B_i}(\omega_\gamma)]) \star ([\mathcal{O}_{T_{\Delta U_i^-}^*(U_i^- \times U_i^-)}] \boxtimes [\mathcal{O}_{L_i/B_i}] \boxtimes [\mathcal{O}_{L_i/B_i}(\alpha_i)])\\
		=&([\mathcal{O}_{\mathrm{graph}(f)}] \star [\mathcal{O}_{T_{\Delta U_i^-}^*(U_i^- \times U_i^-)}]) \boxtimes ([\mathcal{O}_{L_i/B_i}(\omega_\gamma)] \star ([\mathcal{O}_{L_i/B_i}] \boxtimes [\mathcal{O}_{L_i/B_i}(\alpha_i)]))\\
		=&R\Gamma(\mathcal{O}_{L_i/B_i}(\omega_\gamma) \otimes_{\mathcal{O}_{T^*(L_i/B_i)}}^{\mathbb{L}} \mathcal{O}_{L_i/B_i}) [\mathcal{O}_{\mathrm{graph}(f)}] \boxtimes [\mathcal{O}_{L_i/B_i}(\alpha_i)]\in K^{\breve{L}_i}(T_{Z_{13}}^*M_{13}),
	\end{align*} where $R\Gamma$ is the derived global section functor.
    
	Since $T^*(L_i/B_i) \simeq L_i \times^{B_i} \V_{q^2 e^{\alpha_i}}$ where $\V_{q^2 e^{\alpha_i}}$ is the $\breve{B}_i$-module associated with the virtual character $q^2e^{\alpha_i}\in R(\breve{T})\simeq R(\breve{B}_i)$, we have a $\breve{G}$-equivariant Koszul complex:
	$${0} \to {q^{-2} \mathcal{O}_{T^*(L_i/B_i)}(-\alpha_i)} \to {\mathcal{O}_{T^*(L_i/B_i)}} \to {\mathcal{O}_{L_i/B_i}} \to {0}.$$
 	Therefore, $$\mathcal{O}_{L_i/B_i}(\omega_\gamma) \otimes_{\mathcal{O}_{T^*(L_i/B_i)}}^{\mathbb{L}} \mathcal{O}_{L_i/B_i}=\mathcal{O}_{L_i/B_i}(\omega_\gamma)-q^{-2}\mathcal{O}_{L_i/B_i}(\omega_\gamma-\alpha_i).$$
 	Since $R\Gamma(\mathcal{O}_{L_i/B_i}(\lambda))=\frac{e^{\lambda-\alpha_i}-e^{s_i(\lambda)}}{e^{-\alpha_i}-1}$ (cf. \cite[p17\&Ex.32]{Ku11}), 
    we obtain
  \begin{align*}
      &([\mathcal{O}_{\mathrm{graph}(f)}] \star [\mathcal{O}_{T_{\Delta U_i^-}^*(U_i^- \times U_i^-)}]) \boxtimes ([\mathcal{O}_{L_i/B_i}(\omega_\gamma)] \star ([\mathcal{O}_{L_i/B_i}] \boxtimes [\mathcal{O}_{L_i/B_i}(\alpha_i)]))\\
      =&R\Gamma(\mathcal{O}_{L_i/B_i}(\omega_\gamma)-q^{-2}\mathcal{O}_{L_i/B_i}(\omega_\gamma-\alpha_{i}))[\mathcal{O}_{\mathrm{graph}(f)} \boxtimes \mathcal{O}_{L_i/B_i}(\alpha_i)]\\
      =&-(1+q^{-2})e^{\omega_\gamma-\alpha_i}[\mathcal{O}_{\mathrm{graph}(f)}] \boxtimes [\mathcal{O}_{L_i/B_i}(\alpha_i)]\\
      =&-(1+q^{-2})[\mathcal{O}_{\mathrm{graph}(f)}] \boxtimes [\mathcal{O}_{L_i/B_i}(\omega_\gamma)].
  \end{align*}
  Thus we arrive at $e_{\gamma,\id}^{\omega_\gamma} \star e_{s_i}^{\alpha_i}=-(1+q^{-2})e_{\gamma,\id}^{\omega_\gamma}$ as desired.
\end{proof}

\subsection{Localizations of $R(\breve{G})$-modules}
Let $R(\breve{G})_{loc}$ be the localization of $R(\breve{G})$ at $(0)$. For an $R(\breve{G})$-module $M$, denote $M \otimes_{R(\breve{G})} R(\breve{G})_{loc}$ by $M_{loc}$. Below we shall take $M=\widetilde{\HH}, \widetilde{\Sc}_{\ff}$, etc.

\begin{prop}\label{simpl:H}
    The algebra $\widetilde{\HH}_{loc}$ is split simple with the unique irreducible representation $K^{\breve{G}}(\widetilde{\mathcal{N}})_{loc}$.
\end{prop}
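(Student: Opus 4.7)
The plan is to upgrade the Ginzburg isomorphism $\widetilde{\HH} \simeq K^{\breve{G}}(Z)$ to the localized setting and identify $\widetilde{\HH}_{loc}$ with a full matrix algebra over the field $K:=R(\breve{G})_{loc}$ acting on $K^{\breve{G}}(\widetilde{\mathcal{N}})_{loc}$, via a $K$-dimension count.

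First, I would identify $K^{\breve{G}}(\widetilde{\mathcal{N}})$ with $R(\breve{T})$: the Thom isomorphism for the $\breve{G}$-equivariant vector bundle $\widetilde{\mathcal{N}}=T^{*}\sB \to \sB$ gives $K^{\breve{G}}(\widetilde{\mathcal{N}}) \simeq K^{\breve{G}}(\sB) \simeq R(\breve{T})$. By the Pittie--Steinberg freeness (the $P=B$ case of Lemma~\ref{lem:PW}), $R(\breve{T})$ is a free $R(\breve{G})$-module of rank $|\W|$, so $K^{\breve{G}}(\widetilde{\mathcal{N}})_{loc}$ is a $K$-vector space of dimension $|\W|$. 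Since $R(\breve{G}) = R(\breve{T})^{\W}$ and $R(\breve{T})$ is an integral domain, $K$ is a field and $R(\breve{T})_{loc}$ is precisely $\mathrm{Frac}(R(\breve{T}))$, a Galois field extension of $K$ with Galois group $\W$. By the classical fact that the skew group algebra of a Galois extension is a matrix algebra, one has a canonical isomorphism $R(\breve{T})_{loc} \rtimes \W \simeq \End_{K}(R(\breve{T})_{loc})$, of $K$-dimension $|\W|^{2}$.

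Next I would describe the convolution action of $\widetilde{\HH}_{loc}$ on $K^{\breve{G}}(\widetilde{\mathcal{N}})_{loc} \simeq R(\breve{T})_{loc}$. Under the Ginzburg isomorphism, $e^{\lambda} \in R(\breve{T}) \subset \widetilde{\HH}$ corresponds to $e_{\id}^{\lambda} \in K^{\breve{G}}(\Delta\widetilde{\mathcal{N}})$, which acts on $K^{\breve{G}}(\widetilde{\mathcal{N}})$ by ordinary multiplication by $e^{\lambda}$ in $R(\breve{T})$. The defining commutation relation
\[
e^{\lambda} H_i - H_i e^{s_i(\lambda)} = (q^{-1} - q)\frac{e^{\lambda} - e^{s_i(\lambda)}}{1 - e^{-\alpha_i}}
\]
of $\widetilde{\HH}$ then forces $H_i$ on $R(\breve{T})_{loc}$ to be the Demazure--Lusztig operator attached to $s_i$: after inverting the denominator $1-e^{-\alpha_i} \in R(\breve{T})_{loc}^{\times}$, each simple reflection $s_i$, viewed as a $K$-linear endomorphism of $R(\breve{T})_{loc}$, can be expressed as an $R(\breve{T})_{loc}$-linear combination of $H_i$ and the identity. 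Consequently, the image of the structural homomorphism $\widetilde{\HH}_{loc} \to \End_{K}(R(\breve{T})_{loc})$ contains both the subalgebra $R(\breve{T})_{loc}$ (acting by multiplication) and the Galois action of $\W$, and hence equals the full skew group algebra $R(\breve{T})_{loc} \rtimes \W \simeq \End_{K}(R(\breve{T})_{loc})$.

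Finally, I would close with a rank count. Combining the Ginzburg isomorphism with Corollary~\ref{basis:H}, $\widetilde{\HH} \simeq K^{\breve{G}}(Z)$ is free of rank $|\W|$ over $R(\breve{T})$, hence free of rank $|\W|^{2}$ over $R(\breve{G})$. Therefore $\dim_{K} \widetilde{\HH}_{loc} = |\W|^{2} = \dim_{K} \End_{K}(K^{\breve{G}}(\widetilde{\mathcal{N}})_{loc})$, so the surjection above is in fact an isomorphism of $K$-algebras, yielding at once the claimed split simplicity with $K^{\breve{G}}(\widetilde{\mathcal{N}})_{loc}$ as unique irreducible module. The main subtlety will lie in confirming that the convolution action of $H_i$ on $K^{\breve{G}}(\widetilde{\mathcal{N}})$ genuinely realizes the Demazure--Lusztig operator predicted by the commutation relations (rather than some twisted variant), which amounts to an explicit convolution computation with the classes $[\mathcal{O}_{T^{*}_{\id}}]$ and $[\mathcal{O}_{T^{*}_{s_i}}]$ in the spirit of Lemma~\ref{calc}.
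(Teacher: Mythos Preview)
Your argument is correct, but it proceeds differently from the paper. The paper's proof is a two-liner: it invokes \cite[Claim~7.6.7]{CG97} to get that the convolution action of $\widetilde{\HH}$ on $K^{\breve{G}}(\widetilde{\mathcal{N}})$ is \emph{faithful}, and then observes that the $R(\breve{G})$-ranks match ($|\W|^2$ on both sides), so after localization the injective map $\widetilde{\HH}_{loc}\hookrightarrow\End_K(K^{\breve{G}}(\widetilde{\mathcal{N}})_{loc})$ is an isomorphism of $K$-vector spaces, hence of algebras. You instead establish \emph{surjectivity}: identifying $K^{\breve{G}}(\widetilde{\mathcal{N}})_{loc}$ with the Galois extension $\mathrm{Frac}(R(\breve{T}))$ of $K$ with group $\W$, you show that after localization the image of $\widetilde{\HH}$ contains both the multiplication operators and the Weyl group (since each $s_i$ becomes an $R(\breve{T})_{loc}$-combination of $H_i$ and $1$), hence the full skew group algebra $R(\breve{T})_{loc}\rtimes\W\simeq\End_K(R(\breve{T})_{loc})$; the same rank count then upgrades surjectivity to an isomorphism. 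Your route is more self-contained and makes the Galois/Demazure--Lusztig structure explicit, at the cost of the computation you flag at the end; the paper's route is shorter but imports the faithfulness from \cite{CG97} as a black box.
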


\begin{proof}
    By \cite[Claim 7.6.7]{CG97}, we know that the affine Hecke algebra $\widetilde{\HH}$ has a faithful representation $K^{\breve{G}}(\widetilde{\mathcal{N}})$. Furthermore, since $$\mathrm{rank}_{R(\breve{G})}(\widetilde{\HH})=(\mathrm{rank}_{R(\breve{G})}(\widetilde{\mathcal{N}}))^2=\mathrm{rank}_{R(\breve{G})}(\End_{R(\breve{G})}(K^{\breve{G}}(\widetilde{\mathcal{N}}))),$$
    the proposition follows.
\end{proof}

\begin{lem}\label{rank}
   The $\widetilde{\mathbf{H}}$-module $\widetilde{\mathbf{T}}_\ff$ is free over $R(\breve{T})$ (resp. $R(\breve{G})$) with rank $\sum_{\gamma\in\Lambda_\ff}\#\mathcal{D}_{\gamma}$ (resp. $(\sum_{\gamma\in\Lambda_\ff}\#\mathcal{D}_{\gamma})\#\W$). 
\end{lem}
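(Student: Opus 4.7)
The plan is to exhibit an explicit $R(\breve{T})$-basis of each summand $\mathbf{x}_\gamma\widetilde{\mathbf{H}}$ and then invoke Lemma~\ref{lem:PW} for the $R(\breve{G})$-statement. The first step is to verify the $q$-analog of \eqref{eq:xsi}, namely $\mathbf{x}_\gamma H_{s_i}=-q\,\mathbf{x}_\gamma$ for every $s_i\in J_\gamma$. This is a direct Hecke-algebra computation: split the sum defining $\mathbf{x}_\gamma$ according as $us_i>u$ or $us_i<u$ in $\W_\gamma$, apply $H_{s_i}^2=1+(q^{-1}-q)H_{s_i}$ in the second case, and collect coefficients of $H_v$ for $v\in\W_\gamma$. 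An immediate induction on length then upgrades this to $\mathbf{x}_\gamma H_u=(-q)^{\ell(u)}\mathbf{x}_\gamma$ for every $u\in\W_\gamma$.

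The second step uses the decomposition $\W=\W_\gamma\cdot\mathcal{D}_\gamma$ with $\ell(uv)=\ell(u)+\ell(v)$ for $u\in\W_\gamma$, $v\in\mathcal{D}_\gamma$: this gives $\mathbf{x}_\gamma H_w=(-q)^{\ell(u)}\mathbf{x}_\gamma H_v$ whenever $w=uv$. Combining with the Bernstein presentation, which exhibits $\widetilde{\mathbf{H}}=\bigoplus_{w\in\W}H_w\,R(\breve{T})$ as a free right $R(\breve{T})$-module of rank $\#\W$, we obtain $\mathbf{x}_\gamma\widetilde{\mathbf{H}}=\sum_{v\in\mathcal{D}_\gamma}\mathbf{x}_\gamma H_v\,R(\breve{T})$. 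For freeness, suppose $\sum_{v\in\mathcal{D}_\gamma}\mathbf{x}_\gamma H_v\,f_v=0$ with $f_v\in R(\breve{T})$; expanding in the $H_w$-basis, the coefficient of $H_{uv}$ (for the unique pair $u\in\W_\gamma$, $v\in\mathcal{D}_\gamma$) is $(-q)^{\ell(u)-\ell(\theta_\gamma)}f_v$, which forces $f_v=0$. Summing over $\gamma\in\Lambda_\ff$ then yields the $R(\breve{T})$-rank $\sum_\gamma\#\mathcal{D}_\gamma$.

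For the $R(\breve{G})$-statement, I would apply Lemma~\ref{lem:PW} to $P=B$, so the Levi of $\mathrm{Lie}(B)$ is $T$ and its Weyl group is trivial, concluding that $R(\breve{T})$ is a free $R(\breve{G})$-module of rank $\#\W$. Tensoring the preceding description through the tower $R(\breve{G})\subset R(\breve{T})$ gives the total rank $\bigl(\sum_\gamma\#\mathcal{D}_\gamma\bigr)\cdot\#\W$. No step presents a real obstacle; the only subtlety is the sign normalization in $\mathbf{x}_\gamma H_{s_i}=-q\,\mathbf{x}_\gamma$, which must be aligned with \eqref{eq:sym} and the quadratic relation $(H_{s_i}-q^{-1})(H_{s_i}+q)=0$ so that the sign character of $\mathbf{H}|_{\W_\gamma}$ is realized by $H_{s_i}\mapsto-q$; once that is fixed, the rest is bookkeeping using uniqueness of the Bernstein basis.
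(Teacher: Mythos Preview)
Your argument is correct and follows exactly the same route as the paper's proof: exhibit $\{\mathbf{x}_\gamma H_v\mid v\in\mathcal{D}_\gamma\}$ as an $R(\breve{T})$-basis of $\mathbf{x}_\gamma\widetilde{\mathbf{H}}$, sum over $\gamma$, and then pass to $R(\breve{G})$ using that $R(\breve{T})$ is free of rank $\#\W$ over $R(\breve{G})$. The paper's proof simply asserts this basis without carrying out the verification you supply (the identity $\mathbf{x}_\gamma H_{s_i}=-q\,\mathbf{x}_\gamma$ and the freeness check), so your version is a more detailed execution of the same strategy.
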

\begin{proof}
    As an $R(\breve{T})$-module, $\mathbf{x}_{\gamma}\widetilde{\mathbf{H}}$ is free with a basis $\{\mathbf{x}_{\gamma}H_{w}\mid w\in\D_{\gamma}\}$, whose rank is $\#\mathcal{D}_{\gamma}$ clearly. Hence $\widetilde{\mathbf{T}}_\ff=\bigoplus_{\gamma\in\Lambda_\ff}\mathbf{x}_\gamma\widetilde{\mathbf{H}}$ is free over $R(\breve{T})$ with rank $\sum_{\gamma\in\Lambda_{\ff}}\#\D_{\gamma}$. The rest part follows from the fact that $R(\breve{T})$ is a free $R(\breve{G})$-module with rank $\#\W$.  
\end{proof}

\begin{prop}\label{rank2}
    The algebra $\widetilde{\Sc}_{\ff,loc}$ is split simple. Moreover, $$\dim_{R(\breve{G})_{loc}}(\widetilde{\Sc}_{\ff,loc})=(\sum_{\gamma \in \Lambda_\ff}\#\D_\gamma)^2.$$
\end{prop}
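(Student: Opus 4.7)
The plan is to leverage Proposition~\ref{simpl:H}, which tells us that after localization at $(0)$, the affine Hecke algebra $\widetilde{\mathbf{H}}_{loc}$ is split simple with unique simple module $K^{\breve{G}}(\widetilde{\mathcal{N}})_{loc}$. Because $\widetilde{\mathbf{H}}_{loc} \simeq \mathrm{Mat}_n(R(\breve{G})_{loc})$ for some $n$, every finitely generated $\widetilde{\mathbf{H}}_{loc}$-module is automatically a finite direct sum of copies of $K^{\breve{G}}(\widetilde{\mathcal{N}})_{loc}$. The strategy is therefore to apply this semisimplicity to $\widetilde{\mathbf{T}}_{\ff,loc}$, read off the multiplicity by a rank count, and then invoke the double centralizer.

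First I would pin down the $R(\breve{G})$-rank of $K^{\breve{G}}(\widetilde{\mathcal{N}})$. The cellular fibration $\widetilde{\mathcal{N}}=T^*\sB \to \sB$ gives $K^{\breve{G}}(\widetilde{\mathcal{N}}) \simeq K^{\breve{G}}(\sB) \simeq R(\breve{T})$ (see the discussion preceding Corollary~\ref{cor:basisZr}), which is free of rank $\#\W$ over $R(\breve{G})$. Hence $K^{\breve{G}}(\widetilde{\mathcal{N}})_{loc}$ has $R(\breve{G})_{loc}$-rank $\#\W$. Combined with Lemma~\ref{rank}, which says that $\widetilde{\mathbf{T}}_\ff$ is free over $R(\breve{G})$ of rank $(\sum_{\gamma\in\Lambda_\ff}\#\mathcal{D}_\gamma)\cdot\#\W$, we obtain
\[
\widetilde{\mathbf{T}}_{\ff,loc} \simeq \bigl(K^{\breve{G}}(\widetilde{\mathcal{N}})_{loc}\bigr)^{\oplus m}, \quad m=\sum_{\gamma\in\Lambda_\ff}\#\mathcal{D}_\gamma,
\]
as $\widetilde{\mathbf{H}}_{loc}$-modules, since the multiplicity is forced by matching the $R(\breve{G})_{loc}$-ranks.

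Next, since $\widetilde{\mathbf{T}}_\ff$ is finitely generated and projective (in fact free) over the Noetherian ring $R(\breve{G})$, the natural map
\[
\widetilde{\Sc}_\ff\otimes_{R(\breve{G})} R(\breve{G})_{loc} \longrightarrow \End_{\widetilde{\mathbf{H}}_{loc}}(\widetilde{\mathbf{T}}_{\ff,loc})
\]
is an isomorphism. Applying Schur's lemma to the split simple algebra $\widetilde{\mathbf{H}}_{loc}$ yields $\End_{\widetilde{\mathbf{H}}_{loc}}(K^{\breve{G}}(\widetilde{\mathcal{N}})_{loc})=R(\breve{G})_{loc}$, and therefore
\[
\widetilde{\Sc}_{\ff,loc} \simeq \End_{\widetilde{\mathbf{H}}_{loc}}\bigl(K^{\breve{G}}(\widetilde{\mathcal{N}})_{loc}^{\oplus m}\bigr) \simeq \mathrm{Mat}_m(R(\breve{G})_{loc}).
\]
This is split simple of dimension $m^2=\bigl(\sum_{\gamma\in\Lambda_\ff}\#\mathcal{D}_\gamma\bigr)^2$, as desired.

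I expect no serious obstacle: the essential technical input (split simplicity of $\widetilde{\mathbf{H}}_{loc}$) has already been established, and the remaining work is a combination of a rank count and the standard double centralizer phenomenon for modules over matrix algebras. The one point requiring care is verifying that endomorphisms commute with localization, which follows from the finite presentation of $\widetilde{\mathbf{T}}_\ff$ over $R(\breve{G})$.
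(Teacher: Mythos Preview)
Your proposal is correct and follows essentially the same route as the paper: both invoke Proposition~\ref{simpl:H} to reduce to the split simplicity of $\widetilde{\mathbf{H}}_{loc}$, then use Lemma~\ref{rank} for the rank count. The paper compresses your multiplicity computation into the single formula $\dim_{R(\breve{G})_{loc}}(\widetilde{\Sc}_{\ff,loc})=(\mathrm{rank}_{R(\breve{G})}\widetilde{\bT}_\ff)^2/\mathrm{rank}_{R(\breve{G})}\widetilde{\HH}$ and leaves the identification $\widetilde{\Sc}_{\ff,loc}=\End_{\widetilde{\HH}_{loc}}(\widetilde{\bT}_{\ff,loc})$ implicit, whereas you spell out both the decomposition $\widetilde{\bT}_{\ff,loc}\simeq (K^{\breve{G}}(\widetilde{\mathcal{N}})_{loc})^{\oplus m}$ and the commutation of $\End$ with localization.
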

\begin{proof}
The split simplicity of $\widetilde{\Sc}_{\ff,loc}=\mathrm{End}_{\widetilde{\HH}_{loc}}(\widetilde{\mathbf{T}}_{\ff,loc})$ follows from that of $\widetilde{\HH}_{loc}$ shown in Proposition~\ref{simpl:H}. Since $\mathrm{rank}_{R(\breve{G})}(\widetilde{\bT}_\ff)=(\sum_{\gamma \in \Lambda_\ff}\#\D_\gamma)\#\W$ derived in Lemma~\ref{rank}, we have $$\dim_{R(\breve{G})_{loc}}(\widetilde{\Sc}_{\ff,loc})=\frac{(\mathrm{rank}_{R(\breve{G})}(\widetilde{\bT}_\ff))^2}{\mathrm{rank}_{R(\breve{G})}(\widetilde{\HH})}=(\sum_{\gamma \in \Lambda_\ff}\#\D_\gamma)^2.$$
\end{proof}

\begin{cor}\label{center of aff schur}
    The center of $\widetilde{\Sc}_\ff$ coincides with $R(\breve{G})$.
\end{cor}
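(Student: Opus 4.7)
The plan is to combine the split simplicity of Proposition~\ref{rank2} with a torsion-freeness argument coming from the faithful representation on $\widetilde{\mathbf{T}}_\ff$. The inclusion $R(\breve{G})\subseteq Z(\widetilde{\Sc}_\ff)$ is immediate, since $\widetilde{\Sc}_\ff$ is by construction an $R(\breve{G})$-algebra and scalars commute with everything. The task is therefore to prove the reverse inclusion $Z(\widetilde{\Sc}_\ff)\subseteq R(\breve{G})$.

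First I would localize. By Lemma~\ref{rank}, $\widetilde{\mathbf{T}}_\ff$ is a free $R(\breve{G})$-module of finite rank, hence so is $\widetilde{\Sc}_\ff=\mathrm{End}_{\widetilde{\HH}}(\widetilde{\mathbf{T}}_\ff)$. Since $R(\breve{G})$ is an integral domain (it embeds into the Laurent polynomial ring $R(\breve{T})=R(T)[q,q^{-1}]$ via $R(G)\simeq R(T)^{\W}$), the localization map $\widetilde{\Sc}_\ff\hookrightarrow \widetilde{\Sc}_{\ff,loc}$ is injective. Proposition~\ref{rank2} then gives $Z(\widetilde{\Sc}_{\ff,loc})=R(\breve{G})_{loc}$, and pulling back we obtain the embedding
\[
 Z(\widetilde{\Sc}_\ff)\ \hookrightarrow\ Z(\widetilde{\Sc}_{\ff,loc})\cap\widetilde{\Sc}_\ff\ =\ R(\breve{G})_{loc}\cap\widetilde{\Sc}_\ff,
\]
taken inside $\widetilde{\Sc}_{\ff,loc}$.

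It remains to verify that $R(\breve{G})_{loc}\cap\widetilde{\Sc}_\ff=R(\breve{G})$. Take $z\in\widetilde{\Sc}_\ff$ that equals some fraction $r/s\in R(\breve{G})_{loc}$ in $\widetilde{\Sc}_{\ff,loc}$. Because $\widetilde{\mathbf{T}}_\ff$ is free over $R(\breve{G})$, pick an $R(\breve{G})$-basis $\{v_i\}$. The action of $z$ on $\widetilde{\mathbf{T}}_{\ff,loc}=\widetilde{\mathbf{T}}_\ff\otimes_{R(\breve{G})}R(\breve{G})_{loc}$ is by multiplication by $r/s$, so $sz(v_i)=rv_i$ holds inside $\widetilde{\mathbf{T}}_\ff$. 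Expanding $z(v_i)=\sum_j a_{ij}v_j$ with $a_{ij}\in R(\breve{G})$ and comparing coefficients against $s\sum_j a_{ij}v_j=rv_i$ forces $sa_{ii}=r$ and $a_{ij}=0$ for $j\ne i$; in particular $r/s=a_{ii}\in R(\breve{G})$. Hence $z$ acts as the scalar $a_{ii}\in R(\breve{G})$ on $\widetilde{\mathbf{T}}_\ff$, and by faithfulness of the $\widetilde{\Sc}_\ff$-action on $\widetilde{\mathbf{T}}_\ff$ we conclude $z=a_{ii}\in R(\breve{G})$.

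The only slightly delicate point is this last paragraph: one must exploit that $\widetilde{\mathbf{T}}_\ff$ is free (and not merely torsion-free) over $R(\breve{G})$, so that a central element which becomes scalar after localization is already scalar before localization. Everything else is formal once Propositions~\ref{simpl:H} and~\ref{rank2} are in hand.
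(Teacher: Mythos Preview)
Your approach matches the paper's: both localize, use Proposition~\ref{rank2} to identify $Z(\widetilde{\Sc}_{\ff,loc})=R(\breve{G})_{loc}$, and then argue that a central element of $\widetilde{\Sc}_\ff$ lying in $R(\breve{G})_{loc}$ must already lie in $R(\breve{G})$. The paper's one-line proof handles this last step via the fact that $\widetilde{\Sc}_\ff$ is module-finite over $R(\breve{G})$ (so any such element is integral over $R(\breve{G})$) together with the normality of $R(\breve{G})$; you instead use the faithful action on the free module $\widetilde{\mathbf{T}}_\ff$, which is a perfectly valid and slightly more hands-on alternative that avoids invoking integral closure.

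One small gap: the inference ``$\widetilde{\mathbf{T}}_\ff$ is free over $R(\breve{G})$, hence so is $\widetilde{\Sc}_\ff$'' is not justified at this point in the paper (it becomes true only after Theorem~\ref{geo affine Schur-Weyl duality} and Corollary~\ref{basis}). Fortunately you only need torsion-freeness of $\widetilde{\Sc}_\ff$ to get the injection into $\widetilde{\Sc}_{\ff,loc}$, and that is immediate since $\widetilde{\Sc}_\ff\subset\End_{R(\breve{G})}(\widetilde{\mathbf{T}}_\ff)$ sits inside a free $R(\breve{G})$-module. Replace ``free'' by ``torsion-free'' there and the argument is complete.
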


\begin{proof}
    Since $\widetilde{\Sc}_\ff$ is a finite $R(\breve{G})$-subalgebra of the split simple algebra $\widetilde{\Sc}_{\ff,loc}$, the corollary follows.
\end{proof}

Consider the canonical morphism $\Ob_{\gamma,w,\nu} \to \sF_\gamma \times \sF_\nu$, whose pull back (together with the external product $\boxtimes$) induces an $R(\breve{G})$-algebra morphism $R(\breve{P}_\gamma) \otimes_{R(\breve{G})} R(\breve{P}_\nu) \to R(\breve{P}_{\gamma\nu}^w)$ by $\chi \otimes \chi' \to \chi \cdot w(\chi')$, where $\cdot$ is the product in the representation ring $R(\breve{P}_{\gamma\nu}^w)$.
The image is denoted by $\mathscr{R}_{\gamma\nu}^w:=R(\breve{P}_\gamma)R(w\breve{P}_\nu w^{-1})$.
\begin{lem}\label{loc gal} The localization 
$\mathscr{R}_{\gamma\nu,loc}^w=R(\breve{P}^w_{\gamma\nu})_{loc}$.  
\end{lem}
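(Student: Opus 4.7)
The plan is to identify both $\mathscr{R}^w_{\gamma\nu,loc}$ and $R(\breve{P}^w_{\gamma\nu})_{loc}$ with intermediate subfields between $F:=\mathrm{Frac}(R(\breve{G}))$ and $K:=\mathrm{Frac}(R(\breve{T}))$, and then invoke the Galois correspondence. Since $R(\breve{G})\simeq R(\breve{T})^{\W}$, the extension $K/F$ is Galois with group $\W$.

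First I would identify the relevant representation rings with rings of Weyl-group invariants. By Lemma~\ref{lem:PW} (applied with $\breve{G}$ in place of $G$), one has $R(\breve{P}_\gamma)\simeq R(\breve{T})^{\W_\gamma}$ and $R(w\breve{P}_\nu w^{-1})\simeq R(\breve{T})^{w\W_\nu w^{-1}}$. The subgroup $P^w_{\gamma\nu}$ is not parabolic in general, but it contains $T$ and its Lie algebra admits a Levi decomposition whose Levi Weyl group is $\W^w_{\gamma\nu}$ by Lemma~\ref{levi}. Since every representation of a connected algebraic group over $\mathbb{C}$ is trivial on the unipotent radical, this upgrades to an identification $R(\breve{P}^w_{\gamma\nu})\simeq R(\breve{T})^{\W^w_{\gamma\nu}}$.

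Next, for any subgroup $H\subseteq \W$, I would identify the localization $(R(\breve{T})^H)_{loc}$ with the Galois fixed field $K^H\subseteq K$. The inclusion $(R(\breve{T})^H)_{loc}\subseteq K^H$ is clear. For the reverse, given $a/b\in K^H$ with $a,b\in R(\breve{T})$ and $b\neq 0$, rewrite it as $\bigl(a\prod_{\sigma\in \W/H,\,\sigma\neq e}\sigma(b)\bigr)/\prod_{\sigma\in \W/H}\sigma(b)$; the denominator is now $\W$-invariant, hence lies in $R(\breve{G})\setminus\{0\}$, and the $H$-invariance of $a/b$ forces the new numerator to lie in $R(\breve{T})^H$. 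Applying this to $H=\W_\gamma$, $w\W_\nu w^{-1}$, and $\W^w_{\gamma\nu}$ gives $R(\breve{P}_\gamma)_{loc}=K^{\W_\gamma}$, $R(w\breve{P}_\nu w^{-1})_{loc}=K^{w\W_\nu w^{-1}}$, and $R(\breve{P}^w_{\gamma\nu})_{loc}=K^{\W^w_{\gamma\nu}}$.

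Finally, $\mathscr{R}^w_{\gamma\nu,loc}$ is the $F$-subalgebra of $K^{\W^w_{\gamma\nu}}$ generated by $K^{\W_\gamma}$ and $K^{w\W_\nu w^{-1}}$, i.e.\ the compositum of these two subfields inside $K$. By the Galois correspondence, this compositum equals $K^{\W_\gamma\cap w\W_\nu w^{-1}}=K^{\W^w_{\gamma\nu}}$, which is precisely $R(\breve{P}^w_{\gamma\nu})_{loc}$, as desired. The main obstacle I anticipate is the identification $R(\breve{P}^w_{\gamma\nu})\simeq R(\breve{T})^{\W^w_{\gamma\nu}}$, since $P^w_{\gamma\nu}$ is not parabolic and therefore is not directly covered by Lemma~\ref{lem:PW}; once this hurdle is cleared by lifting Lemma~\ref{levi} from the Lie algebra to the group-level Levi decomposition, everything else reduces to routine Galois-theoretic manipulations.
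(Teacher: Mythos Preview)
Your approach is essentially the same as the paper's: both identify the localizations with fixed subfields of $\mathrm{Frac}(R(\breve{T}))$ under the relevant subgroups of $\W$ and then invoke the Galois correspondence for the compositum, and the paper likewise obtains $R(\breve{P}^w_{\gamma\nu})\simeq R(\breve{T})^{\W^w_{\gamma\nu}}$ via the Levi part (implicitly through the parabolic $(P^w_{\gamma\nu})U_\gamma$, whose Weyl group is $\W^w_{\gamma\nu}$). One small slip: in your denominator-clearing argument, $\prod_{\sigma\in \W/H}\sigma(b)$ is not $\W$-invariant for arbitrary $b\in R(\breve{T})$; you should multiply by $\prod_{\sigma\in \W\setminus\{e\}}\sigma(b)$ instead, so the new denominator $\prod_{\sigma\in \W}\sigma(b)$ lies in $R(\breve{G})$.
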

\begin{proof}
For any integral domain $S$ which is a finite $R(\breve{G})$-algebra, $S_{loc}$ is nothing but the localization of $S$ at $(0)$. Therefore,
\begin{align*}
&\mathscr{R}_{\gamma\nu,loc}^w=R(\breve{P}_\gamma)_{loc}R(w\breve{P}_\nu w^{-1})_{loc}\\&=(R(\breve{T})^{\W_{\gamma}})_{loc}(R(\breve{T})^{w\W_{\nu}w^{-1}})_{loc}=(R(\breve{T})_{loc})^{\W_{\gamma}}(R(\breve{T})_{loc})^{w\W_{\gamma}w^{-1}}\\&=(R(\breve{T})_{loc})^{\W_{\gamma\nu}^w}=R(\breve{T})^{\W_{\gamma\nu}^w}_{loc}=R(\breve{P}^w_{\gamma\nu})_{loc},  
\end{align*} where the forth equality is due to a standard result of Galois theory.
\end{proof}

\subsection{A restriction formula}


Let $\chi\in R(\breve{P}^w_{\gamma\nu})$. By the definition of the convolution, the product $\chi_{\gamma\nu}^w\star e^{0}_{\nu,\id}$ lies in $K^{\breve{G}}(Z_{\gamma,\id}^{\preceq\theta_\gamma w_{\gamma\nu}^+})$. Since  $R(\breve{P}^w_{\gamma\nu})\subset R(\breve{T})\simeq R(\breve{P}_{\gamma\mu_0}^{\theta_\gamma w_{\gamma\nu}^+})$, the notation $[\mathcal{O}_{T_{\Ob_{\gamma,\theta_\gamma w_{\gamma\nu}^+}}^*}(\chi)]\in K^{\breve{G}}(T_{\Ob_{\gamma,\theta_\gamma w_{\gamma\nu}^+}}^*)$ makes sense. 
\begin{lem}\label{tec lem}
For any $(\gamma,w,\nu)\in\Xi_\ff$ and $\chi\in R(\breve{P}^w_{\gamma\nu})$, it holds that
   $$(\chi_{\gamma\nu}^{w}\star e^{0}_{\nu,\id})|_{T_{\Ob_{\gamma,\theta_\gamma w_{\gamma\nu}^+}}^*}=[\mathcal{O}_{T_{\Ob_{\gamma,\theta_\gamma w_{\gamma\nu}^+}}^*}(\chi)].$$
\end{lem}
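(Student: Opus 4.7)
The plan is to adapt the proof of Proposition~\ref{6} to the equivariant K-theoretic setting, exploiting the same transversality and the isomorphism of $pr_{13}$ on the relevant fibered product, and using Lemma~\ref{Kcon dia} to localize the computation onto the top stratum $T^*_{\Ob_{\gamma,\theta_\gamma w^+_{\gamma\nu}}}$.

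First I reduce to the principal term. Writing $\chi_{\gamma\nu}^w=[\mathcal{O}_{T^*_{\Ob_{\gamma,w,\nu}}}(\chi)]+r$ with $r$ lying in the image of $K^{\breve{G}}(Z^{\prec w}_{\gamma\nu})\to K^{\breve{G}}(Z^{\preceq w}_{\gamma\nu})$, Lemma~\ref{Kcon dia} implies that $r\star e^0_{\nu,\id}$ is supported inside $Z^{\prec w}_{\gamma\nu}\circ T^*_{\nu,\id}$. The same orbit analysis as in the proof of Proposition~\ref{6} (noting that $\Ob_{\gamma,y,\nu}\circ\Ob_{\nu,\id}$ is a single $G$-orbit, namely $\Ob_{\gamma,\theta_\gamma y^+_{\gamma\nu}}$) identifies this support with $\bigcup_{y\prec w,\,y\in\D_{\gamma\nu}}T^*_{\Ob_{\gamma,\theta_\gamma y^+_{\gamma\nu}}}$. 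A standard Bruhat-order monotonicity on double cosets gives $\theta_\gamma w^+_{\gamma\nu}\not\leq \theta_\gamma y^+_{\gamma\nu}$ for $y\prec w$, so by Lemma~\ref{partial Bruhat order} the closures of those orbits miss $\Ob_{\gamma,\theta_\gamma w^+_{\gamma\nu}}$, forcing $(r\star e^0_{\nu,\id})|_{T^*_{\Ob_{\gamma,\theta_\gamma w^+_{\gamma\nu}}}}=0$.

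It therefore remains to compute $[\mathcal{O}_{T^*_{\Ob_{\gamma,w,\nu}}}(\chi)]\star[\mathcal{O}_{T^*_{\nu,\id}}]$ directly. The proof of Proposition~\ref{6} shows that $pr_{12}^{-1}(T^*_{\Ob_{\gamma,w,\nu}})$ and $pr_{23}^{-1}(T^*_{\nu,\id})$ meet transversally in $\widetilde{\mathcal{N}}_\gamma\times\widetilde{\mathcal{N}}_\nu\times\widetilde{\mathcal{N}}$, and that $pr_{13}$ restricts to a $\breve{G}$-equivariant isomorphism
$$T^*_{\Ob_{\gamma,w,\nu}}\times_{\widetilde{\mathcal{N}}_\nu}T^*_{\nu,\id}\;\xrightarrow{\ \sim\ }\;T^*_{\Ob_{\gamma,w,\nu}\circ\Ob_{\nu,\id}}(\sF_\gamma\times\sB)=T^*_{\Ob_{\gamma,\theta_\gamma w^+_{\gamma\nu}}}.$$
Transversality forces the higher Tors in $pr_{12}^*\mathcal{O}_{T^*_{\Ob_{\gamma,w,\nu}}}(\chi)\otimes^{\mathbb{L}}pr_{23}^*\mathcal{O}_{T^*_{\nu,\id}}$ to vanish, so the derived tensor product collapses to the structure sheaf of the scheme-theoretic fibered product twisted by $pr_{12}^*\chi$; proper pushforward along the isomorphism $pr_{13}$ then yields $[\mathcal{O}_{T^*_{\Ob_{\gamma,\theta_\gamma w^+_{\gamma\nu}}}}(\chi)]$.

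The main obstacle will be making the $\chi$-tracking rigorous. One must verify that the projection $\breve{G}/\breve{P}^{\theta_\gamma w^+_{\gamma\nu}}_{\gamma\mu_0}\twoheadrightarrow \breve{G}/\breve{P}^w_{\gamma\nu}$ coming from the inclusion $\breve{P}^{\theta_\gamma w^+_{\gamma\nu}}_{\gamma\mu_0}\subset\breve{P}^w_{\gamma\nu}$ (using $\theta_\gamma w^+_{\gamma\nu}=wy$ with $y\in\W_\nu$) coincides with the map arising from the convolution picture, so that $\chi\in R(\breve{P}^w_{\gamma\nu})$ transports on the nose to $\chi\in R(\breve{P}^{\theta_\gamma w^+_{\gamma\nu}}_{\gamma\mu_0})=R(\breve{T})$ without picking up a spurious twist through the Thom isomorphisms and the $G$-equivariant structures.
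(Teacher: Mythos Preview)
Your geometric setup is on the same footing as the paper's: both use the transversality and the isomorphism of $pr_{13}$ from Proposition~\ref{6} together with the Cartesian diagram feeding into Lemma~\ref{Kcon dia}. Two small inaccuracies are worth flagging. First, the decomposition $\chi_{\gamma\nu}^w=[\mathcal{O}_{T^*_{\Ob_{\gamma,w,\nu}}}(\chi)]+r$ is ill-formed, since the first summand lives in the \emph{quotient} $K^{\breve{G}}(T^*_{\Ob_{\gamma,w,\nu}})$ rather than in $K^{\breve{G}}(Z^{\preceq w}_{\gamma\nu})$; the paper avoids this by working with $\chi_{\gamma\nu}^w\in K^{\breve{G}}(T^*_{\gamma,w,\nu})$ and applying Lemma~\ref{Kcon dia} directly with $Z_{12}=T^*_{\gamma,w,\nu}$ and $Z_{12}'=T^*_{\Ob_{\gamma,w,\nu}}$. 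Second, $\Ob_{\gamma,y,\nu}\circ\Ob_{\nu,\id}$ is \emph{not} a single $G$-orbit (and your equality $T^*_{\Ob_{\gamma,w,\nu}\circ\Ob_{\nu,\id}}=T^*_{\Ob_{\gamma,\theta_\gamma w^+_{\gamma\nu}}}$ is only an open inclusion), though your support conclusion survives because this set projects to $\Ob_{\gamma,y,\nu}$ in $\sF_\gamma\times\sF_\nu$ while $\Ob_{\gamma,\theta_\gamma w^+_{\gamma\nu}}$ projects to $\Ob_{\gamma,w,\nu}$.

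The genuine gap is precisely what you label the ``main obstacle'': you do not carry out the $\chi$-tracking, and the paper resolves it by a route different from the one you sketch. Rather than verifying directly that the induced map $R(\breve{P}^w_{\gamma\nu})\to R(\breve{T})$ is the natural restriction, the paper first treats the subring $\mathscr{R}^w_{\gamma\nu}=R(\breve{P}_\gamma)\cdot R(w\breve{P}_\nu w^{-1})$: for $\chi=\chi'\cdot w(\chi'')$ with $\chi'\in R(\breve{P}_\gamma)$, $\chi''\in R(\breve{P}_\nu)$, the twist is pulled out of the convolution using the $R(\breve{P}_\gamma)\otimes R(\breve{P}_\nu)$-linearity coming from the two projections, and one checks $\theta_\gamma w^+_{\gamma\nu}(\chi'')=w(\chi'')$ since $\theta_\gamma w^+_{\gamma\nu}\in w\W_\nu$ and $\chi''\in R(\breve{T})^{\W_\nu}$. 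Then for arbitrary $\chi\in R(\breve{P}^w_{\gamma\nu})$ the paper invokes Lemma~\ref{loc gal} ($\mathscr{R}^w_{\gamma\nu,loc}=R(\breve{P}^w_{\gamma\nu})_{loc}$) to find a nonzero $\chi'\in R(\breve{G})$ with $\chi'\chi\in\mathscr{R}^w_{\gamma\nu}$, and divides using torsion-freeness. Your direct approach can in fact be completed as well --- the key is that $\theta_\gamma w^+_{\gamma\nu}P_\nu=wP_\nu$ (again because $\theta_\gamma w^+_{\gamma\nu}\in w\W_\nu$), so the projection $\Ob_{\gamma,\theta_\gamma w^+_{\gamma\nu}}\to\Ob_{\gamma,w,\nu}$ carries base point to base point and is induced by the inclusion $P^{\theta_\gamma w^+_{\gamma\nu}}_{\gamma\mu_0}\subset P^w_{\gamma\nu}$, whence pullback is the plain restriction of characters --- but you have not supplied this verification, and the paper's localization argument sidesteps it entirely.
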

\begin{proof}
   As shown in Proposition \ref{6}, $T_{\Ob_{\gamma,w,\nu}}^* \times \widetilde{\mathcal{N}}$ and $\widetilde{\mathcal{N}}_\gamma \times T_{\nu,\id}^*$ intersect transversely in $\widetilde{\mathcal{N}}_\gamma \times \widetilde{\mathcal{N}}_\nu \times \widetilde{\mathcal{N}}$ and we have a Cartisian diagram
    $$\begin{tikzcd}
        T_{\Ob_{\gamma,w,\nu}}^* \times_{\widetilde{\mathcal{N}}_\nu} T_{\nu,\id}^* \arrow[r, "\simeq"] \arrow[d] & T_{\Ob_{\gamma,w,\nu}}^* \circ T_{\nu,\id}^* \arrow[d] \\
        T_{\gamma,w,\nu}^* \times_{{\widetilde{\mathcal{N}}}_\nu} T_{\nu,\id}^* \arrow[r] & T_{\gamma,\theta_\gamma w_{\gamma\nu}^+}^*.
    \end{tikzcd}$$
    Then by Lemma \ref{Kcon dia} we obtain a commutative diagram
    $$\begin{tikzcd}
         K^{\breve{G}}(T_{\gamma,w,\nu}^*) \otimes_{R(\breve{P}_\nu)} K^{\breve{G}}(T_{\nu,\id}^*) \arrow[r,"\star"] \arrow[d] & K^{\breve{G}}(T_{\gamma,\theta_\gamma w_{\gamma\nu}^+}^*) \arrow[d] \ar[dr] &\\
         K^{\breve{G}}(T_{\Ob_{\gamma,w,\nu}}^*) \otimes_{R(\breve{P}_\nu)} K^{\breve{G}}(T_{\nu,\id}^*) \arrow[r,"\star"] & K^{\breve{G}}(T_{\Ob_{\gamma,w,\nu}}^* \circ T_{\nu,\id}^*) \ar[r]& K^{\breve{G}}(T_{\Ob_{\gamma,\theta_\gamma w_{\gamma\nu}^+}}^*).
    \end{tikzcd}$$
    Moreover, the proof of Proposition \ref{6} shows that $pr_{12}^{-1}(T_{\Ob_{\gamma,w,\nu}}^*)$ and $pr_{23}^{-1}(T_{\Ob_{\nu,\id}}^*)$ intersect transversely and $pr_{13}:pr_{12}^{-1}(T_{\Ob_{\gamma,w,\nu}}^*)\cap pr_{23}^{-1}(T_{\Ob_{\nu,\id}}^*)\longrightarrow T_{\Ob_{\gamma,w,\nu}}^* \circ T_{\nu,\id}^*$ is an isomorphism. Hence we have $[\mathcal{O}_{T_{\Ob_{\gamma,w,\nu}}^*}] \star e^{0}_{\nu,\id} = [\mathcal{O}_{T_{\Ob_{\gamma,w,\nu}}^* \circ T_{\nu,\id}^*}]$ and the convolution $\star$ is an $R(\breve{P}_\gamma) \otimes_{R(\breve{G})} R(\breve{T})$-morphism by considering the morphism $pr_{13}$. For brevity, denote $T_{\Ob_{\gamma,\theta_\gamma w_{\gamma\nu}^+}}^*$ by $V$.
    Firstly, we may assume that $\chi=\chi' \cdot w(\chi'') \in \mathscr{R}_{\gamma\nu}^w$ , where  $\chi'\in R(\breve{P}_\gamma)$ and $\chi''\in R(\breve{P}_\nu)$. Then
    \begin{align*}
        &(\chi_{\gamma\nu}^{w} \star e^{0}_{\nu,\id})|_V=([\mathcal{O}_{T_{\Ob_{\gamma,w,\nu}}^*}(\chi)] \star e^{0}_{\nu,\id})|_V=([\mathcal{O}_{T_{\Ob_{\gamma,w,\nu}}^*} (\chi' \cdot w(\chi''))] \star e^{0}_{\nu,\id})|_V\\
        =&([\mathcal{O}_{T_{\Ob_{\gamma,w,\nu}}^*}(\chi')]  \star ({\chi''}_{\nu\mu_0}^\id))|_V= (\chi' \otimes \chi'') \bullet ([\mathcal{O}_{T_{\Ob_{\gamma,w,\nu}}^*}]  \star e_{\nu,\id}^0)|_V=(\chi' \otimes \chi'') \bullet [\mathcal{O}_V] \\
        =&[\mathcal{O}_V (\chi' \cdot \theta_\gamma w_{\gamma\nu}^+(\chi''))] =[\mathcal{O}_V (\chi' \cdot w(\chi''))]=[\mathcal{O}_V (\chi)],
    \end{align*}
    where $(\chi'\otimes\chi'')\bullet$ means $p_\gamma^{*}(\chi')\otimes^{\mathbb{L}}p_{\mu_0}^*(\chi'')\otimes^{\mathbb{L}}-$ with the canonical projections $p_{\gamma}:V\to\sF_{\gamma}$ and $p_{\mu_0}:V\to\sB$. 
    So for any $\chi \in \mathscr{R}_{\gamma\nu}^w$, the statement holds.
    
    By Lemma \ref{loc gal}, for any $\chi \in R(\breve{P}^w_{\gamma\nu})$, there exists a nonzero $\chi' \in R(\breve{G})$ such that $\chi'\chi \in \mathscr{R}_{\gamma\nu}^w$.
    Then $$\chi'\cdot (\chi_{\gamma\nu}^{w} \star e^{0}_{\nu,\id})|_V= ((\chi'\chi)_{\gamma\nu}^w \star e^{0}_{\nu,\id})|_V=[\mathcal{O}_V (\chi'\chi)].$$ Since $R(\breve{P}^w_{\gamma\nu})$ is a torsion-free $R(\breve{G})$-module, we have $(\chi_{\gamma\nu}^{w} \star e^{0}_{\nu,\id})|_V= [\mathcal{O}_V (\chi)]$, which completes the proof.
\end{proof}    
\subsection{Realization of affine $q$-Schur algebras}

\begin{thm}\label{geo affine Schur-Weyl duality}
\begin{itemize}
    \item[(1)] There exists an $\widetilde{\HH}$-module isomorphism  $$\widetilde{\varphi}: \widetilde{\bT}_\ff \to K^{\breve{G}}(\widetilde{\mathcal{N}}_\ff \times_\mathcal{N} \widetilde{\mathcal{N}})\quad\mbox{defined by}\quad \mathbf{x}_\gamma \mapsto e_{\gamma,\id}^{\omega_\gamma}.$$
    \item[(2)] This isomorphism $\widetilde{\varphi}$ induces an algebra isomorphism $K^{\breve{G}}(Z_\ff) \simeq \widetilde{\Sc}_\ff$.
\end{itemize}
\end{thm}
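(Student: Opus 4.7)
The plan is to mirror the paper's Schur-duality strategy laid out in \S1.3: first establish the module isomorphism $\widetilde{\varphi}$ in (1), then deduce the algebra isomorphism in (2) by passing to $\widetilde{\HH}$-equivariant endomorphisms via a convolution map out of $K^{\breve G}(Z_\ff)$. This is the K-theoretic analogue of the Lagrangian argument in Theorem~\ref{geo schur}, using Ginzburg's realization of $\widetilde{\HH}$ and the key convolution identity of Lemma~\ref{calc} in place of Proposition~\ref{2}.

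For part (1), I would first verify that $\widetilde{\varphi}$ is a well-defined $\widetilde{\HH}$-module map. This amounts to checking the defining relations of $\widetilde{\bT}_\ff$, and the only nontrivial ones are $\mathbf{x}_\gamma H_{s_i}=-q\,\mathbf{x}_\gamma$ for $s_i\in J_\gamma$ (the compatibility with $e^\lambda$ being immediate). Via Ginzburg's iso $H_{s_i}\mapsto qe_{s_i}^{\alpha_i}+q^{-1}e_\id^0$, Lemma~\ref{calc} gives
\begin{equation*}
e_{\gamma,\id}^{\omega_\gamma}\star(qe_{s_i}^{\alpha_i}+q^{-1}e_\id^0)=\big(-q(1+q^{-2})+q^{-1}\big)e_{\gamma,\id}^{\omega_\gamma}=-q\,e_{\gamma,\id}^{\omega_\gamma}.
\end{equation*}
To prove $\widetilde{\varphi}$ is bijective, I compare the $R(\breve T)$-basis $\{\mathbf{x}_\gamma H_w\mid w\in\mathcal{D}_\gamma\}$ of $\mathbf{x}_\gamma\widetilde{\HH}$ with the $R(\breve T)$-basis $\{[\mathcal{O}_{T^*_{\gamma,w}}]\mid w\in\mathcal{D}_\gamma\}$ of $K^{\breve G}(Z_{\gamma\mu_0})$ from Corollary~\ref{cor:basisZr}. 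Computing $\widetilde{\varphi}(\mathbf{x}_\gamma H_w)$ by induction on a reduced word for $w\in\mathcal{D}_\gamma$ and invoking a K-theoretic analogue of Proposition~\ref{5} (the same Cartesian diagram, $p_{13}\colon \Ob_{\gamma,\id}\times_{\sB}\Ob_w\xrightarrow{\sim}\Ob_{\gamma,w}$ from Lemma~\ref{lem:para}, combined with Lemma~\ref{Kcon dia}) shows that $\widetilde{\varphi}(\mathbf{x}_\gamma H_w)$ is upper triangular with invertible leading coefficient relative to the $\{[\mathcal{O}_{T^*_{\gamma,y}}]\}$ basis under the filtration \eqref{filtra}. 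Hence $\widetilde{\varphi}$ is an isomorphism.

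For part (2), associativity of convolution together with (1) yields a canonical $R(\breve G)$-algebra morphism
\begin{equation*}
\widetilde{\psi}\colon K^{\breve G}(Z_\ff)\longrightarrow \End_{K^{\breve G}(Z)}\!\big(K^{\breve G}(\widetilde{\mathcal{N}}_\ff\times_{\mathcal{N}}\widetilde{\mathcal{N}})\big)\stackrel{\widetilde{\varphi}}{\simeq}\widetilde{\Sc}_\ff.
\end{equation*}
To show it is bijective, I apply $\widetilde{\psi}(\chi^w_{\gamma\nu})$ to the generator $\mathbf{x}_{\nu'}$ corresponding to $e^{\omega_{\nu'}}_{\nu',\id}$: the convolution vanishes unless $\nu'=\nu$ (wrong intermediate flag), and when $\nu'=\nu$ Lemma~\ref{tec lem} identifies the restriction of $\chi^w_{\gamma\nu}\star e^0_{\nu,\id}$ at the top cell with $[\mathcal{O}_{T^*_{\Ob_{\gamma,\theta_\gamma w^+_{\gamma\nu}}}}(\chi)]$. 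Twisting by $e^{\omega_\nu}$ (using the $R(\breve T)$-action from Lemma~5.7) and propagating through the filtration produces an upper-triangular expression in the basis $\{[\mathcal{O}_{T^*_{\gamma,y}}]\}$ with leading coefficient a nonzerodivisor in $R(\breve T)$. Consequently $\widetilde{\psi}$ is injective and the image of Proposition~\ref{prop:noncbasis}'s basis is itself an $R(\breve G)$-basis of $\widetilde{\Sc}_\ff$. A final sanity check via rank comparison: Corollary~\ref{basis} gives $\mathrm{rank}_{R(\breve G)}K^{\breve G}(Z_\ff)=(\sum_\gamma\#\mathcal{D}_\gamma)^2$, which matches $\dim_{R(\breve G)_{loc}}\widetilde{\Sc}_{\ff,loc}$ from Proposition~\ref{rank2}; since $\widetilde{\Sc}_{\ff,loc}$ is split simple, $\widetilde{\psi}_{loc}$ must be an isomorphism, and together with the basis statement this upgrades to an isomorphism over $R(\breve G)$.

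The main obstacle I anticipate is in part (2): unlike in the Borel-Moore setting, there is in general no $\breve G$-cellular fibration structure on $Z_{\gamma\nu}$ (Example~\ref{ex:nof}), so one cannot directly mimic Proposition~\ref{6}. The workaround is to replace the fundamental-class basis by the twisted classes $\chi^w_{\gamma\nu}$ from Proposition~\ref{prop:noncbasis} and to control the convolution $\chi^w_{\gamma\nu}\star e^{\omega_\nu}_{\nu,\id}$ on the top cell via Lemma~\ref{tec lem}, treating the representation-theoretic twist $\chi\in R(\breve P^w_{\gamma\nu})$ as an extra parameter throughout the triangularity argument. Keeping this twist compatible with the $R(\breve T)$-module structure on $K^{\breve G}(Z_{\gamma\mu_0})$ inherited from the second-factor projection is the delicate bookkeeping step on which the whole argument pivots.
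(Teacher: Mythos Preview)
Your outline for part (1) is essentially the paper's argument: well-definedness via Lemma~\ref{calc}, then surjectivity via an upper-triangular computation analogous to Proposition~\ref{5}, then a rank count using Corollary~\ref{cor:basisZr} and Lemma~\ref{rank}. That part is fine.

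For part (2), your injectivity argument via Lemma~\ref{tec lem} is correct and is what the paper does. The gap is in surjectivity. You assert that ``the image of Proposition~\ref{prop:noncbasis}'s basis is itself an $R(\breve G)$-basis of $\widetilde{\Sc}_\ff$'' and that the localized isomorphism ``upgrades to an isomorphism over $R(\breve G)$'', but neither claim follows from what you have established. An injective map of free $R(\breve G)$-modules of the same rank that becomes an isomorphism after localizing at $(0)$ need not be surjective (think $2\colon\Z\to\Z$); and your triangularity has leading coefficients that are merely nonzerodivisors, not units, so you cannot invert integrally. Nor is it known \emph{a priori} that $\widetilde{\Sc}_\ff$ is $R(\breve G)$-free of the expected rank---that is a consequence of the theorem, not an input.

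The paper closes this gap with a Cartesian-square argument that you have not anticipated. One embeds both $K^{\breve G}(Z_\ff)$ and $\widetilde{\Sc}_\ff$ into $\End_{R(\breve G)}(\widetilde{\bT}_\ff)$ and into their common localization. The right square ($\widetilde{\Sc}_\ff$ versus its localization inside $\End$ and $\End_{loc}$) is Cartesian because $\widetilde{\bT}_\ff$ is $R(\breve G)$-free. The crux is then to show the outer square is Cartesian, i.e.\ $\End_{R(\breve G)}(\widetilde{\bT}_\ff)\cap K^{\breve G}(Z_\ff)_{loc}=K^{\breve G}(Z_\ff)$: take a hypothetical $\mathcal M$ in the intersection but not in $K^{\breve G}(Z_\ff)$; its leading term in the filtration is $\chi/\chi'$ with $\chi\in R(\breve P^w_{\gamma\nu})$ and $\chi'\in R(\breve G)$ but $\chi/\chi'\notin R(\breve P^w_{\gamma\nu})$. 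Using Steinberg's theorem that $R(\breve T)$ is free over $R(\breve P^w_{\gamma\nu})$ with a basis containing $1$, one checks $\chi/\chi'\notin R(\breve T)$, so by Lemma~\ref{tec lem} the restriction of $\mathcal M\star e^0_{\nu,\id}$ to the top cell lands outside $K^{\breve G}(T^*_{\Ob_{\gamma,\theta_\gamma w^+_{\gamma\nu}}})$, whence $\mathcal M\notin\End_{R(\breve G)}(\widetilde{\bT}_\ff)$---a contradiction. This integrality argument, not the rank/localization sanity check, is what actually forces surjectivity.
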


\begin{proof}
(1). 
 Thanks to Lemma \ref{calc}, we see that $\mathbf{x}_\gamma \mapsto e_{\gamma,\id}^{\omega_\gamma}$ determines a (right) $\widetilde{\HH}$-module homomorphism $\widetilde{\varphi}$.
 Firstly, we will show the surjectivity of $\widetilde{\varphi}$, which is equivalent to verify that as an $\widetilde{\HH}$-module, $K^{\breve{G}}(\widetilde{\mathcal{N}}_\ff \times_\mathcal{N} \widetilde{\mathcal{N}})$ can be generated by $e_{\gamma,\id}^{\omega_\gamma}$, ($\gamma\in\Lambda_\ff$).
 
    By \eqref{eq:stardot}, we have $$e^0_{\gamma,\id}=[\mathcal{O}_{T^*_{\gamma,\id}}]=e^{\omega_\gamma}_{\gamma,\id}\cdot e^{-\omega_\gamma}=e^{\omega_\gamma}_{\gamma,\id}\star e_\id^{-\omega_\gamma},$$ which shows $e^0_{\gamma,\id}\in \mathrm{Im}\widetilde{\varphi}$. Similar to the proof of Proposition~\ref{5}, we can derive that, for any $w \in \D_\gamma$,
    \begin{align*}
         e_{\gamma,\id}^0 \star e_w^0 =[\mathcal{O}_{T^{*}_{\gamma,\id}}] \star [\mathcal{O}_{T^*_{w}}] =[\mathcal{O}_{T^*_{\gamma,w}}]+\sum_{y < w} k_y[\mathcal{O}_{T^*_{\gamma,y}}], \quad \mbox{for some $k_y\in R(\breve{T})$},
    \end{align*} 
    which shows $[\mathcal{O}_{T^*_{\gamma,w}}]\in \mathrm{Im}\widetilde{\varphi}$ by a standard argument about upper triangular matrices. 
   Furthermore, $[\mathcal{O}_{T^*_{\gamma,w}}]\star e_\id^\lambda=e^{\lambda}\cdot[\mathcal{O}_{T^*_{\gamma,w}}]$ by \eqref{eq:stardot} again, which implies the surjectivity of $\widetilde{\varphi}$ by Corollary~\ref{cor:basisZr}.
   
 By Lemma~\ref{rank} and Corollary~\ref{basis}, we get
    $$\mathrm{rank}_{R(\breve{G})}(\widetilde{\bT}_\ff)=\sum_{\gamma\in\ff}\#\D_{\gamma}\#\W=\mathrm{rank}_{R(\breve{G})}(K^{\breve{G}}(\widetilde{\mathcal{N}}_\ff \times_\mathcal{N} \widetilde{\mathcal{N}})),$$ which forces that
    $\widetilde{\varphi}$ is an isomorphism.
   
(2).
By the associativity of convolution,
     the isomorphism $\widetilde{\varphi}$ induces an algebra homomorphism $\widetilde{\psi}:K^{\breve{G}}(Z_\ff) \to \widetilde{\Sc}_\ff$ satisfying $\widetilde{\psi}(a)\cdot b=a\star b$ for $a\in K^{\breve{G}}(Z_\ff)$ and $b\in K^{\breve{G}}(\widetilde{\mathcal{N}}_\ff \times_\mathcal{N} \widetilde{\mathcal{N}})$, where we identify $K^{\breve{G}}(\widetilde{\mathcal{N}}_\ff \times_\mathcal{N} \widetilde{\mathcal{N}})$ with $\widetilde{\mathbf{T}}_\ff$ under the isomorphism $\widetilde{\varphi}$. 
    Lemma \ref{tec lem} implies $\chi_{\gamma\nu}^{w} \star e_{\nu, \id}^0 \in \chi_{\gamma\mu_0}^{\theta_\gamma w_{\gamma\nu}^+}+K^{\breve{G}}(Z_{\gamma\mu_0}^{\prec \theta_\gamma w_{\gamma\nu}^+})$, for any $w \in \D_{\gamma\nu}$ and $\chi \in R(\breve{P}_{\gamma\nu}^w)$.
Let $\mathcal{M} \in K^{\breve{G}}(Z_\ff) \backslash \{0\}$. Without loss of generality, we may assume that $\mathcal{M} \subset K^{\breve{G}}(Z^{\preceq w}_{\gamma\nu}) \backslash K^{\breve{G}}(Z^{\prec w}_{\gamma\nu})$ for some $(\gamma, w,\nu)\in\Xi_\ff$, i.e., $\mathcal{M} \in \chi_{\gamma\nu}^{w}+K^{\breve{G}}(Z^{\prec w}_{\gamma\nu})$ for some $\chi \in R(\breve{P}_{\gamma\nu}^w)\backslash \{0\}$.

By Lemma \ref{tec lem} again, we have $$(\widetilde{\psi}(\mathcal{M})\cdot e^0_{\nu,\id})|_{T_{\Ob_{\gamma,\theta_\gamma w_{\gamma\nu}^+}}^*}=(\mathcal{M} \star e^0_{\nu,\id})|_{T_{\Ob_{\gamma,\theta_\gamma w_{\gamma\nu}^+}}^*} = [\mathcal{O}_{T_{\Ob_{\gamma,\theta_\gamma w_{\gamma\nu}^+}}^*}(\chi)] \neq 0.$$ Hence $\mathrm{Ker}(\widetilde{\psi})=0$ and $\widetilde{\psi}$ is injective.

By Corollary~\ref{basis} and Proposition \ref{rank2}, $\mathrm{rank}_{R(\breve{G})}(K^{\breve{G}}(Z_\ff))=\dim_{R(\breve{G})_{loc}} \widetilde{\Sc}_{\ff,loc}$. So we obtain an isomorphism $\widetilde{\psi}_{loc}:K^{\breve{G}}(Z_\ff)_{loc}\simeq\widetilde{\Sc}_{\ff,loc}$. Consider the following commutative diagram:\begin{equation*}
  \begin{tikzcd}
K^{\breve{G}}(Z_\ff) \arrow[rr, "\widetilde{\psi}", hook] \arrow[d, hook]             &  & \widetilde{\Sc}_\ff \arrow[d, hook] \arrow[r, hook]         & \End_{R(\breve{G})}(\widetilde{\bT}_\ff) \arrow[d, hook]         \\
K^{\breve{G}}(Z_\ff)_{loc} \arrow[rr, phantom] \arrow[rr, "\simeq"] &  & \widetilde{\Sc}_{\ff,loc} \arrow[r, hook] & \End_{R(\breve{G})}(\widetilde{\bT}_\ff)_{loc}
\end{tikzcd}
\end{equation*}
where the right square is a Cartesian diagram clearly.
We will show the biggest square is also Cartesian, which forces the left square to be also Cartesian, and hence $\widetilde{\psi}$ is an isomorphism.
Indeed, suppose that there exists $\mathcal{M} \in (\End_{R(\breve{G})}(\widetilde{\bT}_\ff)\cap K^{\breve{G}}(Z_\ff)_{loc})\backslash K^{\breve{G}}(Z_{\ff})$. Without loss of generality, we may assume that $$\mathcal{M} \in K^{\breve{G}}(Z^{\preceq w}_{\gamma\nu})_{loc} \backslash K^{\breve{G}}(Z^{\prec w}_{\gamma\nu})_{loc},$$
then $\mathcal{M} \in \chi_{\gamma\nu}^{w}/\chi'+K^{\breve{G}}(Z^{\prec w}_{\gamma\nu})_{loc}$ for some $\chi \in R(\breve{P}_{\gamma\nu}^w)$ and $\chi' \in R(\breve{G})$ such that $\chi/\chi' \notin R(\breve{P}_{\gamma\nu}^w)$. We have the following commutative diagram whose right arrow is the natural algebra homomorphism
$$\begin{tikzcd}
    K^{\breve{G}}(T_{\Ob_{\gamma,w,\nu}}^*) \ar[r, "\simeq"] \ar[d, "(- \star e_{\nu,\id}^0)|_{T_{\Ob_{\gamma,\theta_\gamma w_{\gamma\nu}^+}}^*}"'] & R(\breve{P}_{\gamma\nu}^w) \ar[d]\\
    K^{\breve{G}}(T_{\Ob_{\gamma,\theta_\gamma w_{\gamma\nu}^+}}^*) \ar[r, "\simeq"] & R(\breve{T}).
\end{tikzcd}$$
By \cite[Theorem 2.2]{St75}, $R(\breve{T})$ is a free $R(\breve{P}_{\gamma\nu}^w)$-module with a basis $\{f_y ~|~ y \in \W_{\gamma\nu}^w\}$ such that $f_\id=e^0$. If $\chi/\chi' \in R(\breve{T})$, then $\chi/\chi'=\sum_y \chi_y f_y$ with $\chi_y \in R(\breve{P}_{\gamma\nu}^w)$ and $\chi_y \neq 0$ for some $y \neq \id$. So $\chi-\chi'\chi_\id= \sum_{y \neq \id} \chi'\chi_y f_y\in R(\breve{P^w_{\gamma\nu}})$, which is contradictory. Therefore, $\chi/\chi' \notin R(\breve{T})$. Moreover, Lemma \ref{tec lem} implies
$$(\mathcal{M} \star e^0_{\nu,\id})|_{T_{\Ob_{\gamma,\theta_\gamma w_{\gamma\nu}^+}}^*}=\frac{1}{\chi'}\mathcal{O}_{T_{\Ob_{\gamma,\theta_\gamma w_{\gamma\nu}^+}}^*}(\chi) \notin K^{\breve{G}}(T_{\Ob_{\gamma,\theta_\gamma w_{\gamma\nu}^+}}^*),$$
which leads to $\mathcal{M} \star e^{0}_{\nu,\id}\notin\widetilde{\bT}_\ff$ and hence $\mathcal{M} \notin\End_{R(\breve{G})}(\widetilde{\bT}_\ff)$, a contradiction. So $\End_{R(\breve{G})}(\widetilde{\bT}_\ff)\cap K^{\breve{G}}(Z_\ff)_{loc}=K^{\breve{G}}(Z_{\ff})$, which implies that the left square is Cartesian and hence $\widetilde{\psi}$ is an isomorphism.
\end{proof}

\section{Representations of affine $q$-Schur algebras}
\subsection{Specialized affine $q$-Schur algebras}
Let us take a semisimple element $a=(s,z) \in \breve{G}$, where ``semisimple'' means $s\in G$ is semisimple. Recall in \S\ref{sec:Aaction} the $\breve{G}$-actions on $\mathcal{N}_\gamma$ (hence on $\mathcal{N}$), $\widetilde{\mathcal{N}}_\gamma$, $\widetilde{\mathcal{N}}_\ff$ and $Z_\ff$. We write $\mathcal{N}_\gamma^a$, $\mathcal{N}^a$, $\widetilde{\mathcal{N}}_\gamma^a$, $\widetilde{\mathcal{N}}_\ff^a$ and $Z_\ff^a$ the corresponding $a$-fixed point subvarieties. 

Let $\mathbb{C}_a$ be the complex field regarded as an $R(\breve{G})$-algebra by means of the homomorphism
\begin{align*}
    R(\breve{G}) \to \mathbb{C}_a, \quad \chi \mapsto \chi(a).
\end{align*}
Denote $\widetilde{\Sc}_{\ff,a}= \mathbb{C}_a \otimes_{R(\breve{G})}K^{\breve{G}}(Z_\ff)\simeq\mathbb{C}_a \otimes_{R(\breve{G})} \widetilde{\Sc}_\ff$, which is called the affine $q$-Schur algebra specialized at $a$. 

\begin{lem}\label{6.2}
    For any simple $\widetilde{\Sc}_{\ff}$-module, there exists a semisimple element $a \in \breve{G}$ such that the action of $\widetilde{\Sc}_{\ff}$ factors through an action of $\widetilde{\Sc}_{\ff,a}$. Conversely, any simple module of $\widetilde{\Sc}_{\ff,a}$ is naturally a simple module of $\widetilde{\Sc}_{\ff}$. In particular, each simple $\widetilde{\Sc}_{\ff}$-module is finite dimensional.
\end{lem}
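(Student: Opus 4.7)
The plan is to combine two structural facts already established in the paper: first, $\widetilde{\Sc}_{\ff}\simeq K^{\breve{G}}(Z_\ff)$ is a free $R(\breve{G})$-module of finite rank (Theorem~\ref{geo affine Schur-Weyl duality} together with Corollary~\ref{basis}); second, its center coincides with $R(\breve{G})$ (Corollary~\ref{center of aff schur}). Since $R(\breve{G})\simeq R(G)[q,q^{-1}]$ is a finitely generated commutative $\mathbb{C}$-algebra, the algebra $\widetilde{\Sc}_{\ff}$ has countable $\mathbb{C}$-dimension.

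Given a simple $\widetilde{\Sc}_{\ff}$-module $M$, I would first invoke Dixmier's version of Schur's lemma, which applies precisely because $\widetilde{\Sc}_{\ff}$ is countably generated over $\mathbb{C}$, to conclude $\mathrm{End}_{\widetilde{\Sc}_{\ff}}(M)=\mathbb{C}$. The center then acts on $M$ through a single $\mathbb{C}$-algebra character $\chi\colon R(\breve{G})\to\mathbb{C}$, whose kernel $\mathfrak{m}$ is a maximal ideal. Under the standard bijection between closed points of $\mathrm{Spec}\,R(\breve{G})$ and semisimple conjugacy classes in $\breve{G}$ (a direct consequence of \cite[Theorem~6.1.4]{CG97} for $G$ reductive, extended factor-wise to $\breve{G}=G\times(\mathbb{C}\setminus\{0\})$), the ideal $\mathfrak{m}$ corresponds to a semisimple element $a=(s,z)\in\breve{G}$. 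By construction $\mathfrak{m}\cdot M=0$, so the $\widetilde{\Sc}_{\ff}$-action factors through the quotient
$$\widetilde{\Sc}_{\ff}/\mathfrak{m}\widetilde{\Sc}_{\ff}=\mathbb{C}_a\otimes_{R(\breve{G})}\widetilde{\Sc}_{\ff}=\widetilde{\Sc}_{\ff,a}.$$
Because $\widetilde{\Sc}_{\ff}$ is a finite $R(\breve{G})$-module, $\widetilde{\Sc}_{\ff,a}$ is finite-dimensional over $\mathbb{C}$, and therefore so is $M$.

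For the converse, any simple $\widetilde{\Sc}_{\ff,a}$-module $N$ pulls back along the canonical surjection $\widetilde{\Sc}_{\ff}\twoheadrightarrow\widetilde{\Sc}_{\ff,a}$ to a $\widetilde{\Sc}_{\ff}$-module whose submodule lattice is identified with that of $N$; simplicity is therefore preserved. The last assertion is immediate from the first part. The only subtle technical point is to justify Dixmier's lemma together with the identification of $\mathrm{Spec}_{\max}R(\breve{G})$ with semisimple conjugacy classes; both are standard, but deserve explicit citation. Everything else is a formal consequence of the finite generation of $\widetilde{\Sc}_{\ff}$ over its center.
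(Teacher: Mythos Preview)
Your proof is correct and is exactly the argument the paper defers to by citing \cite[\S 8.1]{CG97}: finiteness of $\widetilde{\Sc}_\ff$ over its center $R(\breve{G})$ together with Dixmier--Schur yields a central character, and $\mathbb{C}$-points of $\mathrm{Spec}\,R(\breve{G})$ correspond to semisimple conjugacy classes in $\breve{G}$. One small slip: $R(\breve{G})$ is a finitely generated $\mathbb{Z}$-algebra rather than a $\mathbb{C}$-algebra, but since the simple modules in question are implicitly complex vector spaces (as in \cite{CG97}) this does not affect the argument---Dixmier's lemma applies to the image of $\widetilde{\Sc}_\ff$ in $\mathrm{End}_{\mathbb{C}}(M)$.
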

\begin{proof}
    Since $R(\breve{G})$ is the center of $\widetilde{\Sc}_{\ff}$ by Corollary \ref{center of aff schur}, the lemma can be verified by similar arguments to those in \cite[\S 8.1]{CG97}.
\end{proof}

\subsection{Specialized isomorphism}
Let $\mathcal{A}$ be the closed subgroup of $\breve{G}$ generated by $a$, i.e. the closure of the cyclic group $\langle a\rangle$ in $\breve{G}$. This subgroup $\mathcal{A}$ is an abelian reductive subgroup of $\breve{G}$, and admits $X^a=X^\mathcal{A}$ for any $\mathcal{A}$-variety $X$ such as $X=Z_\ff$, $\widetilde{\mathcal{N}}_\ff$, etc.

\begin{lem}\label{lem:isoraka}
    The natural morphism $R(A)\otimes_{R(\breve{G})}K^{\breve{G}}(Z_\ff)\to K^{\mathcal{A}}(Z_\ff)$ is an isomorphism.
\end{lem}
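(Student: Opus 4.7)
My approach is to reduce the statement to a single stratum via the closed-subset filtration \eqref{filtra:Z} of each $Z_{\gamma\nu}$. Since $s\in G$ is semisimple, I may conjugate so that $s\in T$, whence $\mathcal{A}\subset \breve{T}$. In particular, the Bruhat-style decomposition of each $T^*_{\Ob_\xi}$ used in Lemma \ref{ex seq1} to prove exactness on the left is $\breve{T}$-stable, and hence $\mathcal{A}$-stable, with $\mathcal{A}$-affine cells. The same cellular fibration argument therefore yields short exact sequences
\begin{equation*}
0\to K^{\mathcal{A}}(Z^{\prec w}_{\gamma\nu})\to K^{\mathcal{A}}(Z^{\preceq w}_{\gamma\nu})\to K^{\mathcal{A}}(T^*_{\Ob_{\gamma,w,\nu}})\to 0
\end{equation*}
parallel to their $\breve{G}$-equivariant counterparts.

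Next I would compare these two filtrations via the natural transformation $R(\mathcal{A})\otimes_{R(\breve{G})}K^{\breve{G}}(-)\to K^{\mathcal{A}}(-)$. Each graded piece $K^{\breve{G}}(T^*_{\Ob_\xi})\simeq R(\breve{P}^w_{\gamma\nu})$ is a free $R(\breve{G})$-module by Lemma \ref{lem:PW}, so the $\breve{G}$-equivariant short exact sequences stay exact after applying $R(\mathcal{A})\otimes_{R(\breve{G})}-$. A five-lemma argument, inductive on the order $\preceq$ in the filtration, then reduces the problem to verifying the base-change isomorphism on each single stratum.

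On a single orbit, the Thom isomorphism gives $K^{\breve{G}}(T^*_{\Ob_{\gamma,w,\nu}})\simeq K^{\breve{G}}(\breve{G}/\breve{P}^w_{\gamma\nu})\simeq R(\breve{P}^w_{\gamma\nu})$ and similarly $K^{\mathcal{A}}(T^*_{\Ob_{\gamma,w,\nu}})\simeq K^{\mathcal{A}}(\breve{G}/\breve{P}^w_{\gamma\nu})$. Thus it remains to establish
\begin{equation*}
R(\mathcal{A})\otimes_{R(\breve{G})}R(\breve{P}^w_{\gamma\nu})\;\simeq\;K^{\mathcal{A}}(\breve{G}/\breve{P}^w_{\gamma\nu}).
\end{equation*}
This is classical once $\mathcal{A}\subset\breve{T}$: the Bruhat decomposition of $\breve{G}/\breve{P}^w_{\gamma\nu}$ consists of $\mathcal{A}$-stable affine cells indexed by $\W/\W^w_{\gamma\nu}$, so the cellular fibration lemma (Lemma \ref{lem. cfl}) makes the right-hand side a free $R(\mathcal{A})$-module of rank $\#(\W/\W^w_{\gamma\nu})$. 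For the left-hand side, the Steinberg-Pittie freeness of $R(\breve{T})$ over $R(\breve{G})$ together with $R(\breve{P}^w_{\gamma\nu})\simeq R(\breve{T})^{\W^w_{\gamma\nu}}$ (Lemma \ref{lem:PW}) yields a free $R(\mathcal{A})$-module of the same rank, and the natural map carries a standard generating set to a standard generating set.

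The main obstacle is this last step: one has to match two a priori different descriptions of the flag space of $\breve{P}^w_{\gamma\nu}$ — as $\W^w_{\gamma\nu}$-invariants in $R(\breve{T})$ on one side, and as $\mathcal{A}$-equivariant K-theory of a cellularly decomposed homogeneous space on the other — and check that the natural base-change morphism is compatible with both. Once the rank count and the identification of generators are verified, the filtration induction completes the proof.
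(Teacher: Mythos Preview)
Your approach is correct in outline, but the paper finds a shorter route that sidesteps precisely the ``obstacle'' you flag. Rather than comparing $\breve{G}$- and $\mathcal{A}$-equivariant K-theory directly and being forced to verify the orbit-by-orbit base change
\[
R(\mathcal{A})\otimes_{R(\breve{G})}R(\breve{P}^w_{\gamma\nu})\;\simeq\;K^{\mathcal{A}}(\breve{G}/\breve{P}^w_{\gamma\nu}),
\]
the paper inserts $\breve{T}$ as an intermediate level and factors the map as
\[
R(\mathcal{A})\otimes_{R(\breve{G})}K^{\breve{G}}(Z_\ff)\;\simeq\;R(\mathcal{A})\otimes_{R(\breve{T})}K^{\breve{T}}(Z_\ff)\;\simeq\;K^{\mathcal{A}}(Z_\ff).
\]
The first isomorphism is the global statement \cite[Theorem~6.1.22]{CG97} (which you are implicitly re-proving orbit-by-orbit), and the second follows from part~(c) of the cellular fibration lemma, since $Z_\ff$ genuinely \emph{does} admit a $\breve{T}$-cellular decomposition into affine cells (exactly the decomposition you invoke in your first paragraph).

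What you gain by your route is self-containment: the filtration induction plus the five lemma makes explicit why freeness of the graded pieces over $R(\breve{G})$ is what drives the argument. What the paper gains is brevity and the avoidance of your final ``matching of generators'' step, which, while true, is not quite as automatic as you suggest --- it is itself equivalent to \cite[6.1.22]{CG97} applied to each $\Ob_{\gamma,w,\nu}$, so you would end up quoting that result anyway.
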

\begin{proof}
    Without loss of generality, we may assume that $\mathcal{A} \subset \breve{T}$. Then $$R(\mathcal{A})\otimes_{R(\breve{G})}K^{\breve{G}}(Z_\ff) \simeq R(\mathcal{A})\otimes_{R(\breve{T})}K^{\breve{T}}(Z_\ff) \simeq K^{\mathcal{A}}(Z_\ff),$$
    where the first isomorphism is obtained by \cite[Theorem 6.1.22]{CG97} and the second isomorphism is due to the cellular fibration lemma \ref{lem. cfl}.
\end{proof}

Lemma~\ref{lem:isoraka} brings us the following algebra isomorphism
\begin{equation}\label{eq:AA}
\widetilde{\Sc}_{\ff,a}=\mathbb{C}_a \otimes_{R(\breve{G})}K^{\breve{G}}(Z_\ff)\xrightarrow{\sim}\C_a\otimes_{R(\mathcal{A})} K^{\mathcal{A}}(Z_\ff).
\end{equation}
A ``bivariant version'' of the Localization Theorem (cf. \cite[Theorem~5.11.10]{CG97}) and \cite[(8.1.6)]{CG97} shows that there is an algebra isomorphism 
\begin{equation}\label{eq:ra}
\mathrm{r}_a: \C_a\otimes_{R(\mathcal{A})} K^{\mathcal{A}}(Z_\ff) \xrightarrow{\sim} \C_a\otimes_{R(\mathcal{A})} K^{\mathcal{A}}(Z_\ff^\mathcal{A}) 
\end{equation}
since $\mathcal{A}$ acts trivially on $Z_\ff^{\mathcal{A}}$, there is a canonical identity (cf. \cite[(5.2.4)]{CG97}):
$$K^{\mathcal{A}}(Z_\ff^{\mathcal{A}})=R(\mathcal{A})\otimes_\Z K(Z_\ff^{\mathcal{A}}),$$
by which we have an evaluation isomorphism 
\begin{align}
\label{eq:ev}
\mathrm{ev}: \C_a\otimes_{R(\mathcal{A})} K^{\mathcal{A}}(Z_\ff^\mathcal{A})= \C_a\otimes_{R(\mathcal{A})} (R(\mathcal{A})\otimes_\Z K(Z_\ff^{\mathcal{A}}))&\xrightarrow{\sim} \C\otimes_\Z K(Z_\ff^{\mathcal{A}}),\\\nonumber
1\otimes f\otimes \mathcal{F} &\mapsto f(a)\otimes \mathcal{F}.
\end{align}
The bivariant Riemann-Roch theorem \cite[Theorem~5.11.11]{CG97} implies an algebra isomorphism (cf. \cite[(8.1.6)]{CG97})
\begin{equation}\label{eq:RR}
\mathrm{RR}:\C\otimes_\Z K(Z_\ff^{\mathcal{A}})\xrightarrow{\sim} H_\bullet(Z^{\mathcal{A}}_{\ff},\mathbb{C})=H_\bullet(Z^a_{\ff},\mathbb{C}).
\end{equation}

Combining \eqref{eq:AA},\eqref{eq:ra}, \eqref{eq:ev} and \eqref{eq:RR}, we arrive at 
\begin{prop}\label{localization mor}
    There exists an algebra isomorphism
    $$\widetilde{\Sc}_{\ff,a}\simeq H_\bullet(Z^{a}_{\ff},\mathbb{C}).$$
\end{prop}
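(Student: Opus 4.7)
The plan is straightforward: the proof should simply compose the four isomorphisms \eqref{eq:AA}, \eqref{eq:ra}, \eqref{eq:ev}, \eqref{eq:RR} already assembled in the paragraphs preceding the statement, and observe that each of them respects the convolution algebra structure. That is, starting from $\widetilde{\Sc}_{\ff,a}=\C_a\otimes_{R(\breve{G})}K^{\breve{G}}(Z_\ff)$, I would successively apply (i) the change-of-group isomorphism of Lemma~\ref{lem:isoraka} to pass to $\C_a\otimes_{R(\mathcal{A})}K^{\mathcal{A}}(Z_\ff)$, (ii) the bivariant equivariant localization isomorphism $\mathrm{r}_a$ to restrict to the fixed-point locus $Z_\ff^a=Z_\ff^\mathcal{A}$, (iii) the evaluation isomorphism $\mathrm{ev}$ which uses the triviality of the $\mathcal{A}$-action on $Z_\ff^\mathcal{A}$, and finally (iv) the bivariant Riemann-Roch transformation $\mathrm{RR}$ to land in Borel--Moore homology.

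The key point to check is that this composition is indeed an algebra homomorphism, not merely a $\C$-linear isomorphism. For step (i), compatibility with convolution follows since the flat base-change $R(\mathcal{A})\otimes_{R(\breve{G})}-$ commutes with the convolution product on $K$-theory, which is defined via $\boxtimes$, pull-back and pushforward — all functorial under restriction of equivariance. For steps (ii) and (iii), the algebra structure is preserved because the localization and evaluation maps are compatible with proper pushforward and with external tensor products, so they intertwine the two convolution products built from the same geometric diagrams on $Z_\ff$ and $Z_\ff^a$ (and this is precisely what is meant by the ``bivariant'' version of the Localization Theorem invoked right before the statement, cf.\ \cite[\S5.11]{CG97}). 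For step (iv), the bivariant Riemann-Roch theorem is an algebra isomorphism for convolution algebras by \cite[Theorem~5.11.11]{CG97}, applied to the closed embedding $Z_\ff^a\hookrightarrow\widetilde{\mathcal{N}}_\ff^a\times\widetilde{\mathcal{N}}_\ff^a$ and the smooth ambient variety $\widetilde{\mathcal{N}}_\ff^a$ (note that $\widetilde{\mathcal{N}}_\ff^a$ is smooth since $a$ is semisimple, acting on a smooth variety).

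The main obstacle — really the only one worth checking carefully — will be verifying the multiplicativity of the composed map through step (iv), since the convolution product in Borel--Moore homology is defined in the ambient space $\widetilde{\mathcal{N}}_\ff^a\times\widetilde{\mathcal{N}}_\ff^a\times\widetilde{\mathcal{N}}_\ff^a$, and one must identify the Todd-class corrections coming from $\mathrm{RR}$ applied to the three projections $pr_{12}, pr_{23}, pr_{13}$ and verify they cancel. This is exactly the content of the bivariant Riemann-Roch applied to convolution algebras as in \cite[\S8.1]{CG97}, so the argument reduces to citing that machinery rather than recomputing it. Once these compatibilities are noted, the composition $\mathrm{RR}\circ\mathrm{ev}\circ\mathrm{r}_a\circ(\text{Lemma \ref{lem:isoraka}})$ gives the claimed algebra isomorphism $\widetilde{\Sc}_{\ff,a}\simeq H_\bullet(Z_\ff^a,\C)$.
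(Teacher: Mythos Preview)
Your proposal is correct and follows exactly the paper's approach: the proposition is stated immediately after \eqref{eq:AA}--\eqref{eq:RR} with the sentence ``Combining \eqref{eq:AA}, \eqref{eq:ra}, \eqref{eq:ev} and \eqref{eq:RR}, we arrive at'', and no further proof is given. Your additional remarks on why each step is multiplicative (citing the bivariant localization and Riemann--Roch machinery from \cite[\S5.11, \S8.1]{CG97}) simply make explicit what the paper leaves implicit.
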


\subsection{Restricted map}
The map $\pi_\ff: \widetilde{\mathcal{N}}_{\ff}\to\mathcal{N}$ commutes with the action of $a$, by which we get a restricted map 
$$\pi_\ff^a:\widetilde{\mathcal{N}}_{\ff}^{a}\to\mathcal{N}^a, \quad (gP_\gamma, x)\mapsto x.$$

The  $a$-fixed point subvariety $\widetilde{\mathcal{N}}_{\ff}^{a}\subset\widetilde{\mathcal{N}}_{\ff}$ is smooth (cf. \cite[Lemma~5.11.1]{CG97}). Moreover, it is also a closed subvariety because of $\widetilde{\mathcal{N}}_{\ff}^{a}\simeq \mathrm{graph}(a)\cap\Delta\widetilde{\mathcal{N}}_{\ff}$. Let $G(s)$ be the centralizer of $s$ in $G$ (recall $a=(s,z)$ with $s$ being semisimple). Then $\pi_\ff^a$ becomes a $G(s)$-equivariant projective morphism.  Let $D^b(\widetilde{\mathcal{N}}^a_\ff)$ (resp. $D^{b}(\mathcal{N}^a)$) be the bounded derived category of complexes of sheaves with constructible cohomology sheaves on $\widetilde{\mathcal{N}}^a_\ff$ (resp. $\mathcal{N}^a$). Define a complex on $\widetilde{\mathcal{N}}_{\ff}$ by $$\mathcal{C}_{\ff,a}|_{\widetilde{\mathcal{N}}^{a}_{\gamma}}=\underline{\mathbb{C}}_{\widetilde{\mathcal{N}}^{a}_{\gamma}}[\dim_{\mathbb{C}}(\widetilde{\mathcal{N}}^{a}_{\gamma})]\quad\mbox{for}\ \gamma\in\Lambda_{\ff}.$$ 
By \cite[\S 5.4]{KL87}, the variety $\mathcal{N}^a$ has finite many $G(s)$-orbits. Hence the orbits forms the stratification $\mathcal{N}^a=\bigsqcup \mathcal{O}$. Then by the equivariant version of BBDG decomposition theorem (cf. \cite[Theorem~8.4.12]{CG97}), we have $$\pi_{\ff*}^a\mathcal{C}_{\ff,a}=\bigoplus\limits_{i\in\mathbb{Z},\phi=(\mathcal{O}_{\phi},\chi_\phi)}L_{\phi}(i)\otimes IC_{\phi}[i],$$
where $\phi$ runs over all pairs $(\mathcal{O}_{\phi},\chi_\phi)$ consists of a $G(s)$-orbit $\mathcal{O}_{\phi}\subset\mathcal{N}^{a}$ and an irreducible equivariant local system $\chi_\phi$ on $\mathcal{O}_{\phi}$; $IC_{\phi}$ is the intersection cohomology complex associated with $\phi$, $[i]$ stands for the shift in the derived category and $L_{\phi}(i)$ are certain finite dimensional vector spaces.
Let $L_{\phi}:=\bigoplus_{i}L_{\phi}(i)$. Then we have algebra isomorphisms (not necessarily preserving the grading):
\begin{align*}
H_\bullet(Z^{a}_{\ff},\C)&\simeq\mathrm{Ext}^{\bullet}_{{D}^{b}(\mathcal{N}^a)}(\mu_*\mathcal{C}_{\ff,a},\mu_*\mathcal{C}_{\ff,a})\\&\simeq(\bigoplus\limits_{\phi}\End L_{\phi})\oplus(\bigoplus\limits_{\phi,\psi,i>0}\Hom_{\mathbb{C}}(L_\phi,L_\psi)\otimes\mathrm{Ext}^{i}_{{D}^{b}(\mathcal{N}^a)}(IC_{\phi},IC_{\psi})) .
\end{align*}

   \begin{lem}\label{lem 6.3}
      For any $x\in\mathcal{O}_{\phi}\subset\mathcal{N}^{a}\cap\mathrm{Im}(\pi_{\ff})$, the preimage $(\pi_\ff^a)^{-1}(x)$ is a non-empty projective variety.
   \end{lem}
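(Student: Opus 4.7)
The plan is to establish the two claims---projectivity and non-emptiness---separately, since the former is essentially formal while the latter requires the crucial input.

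\textbf{Projectivity.} Since each partial Springer resolution $\pi_\gamma\colon \widetilde{\mathcal{N}}_\gamma\to\mathcal{N}$ is a projective morphism, so is $\pi_\ff$. Thus the fiber $\pi_\ff^{-1}(x)$ is a projective variety. The $a$-fixed subvariety $\widetilde{\mathcal{N}}_\ff^a$ is closed in $\widetilde{\mathcal{N}}_\ff$ (being the preimage of the diagonal under the graph map of $a$), so $(\pi_\ff^a)^{-1}(x)=\pi_\ff^{-1}(x)\cap\widetilde{\mathcal{N}}_\ff^a$ is closed in $\pi_\ff^{-1}(x)$, hence projective.

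\textbf{Non-emptiness.} The fiber $\pi_\ff^{-1}(x)$ is non-empty because $x\in\mathrm{Im}(\pi_\ff)$. By $\breve{G}$-equivariance of $\pi_\ff$ and the assumption $a\cdot x=x$, the stabilizer $\breve{G}_x\ni a$ acts on the projective variety $\pi_\ff^{-1}(x)$, and we must produce an $a$-fixed point. My plan is to embed $a$ into a positive-dimensional torus of $\breve{G}_x$ using Jacobson--Morozov: pick a cocharacter $\lambda\colon\mathbb{G}_m\to G$ attached to an $\mathfrak{sl}_2$-triple through $x$, so that $\mathrm{Ad}_{\lambda(t)}(x)=t^2 x$, and define $\tilde\lambda\colon\mathbb{G}_m\to\breve{G}$, $t\mapsto(\lambda(t)^{-1},t)$; a direct check shows $\tilde\lambda(t)\cdot x=x$, so $\tilde\lambda(\mathbb{G}_m)\subset\breve{G}_x$. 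Following the Kazhdan--Lusztig refinement, $\lambda$ can be chosen to commute with $s$; then $k:=s\lambda(z)$ is a semisimple element of $Z_G(x)$ commuting with $\lambda$, and $a=(k,1)\cdot\tilde\lambda(z)$ lies in the diagonalizable abelian subgroup $H\subseteq\breve{G}_x$ generated by $\tilde\lambda(\mathbb{G}_m)$ and $(k,1)$. By Borel's fixed point theorem, the connected torus $H^0$ has a non-empty fixed-point set on $\pi_\ff^{-1}(x)$, on which the finite quotient $H/H^0$ acts.

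\textbf{Main obstacle.} The subtle step is handling the component group $H/H^0$: Borel gives $H^0$-fixed points, but we need $a$-fixed points, and $a$ may not lie in $H^0$. The resolution uses that $k$ is a semisimple element of the (connected reductive) centralizer $Z_G(x)^0$---here the hypothesis on $G$ having simply-connected derived subgroup or being of type $\mathrm{SO}_{2d+1}$ enters via Steinberg's connectedness theorem, guaranteeing that appropriate centralizers are connected. Once $a$ is placed inside a genuine torus of $\breve{G}_x$, Borel's theorem completes the argument. An alternative that avoids the connectedness issue is to invoke the Kazhdan--Lusztig existence theorem (Kazhdan--Lusztig 1987, \S 5) for the complete flag variety, stating that $\{B\in\sB\colon s\in B,\ x\in\mathrm{Lie}(B)\}$ is non-empty, and then reduce from the Borel case to the parabolic case by a direct argument on nilradicals.
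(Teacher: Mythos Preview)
Your projectivity argument is fine. For non-emptiness, the paper takes exactly your ``alternative'' route: it cites \cite[8.1.7]{CG97} to produce a Borel $B$ with $s\in B$ and $x\in\mathrm{Lie}(B)$, then passes to the unique parabolic $P'\in\sF_\gamma$ containing $B$ and asserts that $x\in\mathfrak{n}_{P'}$. Your instinct that this step needs justification is right; taken literally it can fail, since for an arbitrary $s$-fixed Borel $B$ with $x\in\mathfrak{n}_B$ one only has $x\in\mathfrak{n}_B\subset\mathfrak{p}'$, and the Levi component of $x$ in $\mathfrak{p}'$ need not vanish. (For instance, in $G=GL_3$ with $P_\gamma$ of block type $(2,1)$, $x=E_{12}$, and $s=\mathrm{diag}(1,z^2,1)$, the standard upper-triangular Borel is $s$-fixed with $x$ in its nilradical, yet $E_{12}\notin\mathfrak{n}_{P_\gamma}$.) So the paper is terse at precisely the point you anticipated.

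Your primary Jacobson--Morozov approach is the robust fix: the argument underlying \cite[8.1.7]{CG97} applies verbatim to any non-empty projective $\breve{G}_x$-variety, in particular to $\pi_\gamma^{-1}(x)$, and nothing there is special to complete flags. One correction: your proposed resolution of the component-group obstacle via Steinberg connectedness of ``$Z_G(x)^0$'' is misstated---$Z_G(x)$ is the centralizer of a \emph{nilpotent} element, typically neither connected nor reductive, and Steinberg's theorem does not apply to it. The simply-connected hypothesis on $G$ is not what settles this step; follow the actual argument in \cite[\S8.1]{CG97} or \cite[\S5]{KL87} instead.
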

   \begin{proof}
      The preimage is projective since $\pi_\ff^a$ is a projective morphism.
      Since $x\in\mathrm{Im}(\pi_{\ff})$, we may assume that $\pi^{-1}_{\gamma}(x)$ is non-empty for some $\gamma\in\Lambda_\ff$. By \cite[8.1.7]{CG97}, there is a Borel subgroup of $G$ fixed by $s$ (and hence containing $s$) such that $x\in\mathrm{Lie}(B)$. So there is a unique parabolic subgroup $P_{\gamma}\in\sF_{\gamma}$ which contains $B$ such that $x\in\mathfrak{n}_{\gamma}$, where $\mathfrak{n}_{\gamma}$ is the Lie algebra of the nilpotent radical of $P_{\gamma}$. Then $P_{\gamma}$ is fixed by $s$, since $s$ is semisimple and hence lies in a Levi factor of $P_{\gamma}$. The lemma follows.
   \end{proof}

\subsection{Classification of irreducible modules}

\begin{thm}[weak version]\label{weak version of rep} Each
$L_{\phi}$ is a finite dimensional irreducible representation of $\widetilde{\Sc}_{\ff}$, provided it is non-zero. Furthermore, such $L_{\phi}$ exhausts all finite dimensional irreducible representations of $\widetilde{\Sc}_{\ff}$. 
   \end{thm}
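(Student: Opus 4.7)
The plan is to combine the specialization reduction (Lemma~\ref{6.2}) with the identification $\widetilde{\Sc}_{\ff,a}\simeq H_\bullet(Z^a_{\ff},\C)$ from Proposition~\ref{localization mor}, and then read off the simple modules from the BBDG decomposition already stated just before the theorem. First, by Lemma~\ref{6.2}, every simple $\widetilde{\Sc}_\ff$-module factors through some specialization $\widetilde{\Sc}_{\ff,a}$, and each simple $\widetilde{\Sc}_{\ff,a}$-module lifts to a simple $\widetilde{\Sc}_{\ff}$-module; so it is enough to classify the simple modules of $\widetilde{\Sc}_{\ff,a}$ as $a$ ranges over semisimple elements of $\breve{G}$, and to check that each such simple is finite dimensional.

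Next I would apply Proposition~\ref{localization mor} to transport the problem to $H_\bullet(Z^a_\ff,\C)\simeq \mathrm{Ext}^\bullet_{D^b(\mathcal{N}^a)}(\pi^a_{\ff*}\mathcal{C}_{\ff,a},\pi^a_{\ff*}\mathcal{C}_{\ff,a})$. Using the equivariant BBDG decomposition stated in the excerpt, this Ext-algebra decomposes as a vector space into
\[
\Bigl(\bigoplus_{\phi}\End L_\phi\Bigr)\;\oplus\; \Bigl(\bigoplus_{\phi,\psi,\,i>0}\Hom_{\C}(L_\phi,L_\psi)\otimes \mathrm{Ext}^i_{D^b(\mathcal{N}^a)}(IC_\phi,IC_\psi)\Bigr).
\]
The Yoneda product respects the natural Ext-grading, and so the second summand $\mathcal{J}$ (the sum of strictly positive Ext-degrees) is a two-sided ideal. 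Since $H_\bullet(Z^a_\ff,\C)$ is finite dimensional and $\mathcal{J}$ lives in strictly positive degrees of a finite grading, $\mathcal{J}$ is nilpotent. Hence $\mathcal{J}$ is contained in the Jacobson radical, while the quotient is the semisimple algebra $\bigoplus_\phi \End L_\phi$, whose simple modules are exactly the non-zero $L_\phi$. A standard argument (lift of idempotents, or the fact that simple modules are annihilated by the radical) then identifies the simple $\widetilde{\Sc}_{\ff,a}$-modules with $\{L_\phi \mid L_\phi\neq 0\}$, each of which is finite dimensional by construction.

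Finally, running $a$ over all semisimple elements and $\phi=(\mathcal{O}_\phi,\chi_\phi)$ over the pairs arising in the BBDG decomposition of $\pi_{\ff*}^{a}\mathcal{C}_{\ff,a}$ yields every finite dimensional irreducible $\widetilde{\Sc}_{\ff}$-module, by the two-way correspondence of Lemma~\ref{6.2}. Lemma~\ref{lem 6.3} provides the non-vanishing criterion on the geometric side: for any orbit $\mathcal{O}_\phi\subset \mathcal{N}^a\cap \mathrm{Im}(\pi_\ff)$ the fibre $(\pi_\ff^a)^{-1}(x)$ is a non-empty projective variety, so at least the local system giving the "top" component of the fibre cohomology contributes a non-zero $L_\phi$. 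The main obstacle, as usual in this framework, is the nilpotency of the higher-Ext ideal $\mathcal{J}$; the natural argument above rests on the compatibility of convolution with the Ext-grading, a fact that must be invoked carefully because the identification in Proposition~\ref{localization mor} comes through a chain of Riemann-Roch and localization isomorphisms that are only shown to preserve the algebra structure, not the grading. I would handle this either by appealing to the graded version of the Chriss-Ginzburg argument (as in \cite[\S8.6]{CG97}), or by directly observing that $\mathcal{J}$ is contained in the kernel of the semisimplification map, which suffices for the classification.
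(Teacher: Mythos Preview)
Your proof is correct and is essentially the paper's own argument: reduce to $\widetilde{\Sc}_{\ff,a}$ via Lemma~\ref{6.2}, identify it with $H_\bullet(Z^a_\ff,\C)$ via Proposition~\ref{localization mor}, and then invoke the general structure theory of convolution algebras from \cite[\S8.6--8.7]{CG97}, which you have unpacked in more detail than the paper does.

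Two minor remarks. First, your worry about the Ext-grading surviving the Riemann--Roch/localization chain is unnecessary: the identification of $H_\bullet(Z^a_\ff,\C)$ with $\mathrm{Ext}^\bullet_{D^b(\mathcal{N}^a)}(\pi^a_{\ff*}\mathcal{C}_{\ff,a},\pi^a_{\ff*}\mathcal{C}_{\ff,a})$, with convolution matching the Yoneda product, is established directly on Borel--Moore homology (cf.\ \cite[\S8.6]{CG97}), so the grading and hence the nilpotency of $\mathcal{J}$ are intrinsic to $H_\bullet(Z^a_\ff,\C)$; Proposition~\ref{localization mor} only needs to supply an ungraded algebra isomorphism $\widetilde{\Sc}_{\ff,a}\simeq H_\bullet(Z^a_\ff,\C)$, which it does. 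Second, Lemma~\ref{lem 6.3} is not needed for this theorem, since the statement is explicitly conditional on $L_\phi\neq 0$; that lemma enters only later, in the stronger form Theorem~\ref{rep aff}.
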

   \begin{proof}
       This result follows by Lemma \ref{6.2}, Lemma \ref{localization mor} and a general construction of representations of convolution algebras (see \cite[\S 8.6-8.7]{CG97}).
   \end{proof}

    Fix a local transversal slice $S$ to $\mathcal{O}_{\phi}$ at $x$. Let $\psi: H_{\bullet}((\pi_\ff^a)^{-1}(x))\to H_\bullet((\pi_\ff^a)^{-1}(S))$ be the direct image map on Borel-Moore homology that induced by the natural closed embedding. By Lemma \ref{lem 6.3}, $H_{\bullet}((\pi_\ff^a)^{-1}(x))\neq 0$. Denote $L_{a,x}:=\mathrm{Im}(\psi)$ the image of $\psi$. Let $G(s)_x$ be the isotropic subgroup of $x$ in $G(s)$, $C(s,x)$ the component group of $G(s)_x$. Let $L_{a,x,\chi}$ be the $\chi$-isotypical component of $L_{a,x}$ for $\chi\in \mathrm{Irr}(C(s,x))$.
   The next proposition is due to \cite[Propositions~8.5.16 \& 8.6.21]{CG97}.
   \begin{prop}\label{6.5}
       Let $x\in\mathcal{O}_{\phi}\subset\mathcal{N}^a\cap\mathrm{Im}(\pi_{\ff})$ and $\chi$ an irreducible representation of $C(s,x)$. Then there is an natural isomorphism of $H_{\bullet}(Z^{a}_{\ff})$-modules $L_\phi\simeq L_{a,x,\chi}$.
   \end{prop}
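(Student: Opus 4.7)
The plan is to adapt the arguments of \cite[Propositions~8.5.16 \& 8.6.21]{CG97} to our generalized setup, using the decomposition theorem for $\pi^a_\ff$ together with a local-to-global argument via the transversal slice $S$. The key point is that, although our partial Springer map $\pi_\ff$ is built from partial flag varieties $\sF_\gamma$ rather than $\sB$, all the essential hypotheses required by the Chriss--Ginzburg argument still hold: $\pi^a_\ff$ is $G(s)$-equivariant and proper, $\mathcal{C}_{\ff,a}$ is a direct sum of (shifted) constant sheaves on smooth components $\widetilde{\mathcal{N}}^a_\gamma$, and $\mathcal{N}^a$ has finitely many $G(s)$-orbits.

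First, I would record how the algebra $H_\bullet(Z^a_\ff,\C)$ acts on each multiplicity space $L_\phi$. Combining the isomorphism $H_\bullet(Z^a_\ff,\C)\simeq\mathrm{Ext}^\bullet_{D^b(\mathcal{N}^a)}(\pi^a_{\ff*}\mathcal{C}_{\ff,a},\pi^a_{\ff*}\mathcal{C}_{\ff,a})$ recalled just before the proposition with the decomposition theorem expansion $\pi^a_{\ff*}\mathcal{C}_{\ff,a}=\bigoplus_{i,\phi}L_\phi(i)\otimes IC_\phi[i]$, the algebra $H_\bullet(Z^a_\ff,\C)$ acts on each $L_\phi$ by composition, and the summand $\bigoplus_\phi \End(L_\phi)$ sees these as simple constituents. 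It therefore suffices to identify the underlying vector space $L_\phi$ with $L_{a,x,\chi}$ in an $H_\bullet(Z^a_\ff,\C)$-equivariant way.

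Next, I would localize at $x\in\mathcal{O}_\phi$ using the transversal slice $S$. Let $j:(\pi^a_\ff)^{-1}(S)\hookrightarrow\widetilde{\mathcal{N}}^a_\ff$ be the inclusion. Then $j^*\mathcal{C}_{\ff,a}$ is again a direct sum of shifted constant sheaves (with a shift by $\dim_\C\mathcal{O}_\phi$), and proper base change gives
\begin{equation*}
(\pi^a_{\ff*}\mathcal{C}_{\ff,a})|_S\;\simeq\;(\pi^a_\ff|_{(\pi^a_\ff)^{-1}(S)})_*\,j^*\mathcal{C}_{\ff,a}.
\end{equation*}
On the one hand, by the compatibility of the decomposition theorem with restriction to a transversal slice, the $\phi'$-summand of this restriction contributes only to those $IC_{\phi'}$ whose support contains $x$, and its stalk at $x$ picks out precisely the $\chi_{\phi'}$-isotypic part with respect to $C(s,x)$. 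On the other hand, taking stalks at $x$ and passing to top-degree cohomology yields the map $\psi:H_\bullet((\pi^a_\ff)^{-1}(x))\to H_\bullet((\pi^a_\ff)^{-1}(S))$ whose image is $L_{a,x}$ by definition. Decomposing under $C(s,x)$ and matching summands, one obtains a canonical isomorphism $L_\phi\simeq L_{a,x,\chi}$ for $\chi=\chi_\phi$, and this isomorphism is $H_\bullet(Z^a_\ff,\C)$-equivariant because both sides arise from the same $\mathrm{Ext}$-action on the multiplicity space.

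The routine part is the decomposition-theorem bookkeeping, which is formally identical to \cite[\S8.5--8.6]{CG97}. The main obstacle will be the verification that the transversal slice argument extends verbatim in the presence of a semisimple (not necessarily unipotent) parameter $a=(s,z)$ acting on the partial flag variety side: one must check that $\pi^a_\ff$ behaves like the restriction of a semismall-type map after base change along $S$, so that the identification of $L_{a,x,\chi}$ with the multiplicity space $L_\phi$ is exact rather than only up to a filtration. This is handled in \cite[\S8.5]{CG97} by the dimension estimates of Kazhdan--Lusztig \cite[\S5.4]{KL87} for $\mathcal{N}^a$ (finite orbits, middle-dimensional fibers), which remain valid for our $\pi^a_\ff$ by the same argument, upon replacing $\widetilde{\mathcal{N}}$ by $\widetilde{\mathcal{N}}_\ff$ and invoking Corollary~\ref{cor:dim}.
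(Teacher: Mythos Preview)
Your proposal is essentially the paper's own approach: the paper gives no independent proof and simply attributes the result to \cite[Propositions~8.5.16 \& 8.6.21]{CG97}, and your sketch is precisely an outline of how those propositions adapt to the partial-flag setting.

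One remark: the ``main obstacle'' you flag in the last paragraph is a red herring. The identification $L_\phi\simeq L_{a,x,\chi}$ in \cite[\S8.5--8.6]{CG97} does \emph{not} rely on semismallness or middle-dimensional fiber estimates; it uses only properness of $\pi^a_\ff$, the decomposition theorem, and the (co)support conditions on $IC$ complexes to show that in the closed-pushforward map $\psi$ only the summands $L_\phi\otimes IC_\phi$ with $\mathcal{O}_\phi$ equal to the orbit of $x$ survive in the image. The Kazhdan--Lusztig dimension bounds and Corollary~\ref{cor:dim} are relevant for the \emph{nonvanishing} statement (Proposition~\ref{6.6}, via \cite[\S8.8]{CG97}), not for the present isomorphism, so you can drop that caveat.
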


    \begin{prop}\label{6.6}
       Suppose  $\Lambda_{\ff}$ contains a regular $\W$-orbit. If $z$ is not a root of unity, then the vector space $L_{a,x,\chi}$ are non-zero for every $x\in \mathcal{N}^a$ and any irreducible representation $\chi$ of $C(s,x)$ that occurs in the isotypical decomposition of $H_\bullet(\mu^{-1}(x))$.
    \end{prop}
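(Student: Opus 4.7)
The plan is to adapt the Kazhdan--Lusztig/Ginzburg non-vanishing argument for affine Hecke algebras (cf.\ \cite[Theorem~8.7.5]{CG97}) to our affine $q$-Schur algebra $\widetilde{\Sc}_{\ff,a}$, using crucially the hypothesis that $\Lambda_{\ff}$ contains a regular $\W$-orbit $\mu_{0}$.

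First I would organize the setup coming from the decomposition theorem already stated above, namely
\[
(\pi^a_\ff)_*\mathcal{C}_{\ff,a}\simeq\bigoplus_{\phi=(\mathcal{O}_\phi,\chi_\phi),\,i}L_\phi(i)\otimes IC_\phi[i].
\]
Taking the stalk at a point $x\in\mathcal{O}_{\phi_0}$ identifies the isotypic multiplicity of an irreducible $\chi\in\mathrm{Irr}(C(s,x))$ in $H_\bullet(\mu^{-1}(x))$ with the sum of $\dim L_\phi(i)$ over those summands whose perverse stratum equals $\mathcal{O}_{\phi_0}$ and whose local system $\chi_\phi$ matches $\chi$. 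Together with Proposition~\ref{6.5}, which gives $L_{a,x,\chi}\simeq L_{(\mathcal{O}_{\phi_0},\chi)}$, the desired statement reduces to showing that the pair $(\mathcal{O}_{\phi_0},\chi)$ appears with nonzero multiplicity in the decomposition whenever $\chi$ actually contributes to the stalk cohomology.

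The heart of the argument is then a weight/purity calculation. The plan is to exploit the $\mathbb{G}_m$-factor of $\mathcal{A}$ generated by $z$, which acts with weight $z^2$ on the cotangent fibers; under this action the stalk cohomology $H_\bullet(\mu^{-1}(x))$ carries a weight grading, and the hypothesis that $z$ is not a root of unity means that distinct weights yield distinct eigenvalues, preventing any accidental cancellations. Following the strategy of \cite[\S 8.7]{CG97}, I would combine this with the purity of the equivariant intersection cohomology sheaves $IC_\phi$ on $\mathcal{N}^a$ and the positivity of the $L_\phi(i)$-contributions to conclude that a cancellation-free count of stalks forces every $\chi$ that appears in $H_\bullet(\mu^{-1}(x))$ to come from a summand $(\mathcal{O}_{\phi_0},\chi)$ with $L_{(\mathcal{O}_{\phi_0},\chi)}\neq 0$.

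The role of the regular-orbit hypothesis is twofold: on the geometric side it puts the full Springer resolution $\widetilde{\mathcal{N}}\to\mathcal{N}$ inside $\pi_\ff$, so that $\mathcal{N}^a$ lies in the image of $\pi^a_\ff$ and Lemma~\ref{lem 6.3} applies everywhere on $\mathcal{N}^a$; on the algebraic side it means that the corner cut out by the $\mu_0$-component of $\widetilde{\bT}_\ff$ recovers the affine Hecke algebra $\widetilde{\mathbf{H}}$, so the Kazhdan--Lusztig non-vanishing input is directly available there and can be propagated through the geometric identifications to all of $\widetilde{\Sc}_{\ff,a}$. The main obstacle I anticipate is that the partial Springer resolutions $\pi_\gamma$ with $\gamma$ non-regular are not semismall in general, so the cohomological shifts $i$ in the decomposition can be nonzero, and the weight/Tate-twist bookkeeping required to rule out cancellations between contributions of different shifts will require care; this parity-style verification is the most delicate technical point of the proof.
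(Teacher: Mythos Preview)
Your last paragraph is close to what the paper actually does: the regular orbit $\mu_0$ puts the full Springer resolution inside $\pi_\ff$, and the paper simply observes that the geometric input \cite[Theorem~8.8.1]{CG97} about $\widetilde{\mathcal{N}}^a$ carries over verbatim to $\widetilde{\mathcal{N}}_\ff^a$, after which the \S8.8 argument runs unchanged. So the high-level strategy is fine.

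The gap is in your middle paragraph. The mechanism in \cite[\S8.8]{CG97} is \emph{not} a weight/purity argument, and your proposed use of ``the $\mathbb{G}_m$-factor of $\mathcal{A}$ generated by $z$'' does not work: $Z_\ff^a=Z_\ff^{\mathcal{A}}$ is already the fixed locus of $\mathcal{A}$, so that torus acts trivially on everything in sight and gives no weight grading on $H_\bullet((\pi_\ff^a)^{-1}(x))$. The actual argument uses a \emph{different} $\mathbb{C}^*$: one completes $x$ to an $\mathfrak{sl}_2$-triple $(x,h,y)$ and uses the one-parameter subgroup coming from $h$ (suitably combined with the $z$-scaling) to contract the transversal slice $S$ to the point $x$. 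This contracting action forces the pushforward $\psi: H_\bullet((\pi_\ff^a)^{-1}(x))\to H_\bullet((\pi_\ff^a)^{-1}(S))$ to be injective, which is exactly the non-vanishing of $L_{a,x}$ and hence of each $L_{a,x,\chi}$ that appears. The hypothesis that $z$ is not a root of unity enters geometrically, to guarantee that this $\mathfrak{sl}_2$-contracting action is compatible with the $a$-fixed locus (this is the content of \cite[Theorem~8.8.1]{CG97}); it is not used to separate eigenvalues in a weight filtration. Your worry about semismallness and parity bookkeeping is therefore misplaced---no such analysis is needed once injectivity of $\psi$ is established.
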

    \begin{proof}
        Since $\Lambda_{\ff}$ contains a regular $\W$-orbit, a result similar to \cite[Theorem~8.8.1]{CG97} holds after replacing $\widetilde{\mathcal{N}}^a$ by $\widetilde{\mathcal{N}}_{\ff}^{a}$. Then by similar arguments as in \cite[\S 8.8]{CG97}, the proposition follows.
    \end{proof}
    Write $\mathbf{M}$ for the set of $G$-conjugacy classes $[a,x,\chi]$ of the triple data$$\left\{(a,x,\chi)\middle| \begin{array}{c}a=(s,z)\in \breve{G},x\in\mathcal{N}^a,\chi\in\mathrm{Irr}(C(s,x)), \\
    \text{$s$ is semisimple and $z$ is not a root of unity}
    \end{array}\right\}{\big/}\mathrm{Ad}G.$$
   \begin{thm}\label{rep aff}
   If $\Lambda_{\ff}$ contains a regular $\W$-orbit, then the set $\{L_{a,x,\chi}~|~[a,x,\chi]\in\mathrm{M}\}$ is a complete collection of simple $\widetilde{\Sc}_{\ff}$-modules such that $q$ acts by means of multiplication by $z$.
   \end{thm}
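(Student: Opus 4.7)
The plan is to assemble the pieces already established in this section into a Kazhdan--Lusztig/Ginzburg type classification, adapted from $\widetilde{\HH}$ to $\widetilde{\Sc}_\ff$. First, by Lemma~\ref{6.2}, every simple $\widetilde{\Sc}_\ff$-module factors through some specialization $\widetilde{\Sc}_{\ff,a}$ at a semisimple element $a=(s,z)\in\breve{G}$, and conversely each simple $\widetilde{\Sc}_{\ff,a}$-module is a simple $\widetilde{\Sc}_\ff$-module; under this specialization the central element $q\in R(\breve{G})$ acts by the scalar $z$. So it suffices to classify simples of $\widetilde{\Sc}_{\ff,a}$ as $a$ varies over semisimple $G$-conjugacy classes.

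Next, Proposition~\ref{localization mor} identifies $\widetilde{\Sc}_{\ff,a}\simeq H_\bullet(Z_\ff^a,\mathbb{C})$, and then the equivariant BBDG decomposition of $\pi_{\ff*}^a\mathcal{C}_{\ff,a}$ together with Theorem~\ref{weak version of rep} realizes the non-zero summands $L_\phi$ as a complete and irredundant set of simples of $\widetilde{\Sc}_{\ff,a}$, parametrized by pairs $\phi=(\mathcal{O}_\phi,\chi_\phi)$ of a $G(s)$-orbit in $\mathcal{N}^a$ and an irreducible $G(s)$-equivariant local system on it. Proposition~\ref{6.5} then rewrites $L_\phi\simeq L_{a,x,\chi}$ for any base point $x\in\mathcal{O}_\phi$, with $\chi$ the representation of $C(s,x)$ corresponding to $\chi_\phi$ under the standard equivalence between equivariant local systems and component group representations.

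It remains to check (i) non-vanishing of every $L_{a,x,\chi}$ in the indexing set $\mathbf{M}$, and (ii) that two triples yield isomorphic simples precisely when $G$-conjugate. For (i) we invoke Proposition~\ref{6.6}: the hypothesis that $\Lambda_\ff$ contains a regular $\W$-orbit provides access to the affine Hecke subalgebra $\widetilde{\HH}$ inside $\widetilde{\Sc}_\ff$ via the corresponding component of $\sF_\ff$, and together with the assumption that $z$ is not a root of unity this forces every relevant $L_{a,x,\chi}$ to be non-zero. For (ii), conjugation by $g\in G$ intertwines $\widetilde{\Sc}_{\ff,a}$ with $\widetilde{\Sc}_{\ff,gag^{-1}}$ and transports the parameter $\phi$ equivariantly, so $G$-conjugate triples produce isomorphic simples; conversely, non-conjugate triples either sit in inequivalent specializations, or give genuinely distinct summands $L_\phi$ of $\pi_{\ff*}^a\mathcal{C}_{\ff,a}$, hence non-isomorphic modules.

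The chief obstacle is the non-vanishing in step~(i): without the regular $\W$-orbit hypothesis one loses the embedding of $\widetilde{\HH}$ into $\widetilde{\Sc}_\ff$ and therefore the standard ``detection on Springer fibres'' argument, while without the assumption that $z$ is not a root of unity one cannot invoke the $\widetilde{\HH}$-theoretic non-vanishing machinery of \cite[\S8.8]{CG97} needed to propagate non-vanishing from a Hecke-algebra component to the full Schur algebra. These two hypotheses are therefore both essential, and enter the proof exclusively through Proposition~\ref{6.6}.
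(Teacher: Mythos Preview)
Your proposal is correct and follows essentially the same route as the paper: the paper's proof is the single line ``It follows from Theorem~\ref{weak version of rep}, Proposition~\ref{6.5} and Proposition~\ref{6.6},'' and you have simply unpacked how these three ingredients combine (with Lemma~\ref{6.2} and Proposition~\ref{localization mor} already absorbed into the proof of Theorem~\ref{weak version of rep}). Your additional remarks on $G$-conjugacy and on the role of the two hypotheses are accurate elaborations rather than a different strategy.
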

   \begin{proof}
       It follows from Theorem \ref{weak version of rep}, Proposition \ref{6.5} and Proposition~\ref{6.6}.
   \end{proof}

\section{Applications}

\subsection{Specializations}
Though the choices of $Q_\ff$ are flexible and far from being unique,
here we list some natural choices from which we can rediscover some known results in the literature.  

\subsubsection{Type $A_{d-1}$} Let $G=\mathrm{GL}_d(\C)$.
We take $Q=\sum_{i=1}^d\Z\epsilon_i$ the weight lattice, whose $\W$-action is given by the permutation of $\epsilon_i$ ($i=1,2,\ldots,d$). For any positive integer $n$, we can choose $Q_\ff$ to be 
\begin{equation}\label{def:Qn}
Q_{n}:=\{\sum_{i=1}^d a_i\epsilon_i~|~a_i\in\Z, 0\leq a_i\leq n-1, \forall i\}.
\end{equation}
The set $\Lambda_\ff$ of $\W$-orbits in $Q_n$ can be regarded as the set of weak compositions of $d$ into $n$ parts 
$$\Lambda_{n,d}=\{\gamma=(\gamma_0,\gamma_1,\ldots,\gamma_{n-1})~|~\sum_{i=0}^{n-1}\gamma_i=d, \gamma_i\in\mathbb{Z}_{\geq0}\},$$ where an orbit $\gamma=(\gamma_0,\gamma_1,\ldots,\gamma_{n-1})\in\Lambda_{n,d}$ consists of all weights $\sum_{i=1}^d a_i\epsilon_i$ such that $\gamma_k=\#\{1\leq i\leq d~|~a_i=k\}$, $(k=0,1,\ldots,n-1)$. 
Then the Fock space $\widetilde{\mathbf{T}}_\ff=\bigoplus_{\gamma\in\Lambda_{n,d} }\mathbf{x}_\gamma\widetilde{\mathbf{H}}$ (rewritten by $\widetilde{\mathbf{T}}_{n,d}$) coincides with the one introduced in \cite{VV99}, where they used an anti-dominant weight to label an orbit $\gamma\in\Lambda_{n,d}$ instead. Hence in this special case, 
\begin{equation}\label{realize:A}
\widetilde{\mathbf{S}}_\ff=\widetilde{\mathbf{S}}_{n,d}
\end{equation}
is the usual $q$-Schur algebra of affine type A studied in \cite{GV93, VV99, Lu99, DF15}, etc. 

For $\gamma=(\gamma_0,\gamma_1,\cdots,\gamma_{n-1})\in\Lambda_{n,d}$, the associated parabolic subgroup $\W_\gamma$ is generated by $$J_\gamma=\{s_i~|~1\leq i\leq d-1, i\neq\gamma_0, \gamma_0+\gamma_1,\ldots,\gamma_0+\cdots+\gamma_{n-1}\}.$$
Fix a basis $\{v_1,\ldots,v_d\}$ of the standard $G$-module $\C^d$, and set $W_i=\langle v_1,\ldots,v_i\rangle$. The parabolic subgroup $P_\gamma$ consists of the elements stabilizing the flag
$$\mathfrak{f}_\gamma=(0\subset W_{\gamma_0}\subset W_{\gamma_0+\gamma_1}\subset\ldots\subset W_d=\C^d).$$
In this case, the partial flag variety $\sF_\ff=\bigsqcup_{\gamma\in\Lambda_{n,d}}G/P_\gamma$ is isomorphic to the $n$-step flag variety   
$$\sF_{n,d}=\{\mathfrak{f}=(0=V_0\subset V_1\subset \ldots\subset V_n=\C^d)\}.$$ The isomorphism is given by $gP_\gamma\mapsto g\mathfrak{f}_\gamma$ (cf. \cite[Lemma~3.1]{LW22}).

\subsubsection{Type $B_d$}\label{sub:BC}
Let $G=\mathrm{SO}_{2d+1}(\C)$ whose standard module $\C^{2d+1}$ is equipped with a non-degenerate symmetric bilinear form $$(v_i,v_j)=\delta_{i,-j}, \quad (\forall -d\leq i,j\leq d)$$ for a given basis $\{v_{-d},\ldots,v_d\}$.
We rewrite the simple reflections of $\W$ by $s_0,s_1,\ldots,s_{d-1}$. Take $Q=Q^0\oplus Q^{\frac{1}{2}}$ with $Q^0=\sum_{i=1}^d\Z\epsilon_i$ and $Q^{\frac{1}{2}}=\sum_{i=1}^d(\frac{1}{2}+\Z)\epsilon_i$, whose $\W$-action is given by that $s_0$ switches $\epsilon_k$ and $\epsilon_{-k}$ ($\forall k=1,\ldots d$) while $s_i$ $(i\neq0)$ switches $\epsilon_{\pm i}$ and $\epsilon_{\pm (i+1)}$. For any positive integer $n$, we can choose $Q_\ff$ to be
\begin{align*}
  Q_n^\jmath&:=\{\sum_{i=1}^da_i\epsilon_i~|~a_i\in\Z,-n\leq a_i\leq n, \forall i\},\quad\mbox{or}\\
 Q_n^{\imath}&:=\{\sum_{i=1}^da_i\epsilon_i~|~a_i\in\frac{1}{2}+\Z,-n\leq a_i\leq n, \forall i\}.
\end{align*}

 The sets $\Lambda_\ff$ of $\W$-orbits in $Q_n^\jmath$ and $ Q_n^{\imath}$ can be regarded as 
 \begin{align*}\Lambda_{n+1,d}&=\{\gamma=(\gamma_0,\gamma_1,\ldots,\gamma_n)~|~\sum_{i=0}^n\gamma_i=d,\gamma_i\in\mathbb{Z}_{\geq0}\}\quad\mbox{and}\\
 \Lambda_{n,d}&=\{\gamma=(\gamma_1,\gamma_2,\ldots,\gamma_n)~|~\sum_{i=1}^n\gamma_i=d, \gamma_i\in\mathbb{Z}_{\geq0}\},\quad\mbox{respectively}.
 \end{align*}
Here an orbit $\gamma=(\gamma_0,\gamma_1,\gamma_2,\ldots,\gamma_n)\in\Lambda_{n+1,d}$ consists of all weights $\sum_{i=1}^d a_i\epsilon_i\in Q_n^{\jmath}$ such that $\gamma_k=\#\{1\leq i\leq d~|~|a_i|=k\}$, $(k=0,1,2,\ldots,n)$, while an orbit $\gamma=(\gamma_1,\gamma_2,\ldots,\gamma_n)\in\Lambda_{n,d}$ consists of all weights $\sum_{i=1}^d a_i\epsilon_i\in Q_n^{\imath}$ such that $\gamma_k=\#\{1\leq i\leq d~|~|a_i|=k-\frac{1}{2}\}$, $(k=1,2,\ldots,n)$. They coincide with $\Lambda^{\imath\jmath}$ and $\Lambda^{\imath\imath}$ in \cite[\S8.4 \& \S8.5]{FLLLW23}, respectively. Thus in this special case, the affine Schur algebra 
\begin{equation}\label{realize:B}
\widetilde{\mathbf{S}}_\ff=\mathbf{S}_{2n+1,d}^{\imath\jmath}\quad\mbox{or}\quad \mathbf{S}_{2n,d}^{\imath\imath}
\end{equation}
is just the affine $\imath$Schur algebra introduced in \cite{FLLLW23}. 

Consider the case of $Q_\ff=Q_n^{\jmath}$. 
For $\gamma=(\gamma_0,\gamma_1,\cdots,\gamma_{n})\in\Lambda_{n+1,d}$, the associated parabolic subgroup $\W_\gamma$ is generated by $$J_\gamma=\{s_i~|~0\leq i\leq d-1, i\neq\gamma_0, \gamma_0+\gamma_1,\ldots,\gamma_0+\cdots+\gamma_{n}\}.$$
Set $W_{i+\frac{1}{2}}=\langle v_{-d},\ldots,v_i\rangle$. The parabolic subgroup $P_\gamma$ consists of the elements stabilizing the flag
$$\mathfrak{f}_\gamma=(0= W_{-d-\frac{1}{2}}\subset W_{-d+\gamma_{n}-\frac{1}{2}}\subset W_{-d+\gamma_n+\gamma_{n-1}-\frac{1}{2}}\subset\ldots\subset W_{d-\gamma_{n}+\frac{1}{2}}\subset W_{d+\frac{1}{2}}=\C^{2d+1})$$ with $W_i=W_j^{\perp}$ if $i+j=0$. 
In this case, the partial flag variety $\sF_\ff$ is isomorphic to the flag variety 
$$\sF_{n,d}^{B,\jmath}=\{\mathfrak{f}=(0=V_{-n-\frac{1}{2}}\subset V_{-n+\frac{1}{2}}\subset\ldots\subset V_{n+\frac{1}{2}}=\C^{2d+1})~|~V_i=V_j^{\perp}\ \mbox{if}\ i+j=0\},$$ also given by $gP_\gamma\mapsto g\mathfrak{f}_\gamma$ (cf. \cite[Lemma~4.1]{LW22}).  

Consider the case of $Q_\ff=Q_n^{\imath}$.
For $\gamma=(\gamma_1,\cdots,\gamma_{n})\in\Lambda_{n,d}$, we regard it as $(\gamma_0, \gamma_1,\cdots,\gamma_{n})\in\Lambda_{n+1,d}$ with $\gamma_0=0$. The corresponding $\W_\gamma$, $P_\gamma$ and $\mathfrak{f}_\gamma$ are all as the same as the ones given previously. In this case, the partial flag variety $\sF_\ff$ is isomorphic to the flag variety 
$$\sF_{n,d}^{B,\imath}=\{\mathfrak{f}\in \sF_{n,d}^{B,\jmath}~|~\dim V_{-\frac{1}{2}}=\dim V_{\frac{1}{2}}-1\}\subset \sF_{n,d}^{B,\jmath}.$$

\subsubsection{Type $C_d$} 
Let $G=\mathrm{Sp}_{2d}(\C)$ be the symplectic group, whose standard module $\C^{2d}$ is equipped with a non-degenerate skew-symmetric bilinear form $$(v_i,v_j)=\mathrm{sign}(i)\delta_{i,-j}, \quad (\forall -d<i,j<d)$$ for a given basis $\{v_{-d+\frac{1}{2}},\ldots,v_{d+\frac{1}{2}}\}$. We also take $Q_\ff=Q_n^{\jmath}$ or $Q_n^{\imath}$ and hence the same associated parabolic subgroup $\W_\gamma$ for $\gamma\in \Lambda_{n+1,d}$ or $\Lambda_{n,d}$.  

Set $W_{i}=\langle v_{-d+\frac{1}{2}},\ldots,v_{i-\frac{1}{2}}\rangle$. The parabolic subgroup $P_\gamma$ consists of the elements stabilizing the flag
$$\mathfrak{f}_\gamma=(0= W_{-d}\subset W_{-d+\gamma_{n}}\subset W_{-d+\gamma_n+\gamma_{n-1}}\subset\ldots\subset W_{d-\gamma_{n}}\subset W_{d}=\C^{2d})$$ with $W_i=W_j^{\perp}$ if $i+j=0$.

In the case of $Q_\ff=Q_n^{\jmath}$, the partial flag variety 
$$\sF_\ff\simeq\sF_{n,d}^{C,\jmath}=\{\mathfrak{f}=(0=V_{-n-\frac{1}{2}}\subset V_{-n+\frac{1}{2}}\subset\ldots\subset V_{n+\frac{1}{2}}=\C^{2d})~|~V_i=V_j^{\perp}\ \mbox{if}\ i+j=0\},$$ while in the case of $Q_\ff=Q_n^{\imath}$,  
$$\sF_\ff\simeq\sF_{n,d}^{C,\imath}=\{\mathfrak{f}\in \sF_{n,d}^{C,\jmath}~|~ V_{-\frac{1}{2}}= V_{\frac{1}{2}}\}\subset \sF_{n,d}^{C,\jmath},$$
both of which are given by $g P_\gamma\mapsto g\mathfrak{f}_\gamma$ (cf. \cite[Lemma~4.4]{LX22}).

\subsection{Application I: Local geometric Langlands correspondence}\label{subsec:app1}
 Let $\mathbb{F}_p$ be a finite field of $p$ elements, and $\mathbb{F}_p((\epsilon))$ the field of formal Laurent series over $\mathbb{F}_p$. Let $G(\mathbb{F}_p((\epsilon)))$ be the associated algebraic group defined over $\mathbb{F}_p((\epsilon))$.

Recall in Remark~\ref{rem:schur} that the affine $q$-Schur algebras $\widetilde{\Sc}_\ff$ defined in this paper coincide with the ones in \cite{CLW24}, up to a Langlands dual. 
A Beilinson-Lusztig-MacPherson type realization of affine $q$-Schur algebras was achieved in \cite[Theorem~3.2]{CLW24}, saying that the affine $q$-Schur algebras can be realized as the convolution algebra $$\C\left[\bigsqcup_{\gamma,\nu\in\Lambda_\ff}\mathcal{I}_\gamma\backslash G(\mathbb{F}_p((\epsilon)))/\mathcal{I}_\nu\right],$$
where $\mathcal{I}_\gamma=\bigsqcup_{w\in \W_\gamma}\mathcal{I}w\mathcal{I}$ (resp. $\mathcal{I}_\nu=\bigsqcup_{w\in \W_\nu}\mathcal{I}w\mathcal{I}$) is the parahoric subgroup of $G(\mathbb{F}_p((\epsilon)))$ associated with $\gamma$ (resp. $\nu$). Here $\mathcal{I}$ is the Iwahori subgroup of $G(\mathbb{F}_p((\epsilon)))$.
Thus using affine $q$-Schur algebras as intermediaries, Theorem~\ref{geo affine Schur-Weyl duality} brings us a Schur algebra analogue of the local geometric Langlands correspondence as follows.
\begin{thm}\label{thm:langlands}
Let $G$ be a connected split reductive algebraic group whose Langlands dual group is as in \S\ref{sec:Aaction}. Let $\W$ be the Weyl group of $G$. For any $\W$-invariant finite subset $Q_\ff$ of the coweight lattice of $G$, there always exists an isomorphism of algebras: 
\begin{equation}\label{LanglandsforSchur}
\C\left[\bigsqcup_{\gamma,\nu\in\Lambda_\ff}\mathcal{I}_\gamma\backslash G(\mathbb{F}_p((\epsilon)))/\mathcal{I}_\nu\right]\simeq K^{{}^LG\times\mathbb{C}\backslash\{0\}}({}^LZ_\ff)|_{p=q^{-2}},
\end{equation}
where $^{L}G$ is the Langlands dual of $G$, and $^{L}Z_\ff$ is the associated Steinberg variety of ${}^LG$ defined by the same $\W$-invariant finite subset $Q_\ff$ of the weight lattice of $^{L}G$.
\end{thm}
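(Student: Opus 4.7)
The plan is to chain together three isomorphisms: a geometric/function-theoretic realization of the convolution algebra of double cosets as an affine $q$-Schur algebra (after the specialization $p = q^{-2}$), the Langlands-dual identification inherent in Remark~\ref{rem:schur}, and our main K-theoretic isomorphism of Theorem~\ref{geo affine Schur-Weyl duality}.

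First I would appeal to the Beilinson-Lusztig-MacPherson type realization established in \cite[Theorem~3.2]{CLW24}. That theorem identifies the convolution algebra
\[
\C\Bigl[\bigsqcup_{\gamma,\nu\in\Lambda_\ff}\mathcal{I}_\gamma\backslash G(\mathbb{F}_p((\epsilon)))/\mathcal{I}_\nu\Bigr]
\]
with an affine $q$-Schur algebra specialized at $p = q^{-2}$, where the Schur algebra is built from $\W$-invariant subsets of the coweight lattice of $G$ (this is the setup used in \emph{loc.\ cit.}). Thus the left-hand side of \eqref{LanglandsforSchur} is identified with an affine $q$-Schur algebra $\widetilde{\Sc}_{\ff}^{\mathrm{cowt}}|_{p=q^{-2}}$ constructed from the coweight datum of $G$.

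Next I would invoke Remark~\ref{rem:schur}(1)-(2), which says that the affine $q$-Schur algebras of the present paper, defined via the weight lattice, coincide (after relabelling) with those of \cite{CLW24} defined via the coweight lattice, provided one passes to the Langlands dual group. More precisely, the $\W$-invariant subset $Q_\ff$ of the coweight lattice of $G$ may equivalently be viewed as a $\W$-invariant subset of the weight lattice of the Langlands dual group $^LG$ (the Weyl groups are canonically identified and the weight lattice of $^LG$ is the coweight lattice of $G$). Under this bookkeeping, the affine $q$-Schur algebra $\widetilde{\Sc}_\ff^{\mathrm{cowt}}$ from \cite{CLW24} attached to $(G,Q_\ff)$ becomes precisely the affine $q$-Schur algebra $\widetilde{\Sc}_\ff$ of the present paper attached to $(^LG,Q_\ff)$. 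At this stage we have
\[
\C\Bigl[\bigsqcup_{\gamma,\nu\in\Lambda_\ff}\mathcal{I}_\gamma\backslash G(\mathbb{F}_p((\epsilon)))/\mathcal{I}_\nu\Bigr] \simeq \widetilde{\Sc}_\ff\bigl(^LG\bigr)\Big|_{p=q^{-2}}.
\]

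Finally, I would apply Theorem~\ref{geo affine Schur-Weyl duality}(2) to the group $^LG$ (which satisfies the hypotheses of \S\ref{sec:Aaction} by assumption), obtaining an algebra isomorphism
\[
\widetilde{\Sc}_\ff(^LG) \simeq K^{\,^LG\times(\C\setminus\{0\})}(^LZ_\ff).
\]
Composing the two isomorphisms and imposing the specialization $p = q^{-2}$ on both sides yields \eqref{LanglandsforSchur}. The main obstacle I foresee is purely bookkeeping: making the Langlands-dual dictionary precise at the level of $(Q,\W)$-data, ensuring that the parahoric subgroups $\mathcal{I}_\gamma,\mathcal{I}_\nu$ on the automorphic side match, orbit by orbit, with the parabolic data $P_\gamma, P_\nu\subset{}^LG$ used to build $^LZ_\ff$, and that the parameter identification $p = q^{-2}$ between the Hecke parameter from the $\mathbb{F}_p((\epsilon))$-side and the equivariant parameter $q$ coming from the $\C\setminus\{0\}$-factor is consistent with the normalizations in \cite{CLW24} and in Theorem~\ref{geo affine Schur-Weyl duality}. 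Once this dictionary is fixed, the three isomorphisms compose without further work.
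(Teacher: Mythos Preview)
Your proposal is correct and follows exactly the same approach as the paper: compose the BLM-type realization from \cite[Theorem~3.2]{CLW24}, the Langlands-dual identification from Remark~\ref{rem:schur}, and Theorem~\ref{geo affine Schur-Weyl duality}(2) applied to $^LG$. The paper's own argument is precisely this three-step composition, stated in the paragraph immediately preceding the theorem.
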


A specialization of this reciprocity, by taking $G=\mathrm{GL}_d(\C)$ (which is self dual) and $Q_\ff=Q_n$ as in \eqref{def:Qn}, seems known to experts. That is, the affine $q$-Schur algebra constructed in \cite{Lu99} via a Beilinson-Lusztig-MacPherson type realization and the one constructed in \cite{GV93} via equivariant K-theory are the same. The reciprocity for $G=\mathrm{Sp}_{2d}(\C)/\{\pm I_{2d}\}$, whose Langlands dual group is $\mathrm{Spin}_{2d+1}(\C)$, is new.

Moreover, if we take $Q_\ff$ to be a single regular $\W$-orbit, then the above reciprocity degenerates to the original local geometric Langlands correspondence about affine Hecke algebras:
$$\C\left[\mathcal{I}\backslash G(\mathbb{F}_p((\epsilon)))/\mathcal{I}\right]\simeq K^{{}^LG\times\mathbb{C}\backslash\{0\}}({}^LZ)|_{p=q^{-2}},$$
which has a categorification version provided by Bezrukavnikov \cite{Be16}. We shall try to give a similar categorification of the reciprocity \eqref{LanglandsforSchur} for $\widetilde{\Sc}_\ff$ in a subsequent work.

\subsection{Application II: Realization of affine $\imath$quantum groups}
\subsubsection{Affine quantum groups $\mathbf{U}(\widetilde{\mathfrak{sl}}_n)$ and $\mathbf{U}(\widetilde{\mathfrak{gl}}_n)$}

By a Belinson-Lusztig-MacPherson type construction, multiplication formulas for simple generators of the affine $q$-Schur algebra $\widetilde{\mathbf{S}}_{n,d}$ were shown in \cite{Lu99}, by which a stabilization property is derived to construct $\mathbf{U}(\widetilde{\mathfrak{sl}}_n)$ in the inverse limit $\lim\limits_{\overleftarrow{d}}\widetilde{\mathbf{S}}_{n,d}$. We caution that when $n\leq d$, the simple generators are not enough to generate the whole $\widetilde{\mathbf{S}}_{n,d}$. See \cite{DF15} for the construction of $\mathbf{U}(\widetilde{\mathfrak{gl}}_n)$ based on multiplication formulas of semisimple generators of $\widetilde{\mathbf{S}}_{n,d}$. 

Employing the multiplication formulas in \cite{DF15}, we can also further realize the affine quantum groups $\mathbf{U}(\widetilde{\mathfrak{gl}}_n)$ and $\mathbf{U}(\widetilde{\mathfrak{sl}}_n)$ based on our equivariant K-theoretic realization for affine Schur algebra $\widetilde{\mathbf{S}}_{n,d}$ explained in \eqref{realize:A}. We remark here that this special (but most important) case had been achieved in \cite{GV93, Va98} over the base field $\mathbb{C}(q)$, while our construction of $\widetilde{\mathbf{S}}_{n,d}$ in this present paper is over $\mathbb{Z}[q,q^{-1}]$.

The above two different approaches to $\mathbf{U}(\widetilde{\mathfrak{gl}}_n)$ and $\mathbf{U}(\widetilde{\mathfrak{sl}}_n)$ are collectively referred to as the Langlands reciprocity for affine quantum groups of type A.

 

\subsubsection{Affine $\imath$quantum groups of type AIII}
Denote $_{\mathbb{Q}}\widetilde{\Sc}_{\ff}:=\mathbb{Q}(q)\otimes_{\mathbb{Z}[q,q^{-1}]}\widetilde{\Sc}_{\ff}$.
We have a similar story for quasi-split $\imath$quantum groups of affine type AIII (over $\mathbb{Q}(q)$) based on the realization of $_{\mathbb{Q}}{\mathbf{S}}_{2n+1,d}^{\imath\jmath}$ and $_{\mathbb{Q}}{\mathbf{S}}_{2n,d}^{\imath\imath}$ as shown in \eqref{realize:B}. Here we describe the results on $\mathbb{Q}(q)$ (instead of $\mathbb{Z}[q,q^{-1}]$) for safety, since there is no integral form for affine $\imath$quantum groups yet.

Although we have not derived multiplication formulas for generators of $_{\mathbb{Q}}{\mathbf{S}}_{2n+1,d}^{\imath\jmath}$ and $_{\mathbb{Q}}{\mathbf{S}}_{2n,d}^{\imath\imath}$ from the equivariant K-theoretic construction; thanks to the Langlands reciprocity displayed in Theorem~\ref{thm:langlands}, we can employed the multiplication formulas obtained in \cite{FLLLW23} for $_{\mathbb{Q}}{\mathbf{S}}_{2n+1,d}^{\imath\jmath}$ and $_{\mathbb{Q}}{\mathbf{S}}_{2n,d}^{\imath\imath}$ to further realize affine $\imath$quantum groups $\mathbf{K}_{2n+1}^{\imath\jmath}$ and $\mathbf{K}_{2n}^{\imath\imath}$ introduced in \cite{FLLLW20} via a stabilization construction originated from \cite{BLM90}. These affine $\imath$quantum groups form affine quantum symmetric pairs $(\mathbf{U}(\widetilde{\mathfrak{gl}}_{2n+1}),\mathbf{K}^{\imath\jmath}_{2n+1})$ and $(\mathbf{U}(\widetilde{\mathfrak{gl}}_{2n}),\mathbf{K}^{\imath\imath}_{2n})$ in the sense of \cite{Ko14}. A direct verification of the equivariant K-theoretic construction of $\mathbf{U}^{\imath\imath}_{2n}$ has been given in \cite{SW24}. 

\subsubsection{Representations of $\imath$quantum groups}
Each irreducible representation of $\widetilde{\Sc}_{\ff}$ can be extended naturally to an irreducible representation of $_{\mathbb{Q}}\widetilde{\Sc}_{\ff}$. The representations given in Theorem \ref{rep aff} exhausts all irreducible $_{\mathbb{Q}}\widetilde{\Sc}_{\ff}$-representations on which $q$ acts as not a root of unity.

For quasi-split $\imath$quantum groups of affine type AIII, one has surjective morphisms:
$$\Psi^{\imath\jmath}_{d}:\mathbf{K}^{\imath\jmath}_{2n+1}\twoheadrightarrow{_{\mathbb{Q}}\mathbf{S}}_{2n+1,d}^{\imath\jmath}\quad\text{and}\quad\Psi^{\imath\imath}_{d}:\mathbf{K}^{\imath\imath}_{2n}\twoheadrightarrow{_{\mathbb{Q}}\mathbf{S}}_{2n,d}^{\imath\imath}$$ by \cite[\S~8.3]{FLLLW23}. Hence the irreducible representations of $\mathbf{S}_{2n+1,d}^{\imath\jmath}$ and $\mathbf{S}_{2n,d}^{\imath\imath}$ obtained in Theorem \ref{rep aff} can be regarded as irreducible representations of $\mathbf{K}^{\imath\jmath}_{2n+1}$ and $\mathbf{K}^{\imath\imath}_{2n}$, respectively.

\subsection{Application III: Geometric Howe duality}
\subsubsection{General case}
Let $Q_\g\subset Q$ be another finite $\W$-invariant subset. We shall use the subscript $\g$ to indicate the notions associated to $Q_\g$, such as $\widetilde{\Sc}_\g$, $\widetilde{\mathcal{N}}_\g$, etc. Denote $\widetilde{\bT}_{\ff\g}:=K^{\breve{G}}(\widetilde{\mathcal{N}}_\ff\times_\mathcal{N} \widetilde{\mathcal{N}}_\g)$, which admits a left $\widetilde{\Sc}_\ff$-action and a right $\widetilde{\Sc}_\g$-action under the convolution product. 

\begin{lem}\label{lem:howe}
    If for each $\nu \in \Lambda_\ff$, there exists an orbit $\mu \in \Lambda_\g$ such that $P_\mu \subset P_\nu$, then $$\widetilde{\Sc}_\ff \cong \End_{\widetilde{\Sc}_\g}(\widetilde{\bT}_{\ff\g}).$$
\end{lem}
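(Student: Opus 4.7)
The plan is to imitate the strategy used to prove Theorem \ref{geo affine Schur-Weyl duality}, which already handled the case $\Lambda_\g = \{\mu_0\}$ (a single regular orbit, so $\widetilde{\Sc}_\g = \widetilde{\HH}$ and $\widetilde{\bT}_{\ff\g} = \widetilde{\bT}_\ff$). By that theorem, $\widetilde{\Sc}_\ff \simeq K^{\breve{G}}(Z_\ff)$ and $\widetilde{\Sc}_\g \simeq K^{\breve{G}}(Z_\g)$, and the convolution product on triples $\widetilde{\mathcal{N}}_\ff \times \widetilde{\mathcal{N}}_\ff \times \widetilde{\mathcal{N}}_\g$ and $\widetilde{\mathcal{N}}_\ff \times \widetilde{\mathcal{N}}_\g \times \widetilde{\mathcal{N}}_\g$ endows $\widetilde{\bT}_{\ff\g}$ with commuting left $\widetilde{\Sc}_\ff$- and right $\widetilde{\Sc}_\g$-actions by associativity of convolution (Lemma \ref{Kcon dia}). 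This yields the natural $R(\breve{G})$-algebra homomorphism
$$\Phi: \widetilde{\Sc}_\ff \longrightarrow \End_{\widetilde{\Sc}_\g}(\widetilde{\bT}_{\ff\g}), \qquad a \longmapsto (t \mapsto a \star t),$$
which I aim to prove is an isomorphism.

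The hypothesis lets us pick, for each $\nu \in \Lambda_\ff$, an orbit $\mu_\nu \in \Lambda_\g$ with $P_{\mu_\nu} \subset P_\nu$; the orbit $\Ob_{\nu,\id,\mu_\nu}$ is then closed in $\sF_\nu \times \sF_{\mu_\nu}$ by Lemma \ref{partial Bruhat order}, so $[\mathcal{O}_{T^*_{\nu,\id,\mu_\nu}}] \in K^{\breve{G}}(\widetilde{\mathcal{N}}_\nu \times_\mathcal{N} \widetilde{\mathcal{N}}_{\mu_\nu}) \subset \widetilde{\bT}_{\ff\g}$. Generalizing Lemmas \ref{lem2} and \ref{tec lem} (replacing the $B$-fibration $\sB \to \sF_\nu$ with $\sF_{\mu_\nu} \to \sF_\nu$), I would establish the restriction formula
$$\bigl(\chi_{\gamma\nu}^w \star [\mathcal{O}_{T^*_{\nu,\id,\mu_\nu}}]\bigr)\Big|_{T^*_{\Ob_{\gamma,w',\mu_\nu}}} = \bigl[\mathcal{O}_{T^*_{\Ob_{\gamma,w',\mu_\nu}}}(\chi)\bigr],$$
where $w'$ is the unique longest element of $\W_\gamma w \W_\nu \cap \D_{\gamma\mu_\nu}$ (cf.~Lemma \ref{lem2}). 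Together with the $R(\breve{G})$-basis $\{\chi_{\gamma\nu}^w\}$ of $K^{\breve{G}}(Z_\ff)$ from Proposition \ref{prop:noncbasis}, this formula forces injectivity of $\Phi$ in exactly the style of Theorem \ref{geo affine Schur-Weyl duality}: any nonzero $a \in \widetilde{\Sc}_\ff$, expanded in this basis and restricted to a suitable conormal bundle $T^*_{\Ob_{\gamma,w',\mu_\nu}}$, is detected by convolving with $[\mathcal{O}_{T^*_{\nu,\id,\mu_\nu}}]$.

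For the rank comparison, a filtration argument parallel to Corollary \ref{basis} (using the $G$-orbit stratification $\{\Ob_{\nu,w,\mu}\}_{w \in \D_{\nu\mu}}$ of $\sF_\nu \times \sF_\mu$ together with Lemma \ref{lem:PW}) shows that $\widetilde{\bT}_{\ff\g}$ is a free $R(\breve{G})$-module of rank $\bigl(\sum_{\nu \in \Lambda_\ff} \#\D_\nu\bigr)\bigl(\sum_{\mu \in \Lambda_\g} \#\D_\mu\bigr)$. After localizing at $(0)$, Proposition \ref{rank2} makes $\widetilde{\Sc}_{\g,loc}$ split simple with unique simple module of $R(\breve{G})_{loc}$-dimension $\sum_\mu \#\D_\mu$; injectivity of $\Phi_{loc}$ together with this simplicity forces $\widetilde{\bT}_{\ff\g,loc}$ to be a sum of $\sum_\nu \#\D_\nu$ copies of that simple module, whence
$$\dim_{R(\breve{G})_{loc}} \End_{\widetilde{\Sc}_{\g,loc}}(\widetilde{\bT}_{\ff\g,loc}) = \Bigl(\sum_{\nu \in \Lambda_\ff} \#\D_\nu\Bigr)^2 = \dim_{R(\breve{G})_{loc}} \widetilde{\Sc}_{\ff,loc},$$
so $\Phi_{loc}$ is an isomorphism. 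The integral statement follows by the Cartesian-square argument at the end of the proof of Theorem \ref{geo affine Schur-Weyl duality}: any $\mathcal{M} \in \End_{\widetilde{\Sc}_\g}(\widetilde{\bT}_{\ff\g}) \cap \widetilde{\Sc}_{\ff,loc}$ not already in $\widetilde{\Sc}_\ff$ would, via the restriction formula, produce some $\mathcal{M} \star [\mathcal{O}_{T^*_{\nu,\id,\mu_\nu}}]$ with a genuine $R(\breve{G})$-denominator, contradicting its membership in $\widetilde{\bT}_{\ff\g}$.

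The principal technical obstacle is securing the restriction formula. The proof of Lemma \ref{tec lem} leveraged the $B$-fibration $\sB \to \sF_\nu$ to reduce to an explicit Koszul computation on $P_i/B_i \simeq \mathbb{P}^1$; replacing $\sB$ by $\sF_{\mu_\nu}$ forces one to track a Koszul-type resolution along the partial-flag fibre $P_\nu/P_{\mu_\nu}$, which is a (generally higher-dimensional) partial flag variety of the Levi of $P_\nu$. The transversality and Cartesian-square inputs from Lemma \ref{Kcon dia} remain available, so I expect the generalization to go through, but the explicit cohomological bookkeeping (especially when $w \neq \id$) will need care.
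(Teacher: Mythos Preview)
Your proposal is correct and follows essentially the same route as the paper: the paper states the restriction formula $\chi_{\gamma\nu}^{w} \star e_{\nu,\id,\mu}^0 \in \chi_{\gamma\mu}^{w'} + K^{\breve{G}}(Z^{\prec w'}_{\gamma\mu})$ (citing Lemma~\ref{lem2}), uses it for injectivity, and then invokes the Cartesian-square argument from Theorem~\ref{geo affine Schur-Weyl duality} verbatim.

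One clarification regarding your ``principal technical obstacle'': the proof of Lemma~\ref{tec lem} does \emph{not} rely on the Koszul computation of Lemma~\ref{calc}. That Koszul step was only needed to verify that $\mathbf{x}_\gamma \mapsto e_{\gamma,\id}^{\omega_\gamma}$ respects the $\widetilde{\HH}$-relation $\mathbf{x}_\gamma H_i = -q\mathbf{x}_\gamma$, which is irrelevant here. The actual proof of Lemma~\ref{tec lem} uses only (i) the transversality and Cartesian diagram from Proposition~\ref{6}, which go through unchanged when $\sB$ is replaced by $\sF_\mu$ with $P_\mu \subset P_\nu$ (this is precisely what Lemma~\ref{lem2} and the proof of Proposition~\ref{6} already establish in that generality), and (ii) the density argument via Lemma~\ref{loc gal}, which is insensitive to the choice of $\mu$. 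So there is no higher-dimensional Koszul bookkeeping to do; the generalization is immediate.
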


\begin{proof}
    To be not tedious, similar to the proof of Theorem \ref{geo affine Schur-Weyl duality} and by Lemma \ref{lem2}, one can see that if $P_\mu \subset P_\nu$, then
    \begin{align*}
        \chi_{\gamma\nu}^{w} \star e_{\nu, \id, \mu}^0\in\chi_{\gamma, w', \mu}+K^{\breve{G}}(Z^{\prec w}_{\gamma\mu}), \quad \forall w \in \D_{\gamma\nu} \ \mbox{and} \ \chi \in R(P_{\gamma\nu}^w),
    \end{align*}
    where $w'$ is the longest element in $\W_\gamma w \W_\nu \cap \D_{\gamma\mu}$. Therefore,  $\widetilde{\Sc}_\ff \to \End_{\widetilde{\Sc}_\g}(\widetilde{\bT}_{\ff\g})$ is injective. Similar to the argument of Theorem \ref{geo affine Schur-Weyl duality}, we have $$\End_{\widetilde{\Sc}_\g}(\widetilde{\bT}_{\ff\g})=\End_{R(\breve{G})}(\widetilde{\bT}_{\ff\g}) \cap \widetilde{\Sc}_{\ff,loc}=\widetilde{\Sc}_\ff.$$
\end{proof}

\begin{rem}
    If the above condition does not hold, then neither may the conclusion. For example, take $G=\mathrm{SL}_2(\C)$ and let $\Lambda_\ff$ (resp. $\Lambda_\g$) consist of a nonzero orbit (resp. the $0$ orbit). Then $$\widetilde{\Sc}_\ff \cong \widetilde{\HH}=R(\breve{G}) \oplus R(\breve{G})e^{-\varpi} \oplus R(\breve{G})H \oplus R(\breve{G})e^{-\varpi}H,$$ $$\widetilde{\Sc}_\g \cong R(\breve{G})\quad\mbox{and}\quad \widetilde{\bT}_{\ff\g} \cong R(\breve{T})=R(\breve{G}) \oplus R(\breve{G})e^{-\varpi},$$ 
    where $\varpi$ is the fundamental weight.
    Thus $\End_{\widetilde{\Sc}_\g}(\widetilde{\bT}_{\ff\g}) \cong \mathrm{Mat}_2(R(\breve{G}))$. But the image of $\widetilde{\HH}$ in $\mathrm{Mat}_2(R(\breve{G}))$ is generated by 
    $$\begin{pmatrix}
    1 & 0 \\
    0 & 1
    \end{pmatrix},\quad
    \begin{pmatrix}
    0 & -1 \\
    1 & e^\varpi+e^{-\varpi}
    \end{pmatrix},\quad
    \begin{pmatrix}
    -q & -q^{-1}(e^\varpi+e^{-\varpi}) \\
    0 & q^{-1}
    \end{pmatrix},\quad
    \begin{pmatrix}
    0 & -q^{-1} \\
    -q & 0
    \end{pmatrix}$$
    as an $R(\breve{G})$-module, which is a proper subset of $\mathrm{Mat}_2(R(\breve{G}))$.
\end{rem}

Thanks to Lemma~\ref{lem:howe}, we immediately obtain the following double centralizer property, which can be regarded as a Howe duality of affine type at the Schur algebra level.
\begin{thm}\label{thm:howe}
If the subsets of minimal parabolic subgroups of $\{P_\mu~|~\mu\in\Lambda_\ff\}$ and of $\{P_\mu~|~\mu\in\Lambda_\g\}$ coincide, then $\widetilde{\Sc}_\ff$ and $\widetilde{\Sc}_\g$ admit a double centralizer property on $\widetilde{\bT}_{\ff\g}$.
\end{thm}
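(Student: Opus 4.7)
The strategy is to reduce the double centralizer statement to two symmetric applications of Lemma~\ref{lem:howe}, one in each direction. Specifically, I plan to verify that the hypothesis ``for each $\nu\in\Lambda_\ff$ there exists $\mu\in\Lambda_\g$ with $P_\mu\subset P_\nu$'' of Lemma~\ref{lem:howe} holds under the assumed coincidence of minimal parabolic subgroups, and likewise with the roles of $\ff$ and $\g$ interchanged.

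Concretely, let $\mathcal{P}_\ff^{\min}$ (resp.\ $\mathcal{P}_\g^{\min}$) denote the set of minimal elements, with respect to inclusion, in $\{P_\mu\mid\mu\in\Lambda_\ff\}$ (resp.\ $\{P_\mu\mid\mu\in\Lambda_\g\}$). Given $\nu\in\Lambda_\ff$, there exists $\nu'\in\Lambda_\ff$ with $P_{\nu'}\in\mathcal{P}_\ff^{\min}$ and $P_{\nu'}\subset P_\nu$, since $\Lambda_\ff$ is finite. By hypothesis $\mathcal{P}_\ff^{\min}=\mathcal{P}_\g^{\min}$, so $P_{\nu'}=P_\mu$ for some $\mu\in\Lambda_\g$, giving $P_\mu\subset P_\nu$. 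Thus the hypothesis of Lemma~\ref{lem:howe} is satisfied, producing the isomorphism
\begin{equation*}
\widetilde{\Sc}_\ff\simeq\End_{\widetilde{\Sc}_\g}(\widetilde{\bT}_{\ff\g}).
\end{equation*}

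The symmetric argument, swapping $\ff$ and $\g$, yields
\begin{equation*}
\widetilde{\Sc}_\g\simeq\End_{\widetilde{\Sc}_\ff}(\widetilde{\bT}_{\g\ff}).
\end{equation*}
Finally, under the natural identification $\widetilde{\bT}_{\g\ff}\simeq\widetilde{\bT}_{\ff\g}$ as $(\widetilde{\Sc}_\g,\widetilde{\Sc}_\ff)$-bimodules obtained by swapping the two factors in $\widetilde{\mathcal{N}}_\ff\times_\mathcal{N}\widetilde{\mathcal{N}}_\g$ (compatible with convolution as in \cite[\S2.7]{CG97}), the two isomorphisms together produce the asserted double centralizer property. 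I do not expect a serious obstacle here: the heavy lifting—injectivity via Lemma~\ref{tec lem} and the local\slash global compatibility via $\widetilde{\Sc}_{\ff,loc}$—was already carried out in Lemma~\ref{lem:howe}; the remaining task is purely a combinatorial check on the poset of parabolics, and the swap isomorphism between $\widetilde{\bT}_{\ff\g}$ and $\widetilde{\bT}_{\g\ff}$ is standard.
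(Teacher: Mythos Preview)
Your proposal is correct and matches the paper's approach: the paper states the theorem as an immediate consequence of Lemma~\ref{lem:howe}, and you have simply spelled out the (straightforward) verification that the hypothesis of that lemma is satisfied in both directions under the assumed coincidence of minimal parabolics, together with the standard swap identification $\widetilde{\bT}_{\g\ff}\simeq\widetilde{\bT}_{\ff\g}$.
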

 We refer to \cite{LW22} for the geometric Howe duality of any finite type via a Beilinson-Lusztig-MacPherson type construction. 

\subsubsection{Howe $(\mathbf{U}(\widetilde{\mathfrak{gl}}_m), \mathbf{U}(\widetilde{\mathfrak{gl}}_n))$-duality}
Let us consider the special case of type A by taking $Q_\ff=Q_m$ and $Q_\g=Q_n$ as in \eqref{def:Qn}, and rewrite the Fork space $\widetilde{\bT}_{\ff\g}$ by $\widetilde{\bT}_{m,n}^d$ in this case. We assume $m,n\geq d$ so that the Borel subgroup $B$ is the minimal parabolic subgroup determined by $\Lambda_{m,d}$, which is also the same by $\Lambda_{n,d}$. Then Theorem~\ref{thm:howe} shows a duality between affine $q$-Schur algebras $\widetilde{\Sc}_{m,d}$ and $\widetilde{\Sc}_{n,d}$. By the surjectivity of $\mathbf{U}(\widetilde{\mathfrak{gl}}_n)\twoheadrightarrow \widetilde{\Sc}_{n,d}$, we further obtain a Howe duality between affine quantum groups $\mathbf{U}(\widetilde{\mathfrak{gl}}_m)$ and $\mathbf{U}(\widetilde{\mathfrak{gl}}_n)$ as follows.  
\begin{thm} For any integers $m,n\geq d$, the affine quantum groups $\mathbf{U}(\widetilde{\mathfrak{gl}}_m)$ and $\mathbf{U}(\widetilde{\mathfrak{gl}}_n)$ admit a double centralizer property on $\widetilde{\bT}_{m,n}^d$:
$$\mathbf{U}(\widetilde{\mathfrak{gl}}_m)\twoheadrightarrow \widetilde{\Sc}_{m,d}\curvearrowright \widetilde{\bT}_{m,n}^d \curvearrowleft\widetilde{\Sc}_{n,d}\twoheadleftarrow \mathbf{U}(\widetilde{\mathfrak{gl}}_n).$$
\end{thm}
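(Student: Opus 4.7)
The plan is to obtain this duality as a direct consequence of Theorem~\ref{thm:howe} applied to $Q_\ff = Q_m$ and $Q_\g = Q_n$, combined with the well-known surjection from the affine quantum group onto the affine $q$-Schur algebra. First, I would verify the hypothesis of Theorem~\ref{thm:howe}: one must check that the collections of minimal parabolic subgroups $\{P_\mu \mid \mu \in \Lambda_{m,d}\}$ and $\{P_\mu \mid \mu \in \Lambda_{n,d}\}$ share the same minimal elements. Because $m, n \geq d$, the weak composition $(1,1,\ldots,1,0,\ldots,0) \in \Lambda_{m,d}$ (with $d$ ones followed by $m-d$ zeros) and similarly in $\Lambda_{n,d}$ both correspond to $P_\mu = B$, the Borel. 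Hence $B$ lies in both collections and is trivially minimal.

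Next, Theorem~\ref{thm:howe} immediately yields the double centralizer property at the Schur algebra level:
\begin{equation*}
\widetilde{\Sc}_{m,d} \simeq \End_{\widetilde{\Sc}_{n,d}}(\widetilde{\bT}_{m,n}^d) \quad \text{and} \quad \widetilde{\Sc}_{n,d} \simeq \End_{\widetilde{\Sc}_{m,d}}(\widetilde{\bT}_{m,n}^d).
\end{equation*}
Here we use the identification $\widetilde{\Sc}_\ff \simeq \widetilde{\Sc}_{m,d}$ and $\widetilde{\Sc}_\g \simeq \widetilde{\Sc}_{n,d}$ established in \eqref{realize:A}, plus the realization in Theorem~\ref{geo affine Schur-Weyl duality} giving $\widetilde{\bT}_{m,n}^d = K^{\breve{G}}(\widetilde{\mathcal{N}}_\ff \times_\mathcal{N} \widetilde{\mathcal{N}}_\g)$ the structure of a $(\widetilde{\Sc}_{m,d}, \widetilde{\Sc}_{n,d})$-bimodule via convolution.

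Then I would invoke the surjective algebra homomorphisms $\mathbf{U}(\widetilde{\mathfrak{gl}}_m) \twoheadrightarrow \widetilde{\Sc}_{m,d}$ and $\mathbf{U}(\widetilde{\mathfrak{gl}}_n) \twoheadrightarrow \widetilde{\Sc}_{n,d}$, which come from the stabilization construction of \cite{DF15} (or equivalently from evaluation of the affine quantum Schur-Weyl duality). Pulling back the $\widetilde{\Sc}_{m,d}$-action (resp. $\widetilde{\Sc}_{n,d}$-action) along these surjections gives $\widetilde{\bT}_{m,n}^d$ a $(\mathbf{U}(\widetilde{\mathfrak{gl}}_m), \mathbf{U}(\widetilde{\mathfrak{gl}}_n))$-bimodule structure. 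Because each morphism is surjective, the image of $\mathbf{U}(\widetilde{\mathfrak{gl}}_m)$ in $\End(\widetilde{\bT}_{m,n}^d)$ equals the image of $\widetilde{\Sc}_{m,d}$ and similarly on the other side, so the double centralizer property descends verbatim to the quantum group level, yielding the desired chain $\mathbf{U}(\widetilde{\mathfrak{gl}}_m) \twoheadrightarrow \widetilde{\Sc}_{m,d} \curvearrowright \widetilde{\bT}_{m,n}^d \curvearrowleft \widetilde{\Sc}_{n,d} \twoheadleftarrow \mathbf{U}(\widetilde{\mathfrak{gl}}_n)$.

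The hard part, which is already handled by Theorem~\ref{thm:howe} (whose proof rests on Lemma~\ref{lem:howe}), is the injectivity of $\widetilde{\Sc}_{m,d} \to \End_{\widetilde{\Sc}_{n,d}}(\widetilde{\bT}_{m,n}^d)$ and the tight localization identity $\End_{R(\breve{G})}(\widetilde{\bT}_{m,n}^d) \cap \widetilde{\Sc}_{m,d,loc} = \widetilde{\Sc}_{m,d}$. Given how cleanly Theorem~\ref{thm:howe} packages these, the main subtlety remaining for the present statement is merely bookkeeping: confirming that under the identifications made in \eqref{realize:A}, the condition $m, n \geq d$ exactly matches the minimality hypothesis; were one of them smaller, the Borel would drop out of one collection and the duality would fail in general.
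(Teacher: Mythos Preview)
Your proposal is correct and follows essentially the same route as the paper: the paragraph preceding the theorem in the paper verifies that $m,n\geq d$ forces the Borel $B$ to be the (unique) minimal parabolic in both $\{P_\mu\mid\mu\in\Lambda_{m,d}\}$ and $\{P_\mu\mid\mu\in\Lambda_{n,d}\}$, invokes Theorem~\ref{thm:howe} to get the Schur-level duality, and then uses the surjections $\mathbf{U}(\widetilde{\mathfrak{gl}}_n)\twoheadrightarrow\widetilde{\Sc}_{n,d}$ from \cite{DF15}. Your write-up is slightly more explicit about why surjectivity transfers the double centralizer property, but the argument is the same.
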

We remark that in the case of $m>d$ (resp. $n>d$), it is still valid to replace $\mathbf{U}(\widetilde{\mathfrak{gl}}_m)$ (resp, $\mathbf{U}(\widetilde{\mathfrak{gl}}_n)$) by $\mathbf{U}(\widetilde{\mathfrak{sl}}_m)$ (resp. $\mathbf{U}(\widetilde{\mathfrak{sl}}_m)$) in the above Howe duality. But it is not valid in the case of $n\leq d$ since the map $\mathbf{U}(\widetilde{\mathfrak{sl}}_n)\rightarrow \widetilde{\Sc}_{n,d}$ is no longer surjective. 

\subsubsection{Howe duality for affine $\imath$quantum groups}
Now we consider the special case of type B/C by taking $Q_\ff=Q_m^{\mathfrak{c}}$ and $Q_\g=Q_m^{\mathfrak{c}'}$ ($\mathfrak{c,c'}=\imath,\jmath$). We rewrite the Fork space 
$\widetilde{\mathbf{T}}_{\ff\g}$ by $\widetilde{\mathbf{T}}_{m,n}^{d;\mathfrak{c,c'}}$. Assume $m,n\geq d$ so that we can apply Theorem~\ref{thm:howe} to obtain an $(\mathbf{S}^{\imath\mathfrak{c}}_{\mathfrak{c}(m),d}, \mathbf{S}^{\imath\mathfrak{c'}}_{\mathfrak{c'}(n),d})$-duality on $\widetilde{\mathbf{T}}_{m,n}^{d;\mathfrak{c,c'}}$ ($\mathfrak{c,c'}=\imath,\jmath$), where we denote $\jmath(n)=2n+1$ and $\imath(n)=2n$.

By the surjectivity of $\mathbf{K}_{2n+1}^{\imath\jmath}\twoheadrightarrow {_{\mathbb{Q}}\Sc}^{\imath\jmath}_{2n+1,d}$ and $\mathbf{K}_{2n}^{\imath\imath}\twoheadrightarrow {_{\mathbb{Q}}\Sc}^{\imath\imath}_{2n,d}$, we get a Howe duality between affine $\imath$quantum groups as follows.
\begin{thm}
 For any integers $m,n\geq d$, the affine $\imath$quantum groups $\mathbf{K}^{\imath\mathfrak{c}}_{\mathfrak{c}(m)}$ and $\mathbf{K}^{\imath\mathfrak{c'}}_{\mathfrak{c'}(n)}$ admit a double centralizer property on ${_{\mathbb{Q}}\widetilde{\mathbf{T}}}_{m,n}^{d;\mathfrak{c,c'}}(:=\mathbb{Q}(q)\otimes_{\mathbb{Z}[q,q^{-1}]}\widetilde{\mathbf{T}}_{m,n}^{d;\mathfrak{c,c'}})$:
$$\mathbf{K}^{\imath\mathfrak{c}}_{\mathfrak{c}(m)}\twoheadrightarrow {_{\mathbb{Q}}\Sc}^{\imath\mathfrak{c}}_{\mathfrak{c}(m),d}\curvearrowright {_{\mathbb{Q}}\widetilde{\mathbf{T}}}_{m,n}^{d;\mathfrak{c,c'}} \curvearrowleft{_{\mathbb{Q}}\Sc}^{\imath\mathfrak{c'}}_{\mathfrak{c'}(n),d}\twoheadleftarrow \mathbf{K}^{\imath\mathfrak{c'}}_{\mathfrak{c'}(n)},\quad (\mathfrak{c,c'}=\imath,\jmath).$$
\end{thm}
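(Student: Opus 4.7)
My plan is to reduce the Howe duality for affine $\imath$quantum groups to the already-established duality at the Schur-algebra level in Theorem~\ref{thm:howe}, and then transport it through the surjections $\mathbf{K}^{\imath\mathfrak{c}}_{\mathfrak{c}(m)}\twoheadrightarrow{_{\mathbb{Q}}}\Sc^{\imath\mathfrak{c}}_{\mathfrak{c}(m),d}$ constructed in \cite{FLLLW23}.

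The first step is to verify the hypothesis of Theorem~\ref{thm:howe}, namely that under $m,n\geq d$ the collections $\{P_\mu\mid\mu\in\Lambda_{\ff}\}$ and $\{P_\mu\mid\mu\in\Lambda_{\g}\}$ share the same set of minimal parabolic subgroups. I would show that each of these collections in fact contains the Borel $B$ as a (unique) minimum. For $\mathfrak{c}=\jmath$, take $\gamma=(0,1,1,\ldots,1,0,\ldots,0)\in\Lambda_{m+1,d}$ with ones in positions $1,\ldots,d$; the partial sums $0,1,2,\ldots,d$ cover $\{0,1,\ldots,d-1\}$, forcing $J_\gamma=\emptyset$ and $P_\gamma=B$. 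For $\mathfrak{c}=\imath$ the analogous orbit $\gamma=(1,\ldots,1,0,\ldots,0)\in\Lambda_{m,d}$ (viewed with the forced $\gamma_0=0$ recalled in \S\ref{sub:BC}) produces the same conclusion; the same computation on the $\g$-side uses $n\geq d$. Since the Borel is contained in every parabolic, it is automatically the unique minimum of each collection.

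With the hypothesis verified, Theorem~\ref{thm:howe} furnishes a double centralizer property for the pair $(\widetilde{\Sc}_{\ff},\widetilde{\Sc}_{\g})=(\Sc^{\imath\mathfrak{c}}_{\mathfrak{c}(m),d},\Sc^{\imath\mathfrak{c'}}_{\mathfrak{c'}(n),d})$ acting on $\widetilde{\bT}_{m,n}^{d;\mathfrak{c,c'}}$ over $\Z[q,q^{-1}]$. Because Lemma~\ref{rank} ensures freeness of the Fock space over $R(\breve{G})$, tensoring with $\mathbb{Q}(q)$ is exact and preserves the mutual centralizer relations, transferring the duality to $({_{\mathbb{Q}}}\Sc^{\imath\mathfrak{c}}_{\mathfrak{c}(m),d},{_{\mathbb{Q}}}\Sc^{\imath\mathfrak{c'}}_{\mathfrak{c'}(n),d})$ on ${_{\mathbb{Q}}}\widetilde{\bT}_{m,n}^{d;\mathfrak{c,c'}}$. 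Finally, given any algebra surjection $A\twoheadrightarrow A'$ of algebras acting on a module $M$, the images of $A$ and $A'$ in $\End(M)$ coincide, so $\End_A(M)=\End_{A'}(M)$; applying this on both sides to the surjections from $\mathbf{K}^{\imath\mathfrak{c}}_{\mathfrak{c}(m)}$ and $\mathbf{K}^{\imath\mathfrak{c'}}_{\mathfrak{c'}(n)}$ upgrades the Schur-algebra duality to the desired $\imath$quantum group duality.

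The main obstacle is the combinatorial verification in the first step: it is the single point at which the constraint $m,n\geq d$ is genuinely used, and one has to inspect both conventions $\mathfrak{c}\in\{\imath,\jmath\}$ carefully in light of the slightly different parametrizations of orbits recalled in \S\ref{sub:BC}. The remaining steps — passage to $\mathbb{Q}(q)$ by flat base change and the transport of duality through surjective algebra maps — are purely formal manipulations and do not pose any difficulty.
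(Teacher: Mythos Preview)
Your proposal is correct and follows essentially the same approach as the paper: verify that $m,n\geq d$ ensures both $\Lambda_\ff$ and $\Lambda_\g$ contain a regular orbit (so that the Borel $B$ is the common minimal parabolic), apply Theorem~\ref{thm:howe} to obtain the Schur-algebra-level duality, and then pull it back through the surjections $\mathbf{K}^{\imath\mathfrak{c}}_{\mathfrak{c}(m)}\twoheadrightarrow{_{\mathbb{Q}}}\Sc^{\imath\mathfrak{c}}_{\mathfrak{c}(m),d}$ from \cite{FLLLW23}. Your explicit combinatorial check that the composition $(0,1,\ldots,1,0,\ldots,0)$ yields $J_\gamma=\emptyset$ is more detailed than what the paper spells out, but the logic is identical.
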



\end{document}